\numberwithin{equation}{section}
\newtheorem{theorem}{Theorem}[section]
\newtheorem{conjecture}[theorem]{Conjecture}
\newtheorem{corollary}[theorem]{Corollary}
\newtheorem{lemma}[theorem]{Lemma}
\newtheorem{proposition}[theorem]{Proposition}
\newtheorem{hypo}[theorem]{Hypothesis}
\theoremstyle{definition}
\newtheorem{convention}[theorem]{Convention}
\newtheorem{definition}[theorem]{Definition}
\newtheorem{remark}[theorem]{Remark}
\newcommand{\ra}{\rightarrow}
\newcommand{\lra}{\longrightarrow}
\def\AAA{\mathbb{A}}
\def\CC{\mathbb{C}}
\def\GG{\mathbb{G}}
\def\QQ{\mathbb{Q}}
\def\ZZ{\mathbb{Z}}
\def\calA{\mathcal{A}}
\def\calH{\mathcal{H}}
\def\calJ{\mathcal{J}}
\def\calM{\mathcal{M}}
\def\calR{\mathcal{R}}
\def\calS{\mathcal{S}}
\def\calT{\mathcal{T}}
\def\calU{\mathcal{U}}
\def\calV{\mathcal{V}}
\def\gothf{\mathfrak{f}}
\def\goths{\mathfrak{s}}
\def\gothN{\mathfrak{N}}
\def\scrH{\mathscr{H}}
\def\scrS{\mathscr{S}}
\def\scrU{\mathscr{U}}
\def\scrV{\mathscr{V}}
\def\scrW{\mathscr{W}}
\def\wt{\widetilde}
\def\wh{\widehat}
\def\Lgp{\prescript{L}{}} 
\def\alg_k{\AAA\mathbbm{l}\mathbbm{g}_{/k}}
\DeclareMathOperator{\im}{Im}
\DeclareMathOperator{\Tr}{Tr}
\DeclareMathOperator{\disc}{disc}
\DeclareMathOperator{\Hom}{Hom}
\DeclareMathOperator{\Ind}{Ind}
\DeclareMathOperator{\Irr}{Irr}
\DeclareMathOperator{\Herm}{Herm}
\DeclareMathOperator{\Isom}{Isom}
\DeclareMathOperator{\supp}{supp}
\begin{document}

\title{Theta correspondence and Arthur packets}
\author{Rui Chen \and Jialiang Zou}


\begin{abstract}
In spirit of \cite{MR3866889}, we have established an Arthur's multiplicity formula for even orthogonal or unitary groups with Witt index less than or equal to one. In that multiplicity formula, some local packets defined using the stable range theta lifts are involved. In this paper, we prove that at non-Archimedean places, the definition of the local packets involved in that multiplicity formula is independent of the choice of the dual-pairs used in their construction. Moreover, at those places where the groups are quasi-split, we prove that the local packets involved are the same as the local $A$-packets defined by Arthur/ Mok.
\end{abstract}

\maketitle

\section{Introduction}
In this section we briefly explain our main motivation.\\

Let $F$ be a number field, $\AAA$ be the Adele ring of $F$, and $G$ be a reductive group over $F$. A central question in representation theory is to study the unitary representation $L^2(G(F)\backslash G(\AAA))$. By some classical works, this question is reduced to study the ``discrete part'' of $L^2(G(F)\backslash G(\AAA))$, which is usually denoted by $L_{disc}^2(G)$. When $G$ is a quasi-split classical group, this question is already very well-studied by Arthur \cite{MR3135650} and Mok \cite{MR3338302}. Basically, they decomposed $L_{disc}^2(G)$ into some summands called ``near equivalence classes'', and describe each of these summands using the so called ``Arthur's multiplicity formula''. For non quasi-split groups, Arthur has proposed some strategies to attack this question using the trace formula. For the case of inner forms of unitary groups, Kaletha-M\'{\i}nguez-Shin-White have established some partial results in \cite{kaletha2014endoscopic}. Also, for the case of certain inner forms of classical groups, Ta\"{\i}bi has proved the Arthur's multiplicity formula for the near equivalence classes satisfying some conditions in \cite{MR3908767}. However, except these works, it seems that the general results for non quasi-split groups are not known.\\

On the other hand, the theta correspondence has provided a tool to ``transfer'' results between dual-pairs. In particular, one may ``transfer'' results for a quasi-split group to a possibly non quasi-split group using the theta correspondence. This idea has been used in many papers. In our previous paper \cite{CZ2020AMFPIF}, we have established some results for non quasi-split even orthogonal or unitary groups. Now we briefly recall them.

\subsection{Weak lift, multiplicity preservation and a multiplicity formula}
We continue to let $F$ be a number field. Let $(G, H)$ be a reductive dual pair over $F$ in stable range and $H$ is the larger group, that is, the split rank of $H$ is greater than the dimension of the standard representation of $G$ (we shall make it more precisely later). In some early works of Howe \cite{MR777342} and J-S. Li \cite{MR1448215}, they defined the so-called ``low rank representations'' of classical groups, and showed that theses representations can be characterized by the theta lift from some smaller groups. These works suggest the possibility to study the automorphic spectrum of $G$ in terms of $H$, using the theta correspondence between $(G,H)$. This idea was first exploited by Gan-Ichino \cite{MR3866889}. In their work, they put $G=Mp(2n)$, and $H=SO(2r+1)$, such that $r>2n$. By computing some unramified theta lifts and partial $L$-functions, they attached to each near equivalence class of $G$ an elliptic $A$-parameter (i.e. showed the existence of the weak lift to $GL(2n)$ through the standard representation of the dual group); they also observed that, for those automorphic representations $\pi$ of $G$ with tempered elliptic $A$-parameters, any automorphic realization of $\pi$ must be cuspidal, which implies that
\begin{equation}\label{Intro.Multi.Preserve}
  m_{disc}(\pi)=m_{disc}\left(\theta^{abs}(\pi)\right),
\end{equation}
where $m_{disc}$ means the multiplicity in the automorphic discrete spectrum, and $\theta^{abs}(\pi)$ means the abstract theta lift of $\pi$ to $H$ (i.e. restricted tensor product of the local theta lift at each local place). Combining this with some knowledge on the local theta lift, they proved the Arthur's multiplicity formula for the tempered part of the automorphic discrete spectrum of $Mp(2n)$.\\

In our previous paper \cite{CZ2020AMFPIF}, we put $G$ to be an even orthogonal or unitary group (not necessarily quasi-split), and $H$ be a symplectic or quasi-split unitary group according to $G$. We established the same results as in \cite{MR3866889}. Besides, we also observed that the multiplicity preservation (i.e. equality (\ref{Intro.Multi.Preserve})) also holds if the Witt index of $G$ is less than or equal to one. Hence we obtained a description for the full automorphic discrete spectra of those even orthogonal or unitary groups, by ``pulling back'' the Arthur's multiplicity formula for $H$ to $G$ through the theta lift. For each local place of $F$, we defined the so-called ``$\theta$-packets'' of $G$ to be the theta lift of certain $A$-packets of $H$ (see Section \ref{Def.Theta-Pack}), and ``glue'' them together to get some global packets; we showed that the automorphic discrete spectrum of $G$ can be decomposed according to these global packets (see Theorem \ref{AMF.Anisotropic}). We should emphasize here that, essentially our results in this paper are independent of the results in \cite{CZ2020AMFPIF}; we use \cite{CZ2020AMFPIF} only as one of the motivations for the results shown in this paper.\\

Notice that at each place of $F$, the definition of the ``$\theta$-packets'' is purely local; also, although we assumed that the Witt index of $G$ is less or equal to one, the localization of $G$ at local places could be quite general. It makes sense to ask: are these ``$\theta$-packets'' dependent on the choice of $H$? What are these ``$\theta$-packets''? It turns out that, at least at non-Archimedean places, $\theta$-packets are independent of the choice of $H$; moreover, at those places where our $G$ is quasi-split, we show that the $\theta$-packets are the same as the $A$-packets defined by Arthur and Mok. These are the main results in this paper. Indeed, these questions are already asked in the study of local theta correspondence, known as the ``Adams conjecture''.

\subsection{Adams conjecture}
Now let $F$ be a local field of characteristic $0$. In his paper \cite{MR1021501} Section 4, Adams proposed the following conjecture, which describes the local theta lift in terms of $A$-parameters:
\begin{conjecture}\label{Intro.Adams.Conjecture}
Let $(G,H)$ be a reductive dual pair, such that the dimension of the standard representation of $\Lgp G$ is not greater than that of $\Lgp H$. Let $\psi$ be a local $A$-parameter for $G$. Then one can write down a local $A$-parameter $\theta(\psi)$ for $H$ in terms of $\psi$ explicitly, such that
\begin{enumerate}
  \item[(A)] For an irreducible representation $\pi\in\Pi_\psi^A(G)$, its theta lift $\theta(\pi)$ lies in the corresponding local $A$-packet $\Pi_{\theta(\psi)}^A(H)$ if it is non-zero;
  \item[(B)] If we suppose further that $(G,H)$ is in the stable range, then the theta lift between $(G',H)$ provides us a bijection
    \[
      \theta:\bigsqcup\Pi_\psi^A(G')\lra\Pi_{\theta(\psi)}^A(H),
    \]
    where $G'$ runs over all pure inner forms of $G$.
\end{enumerate}
\end{conjecture}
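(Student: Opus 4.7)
The plan is to proceed by an explicit construction of both sides of the alleged correspondence, combined with a reduction from arbitrary $A$-parameters to tempered ones via parabolic induction. For part (A), I would use the Mœglin (and Mœglin--Renard) recipe, which describes each constituent of $\Pi_\psi^A(G)$ as a well-defined subquotient of a standard module obtained by parabolically inducing a tempered representation attached to the "bad" part of $\psi$ from a Levi whose complement encodes the Speh-type pieces. On the theta side, Kudla's filtration of the Jacquet modules of the Weil representation (combined with Rallis' tower argument and Mœglin's analogue of the Kudla--Rallis first occurrence principle) computes the theta lift of a parabolically induced representation in terms of the theta lifts of its inducing data. The goal is then to match these two computations with the combinatorial formula for $\theta(\psi)$ in terms of the Jordan blocks of $\psi$.

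Once the inductive step is in place, the problem reduces to verifying (A) for tempered or discrete $A$-parameters $\psi$, where $\Pi_\psi^A(G)$ coincides with the $L$-packet. Here I would argue by a see-saw identity: choose auxiliary dual pairs realizing the smaller Jordan blocks of $\psi$ as theta lifts from small anisotropic groups, and transfer the known endoscopic character identities on $H$ (after Arthur and Mok) across the see-saw to $G$. For the labeling by characters of the component group, one needs to match the character $\chi_\pi$ of $S_\psi$ with the character $\chi_{\theta(\pi)}$ of $S_{\theta(\psi)}$; this is typically accessible via Prasad's epsilon-dichotomy-type conjectures, or (in the non-Archimedean case) via the Gan--Ichino/Atobe--Gan computations of theta lifts of tempered representations.

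For part (B), the stable range hypothesis is essential: by J.-S.~Li's non-vanishing theorem in the stable range, every irreducible representation of any pure inner form $G'$ of $G$ has non-zero theta lift to $H$, so the map
\[
\theta:\bigsqcup_{G'}\Pi_\psi^A(G')\lra\Pi_{\theta(\psi)}^A(H)
\]
is defined on the whole source by (A), and is injective because local theta lifts on distinct pure inner forms land in distinct irreducible representations of $H$ (Howe duality and the conservation relation of Sun--Zhu). Surjectivity would then be checked by a cardinality count: both sides should be in bijection with the characters of the component group $\mathcal{S}_\psi$, and the formula for $\theta(\psi)$ is arranged so that $\mathcal{S}_{\theta(\psi)}\cong\mathcal{S}_\psi$; the sum on the left is then exactly the union over pure inner forms of the subsets cut out by sign characters on the center of $\mathcal{S}_\psi$.

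The hardest step, in my view, is the inductive reduction in (A): for non-tempered $\psi$ the $A$-packet $\Pi_\psi^A(G)$ is \emph{not} the set of Langlands constituents of the induced standard module, but a subtler collection involving Aubert--Zelevinsky type operators, and controlling how these subquotients interact with Kudla's filtration is delicate. A secondary obstacle is the passage between pure inner forms: Howe duality (in the form needed to get injectivity across inner forms simultaneously) has been established case by case (Waldspurger, Gan--Takeda, Gan--Sun), and one has to check that the formulation used is uniform enough to support the bijection in (B), especially at ramified non-Archimedean places where $G$ is not quasi-split and the Arthur/Mok theory is not directly available on $G$ itself.
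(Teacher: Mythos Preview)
The most basic issue is that the statement you are trying to prove is a \emph{conjecture} in the paper, and the paper does not attempt to prove it as stated. In fact, the paper explicitly records that part (A) is \emph{false} in general: M{\oe}glin has produced counter-examples (see the discussion in the subsection ``Some results of M{\oe}glin''). So any argument purporting to establish (A) for arbitrary local $A$-parameters must fail; your inductive reduction via Kudla's filtration cannot succeed as written, because the conclusion is simply not true outside of special situations (e.g.\ under M{\oe}glin's technical hypotheses, which are automatic only in the stable range).

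Your sketch for (B) also contains a concrete error in the cardinality count. You assert that both sides are in bijection with $\widehat{\calS_\psi}$ and that $\calS_{\theta(\psi)}\cong\calS_\psi$. Neither is correct: the map $\calJ^A_{\scrW}:\Pi_\psi^A(G)\to\widehat{\overline{\calS_\psi}}$ is in general neither injective nor surjective (bijectivity is a special feature of, e.g., elementary parameters, cf.\ Lemma~\ref{surjectivity.ell.type}), and since $\theta(\psi)=\psi\chi_W^{-1}\chi_V+\chi_V\boxtimes S_{2r-2n+1}$ contains an extra irreducible summand, $\calS_{\theta(\psi)}$ is strictly larger than $\calS_\psi$ (there is only a natural embedding $\calS_\psi\hookrightarrow\calS_{\theta(\psi)}$). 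So the surjectivity argument as you describe it does not go through.

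What the paper actually proves is a refinement of (B) formulated in terms of $\theta$-packets (Theorem~\ref{Main.Theorem.Packets} and the diagram~(\ref{Diag-LocalCompare.Theta-Pack})), and its method is quite different from yours. Rather than a local, combinatorial comparison of M{\oe}glin's construction with Kudla's filtration, the paper relies predominantly on \emph{global} arguments: one globalizes the local data, invokes Arthur's multiplicity formula on the $H$-side, and uses J.-S.~Li's theory of low-rank representations to transport automorphy back to $G$. The bijection in Proposition~\ref{Count.Size.Theta.Packets} comes not from a cardinality count but from Howe duality plus Li's global classification of low-rank automorphic forms; the matching of labelings is obtained by computing normalized intertwining operators on both sides via the explicit equivariant maps of Section~\ref{Compatible.Theta.N.Ind} and comparing with the local intertwining relation (Theorems~\ref{LIR-B.Qusi.Split} and~\ref{LIR-B.PIF}). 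M{\oe}glin's explicit construction does enter, but only for \emph{quasi-split} groups, as an input (Theorem~\ref{Construction.bp.From.DDR}) to the descent-along-the-Witt-tower argument.
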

Our results in this paper can be more or less regarded as a refined version of Conjecture \ref{Intro.Adams.Conjecture} (B) here: we not only prove the bijection between packets as (multi) sets, we also show the consistency of ``labelings'', i.e. the characters of component groups attached to the representations inside the packets. However, in our results, we use the terminology ``$\theta$-packets'' rather than ``$A$-packets'', due to two reasons: 
\begin{itemize}
\item for quasi-split groups, our results imply the two terminologies are the same, so there is no harm; but for non quasi-split groups, at present $A$-packets (under the framework of Arthur, both locally and gloablly, cf. \cite{MR3135650} Chapter 9, or \cite{kaletha2014endoscopic} Chapter 1.6 and 1.7) are not avaliable, so we use $\theta$-packets for substitutions;
\item as explained in the last subsection, a motivation of studying this question is to obtain some results for non quasi-split groups from quasi-split groups via theta lifts; motivated by this purpose and taking the results of \cite{CZ2020AMFPIF} into account, it is natural to use the $\theta$-packets.
\end{itemize}
We should also mention to readers that M{\oe}glin has done many wonderful works on Conjeture \ref{Intro.Adams.Conjecture} in \cite{MR2906916}, based on her explicit construction of local $A$-packets for classical groups. Next we briefly recall her results.

\subsection{Some results of M{\oe}glin}
For a classical group $G$ over some $p$-adic field and a local $A$-parameter $\psi$ of $G$, M{\oe}glin has construct a packet $\Pi_\psi^M(G)$ consisting of irreducible unitary representations using the techniques of Jacquet modules. To be more precise, she defined a collection of unitary representations
\[
	\pi\left(\psi,\underline{t},\underline{\eta}\right),
\]
where the parameter $(\underline{t},\underline{\eta})$ runs over some set $\Sigma_\psi(G)$ (which can be writen down explicitly according to $\psi$). These representations are either zero or irreducible, and the packet $\Pi_\psi^M(G)$ is simply the collection of all non-zero guys. When $G$ is a symplectic group or quasi-split orthogonal group, it was proved in \cite{MR3679701} that $\Pi_\psi^M(G)=\Pi_\psi^A(G)$. To distinguish various notions, we shall call the packet $\Pi_\psi^M(G)$ ``$M$-packet''.\\

Now, let $(G,H)$ be an orthogonal-symplectic dual pair such that the dimension of the standard representation of $\Lgp G$ is not greater than that of $\Lgp H$, and $\psi$ be a local $A$-parameter for the group $G$. Then, for each $(\underline{t},\underline{\eta})\in\Sigma_\psi(G)$, she constructed a pair $(\wt{\underline{t}},\wt{\underline{\eta}})\in\Sigma_{\theta(\psi)}(H)$, where $\theta(\psi)$ is the local $A$-parameter for $H$ predicted by Conjecture \ref{Intro.Adams.Conjecture}. Under some technical assumptions on the local $A$-parameter $\psi$, she asserted that
\[
	\theta\left(\pi\left(\psi,\underline{t},\underline{\eta}\right)\right)=\pi\left(\theta(\psi),\wt{\underline{t}},\wt{\underline{\eta}}\right)
\]
if the LHS and the RHS are both non-zero (see \cite{MR2906916} 5.2 Th\'eor\`eme). She also showed some non-vanishing criterion for the representation $\pi\left(\psi,\underline{t},\underline{\eta}\right)$ along the proof. Hence in some sense, she has proved Conjecture \ref{Intro.Adams.Conjecture} (A) under her assumptions. Her method is purely local. To prove these results, she mastered the Kudla's filtration and Jacquet modules very carefully. Moreover, she pointed out that Conjecture \ref{Intro.Adams.Conjecture} (A) is generally not true by giving some counter-examples.\\

If we suppose further that $(G,H)$ is in the stable range, then M{\oe}glin's technical assumptions are automatically satisfied. From her results, one can easily check the following:
\begin{itemize}
	\item the assignment $(\underline{t},\underline{\eta})\mapsto(\wt{\underline{t}},\wt{\underline{\eta}})$ induces a bijection
		\[
			\theta:\bigsqcup\Sigma_\psi(G')\longrightarrow\Sigma_{\theta(\psi)}(H),
		\]
		where $G'$ runs over all pure inner forms of $G$;
	\item the representation $\pi\left(\psi,\underline{t},\underline{\eta}\right)$ is non-zero if and only if $\pi\left(\theta(\psi),\wt{\underline{t}},\wt{\underline{\eta}}\right)$ is non-zero.
\end{itemize}
It follows that Conjecture \ref{Intro.Adams.Conjecture} (B) holds (with ``$A$-packets'' replaced by ``$M$-packets'').\\

However, we still want to look for an independent proof of Conjecture \ref{Intro.Adams.Conjecture} (B), due to the following reasons:
\begin{itemize}
\item As we have explained, one of the motivations of studying this question is to obtain some results for non quasi-split groups from quasi-split groups via theta lifts. Hence we want to look for an approach which is free of using results from non quasi-split groups. 
\item Except for for the bijectivity, for our purpose, we also need to show the consistency of the ``labelings''. To pass from M{\oe}glin's parametrization to that of Arthur, one still needs to do some computions following \cite{MR3679701}.
\end{itemize}
In the next subsection, we briefly describe the idea of our approach.

\subsection{Idea of the proof}
Our idea of the proof is very simple: we use global methods as much as possible. For quasi-split classical groups, the Arthur's multiplicity formula implies that any localization of an irreducible unitary representation occuring in the automorphic discrete spectrum lies in a local $A$-packet. One can image that, if the Arthur's multiplicity formula has been established for all classical groups, then Conjecture \ref{Intro.Adams.Conjecture} (B) should simply follows from the combination of some easy computations at unramified places and the Arthur's multiplicity formula. From this point of view, many of our lemmas/ propositions in this paper indeed reduce to appropriately globalize a (local) representation. However, since the Arthur's multiplicity formula for general non quasi-split groups has not been established yet, we still need to appeal to M{\oe}glin's explicit constructions of $p$-adic local $A$-packets to deal with some cases. But we should emphasize that our approach only rely on her results for quasi-split groups. As for the ``labelings'', we shall use the intertwining relation to interpret the ``labelings'' of a local $A$-packet as some representation-theoretical quantities. Then we can compute the ``labelings'' using the same techniques as in \cite{MR3573972}.\\ 

For Archimedean places, we also expect that Conjecture \ref{Intro.Adams.Conjecture} (B) holds. Indeed, combining results in \cite{cossutta2009theta} and \cite{MR3947270}, one can conclude that Conjecture \ref{Intro.Adams.Conjecture} (B) holds for unitary dual-pairs when the $A$-parameter $\psi$ is Adams-Johnson. We will not consider Archimedean places in this paper.\\

Now we give a summary of the layout of this paper. We formulate the main theorems in Section 2, taking the chance to recall some preliminaries. After doing some preparation work in Section 3 and recalling some results from other papers that we will use in Section 4, we prove our first result (independence of $\theta$-packets as sets on the choice of some data used in their construction) in Section 5, and we also prove some complementary results in Section 6. Then in Section 7 we recall the local intertwining relation by Arthur, and state an alternative version of it. Finally in Section 8, we prove the local intertwining relation for non quasi-split groups using some techniques developed by Gan-Ichino, and finish the proof of our main results; after that, we briefly summarize some expected and known properties of $\theta$-packets. 

\section*{Acknowledgments}
We would like to thank our supervisor Wee Teck Gan for many useful advices. We would also like to give a special thanks to Wen-Wei Li, Colette M{\oe}glin, and Bin Xu for answering our naive questions. We thank Atsushi Ichino, Alberto M\'{\i}nguez, and Lei Zhang for giving us many useful suggestions. We also thank Caihua Luo and Chuijia Wang for helpful discussions. The second author is supported by an MOE Graduate Research Scholarship.

\section{Statement of main results}\label{Statements.Main.Results}
We first recall some notations from \cite{CZ2020AMFPIF}. Let $F$ be a local or global field, and $E$ be either $F$ or a quadratic field extension of $F$. Let
\[
  c=\begin{cases}
    \textit{the identity of }F\quad &\textit{if }E=F;\\
    \textit{the non-trivial element in }Gal(E/F)\quad &\textit{if }[E:F]=2.
  \end{cases}
\]
In the case $[E:F]=2$, we denote by $\omega_{E/F}$ the quadratic character of $F^\times$ (or $F^\times\backslash\AAA^\times$ if $F$ is global, and similarly in later paragraph) by class field theory, and we fix a trace zero element $\delta\in E^\times$. Let $V=V_{(n)}$ be a finite dimensional vector space over $E$ equipped with a non-degenerate Hermitian $c$-sesquilinear form
\[
  \langle\cdot,\cdot\rangle_V:V\times V\lra E.
\]
We consider the following three cases:
\[
  \begin{cases}
    \textit{Case $O$: } &\textit{$E=F$ and $\dim V=2n$};\\
    \textit{Case $U_0$: } &\textit{$[E:F]=2$ and $\dim V=2n$};\\
    \textit{Case $U_1$: } &\textit{$[E:F]=2$ and $\dim V=2n-1$}.\\
  \end{cases}
\]
where $n\geq 0$ is an integer (we require $n\geq 1$ in Case $U_1$). Sometimes when we want to deal with Case $U_0$ and Case $U_1$ at the same time, we shall simply write ``Case $U$''. Let $G=G(V)$ be the group of elements $g$ in $GL(V)$ such that
\[
  \langle gv,gw\rangle_V=\langle v,w\rangle_V\quad\textit{for }v,w\in V.
\]
If $n=0$, we interpret $G$ as the trivial group. In Case $O$, we let
\[
  \chi_V:F^\times\lra\CC^\times\quad\left(\textit{or}\quad\chi_V:F^\times\backslash\AAA^\times\lra\CC\textit{ if $F$ is global }\right)
\]
be the quadratic character associated to the discriminant of $V$ by class field theory. We set
\[
  \varepsilon_0=\begin{cases}
              1\quad&\textit{Case $O$};\\
              0\quad&\textit{Case $U$}.
          \end{cases}
\]
All pure inner forms of $G=G(V)$ arise in the form $G'=G(V')$ for some space $V'$. When $F$ is a local field, all these spaces $V'$ are classified by some invariants. We briefly describe this classification.

\noindent\underline{\textit{When $F$ is non-Archimedean:}}
\begin{itemize}
\item In Case $O$, these $V'$ are orthogonal spaces with the same dimension and discriminant as $V$. There are exactly two of these spaces, distinguished by their (normalized) Hasse-Witt invariant $\epsilon(V)$ (cf. \cite{MR770063} page 80--81). We shall denote by $V^+$ the one with Hasse-Witt invariant $+1$, and by $V^-$ the one with Hasse-Witt invariant $-1$. Since $V^+$ has the maximal possible Witt index, $V^+$ must be isometric to
\[
  V^+\simeq V_{(d,c)}+\calH^{n-1}
\]
for some $d,c\in F$, where 
\[
  V_{(d,c)}=F[X]/(X^2-d)
\]
is an $2$-dimensional vector space over $F$ equipped with the quadratic form
\[
  a+bX\longmapsto c\cdot(a^2-b^2d),
\]
and $\calH$ is the (orthogonal) hyperbolic plane. We fix such a tuple $(d,c)$ and the isometry, and we shall say that $V^+$ is of type $(d,c)$. Notice that the choice of the tuple $(d,c)$ is not unique.
\item In Case $U$, these $V'$ are Hermitian spaces with the same dimension as $V$. There are also exactly two of these spaces, distinguished by their sign $\epsilon(V)=\omega_{E/F}(\disc V)$. We shall denote by $V^+$ the one with sign $+1$, and by $V^-$ the one with sign $-1$.
\end{itemize}
\underline{\textit{When $F$ is real:}}
\begin{itemize}
\item In this situation, such spaces $V'$ are classified by their signatures $(p,q)$ (satisfying certain conditions). Similar to the non-Archimedean case, in Case $O$, we shall denote by $V^+$ the space with the same dimension, same discriminant as $V$ and with Hasse-Witt invariant $+1$, such that $G(V^+)$ is a quasi-split pure inner form of $G$; and in Case $U$, we shall denote by $V^+$ the space with the same dimension as $V$ and with sign $+1$, such that $G(V^+)$ is a quasi-split pure inner form of $G$.
\end{itemize}
\underline{\textit{When $F$ is complex:}}
\begin{itemize}
\item There is only one such space up to isometry with given dimension, and we shall denote it by $V^+$.
\end{itemize}
When $F$ is a global field, the local-global principle for orthogonal or Hermitian spaces implies that, whenever we are given a collection of local spaces $\{V'_v\}_v$ for all places $v$ of $F$, as long as these local spaces satisfy some ``coherent'' conditions, there will be a space $V'$ over $F$, such that the localization of $V'$ at each place $v$ is isometry to $V'_v$ (see \cite{MR770063} page 225 Theorem 6.10, or page 377 Theorem 6.9). Given $V$ and $G=G(V)$, we let $V^+$ be the space such that for each place $v$ of $F$, $V_v^+$ is (isometry to) the space we have defined in the local situations, i.e. $(V^+)_v\simeq (V_v)^+$.\\

In all cases above, $G^*=G(V^+)$ is quasi-split, and we shall refer it as the quasi-split pure inner form of $G$.
\begin{convention}
In later proofs of our results, we will often use the Arthur's multiplicity formula for quasi-split classical groups. When we say something like ``$V$ is a space such that $G=G(V)$ is quasi-split'', this should be understood as $V=V^+$, and $G$ is the quasi-split pure inner form of itself.
\end{convention}

Let $W=W_{(r)}$ be an 
\[
  \begin{cases}
    2r\textit{-dimensional}\quad &\textit{Case $O$};\\
    (2r+1)\textit{-dimensional}\quad &\textit{Case $U_0$};\\
    2r\textit{-dimensional}\quad &\textit{Case $U_1$}\\
  \end{cases}
\]
vector space over $E$ equipped with a non-degenerate skew-Hermitian $c$-sesquilinear form
\[
  \langle\cdot,\cdot\rangle_W:W\times W\lra E,
\]
such that $W$ is split (in Case $U_0$ we require that the anisotropic kernel of $W$ is the $1$-dimensional skew-Hermitian space represented by $\delta$). Let $H=H(W)$ be the group of elements $h$ in $GL(W)$ such that
\[
  \langle hv,hw\rangle_W=\langle v,w\rangle_W\quad\textit{for }v,w\in W.
\]
The pair $(G,H)$ is then an example of a reductive dual-pair. When $F$ is a local field, we fix a non-trivial additive character $\psi_F$ of $F$, and pick up a pair of characters $(\chi_V,\chi_W)$ of $E^\times$ as follows
\[
  \chi_V=\begin{cases}
    \textit{the quadratic character associated to }V\quad &\textit{Case $O$};\\
    \textit{a character of }E^\times\textit{ such that }\chi_V|_{F^\times}=\omega_{E/F}^{\dim V}\quad &\textit{Case $U$}.
  \end{cases}
\]
\[
  \chi_W=\begin{cases}
    \textit{the trivial character of }F^\times\quad &\textit{Case $O$};\\
    \textit{a character of }E^\times\textit{ such that }\chi_W|_{F^\times}=\omega_{E/F}^{\dim W}\quad &\textit{Case $U$}.
  \end{cases}
\]
When $F$ is a global field, we fix a non-trivial additive character $\psi_F$ of $F\backslash\AAA$, and also characters $(\chi_V,\chi_W)$ of $E^\times\backslash\AAA_E^\times$ similar to the local case. With respect to this tuple of auxiliary data $(\psi_F,\chi_V,\chi_W)$, one can consider the theta lift between $(G,H)$.

\subsection{Theta lifts}
Assume $F$ is local for a moment. With respect to the non-trivial additive character $\psi_F$ of $F$ and the auxliary data $(\chi_V,\chi_W)$, one can define the Weil representation $\omega$ of $G\times H$. For any irreducible representation $\pi$ of $G$, the maximal $\pi$-isotypic quotient of $\omega$ is of the form 
\begin{equation*}
\pi\boxtimes\Theta(\pi)
\end{equation*}
for some smooth representation $\Theta(\pi)$ of $H$ of finite length. Then by the Howe duality \cite{MR985172}, \cite{MR1159105}, \cite{MR3502978}, \cite{MR3454380}, the maximal semi-simple quotient $\theta(\pi)$ of $\Theta(\pi)$ is either zero or irreducible. Similarly, for any irreducible representation $\sigma$ of $H$, we can define $\Theta(\sigma)$ and $\theta(\sigma)$.\\

Suppose next that $F$ is a number field. Fix a non-trivial additive character $\psi_F$ of $F\backslash\AAA$, and also characters $(\chi_V,\chi_W)$. Let $\pi=\otimes_v\pi_v$ be an abstract irreducible representation of $G(\AAA)$ (i.e. a collection of local irreducible representations $\pi_v$ of $G(F_v)$ for all places $v$ of $F$, such that $\pi_v$ is unramified for almost all $v$). At each place $v$ of $F$, we can form the local theta lift $\theta(\pi_v)$ with respect to $(\psi_{F,v},\chi_{V,v},\chi_{W,v})$. Assume that they are all non-vanishing. Then $\theta(\pi_v)$ is irreducible for all $v$ and is unramified for almost all $v$. Hence we may define an abstract irreducible representation
\begin{equation*}
\theta^{abs}(\pi)=\bigotimes_v\theta(\pi_v)
\end{equation*}
of $H(\AAA)$. We call $\theta^{abs}(\pi)$ the abstract theta lift of $\pi$ to $H(\AAA)$.

\subsection{Unitary representations of low rank}\label{J-S.Li.Low.rk}
The notion of rank for unitary representations was first introduced by Howe \cite{MR777342} in the case of symplectic groups and was extended to the case of classical groups by J-S. Li \cite{MR1008803}. Following \cite{MR1008803}, we say that an irreducible unitary representation of $H=H\left(W_{(r)}\right)$ is of low rank if its rank is less than $r$. Such representations are obtained by theta lifts as follows.\\

Let $F$ be a local field. Assume $\dim V<r$. In particular, the reductive dual pair $(G,H)$ is in the stable range. Then for any irreducible representation $\pi$ of $G$, its theta lift $\theta(\pi)$ to $H$ is non-vanishing. Moreover, if $\pi$ is unitary, then by \cite{MR1001840}, so is $\theta(\pi)$. In \cite{MR1008803}, J-S. Li showed that:
\begin{theorem}\label{J-S.Li.Low.rk.Local}
The theta lift provides a bijection
\begin{equation*}
\begin{array}{c}
\displaystyle{\bigsqcup_V}\Irr_{unit}G(V)\times\Big\{\textit{Characters of }E^1\Big\}\\
\Bigg\updownarrow\\
\Big\{\textit{Irreducible unitary representations of }H\textit{ of rank }\dim V\Big\}.
\end{array}
\end{equation*}
where the disjoint union runs over all vector space $V$ over $E$ with fixed dimension, and equipped with a non-degenerate Hermitian $c$-sesquilinear form (in Case $O$ we interpret $E^1$ as the trivial group). The map sends a pair $(\pi,\chi)$ in the first set to a representation $\theta(\pi)\otimes\chi$ of $H$, where we regard $\chi$ as a character of $H$ via the determinant map.
\end{theorem}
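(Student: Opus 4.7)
My plan is to follow J-S. Li's original strategy in \cite{MR1008803}: analyze irreducible unitary representations of $H$ via their restriction to a Heisenberg-type subgroup, and match them against theta lifts using Mackey theory and the Stone--von Neumann theorem. Let $P=L\cdot N$ be the parabolic subgroup of $H$ stabilizing a fixed $k$-dimensional isotropic subspace $W_0\subset W$, where $k=\dim V<r$. Then $L$ has a $GL(W_0)$ factor, and the unipotent radical $N$ is two-step nilpotent with center $Z$ canonically identified with the space of Hermitian forms on $W_0^*$; via the fixed additive character $\psi_F$, the unitary dual of $Z$ is identified with the same space, naturally stratified by rank.

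\textbf{Forward direction.} First I would verify that $(\pi,\chi)\mapsto \theta(\pi)\otimes\chi$ lands in the set of rank-$k$ representations. Using a mixed model of the Weil representation $\omega$ of $G(V)\times H$, the restriction $\omega|_Z$ is supported on the $GL(W_0)$-orbit of the Hermitian form defining $V$; every form in this orbit has rank exactly $k$. Extracting the $\pi$-isotypic quotient preserves this support, and twisting by $\chi\circ\det$ does not alter it. Unitarity of $\theta(\pi)$ in the stable range is \cite{MR1001840}, and irreducibility follows from Howe duality.

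\textbf{Reverse direction.} Let $\sigma$ be an irreducible unitary representation of $H$ of rank exactly $k$. By definition of rank, $\sigma|_Z$ is supported on rank-$k$ Hermitian forms, i.e.\ on non-degenerate forms viewed on a $k$-dimensional quotient of $W_0^*$. Isometry classes of such forms correspond bijectively to the spaces $V'$ appearing in the disjoint union, and each such isometry class constitutes a single $GL(W_0)$-orbit. Choose an orbit in the support of $\sigma|_Z$, corresponding to some space $V'$; Mackey theory and Stone--von Neumann then show that $\sigma$ is induced from the stabilizer of this orbit, and that the restriction of $\sigma$ to this stabilizer takes the form $\omega_{V'}\boxtimes\tau$, where $\tau$ is an irreducible unitary representation of the stabilizer in $L$. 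This stabilizer factors (up to a finite abelian quotient) as $G(V')\times E^1$ in Case $U$, producing a pair $(\pi,\chi)$; in Case $O$ the $E^1$-factor disappears. Unwinding the construction identifies $\sigma\cong \theta(\pi)\otimes\chi$, and injectivity is automatic since $V'$ is recovered from the support of $\sigma|_Z$, while $(\pi,\chi)$ is recovered from the multiplicity space.

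The main technical difficulty I anticipate is the careful tracking of the various splittings of the metaplectic cover and of the normalizing data $(\psi_F,\chi_V,\chi_W)$, in particular verifying in Case $U$ that the residual ambiguity in the stabilizer's additional $E^1$-factor matches exactly the $E^1$-character twist on the representation-theoretic side. One must also check that the rank is preserved in the appropriate sense under restriction, induction, and twisting, and that theta lifts are non-vanishing throughout the stable range so that the map is well-defined on the entire source.
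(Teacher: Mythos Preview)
The paper does not give its own proof of this theorem; it is quoted as a result of J-S.~Li with a reference to \cite{MR1008803}. Your sketch follows J-S.~Li's original strategy (restriction to the center of the unipotent radical, rank stratification of the spectrum, Mackey/Stone--von~Neumann analysis of the orbit picture), and the outline is essentially correct, so there is nothing to compare.
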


This result has a global analog. Let $F$ be a number field and $\sigma=\otimes_v\sigma_v$ an irreducible unitary representation of $H(\AAA)$ which occurs as a subrepresentation of $\calA(H)$, where $\calA(H)$ is the space of automorphic forms of $H$. Then, by \cite{MR1001840} and \cite{MR1448215}, we have: 
\begin{theorem}\label{J-S.Li.Low.rk.Global}
\begin{enumerate}
\item The following are equivalent:
        \begin{itemize}
            \item $\sigma$ is of rank $\dim V$;
            \item $\sigma_v$ is of rank $\dim V$ for all $v$;
            \item $\sigma_v$ is of rank $\dim V$ for some $v$.
        \end{itemize}
\item Suppose that $\sigma$ satisfies the above equivalent conditions. Then, there exists an unique $G=G(V)$ together with an abstract representation $\pi=\otimes_v\pi_v$ of $G(\AAA)$, and an automorphic character $\chi$ of $E^1(\AAA)$, such that
\begin{equation*}
\sigma\simeq\theta^{abs}(\pi)\otimes\chi.
\end{equation*}
\end{enumerate}
\end{theorem}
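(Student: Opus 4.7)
The plan is to deduce both statements from the local bijection Theorem \ref{J-S.Li.Low.rk.Local} together with J-S. Li's global arguments in \cite{MR1448215}, with the rank theory of Howe \cite{MR777342} and Li \cite{MR1008803} providing the bridge between local and global.

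For part (1), I would characterize the rank both locally and globally in terms of the support of Fourier coefficients along the unipotent radical $N$ of a Siegel parabolic of $H$. Globally, the rank of $\sigma$ is the maximal rank of a character of $N(F)\backslash N(\AAA)$ whose Fourier coefficient is non-trivial on $\sigma$; locally, the rank of $\sigma_v$ is read off the wavefront set of $\sigma_v$ / the support of $N(F_v)$-Fourier coefficients of matrix coefficients. Non-vanishing of a global Fourier coefficient forces non-vanishing of its analog at every place, while vanishing at a single place forces global vanishing. This gives the chain of equivalences: global rank equals $\dim V$ iff $\sigma_v$ has local rank $\dim V$ for every $v$ iff for some $v$. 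The ``for some'' direction uses the fact that in the relevant unitary dual (of low rank representations), rank is rigid along the restricted tensor product and cannot jump between places.

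For part (2), granting (1), every component $\sigma_v$ has local rank $\dim V$. Theorem \ref{J-S.Li.Low.rk.Local} then produces for each $v$ a uniquely determined triple $(V_v,\pi_v,\chi_v)$, consisting of a Hermitian (or quadratic) $E_v$-space $V_v$ of dimension $\dim V$, an irreducible unitary representation $\pi_v$ of $G(V_v)$, and a character $\chi_v$ of $E^1_v$, such that $\sigma_v \simeq \theta(\pi_v)\otimes\chi_v$. At almost all places $\sigma_v$ is unramified, and an explicit computation of unramified theta lifts (via the Satake parameters) forces $V_v$ to be the split space and $\pi_v$, $\chi_v$ to be unramified. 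Thus the data $(V_v,\pi_v,\chi_v)$ is coherent in the restricted sense needed to form an abstract representation.

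The heart of the matter, and the step I expect to be the main obstacle, is the global coherence of the local spaces $V_v$: they must be the localizations of a single global space $V$ over $F$. By the local-global principle for Hermitian (or quadratic) spaces, this reduces to checking that the local invariants (Hasse-Witt invariant in Case $O$, sign in Case $U$) satisfy the product formula. Here the automorphy of $\sigma$ is indispensable: following \cite{MR1448215}, one uses the Rallis tower together with (a special case of) the Siegel-Weil formula to show that an automorphic representation of rank exactly $\dim V$ must itself appear as a global theta lift from $G(V)$ for a \emph{coherent} global pair. The same automorphy argument patches the characters $\chi_v$ to an automorphic character $\chi$ of $E^1(F)\backslash E^1(\AAA)$. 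Setting $\pi=\otimes_v \pi_v$ then yields $\sigma \simeq \theta^{abs}(\pi)\otimes\chi$, and uniqueness of the global $V$ is inherited from the local uniqueness in Theorem \ref{J-S.Li.Low.rk.Local}.
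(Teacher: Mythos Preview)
The paper does not give its own proof of this theorem: it is stated as a known result, attributed to J-S.~Li via the citations \cite{MR1001840} and \cite{MR1448215} immediately preceding the statement. So there is no in-paper argument to compare against; your proposal is really a sketch of what those cited papers contain.

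Your outline is broadly faithful to Li's approach, particularly the use of Fourier coefficients along the Siegel unipotent radical to control rank both locally and globally, and the reduction of global coherence of the $V_v$ to the product formula for the relevant local invariants. One point worth flagging: the mechanism you propose for establishing global coherence --- ``the Rallis tower together with (a special case of) the Siegel-Weil formula'' --- is not quite how Li argues in \cite{MR1448215}. His proof works more directly with the Fourier--Jacobi expansion of an automorphic form on $H$ and identifies the space $V$ (and the representation $\pi$) from the nonvanishing Fourier coefficients themselves, without passing through Siegel--Weil. The Siegel--Weil formula governs a different (though related) question, namely the relation between theta integrals and Eisenstein series. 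Your sketch would still assemble into a correct argument, but if you want to match the cited source, the global step is a Fourier-coefficient argument rather than a Siegel--Weil one.
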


Finally, we recall another result of J-S. Li, which allows us to lift square-integrable automorphic representations of $G(\AAA)$ to $H(\AAA)$. For any irreducible representation $\pi$ of $G(\AAA)$, we define its multiplicities $m(\pi)$ and $m_{disc}(\pi)$ by
\begin{align*}
m(\pi)&=\dim\Hom_{G(\AAA)}\big(\pi,\calA(G)\big);\\
m_{disc}(\pi)&=\dim\Hom_{G(\AAA)}\big(\pi,\calA^2(G)\big),
\end{align*}
where $\calA^2(G)=\calA(G)\cap L^2_{disc}(G)$. Obviously, $m_{disc}(\pi)\leq m(\pi)$. Likewise, if $\sigma$ is an irreducible representation of $H(\AAA)$, we have its multiplicities $m(\sigma)$ and $m_{disc}(\sigma)$. By \cite{MR1448215}, we have
\begin{theorem}
Assume that $\dim V<r$. Let $\pi$ be an irreducible unitary representation of $G(\AAA)$ and $\theta^{abs}(\pi)$ its abstract theta lift to $H(\AAA)$. Then we have
\begin{equation*}
m_{disc}(\pi)\leq m_{disc}(\theta^{abs}(\pi))\leq m(\theta^{abs}(\pi))\leq m(\pi).
\end{equation*}
\end{theorem}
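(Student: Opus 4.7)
The middle inequality $m_{disc}(\theta^{abs}(\pi))\leq m(\theta^{abs}(\pi))$ is tautological since $\calA^2(H)\subset\calA(H)$, so the real content lies in the two outer inequalities. Both proceed by producing independent (multi-)embeddings of one representation into the other using the global theta integral. The central analytic input, due to J.-S.~Li, is that in the stable range the Weil representation decays sufficiently rapidly on $G(\AAA)$ that, for any $\phi\in\calA^2(G)$ and any Schwartz $f$ in the global Weil representation $\omega=\omega_{\psi_F,\chi_V,\chi_W}$, the integral
\[
\theta(\phi,f)(h)\defeq\int_{G(F)\backslash G(\AAA)}\overline{\theta(g,h;f)}\,\phi(g)\,dg
\]
converges absolutely and defines an element of $\calA^2(H)$.

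For the first inequality $m_{disc}(\pi)\leq m_{disc}(\theta^{abs}(\pi))$, I would fix a basis $\iota_1,\dots,\iota_k$ of $\Hom_{G(\AAA)}(\pi,\calA^2(G))$ with $k=m_{disc}(\pi)$. Each $\iota_i$ gives a $G(\AAA)\times H(\AAA)$-equivariant map $\pi\otimes\omega\to\calA^2(H)$ via $(\phi,f)\mapsto\theta(\iota_i(\phi),f)$. Locally, the maximal $\pi_v$-isotypic quotient of $\omega_v$ is $\pi_v\boxtimes\Theta(\pi_v)$, which admits $\pi_v\boxtimes\theta(\pi_v)$ as an irreducible quotient (using $\theta(\pi_v)\neq 0$ in the stable range). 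Taking isotypic quotients at every place shows that each equivariant map descends to a map $T_i:\theta^{abs}(\pi)\to\calA^2(H)$. Linear independence of the $T_i$ (and their non-vanishing) follows from the Rallis inner product formula: the $L^2$-pairing $\langle T_i(\psi),T_j(\psi)\rangle_{L^2(H)}$ factors as a period detecting $\langle\iota_i,\iota_j\rangle$ times an explicit product of local zeta integrals that are non-zero in the stable range.

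For the third inequality $m(\theta^{abs}(\pi))\leq m(\pi)$, I would run the analogous construction in reverse, fixing a basis $j_1,\dots,j_\ell$ of $\Hom_{H(\AAA)}(\theta^{abs}(\pi),\calA(H))$ and attempting to produce $\ell$ independent elements of $\Hom_{G(\AAA)}(\pi,\calA(G))$ via a ``backwards'' theta integral $\psi\otimes f\mapsto\int_{H(F)\backslash H(\AAA)}\theta(g,h;f)\,\overline{j_i(\psi)(h)}\,dh$. The standard Howe-duality argument then recovers the $\pi_v$ component at each place from $\theta(\pi_v)$, and the Rallis inner product formula once again converts non-vanishing of the resulting $G$-period into non-vanishing (and independence) of the embeddings. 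Alternatively, one can bypass the construction of the backwards map entirely and argue by \emph{duality}: combining the forward map with the fact that the global Weil representation couples $\pi\otimes\theta^{abs}(\pi)$ into $\calA(G(\AAA)\times H(\AAA))$, one obtains a perfect pairing between $\Hom(\pi,\calA(G))$ and $\Hom(\theta^{abs}(\pi),\calA(H))$ after tensoring with an appropriate piece of $\omega$.

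The main obstacle in the third inequality is analytic: the automorphic realizations of $\theta^{abs}(\pi)$ on $H(\AAA)$ need not be cuspidal, nor even square-integrable, so the backwards theta integral does not converge absolutely in the naive sense. The resolution, which is specific to the stable range, is to truncate on $H$ (or, equivalently, to filter by cuspidal support) and to use the rapid decay of the Weil representation along the ``large'' unipotent directions of $H$ to control the tail — only finitely many cuspidal-support strata contribute, and the contributions can be evaluated by the doubling method / Rallis inner product formula. Once convergence is secured, preservation of multiplicities follows formally from the Howe duality and the surjectivity of the local theta correspondence in the stable range.
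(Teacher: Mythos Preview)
The paper does not prove this theorem; it simply records it as a citation to J.-S.~Li \cite{MR1448215}. So there is no ``paper's proof'' to compare against, and your proposal should be judged against Li's actual argument.

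Your treatment of the first inequality is essentially correct in outline: the stable-range decay estimate does guarantee that the global theta integral carries $\calA^2(G)$ into $\calA^2(H)$, and independent embeddings of $\pi$ produce independent embeddings of $\theta^{abs}(\pi)$. (Invoking the full Rallis inner product formula is heavier machinery than Li uses; non-vanishing and injectivity follow more directly from the see-saw identity and the fact that the doubling integral computes the $L^2$-inner product.)

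The third inequality is where your sketch has a genuine gap. The backwards theta integral you write down, $\int_{[H]}\theta(g,h;f)\,\overline{j_i(\psi)(h)}\,dh$, is not how Li proceeds, and your proposed fix by truncation and ``rapid decay along large unipotent directions'' is not the mechanism that makes the argument work. Li's approach is instead through \emph{Fourier--Jacobi coefficients}: given an automorphic realization $\calV\subset\calA(H)$ of $\theta^{abs}(\pi)$, one expands each $F\in\calV$ along the unipotent radical of the Siegel parabolic of $H$. The low-rank hypothesis (which $\theta^{abs}(\pi)$ satisfies by construction) means precisely that only Fourier coefficients of rank $\le\dim V$ survive. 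The rank-$\dim V$ Fourier--Jacobi coefficient is then an automorphic form on (a Jacobi cover of) $G$, and the map $F\mapsto(\textit{its Fourier--Jacobi coefficient})$ produces an embedding $\pi\hookrightarrow\calA(G)$. This construction is purely algebraic once the Fourier expansion is written down --- no regularization or truncation is required, and convergence is a non-issue because one is integrating over a compact abelian quotient rather than over $[H]$. Injectivity of $\calV\mapsto(\textit{resulting embedding of }\pi)$ then comes from the fact that a low-rank automorphic form is determined by its top Fourier--Jacobi coefficients.
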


\subsection{Local and global classifications}
We briefly recall some terminologies and results from \cite{MR3135650} (also \cite{MR3708200}), \cite{MR3338302}, and some other papers.\\

First let $F$ be a local field of characteristic $0$. A local $A$-parameter for the group $G$ is a homomorphism
\[
  \psi: L_F\times SL_2(\CC)\lra \Lgp{G},
\]
where $L_F$ is the Weil-Deligne group of $F$. If there is no further explanations, we will assume that the image of the Weil group under a local $A$-parameter is bounded by default. By composing this homomorphism with the standard representation of $\Lgp{G}$, we can regard a local $A$-parameter as a (conjugate) self-dual representation of $L_E\times SL_2(\CC)$ with certain parity. We denote by $\Psi(G)$ the set of local $A$-parameters for $G$. Following Arthur, we define
\begin{align*}
S_\psi&=Cent(\im\psi,\wh{G}),\\
\calS_\psi&=\pi_0(S_\psi),\\
\overline{\calS_\psi}&=\calS_\psi/\langle z_\psi\rangle,
\end{align*}
where $z_\psi$ is the image of $-I\in \widehat{G}$ in $\calS_\psi$. We shall call $\calS_\psi$ or $\overline{\calS_\psi}$ the component group associated to the local $A$-parameter $\psi$. If we write $\psi$ as
\[
  \psi=\sum_{i\in I_\psi} m_i\cdot\psi_i,
\]
where each $\psi_i=\phi_i\boxtimes S_{d_i}$ is an irreducible representation of $L_E\times SL_2(\CC)$, and $I_\psi$ is the index set of this summation, then as explicated in \cite{MR3202556} Section 8, $\calS_\psi$ has an explicit description in the form
\[
  \calS_\psi=\prod_{i\in I'_\psi}\left(\ZZ/2\ZZ\right)a_i,
\]
where on the RHS, the product runs over the subset $I'_\psi$ of $I_\psi$ containing all $i\in I_\psi$ such that $\psi_i$ is of the same parity as $\psi$; each element in the canonical basis $\{a_i\}$ of $\calS_\psi$ corresponds to such a $\psi_i$. Under this identification, we have
\[
  z_\psi=\sum_{i\in I'_\psi}m_i\cdot a_i,
\]
and $\overline{\calS_\psi}=\calS_\psi/\langle z_\psi\rangle$; where again, the summation on the RHS runs over all $i$ such that $\psi_i$ is of the same parity as $\psi$.\\

When $V=V^+$, i.e. $G=G^*$ is quasi-split, thanks to Arthur and Mok, we can talk about the local $A$-packet $\Pi_\psi^A(G^*)$ associated to the $A$-parameter $\psi$: this is a finite (multi) set of irreducible unitary representations of $G^*$, together with a map to the Pontryagin dual of the component group
\[
  \calJ^A_\scrW:\Pi_\psi^A(G^*)\lra\widehat{\overline{\calS_\psi}}.
\]
This map depends on the choice of a Whittaker datum $\scrW$ of $G^*$. The local $A$-packet $\Pi_\psi^A(G^*)$ can be also regarded as a representation of $\overline{\calS_\psi}\times G^*$ by setting
\[
  \Pi_{\psi}^A(G^*)=\bigoplus_\pi \calJ^A_{\scrW}(\pi)\boxtimes\pi,
\]
where the summation on the RHS runs over all irreducible unitary representations of $G^*$ in $\Pi_{\psi}^A(G^*)$. Sometimes we shall adopt this point of view without any further explanation.\\

Similarly, one can define the local $A$-parameter for the group $H$, and for a local $A$-parameter $\psi_H$ of $H$, one can define and describe the component group $\overline{\calS_{\psi_H}}$ in the same manner. Again, according to Arthur and Mok's works, we can talk about the local $A$-packet $\Pi_{\psi_H}^A(H)$ associated to the $A$-parameter $\psi_H$, which can be regarded as a representation of $\overline{\calS_{\psi_H}}\times H$ by setting
\[
  \Pi_{\psi_H}^A(H)=\bigoplus_\sigma \calJ^A_{\scrW'}(\sigma)\boxtimes\sigma,
\]
where $\scrW'$ is a Whittaker datum of $H$, and the summation on the RHS runs over all irreducible unitary representations of $H$ in $\Pi_{\psi_H}^A(H)$.\\

Now we turn to the global classifications. Let $F$ be a number field. Two irreducible representations $\pi=\otimes_v\pi_v$ and $\pi'=\otimes_v\pi'_v$ of $G(\AAA)$ are said to be nearly equivalent if $\pi_v$ and $\pi'_v$ are equivalent for almost all places $v$ of $F$. The decomposition of automorphic discrete spectrum of $G$ into near equivalence classes will be expressed in terms of elliptic $A$-parameters. Recall that an elliptic $A$-parameter for $G$ is nothing but a formal finite sum 
\begin{equation}\label{paradecomp}
\psi=\sum_i\rho_i\boxtimes S_{d_i},
\end{equation}
where
\begin{itemize}
\item $\rho_i$ is an irreducible (conjugate) self-dual cuspidal automorphic representation of $GL_{n_i}(\AAA_E)$;
\item $S_{d_i}$ is the $d_i$-dimensional irreducible representation of $SL_2(\CC)$;
\item $\sum_i n_id_i=\dim V$;
\item If $d_i$ is odd, then $\rho_i$ is 
  \[
    \begin{cases}
      \textit{orthogonal}\quad &\textit{Case $O$};\\
      \textit{conjugate symplectic }\quad &\textit{Case $U_0$};\\
      \textit{conjugate orthogonal }\quad &\textit{Case $U_1$}.
    \end{cases}
  \]
\item If $d_i$ is even, then $\rho_i$ is 
  \[
    \begin{cases}
      \textit{symplectic}\quad &\textit{Case $O$};\\
      \textit{conjugate orthogonal }\quad &\textit{Case $U_0$};\\
      \textit{conjugate symplectic }\quad &\textit{Case $U_1$}.
    \end{cases}
  \]
\item If $(\rho_i,d_i)=(\rho_j,d_j)$, then $i=j$;
\item In Case $O$, if we denote the central character of $\rho_i$ by $\omega_i$, then
  \[
    \prod_i\omega_i^{d_i}=\chi_V.
  \]
\end{itemize}
If further $d_i=1$ for all $i$, then we say that $\psi$ is generic. We denote the set of all elliptic $A$-parameters by $\Psi_{ell}(G)$. For each place $v$ of $F$, let  
\begin{equation*}
\psi_v:L_{F_v}\times SL_2(\CC)\lra\Lgp{G_v}
\end{equation*}
be the localization of $\psi$ at $v$. Here
\begin{equation*}
L_{F_v}=\begin{cases}
\textit{the Weil group of }F_v\quad&\textit{if }v\textit{ is Archimedean};\\
\textit{the Weil-Deligne group of }F_v\quad&\textit{if }v\textit{ is non-Archimedean},\\
\end{cases}
\end{equation*}
and we use the local Langlands correspondence for the general linear groups. We associate to it an $L$-parameter $\phi_{\psi_v}:L_{F_v}\ra\Lgp{G_v}$ by
\begin{equation*}
\phi_{\psi_v}(w)=\psi_v\left(w,\left(\begin{array}{cc}
                                    {|w|^\frac{1}{2}} & {} \\
                                    {} & {|w|^{-\frac{1}{2}}}
                                 \end{array}\right)\right).
\end{equation*}
We have
\begin{theorem}\label{Decompose.NEC}
There exists a decomposition
\begin{equation*}
L^2_{disc}(G)=\bigoplus_{\psi\in\Psi_{ell}(G)} L^2_\psi(G),
\end{equation*}
where $L^2_\psi(G)$ is a full near equivalence class of irreducible representations $\pi$ in $L^2_{disc}(G)$ such that the $L$-parameter of $\pi_v$ is $\phi_{\psi_v}$ for almost all places $v$ of $F$.
\end{theorem}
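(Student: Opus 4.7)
The plan is to treat the quasi-split and non quasi-split cases separately. When $G=G(V^+)$ is quasi-split, the decomposition is an immediate consequence of the global classification theorems of Arthur \cite{MR3135650} in Case $O$ and Mok \cite{MR3338302} in Case $U$: the partition of $L^2_{disc}(G^*)$ into summands indexed by elliptic $A$-parameters $\psi$, together with the prescribed unramified property that $\phi_{\psi_v}$ is the Langlands parameter of $\pi_v$ at almost every $v$, is exactly the content of those theorems. So the real work is to reduce the general case to the quasi-split one.

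For non quasi-split $G$, I would transport the quasi-split decomposition along the theta correspondence. Fix a dual-pair partner $H=H(W)$ with $W$ split and of sufficiently large dimension that $(G,H)$ lies in the stable range (in particular $\dim V<r$); then $H$ is quasi-split, and Theorem \ref{J-S.Li.Low.rk.Global} of J-S.\ Li combined with the multiplicity preservation \eqref{Intro.Multi.Preserve} shows that the abstract theta lift $\theta^{abs}$ embeds the automorphic discrete spectrum of $G$ into that of $H$ while preserving near equivalence. Applying the decomposition for $H$ already provided by Arthur/Mok, each irreducible $\pi$ occurring in $L^2_{disc}(G)$ is sent into a unique near equivalence summand $L^2_{\psi_H}(H)$ of the discrete spectrum of $H$.

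The next step is to realize $\psi_H$ in the form $\theta(\psi)$ for a uniquely determined elliptic $A$-parameter $\psi$ of $G$, after which one may define $L^2_\psi(G)$ as the preimage under $\theta^{abs}$ of $L^2_{\theta(\psi)}(H)$. At every unramified place $v$ of $F$, an explicit Satake-parameter computation of the unramified theta lift identifies $\psi_{H,v}$ with the $A$-parameter of $G_v$ predicted by the Adams recipe; this identification, together with strong multiplicity one for general linear groups, both produces the required global $\psi$ and makes the assignment $\pi\mapsto\psi$ canonical. The condition that $\phi_{\psi_v}$ is the Langlands parameter of $\pi_v$ at almost every $v$ is then automatic from the quasi-split case applied at $H$ together with the unramified theta-lift computation.

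The main obstacle is verifying that the partition produced in this way is intrinsic to $G$ rather than an artefact of the choice of $H$. The recipe $\psi\mapsto\theta(\psi)$ only modifies the $SL_2(\CC)$-factors in a way controlled by the difference $r-\dim V$, so one can unambiguously invert it on its cuspidal factors and recover a well-defined $\psi$ independently of $W$. Combined with the stable-range hypothesis, this yields a decomposition of $L^2_{disc}(G)$ that depends only on $G$ and matches the one obtained on the quasi-split side under $\theta^{abs}$.
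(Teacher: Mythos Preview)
The paper does not supply its own proof of this theorem; it is stated as a result imported from Arthur \cite{MR3135650} and Mok \cite{MR3338302} in the quasi-split case, and from the authors' earlier paper \cite{CZ2020AMFPIF} in the non quasi-split case. Your outline is essentially the argument of \cite{CZ2020AMFPIF}: lift $\pi$ to the discrete spectrum of the large quasi-split $H$, read off the elliptic $A$-parameter there via Arthur/Mok, and invert the recipe $\psi\mapsto\theta(\psi)$ using an unramified computation plus strong multiplicity one.

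There is, however, a genuine gap in your write-up. You invoke the multiplicity preservation equality \eqref{Intro.Multi.Preserve}, i.e.\ $m_{disc}(\pi)=m_{disc}(\theta^{abs}(\pi))$, to embed $L^2_{disc}(G)$ into $L^2_{disc}(H)$. But as the introduction of the paper makes explicit, this equality is only established when $\psi$ is generic (tempered) or when the Witt index of $G$ is at most one; it is \emph{not} known for arbitrary $G$ and arbitrary $\psi$. What you actually need, and what suffices, is J-S.~Li's one-sided inequality $m_{disc}(\pi)\leq m_{disc}(\theta^{abs}(\pi))$ (stated in the paper immediately after Theorem~\ref{J-S.Li.Low.rk.Global}): if $\pi$ occurs discretely then so does $\theta^{abs}(\pi)$, hence Arthur/Mok attach to it an elliptic $\psi_H$, and the unramified computation then forces $\psi_H=\theta(\psi)$ for a unique elliptic $\psi\in\Psi_{ell}(G)$. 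The decomposition into near equivalence classes follows from this alone; the full multiplicity equality is neither available nor required here. You should replace the appeal to \eqref{Intro.Multi.Preserve} by the inequality.
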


When $G=G^*$ is quasi-split, a further decomposition of each near equivalence class is avaliable, known as the Arthur's multiplicity formula for $G^*$. Fix a global Whittaker datum $\scrW$ of $G^*$. Given an elliptic $A$-parameter $\psi$, we define the global packet $\Pi_\psi^A(G^*)$ associated to $\psi$ as the restricted tensor product of the local $A$-packets
\begin{align*}
\Pi_\psi^A(G^*)&=\otimes'_v\Pi_{\psi_v}^A(G^*_v)\\
&=\{\pi=\otimes'_v\pi_v~|~\pi_v\in\Pi_{\psi_v}^A(G^*_v),~\pi_v\textit{ unramified with the $L$-parameter $\phi_{\psi_v}$ for almost all }v\}.
\end{align*}
We then have a map
\begin{align*}
\calJ_{\scrW}^A:\Pi_\psi^A(G^*)&\lra\wh{\overline{\calS_\psi}},\\
\pi&\longmapsto\calJ^A_{\scrW}(\pi),\\
\calJ^A_{\scrW}(\pi)(x)&\coloneqq\prod_v\calJ_{\scrW_v}^A(\pi_v)(x_v),
\end{align*}
where $x\in\calS_\psi$ and $x_v$ is the localization of $x$ at $v$. We can also define the so-called canonical sign character $\epsilon_\psi\in\wh{\calS_\psi}$ following \cite{MR3135650} page 47, or \cite{MR3338302} page 29. We put
\begin{equation*}
\Pi_\psi^A(G^*,\epsilon_\psi)=\left\{\pi\in\Pi_\psi^A(G^*)~|~\calJ_{\scrW}^A(\pi)=\epsilon_\psi\right\}.
\end{equation*}
Then the main global Theorems in \cite{MR3135650} and \cite{MR3338302} assert that
\begin{theorem}\label{AMF.QS}
Let $\psi$ be an elliptic $A$-parameter for $G^*$. Then we have the decomposition
\begin{equation*}
L^2_\psi(G^*)=\bigoplus_{\pi\in\Pi_\psi^A(G^*,\epsilon_\psi)}\pi.
\end{equation*}
\end{theorem}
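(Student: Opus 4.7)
The plan is not to reprove Theorem \ref{AMF.QS} but to invoke the global endoscopic classification for quasi-split classical groups due to Arthur \cite{MR3135650} and Mok \cite{MR3338302}; since we shall apply this result as a black box when globalizing representations through quasi-split groups in Sections 3--8, I sketch only the structural anatomy of the argument. First one organizes the discrete automorphic spectrum of $GL_N(\AAA_E)$, with $N=\dim V$, according to elliptic $A$-parameters using the Moeglin--Waldspurger classification. Given an elliptic $A$-parameter $\psi$ for $G^*$, the dual group embedding $\Lgp{G^*}\hookrightarrow GL_N(\CC)$ attaches to $\psi$ a distinguished isobaric automorphic representation $\pi_\psi$ of $GL_N(\AAA_E)$, self-dual (or conjugate self-dual) with respect to the relevant outer automorphism $\theta$.

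The next step is to run the stable trace formula for $G^*$ and the $\theta$-twisted trace formula for $GL_N$, stabilize both following Arthur and Kottwitz--Shelstad, and compare the spectral expansions restricted to the near-equivalence class of $\psi$. This comparison is powered by the fundamental lemma (Ng\^o) and Kottwitz--Shelstad transfer. The local ingredients needed to convert this global comparison into a pointwise multiplicity formula are the local $A$-packets $\Pi^A_{\psi_v}(G^*_v)$ together with their Whittaker-normalized pairings $\calJ^A_{\scrW_v}\colon\Pi^A_{\psi_v}(G^*_v)\to\widehat{\overline{\calS_{\psi_v}}}$; these are constructed recursively so that the characters of packets satisfy twisted endoscopic identities against the character of $\pi_{\psi_v}$ on $GL_N$.

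Matching the $\psi$-contribution on the two sides of the stabilized comparison, and using the canonical sign character $\epsilon_\psi\in\widehat{\calS_\psi}$ produced by the normalization of transfer factors, one deduces that the multiplicity of $\pi=\otimes_v\pi_v\in\Pi_\psi^A(G^*)$ inside $L^2_\psi(G^*)$ is given by
\begin{equation*}
m(\pi)=\frac{1}{|\overline{\calS_\psi}|}\sum_{x\in\overline{\calS_\psi}}\epsilon_\psi(x)\,\calJ^A_{\scrW}(\pi)(x).
\end{equation*}
Orthogonality of characters on the finite abelian group $\overline{\calS_\psi}$ shows that the right-hand side equals $1$ precisely when $\calJ^A_{\scrW}(\pi)=\epsilon_\psi$ and vanishes otherwise, which is exactly the decomposition asserted in Theorem \ref{AMF.QS}.

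The main obstacle, and the part we cannot reprove, is the simultaneous stabilization of the trace formulas together with the recursive construction of local $A$-packets satisfying the prescribed twisted endoscopic character identities at every place. This rests on the fundamental lemma, the Kottwitz--Shelstad transfer theorems, and a delicate induction on the rank of parameters; it occupies the bulk of \cite{MR3135650} and \cite{MR3338302}. For our purposes here, Theorem \ref{AMF.QS} is quoted as a black box and will serve as the global input when we globalize local representations through quasi-split forms.
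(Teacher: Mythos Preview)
Your proposal is correct and matches the paper's approach: the paper does not prove Theorem \ref{AMF.QS} at all but simply records it as the main global theorem of \cite{MR3135650} and \cite{MR3338302}, to be used as a black box throughout. Your additional sketch of the stabilized trace formula comparison and the derivation of the multiplicity formula via orthogonality on $\overline{\calS_\psi}$ is accurate background, but the paper itself offers no such commentary and just cites the references.
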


Similarly, one can define the global $A$-parameters for the group $H$, and for a global elliptic $A$-parameter $\psi_H$ of $H$, one can define the associated global $A$-packets by gluing local $A$-packets in the same manner. Again, according to Arthur and Mok's works, Theorem \ref{Decompose.NEC} and Theorem \ref{AMF.QS} also holds for $H$.

\subsection{Remarks on Whittaker data}\label{Whittaker.Data}
Since the local or global classification of both $G^*$ and $H$ depend on the choices of Whittaker data on $G^*$ and $H$, and the theta lift also depends on the choice of an additive character, we need to choose these data in a compatible way. We now briefly describe the way we choose these data.\\

Let $F$ be a local or global field. Firstly we fix a non-trivial additive character $\psi_F$ of $F$ (or $F\backslash\AAA$ if $F$ is global). Next we fix an Whittaker datum $\scrW$ of $G^*$ as follows.\\

In Case $O$, $G^*=G(V^+)$ is an even orthogonal groups. As explicated at the beginning of this section, we fix an isometry
\[
  V^+\simeq V_{(d,c)}+\calH^{n-1}
\]
for some $d,c\in F$, where $\calH$ is the (orthogonal) hyperbolic plane. The images of $1,X\in F[X]$ in $V_{(d,c)}$ are denoted by $e,e'$, respectively. For $1\leq k\leq n-1$, we write the $k$-th hyperbolic plane $\calH=Fv_k+Fv_k^*$ with 
\[
  \langle v_k,v_k\rangle_V=\langle v_k^*,v_k^*\rangle_V=0\quad\textit{and}\quad \langle v_k,v_k^*\rangle_V=1,
\]
and we set
\[
  X_k=Fv_1+\cdots+Fv_k\quad\textit{and}\quad X_k^*=Fv_1^*+\cdots+Fv_k^*.
\]
We denote by $B=TU$ the $F$-rational Borel subgroup of $G^*$ stabilizing the complete flag
\[
  X_1\subset\cdots\subset X_{n-1},
\]
where $T$ is the $F$-rational torus stabilizing the lines $Fv_i$ for $1\leq k\leq n-1$. We define a generic character $\mu_c$ of $U$ by
\[
  \mu_c(u)=\psi_F(\langle uv_2,v_1^*\rangle_V+\cdots+\langle uv_{n-1},v_{n-2}^*\rangle_V+\langle ue,v_{n-1}^*\rangle_V).
\]
Let $\scrW=(U,\mu_c)$. Note that in fact $\scrW$ does not depend on the choice of the additive character $\psi_F$, but only depends on the constant $c$ we have picked up.\\

In Case $U_0$, $G^*=G(V^+)$ is an even unitary groups. Recall that we have fixed a trace zero element $\delta\in E^\times$. Let $V^+$ be the $2n$-dimensional Hermitian space over $E$ such that $G^*=U(V^+)$. Since $G^*$ quasi-split, the Witt index of $V^+$ is $n$. We choose a basis $\{v_i,v_i^*~|~i=1,\cdots,n\}$ of $V^+$ such that
\[
  \langle v_i,v_j\rangle_V=\langle v_i^*,v_j^*\rangle_V=0\quad\textit{and}\quad \langle v_i,v_j^*\rangle_V=\delta_{i,j}
\]
for $1\leq i,j\leq n$. We set
\[
  X_k=Ev_1+\cdots+Ev_k\quad\textit{and}\quad X_k^*=Ev_1^*+\cdots+Ev_k^*
\]
for $1\leq i,j\leq n$. We denote by $B=TU$ the $F$-rational Borel subgroup of $G^*$ stabilizing the complete flag
\[
  X_1\subset\cdots\subset X_{n},
\]
where $T$ is the $F$-rational torus stabilizing the lines $Ev_i$ for $1\leq k\leq n$. We define a generic character $\mu$ of $U$ by
\[
  \mu(u)=\psi_F\Big(\frac{1}{2}\Tr_{E/F}\big(\delta\cdot(\langle uv_2,v_1^*\rangle_V+\cdots+\langle uv_{n},v_{n-1}^*\rangle_V+\langle uv_{n}^*,v_{n}^*\rangle_V)\big)\Big).
\]
Let $\scrW=(U,\mu)$.\\

In Case $U_1$, there is an unique Whittaker datum $\scrW$ of $G^*$.\\

Finally we fix a Whittaker datum $\scrW'$ of $H$ as follows.\\

In Case $O$, $W$ is the $2r$-dimensional symplectic space. We choose a basis $\{w_i,w_i^*~|~i=1,\cdots,r\}$ of $W$ such that
\[
  \langle w_i,w_j\rangle_W=\langle w_i^*,w_j^*\rangle_W=0\quad\textit{and}\quad \langle w_i,w_j^*\rangle_W=\delta_{i,j}
\]
for $1\leq i,j\leq r$. We set
\[
  Y_k=Fw_1+\cdots+Fw_k\quad\textit{and}\quad Y_k^*=Fw_1^*+\cdots+Fw_k^*
\]
for $1\leq i,j\leq r$. We denote by $B'=T'U'$ the $F$-rational Borel subgroup of $H$ stabilizing the complete flag
\[
  Y_1\subset\cdots\subset Y_{r},
\]
where $T'$ is the $F$-rational torus stabilizing the lines $Fw_i$ for $1\leq k\leq r$. We define a generic character $\mu'_c$ of $U'$ by
\[
  \mu'_c(u)=\psi_F\Big(c\cdot\big(\langle uw_2,w_1^*\rangle_V+\cdots+\langle uw_{r},w_{r-1}^*\rangle_V+\langle uw_{r}^*,w_{r}^*\rangle_V\big)\Big),
\]
where the constant $c$ is the one appearing in the isometry $V^+\simeq V_{(d,c)}+\calH^{n-1}$ we have fixed. Let $\scrW'=(U',\mu'_c)$. In this case we also define another generic character $\mu'_1$ of $U'$ by
\[
  \mu'_1(u)=\psi_F\Big(\langle uw_2,w_1^*\rangle_V+\cdots+\langle uw_{r},w_{r-1}^*\rangle_V+\langle uw_{r}^*,w_{r}^*\rangle_V\Big),
\]
and let $\scrW'_1=(U',\mu'_1)$.\\

In Case $U_0$, $W$ is an $(2r+1)$-dimensional skew-Hermitian space. Hence there is an unique Whittaker datum $\scrW'$ of $H$. \\

In Case $U_1$, $W$ is an $2r$-dimensional skew-Hermitian space. We choose a basis $\{w_i,w_i^*~|~i=1,\cdots,r\}$ of $W$ such that
\[
  \langle w_i,w_j\rangle_W=\langle w_i^*,w_j^*\rangle_W=0\quad\textit{and}\quad \langle w_i,w_j^*\rangle_W=\delta_{i,j}
\]
for $1\leq i,j\leq r$. We set
\[
  Y_k=Fw_1+\cdots+Fw_k\quad\textit{and}\quad Y_k^*=Fw_1^*+\cdots+Fw_k^*
\]
for $1\leq i,j\leq r$. We denote by $B'=T'U'$ the $F$-rational Borel subgroup of $H$ stabilizing the complete flag
\[
  Y_1\subset\cdots\subset Y_{r},
\]
where $T'$ is the $F$-rational torus stabilizing the lines $Fw_i$ for $1\leq k\leq r$. We define a generic character $\mu'$ of $U'$ by
\[
  \mu'(u)=\psi_F\Big(\frac{1}{2}\Tr_{E/F}\big(\langle uw_2,w_1^*\rangle_V+\cdots+\langle uw_{r},w_{r-1}^*\rangle_V+\langle uw_{r}^*,w_{r}^*\rangle_V\big)\Big).
\]
Let $\scrW'=(U',\mu')$.

\subsection{Theta packets}\label{Def.Theta-Pack}
Now we recall the definition of the main object we want to study in this paper, the so-called ``$\theta$-packet'', which is defined in \cite{CZ2020AMFPIF}. From now on, we let $F$ be a local field of characteristic $0$. We fix Whittaker data $\scrW$ and $\scrW'$ of the group $G^*$ and $H$, depending on the additive character $\psi_F$, as explicated in Section \ref{Whittaker.Data}. Assume now $\dim V<r$. Let $\psi$ be a local $A$-parameter for $G$, and 
\[
  \theta(\psi)=\psi\chi_W^{-1}\chi_V+\chi_V\boxtimes S_{2r-2n+1}
\]
be a local $A$-parameter for $H$ (sometimes we shall also write it as $\theta^r(\psi)$ to emphasize its dependence on the integer $r$). There is an obvious map 
\[
  \calS_\psi\lra\calS_{\theta(\psi)},
\]
sending an element $a_i\in\calS_\psi$ corresponding to an irreducible constituent $\psi_i$ of $\psi$, to the element $a'_i\in\calS_{\theta(\psi)}$ corresponding to the irreducible constituent $\psi_i\chi_W^{-1}\chi_V$ of $\theta(\psi)$. We write the local $A$-packet $\Pi_{\theta(\psi)}^A(H)$ as a representation of $\overline{\calS_{\theta(\psi)}}\times H$
\[
  \Pi_{\theta(\psi)}^A(H)=\bigoplus_\sigma \calJ^A_{\scrW'}(\sigma)\boxtimes\sigma,
\]
as $\sigma$ runs over all irreducible unitary representations of $H$ in $\Pi_{\theta(\psi)}^A(H)$. Then the $\theta$-packet of $G$ associated to the $A$-parameter $\psi$ is defined as
\[
  \Pi_\psi^\theta(G)=\bigoplus_\sigma \left(\calJ^A_{\scrW'}(\sigma)\Big|_{\calS_\psi}\right)\boxtimes\theta(\sigma),
\]
where $\theta(\sigma)$ is the theta lift of $\sigma$ to the group $G$ with respect to $(\psi_F,\chi_V,\chi_W)$, and we regard $\calS_\psi$ as a subgroup of $\calS_{\theta(\psi)}$ via the obvious map between them. Sometimes when we want to emphasize the possible dependence of $\Pi_\psi^\theta(G)$ on the choice of $H=H(W_{(r)})$, we shall also use the notation $\Pi_\psi^{\theta,r}(G)$. The $\theta$-packet $\Pi_\psi^\theta(G)$ can be also regarded as a (multi) set of irreducible unitary representations of $G$, together with a map 
\[
  \calJ_{\psi_F}:\Pi_\psi^\theta(G)\lra\wh{\calS_\psi}
\]
by sending $\theta(\sigma)$ to $\calJ^A_{\scrW'}(\sigma)\Big|_{\calS_\psi}$.
\begin{remark}
For those $\psi$ that do not have bounded image on the Weil group, but come from a localization of some global elliptic $A$-parameter for an even orthogonal or unitary group, we can define the $\theta$-packet $\Pi_\psi^\theta(G)$ in the same manner.
\end{remark}

In this paper, we mainly consider the case that $F$ is non-Archimedean. Our main theorem in this paper is
\begin{theorem}\label{Main.Theorem.Packets}
Let $F$ be a non-Archimedean local field. For all local $A$-parameter $\psi$ of $G=G(V)$ (with bounded image on the Weil group), we have:
\begin{enumerate}
  \item the definition of the packet $\Pi_\psi^\theta(G)$ is indeed independent of the choice of $H=H(W_{(r)})$ with $r>\dim V$;
  \item if $V=V^+$, i.e. $G=G^*$ is quasi-split, then
  \[
    \Pi_\psi^\theta(G)=\Pi_\psi^A(G) 
  \]
  as representations of $\calS_\psi\times G^*$.
\end{enumerate}
\end{theorem}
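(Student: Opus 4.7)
The plan is to handle both parts by globalizing and invoking Arthur--Mok's multiplicity formula on the quasi-split groups (which is where the hypotheses on $H$, and for Part~(2) also on $G$, make it available), and then to upgrade the resulting set-theoretic equalities to equalities of $\calS_\psi$-representations via a careful analysis of local intertwining operators in the spirit of Gan--Ichino~\cite{MR3573972}. The set-theoretic content is driven by Theorems~\ref{J-S.Li.Low.rk.Global} and~\ref{AMF.QS}, while the labeling content is where the genuine technical core of the paper lives.

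For Part~(1), fix two integers $r_1,r_2>\dim V$, set $H_i=H(W_{(r_i)})$, and pick an irreducible $\theta(\sigma_1)\in\Pi_\psi^{\theta,r_1}(G)$ coming from some $\sigma_1\in\Pi_{\theta^{r_1}(\psi)}^A(H_1)$. I would globalize $(F,G,H_1,\psi,\sigma_1)$ to $(\dot F,\dot G,\dot H_1,\dot\psi,\dot\sigma_1)$ with $\dot F_{v_0}=F$ and localization equal to the given data at $v_0$, so that $\dot\sigma_1$ occurs in $L^2_{\dot\psi_{H_1}}(\dot H_1)$; this uses the Arthur--Mok formula (Theorem~\ref{AMF.QS}) for the quasi-split $\dot H_1$, together with the usual control on local $A$-packets at the auxiliary ramified places. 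By the stable-range results of J.-S.~Li (Theorem~\ref{J-S.Li.Low.rk.Global}), the abstract theta lift of $\dot\sigma_1$ to $\dot G$ is automorphic, square-integrable and recovers $\theta(\sigma_1)$ at $v_0$; lifting further up to $\dot H_2$ produces a square-integrable representation whose Satake parameters agree with those of $\theta^{r_2}(\dot\psi)$ almost everywhere. Theorems~\ref{Decompose.NEC} and~\ref{AMF.QS} for the quasi-split $\dot H_2$ then force its $v_0$-component $\sigma_2$ into $\Pi_{\theta^{r_2}(\psi)}^A(H_2)$, while the conservation of the local theta lift in stable range gives $\theta(\sigma_2)\simeq\theta(\sigma_1)$. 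This yields $\Pi_\psi^{\theta,r_1}(G)\subseteq\Pi_\psi^{\theta,r_2}(G)$ as sets, and the reverse inclusion is symmetric.

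For Part~(2), once $G=G^*$ is quasi-split, the set-theoretic identity $\Pi_\psi^\theta(G)=\Pi_\psi^A(G)$ follows by the same globalization combined with M{\oe}glin's~\cite{MR2906916} theta-lift calculations on her $M$-packets, applied only to the quasi-split $H$ and cross-referenced with the identification $\Pi_\psi^M(G^*)=\Pi_\psi^A(G^*)$ of~\cite{MR3679701}. The genuinely hard step, and what I expect to be the main obstacle, is matching the labeling maps $\calJ_{\psi_F}$ and $\calJ^A_\scrW$ as characters of $\calS_\psi$. The plan is to reinterpret both characters as eigenvalues of normalized standard intertwining operators on parabolic inductions: by Arthur's local intertwining relation, $\calJ^A_\scrW(\pi)(a_i)$ is such an eigenvalue on the quasi-split $G^*$, and by transporting the intertwining operator across the theta correspondence using Kudla's filtration on the Jacquet module of the Weil representation (as in~\cite{MR3573972} for tempered parameters), one reads off $\calJ_{\psi_F}(\theta(\sigma))(a_i)$ in the same form on the $H$ side. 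Because Arthur's intertwining relation is only available on quasi-split groups, I would first formulate an alternative version valid on all pure inner forms of $G$ and prove it by yet another globalization, appealing simultaneously to the multiplicity formula on $\dot H$ and to the set-theoretic part already obtained; this is the technical heart of Sections~7--8 of the paper and is precisely what makes the extension to non-quasi-split $G$ possible. Once this intertwining relation is in place on every pure inner form, comparing both sides generator-by-generator on $\calS_\psi$ yields the claimed equality of $\calS_\psi\times G$-representations in both parts of the theorem.
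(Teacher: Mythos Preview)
Your strategy for Part~(2) and for the labeling is essentially the paper's: reinterpret both $\calJ_{\psi_F}$ and $\calJ^A_{\scrW}$ as eigenvalues of normalized intertwining operators, relate the operators on $G$ and $H$ via an explicit equivariant map built from the Weil representation, and read off the equality generator by generator. That part is fine.

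The gap is in Part~(1). You write that by Theorem~\ref{J-S.Li.Low.rk.Global} ``the abstract theta lift of $\dot\sigma_1$ to $\dot G$ is automorphic, square-integrable''. J.-S.~Li's theorem does not say this: it only produces an \emph{abstract} irreducible $\dot\pi$ of $\dot G(\AAA)$ with $\dot\sigma_1\simeq\theta^{abs}(\dot\pi)$, with no claim that $\dot\pi$ is automorphic, let alone discrete. Li's inequality $m_{disc}(\dot\pi)\le m_{disc}(\theta^{abs}(\dot\pi))$ goes in the wrong direction for what you need, and since $\dot G$ is in general not quasi-split you cannot invoke Arthur--Mok on $\dot G$ either. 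Without $m_{disc}(\dot\pi)\ge 1$ you cannot lift $\dot\pi$ to $\dot H_2$ and conclude $\sigma_2\in\Pi^A_{\theta^{r_2}(\psi)}(H_2)$.

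This obstruction is exactly what the paper spends Section~5 overcoming, and the fix is not a tweak of your argument. The paper first restricts to a special class of parameters (Hypothesis~\ref{Hypothesis.Special.Class.A-parameter}) for which one can arrange a \emph{supercuspidal} local component at an auxiliary place $w_1$; this forces every automorphic realization of $\dot\pi$ to be cuspidal, hence discrete (Proposition~\ref{Antipasto}). General good-parity parameters are then handled not directly but via the ``sharp construction'' (Lemma~\ref{sharp}), which embeds $\psi$ into a larger $\psi^\sharp$ satisfying the hypothesis, combined with M{\oe}glin's Jacquet-module descent (Theorem~\ref{Construction.bp.From.DDR}) to pass from sufficiently large $r$ back down to an arbitrary $r_0>\dim V$ (Section~5.4). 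Your globalization outline skips all three of these ingredients, and without at least the supercuspidal trick the argument does not close.
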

\begin{remark}
When $F$ is Archimedean, we also expect the same results hold. In the case that $G$ is a real unitary group and $\psi$ is Adams-Johnson, one can refer to \cite{MR3947270} Th\'eor\`eme 1.1.
\end{remark}

Along the way of proving this theorem, we also deduce some by-products. We shall summarize them in the last part of this paper.

\section{Compatibility with parabolic inductions}\label{Compatible.Theta.N.Ind}
In this section, we imitate \cite{MR3573972} Section 8 to construct an equivariant map, and use this equivariant map to deduce some results in the context of the theta correspondence which will be used later.
\subsection{A mixed model}
We shall use a mixed model to do some computations. The same model is also used in \cite{MR3573972} and \cite{MR3788848}, so readers may also consult these two papers for details. For the convenience of readers, we briefly recall it. Suppose that we have
\[
  V=X+V_0+X^*
\]
for some $k$-dimensional totally isotropic subspace $X$ and $X^*$ of $V$. Let $n_0=n-k$, and $r_0=r-k$. Then there is a maximal parabolic subgroup $P=M_PU_P$ of $G$ stabilizing $X$, where $M_P$ is the Levi component of $P$ stabilizing $X^*$ and $U_P$ is the unipotent radical of $P$. We have 
\begin{equation*}
\begin{aligned}
M_P&=\{m_P(a)\cdot h_0~|~a\in GL(X),h_0\in G(V_0)\},\\
U_P&=\{u_P(b)\cdot u_P(c)~|~b\in\Hom(V_0,X),c\in\Herm(X^*,X)\},
\end{aligned}
\end{equation*}

where
\begin{align*}
m_{P}(a)&=\left(\begin{array}{ccc}
                     a & ~ & ~ \\
                     ~ & 1_{V_{0}} & ~ \\ 
                     ~ & ~ & \left(a^{*}\right)^{-1}
                \end{array}\right),\\
u_{P}(b)&=\left(\begin{array}{ccc}
                     1_{X} & b & -\frac{1}{2} b b^{*}\\ 
                     ~ & 1_{V_{0}} & -b^{*} \\ 
                     ~ & ~ & 1_{X^{*}}
                \end{array}\right),\\
u_{P}(c)&=\left(\begin{array}{ccc}
                     {1_{X}} & {} & {c} \\ 
                     {} & {1_{V_{0}}} & {} \\ 
                     {} & {} & {1_{X^{*}}}
                \end{array}\right),
\end{align*}
and 
\begin{equation*}
\Herm(X^*,X)=\{c\in\Hom(X^*,X)~|~c^*=-c\}.
\end{equation*}
Here, the elements $a^*\in GL(X^*)$, $b^*\in\Hom(X^*,V_0)$, and $c^*\in\Hom(X^*,X)$ are the adjoints of $a$, $b$, and $c$ respectively, under the $c$-Hermitian form on $V$. Put
\begin{equation*}
\rho_P=\frac{\dim V_0+k-\varepsilon_0}{2}\quad w_P=\left(\begin{array}{ccc}
                                    {} & {} & -I_X\\
                                    {} & 1_{V_0} & {}\\
                                    -I_X^{-1} & {} & {}

                                 \end{array}\right),
\end{equation*}
where we pick up $I_X\in\Isom(X^*,X)$ in an obvious way after choosing a basis of $X$.\\

Similarly, suppose that we have
\[
  W=Y+W_0+Y^*
\]
for some $k$-dimensional totally isotropic subspace $Y$ and $Y^*$ of $W$. Then there is a maximal parabolic subgroup $Q=M_QU_Q$ of $G$ stabilizing $Y$, where $M_Q$ is the Levi component of $Q$ stabilizing $Y^*$ and $U_Q$ is the unipotent radical of $Q$. For $a\in GL(Y)$, $b\in\Hom(W_0,Y)$ and $c\in\Herm(Y^*,Y)$, we define elements $m_Q(a)\in M_Q$ and $u_Q(b),u_Q(c)\in U_Q$ as above. Put
\begin{equation*}
\rho_Q=\frac{\dim W_0+k+\varepsilon_0}{2},\quad w_Q=\left(\begin{array}{ccc}
                                    {} & {} & -I_Y\\
                                    {} & 1_{W_0} & {}\\
                                    I_Y^{-1} & {} & {}
                                    \end{array}\right),
\end{equation*}
where we pick up $I_Y\in\Isom(Y^*,Y)$ in an obvious way after choosing a basis of $Y$.\\

We write:
\begin{itemize}
\item $\omega$ for the Weil representation $\omega_{\psi_F,V,W}$ of $G\times H$ on a space $\scrS$;
\item $\omega_0$ for the Weil representation $\omega_{\psi_F,V,W_0}$ of $G_0\times H$ on a space $\scrS_0$, where $G_0=G(V_0)$;
\item $\omega_{00}$ for the Weil representation $\omega_{\psi_F,V_0,W_0}$ of $G_0\times H_0$ on a space $\scrS_{00}$, where $H_0=H(W_0)$.
\end{itemize}
We take a mixed model 
\begin{equation*}
\scrS=\scrS(W\otimes X^*)\otimes\scrS_0
\end{equation*}
of $\omega$, where we regard $\scrS$ as a space of functions on $W\otimes X^*$ with values in $\scrS_0$. Similarly, we take a mixed model 
\begin{equation*}
\scrS_0=\scrS(Y^*\otimes V_0)\otimes\scrS_{00}
\end{equation*}
of $\omega_0$, where we regard $\scrS_0$ as a space of functions on $Y^*\otimes V_0$ with values in $\scrS_{00}$. Also, we write:
\begin{itemize}
\item $\rho_0$ for the Heisenberg representation of $\scrH(W\otimes V_0)$ on $\scrS_0$ with central character $\psi_F$;
\item $\rho_{00}$ for the Heisenberg representation of $\scrH(W_0\otimes V_0)$ on $\scrS_{00}$ with central character $\psi_F$.
\end{itemize}
We can derive the following formulas for the Weil representations $\omega$ and $\omega_0$. For $\varphi\in\scrS$ and $x\in W\otimes X^*$, we have
\begin{align*}
(\omega(h)\varphi)(x)&=\omega_0(h)\varphi(h^{-1}x), & h&\in H,\\
(\omega(g_0)\varphi)(x)&=\omega_0(g_0)\varphi(x), & g_0&\in G_0,\\
(\omega(m_P(a))\varphi)(x)&=\chi_W(\det a)|\det a|^{\dim W/2}\varphi(a^*x), & a&\in GL(X),\\
(\omega(u_P(b))\varphi)(x)&=\rho_0((b^*x,0))\varphi(x), & b&\in\Hom(V_0,X),\\
(\omega(u_P(c))\varphi)(x)&=\psi_F(\frac{1}{2}\langle cx,x\rangle)\varphi(x), & c&\in\Herm(X^*,X),\\
(\omega(w_P)\varphi)(x)&=\gamma_W^{-k}\int_{W\otimes X}\varphi(I_X^{-1}y)\psi(-\langle y,x\rangle)dy,
\end{align*}
where $\gamma_W$ is a certain constant. Also, for $\varphi_0\in\scrS_0$ and $x\in Y^*\otimes V_0$, we have
\begin{align*}
(\omega_0(g_0)\varphi_0)(x)&=\omega_{00}(g_0)\varphi_0(g_0^{-1}x), & g_0&\in G_0,\\
(\omega_0(h_0)\varphi_0)(x)&=\omega_{00}(h_0)\varphi_0(x), & h_0&\in H_0,\\
(\omega_0(m_Q(a))\varphi_0)(x)&=\chi_V(\det a)|\det a|^{\dim V_0/2}\varphi_0(a^*x), & a&\in GL(Y),\\
(\omega_0(u_Q(b))\varphi_0)(x)&=\rho_{00}((b^*x,0))\varphi_0(x), & b&\in\Hom(W_0,Y),\\
(\omega_0(u_Q(c))\varphi_0)(x)&=\psi_F(\frac{1}{2}\langle cx,x\rangle)\varphi_0(x), & c&\in\Herm(Y^*,Y),\\
(\omega_0(w_Q)\varphi_0)(x)&=\gamma_V^{-k}\int_{Y\otimes V_0}\varphi_0(I_Y^{-1}y)\psi(-\langle y,x\rangle)dy,
\end{align*}
where $\gamma_V$ is a certain constant. Moreover, we have
\begin{align*}
(\rho_0\left((y+y',0)\right)\varphi_0)(x)&=\psi(\langle x,y\rangle+\frac{1}{2}\langle y',y\rangle)\varphi_0(x+y'), & y&\in Y\otimes V_0,\\
& & y'&\in Y^*\otimes V_0,\\
(\rho_0\left((y_0,0)\right)\varphi_0)(x)&=\rho_{00}\left((y_0,0)\right)\varphi_0(x), & y_0&\in W_0\otimes V_0.
\end{align*}

\subsection{An equivariant map}
In this subsection we construct the explicit equivariant map. A non-vanishing result of this map will be important to us. The construction is roughly the same as \cite{MR3573972} Section 8, except at one place we use the ``small'' theta lift whereas in Gan-Ichino's paper they use the ``big'' theta lift.\\

First of all we need to fix Haar measures on various groups. For this part, we simply follow \cite{MR3788848} Section 6.3 in Case $O$, and \cite{MR3573972} Section 7.2 in Case $U$. We shall identify $GL(X)$ with $GL_k(F)$ using a basis $\{v_i\}_i$ for $X$, and similarly identify $GL(Y)$ with $GL_k(F)$ using a basis $\{w_i\}_i$ for $Y$. We write $\{v_i^*\}_i$ and $\{w_i^*\}_i$ for the dual basis for $X^*$ and $Y^*$ respectively. Then we can define an isomorphism $i:GL(X)\ra GL(Y)$ via these identifications. Put
\begin{align*}
e&=w_1\otimes v_1^*+\cdots+w_k\otimes v_k^*\in Y\otimes X^*,\\
e^*&=w_1^*\otimes v_1+\cdots+w_k^*\otimes v_k\in Y^*\otimes X.
\end{align*}
Then $i(a)^ce=a^*e$ and $\left(i(a)^c\right)^*e^*=ae^*$ for $a\in GL(X)$.\\

For $\varphi\in\scrS=\scrS(W\otimes X^*)\otimes\scrS_0$, we define functions $\gothf(\varphi)$, $\hat{\gothf}(\varphi)$ on $G\times H$ with values in $\scrS_0$ by
\begin{align*}
\gothf(\varphi)(g,h)&=(\omega(g,h)\varphi)\left(\begin{matrix}
                                                e\\
                                                0\\
                                                0
                                                \end{matrix}\right),\\
\hat{\gothf}(\varphi)(g,h)&=\int_{Y\otimes X^*}(\omega(g,h)\varphi)\left(\begin{matrix}x\\0\\0\end{matrix}\right)\psi(\langle x,e^*\rangle)dx
\end{align*}
for $g\in G$ and $h\in H$. Here we write an element in $W\otimes X^*$ as a block matrix relative to the decomposition $W\otimes X^*=Y\otimes X^*\oplus W_0\otimes X^*\oplus Y^*\otimes X^*$. We also define functions $f(\varphi)$, $\hat{f}(\varphi)$ on $G\times H$ with values in $\scrS_{00}$ by
\begin{align*}
f(\varphi)(g,h)&=ev(\gothf(\varphi)(g,h)),\\
\hat{f}(\varphi)(g,h)&=ev(\hat{\gothf}(\varphi)(g,h)).
\end{align*}
Here $ev:\scrS_0=\scrS(Y^*\otimes V_0)\otimes\scrS_{00}\ra\scrS_{00}$ is the evaluation at $0\in Y^*\otimes V_0$. If $f=f(\varphi)$ or $\hat{f}(\varphi)$, then
\begin{align*}
f(ug,u'h)&=f(g,h), & u&\in U_P,\\
& & u'&\in U_Q,\\
f(g_0g,h_0h)&=\omega_{00}(g_0,h_0)f(g,h), & g_0&\in G_0,\\
& & h_0&\in H_0,\\
f(m_P(a)g,m_Q(i(a)^c)h)&=(\chi_V^c\chi_W)(\det a)|\det a|^{\rho_P+\rho_Q}f(g,h), & a&\in GL(X).
\end{align*}

Let $\tau$ be an irreducible unitary representation of $GL_k(E)$ on a space $\scrV_\tau$. We may regard $\tau$ as a representation of $GL(X)$ or $GL(Y)$ via the above identifications. Let $\pi_0$ and $\sigma_0$ be irreducible unitary representations of $G_0$ and $H_0$ on spaces $\scrV_{\pi_0}$ and $\scrV_{\sigma_0}$ respectively. Fix non-zero invariant non-degenerate bilinear forms $\langle\cdot,\cdot\rangle$ on $\scrV_\tau\times\scrV_{\tau^\vee}$, $\scrV_{\pi_0}\times\scrV_{{\pi_0}^\vee}$, and $\scrV_{\sigma_0}\times\scrV_{{\sigma_0}^\vee}$. Let
\begin{equation*}
\langle\cdot,\cdot\rangle:(\scrV_\tau\otimes\scrV_{{\sigma_0}^\vee})\times\scrV_{\tau^\vee}\lra\scrV_{{\sigma_0}^\vee}
\end{equation*}
be the induced map. Assume that 
\begin{equation*}
\sigma_0=\theta_{\psi_F,V_0,W_0}(\pi_0).
\end{equation*}
We fix a non-zero $G_0\times H_0$-equivariant map
\begin{equation*}
\calT_{00}:\omega_{00}\otimes\sigma_0^\vee\lra\pi_0.
\end{equation*}
For $\varphi\in\scrS$, $\varPhi_s\in\Ind_Q^{H}(\tau_s^c\chi_V^c\boxtimes\sigma_0^\vee)$, $g\in G$, $\check{v}\in\scrV_{\tau^\vee}$, and $\check{v}_0\in\scrV_{\pi_0^\vee}$, put
\begin{align*}
\langle\calT_s(\varphi&\otimes\varPhi_s)(g),\check{v}\otimes\check{v}_0\rangle\\
&=L\left(s-s_0,\tau\right)^{-1}\times\int_{U_QH_0\backslash H}\langle\calT_{00}(\hat{f}(\varphi)(g,h)\otimes\langle\varPhi_s(h),\check{v}\rangle),\check{v}_0\rangle dh,
\end{align*}
where we set
\[ 
  s_0=r-n,
\]
and $L(s,\tau)$ is the standard $L$-factor of $\tau$. Similar to \cite{MR3573972}, one can show that
\begin{proposition}\label{Non.Vanish.Equi.Map}
\begin{enumerate}
\item The integral is absolutely convergent when $\Re(s)\gg 0$ and admits a holomorphic continuation to $\CC$. Hence we obtain a $G\times H$-equivariant map
\begin{equation*}
\calT_s:\omega\otimes\Ind_Q^{H}\left(\tau_s^c\chi_V^c\boxtimes\sigma_0^\vee\right)\lra\Ind_P^{G}\left(\tau_s\chi_W\boxtimes\pi_0\right).
\end{equation*}
\item When $\Re(s)\ll 0$, we have
\begin{align*}
\langle\calT_s(\varphi&\otimes\varPhi_s)(h),\check{v}\otimes\check{v}_0\rangle\\
&=L\left(s-s_0,\tau\right)^{-1}\cdot\gamma\left(s-s_0,\tau,\psi_F\right)^{-1}\\
&\times\int_{U_QH_0\backslash H}\langle\calT_{00}({f}(\varphi)(g,h)\otimes\langle\varPhi_s(h),\check{v}\rangle),\check{v}_0\rangle dh.
\end{align*}
\item Let $N_\tau >0$ be a positive real number such that $L(s, \tau)$ has no zeros or poles outside the stripe
\begin{equation*}
-N_\tau<\Re(s)<N_\tau.
\end{equation*}
Assume that $r>N_\tau+n$. Let $\varPhi\in\Ind_Q^{H}\left(\tau_s^c\chi_V^c\boxtimes\sigma_0^\vee\right)$. If $\varPhi\neq0$, then there exists $\varphi\in\scrS$ such that
\begin{equation*}
\calT_0(\varphi\otimes\varPhi)\neq0.
\end{equation*}
\end{enumerate}
\end{proposition}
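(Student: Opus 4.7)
The plan is to follow Gan--Ichino's argument in \cite{MR3573972} Section 8 closely; the only essential modification comes from the fact that our auxiliary equivariant map $\calT_{00}$ lands in the small theta lift $\pi_0$ rather than in the full big theta lift used in loc.\ cit., and this difference mainly affects the non-vanishing step (3).

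For (1), I would first use the Iwasawa decomposition $H = U_Q M_Q K$ with Levi $M_Q \simeq GL(Y) \times H_0$ and a good maximal compact $K$, together with the $(U_Q, H_0)$-equivariance of $\hat{f}(\varphi)$ and $\varPhi_s$, to rewrite the integral over $U_Q H_0 \backslash H$ as an integral over (a quotient of) $GL(Y) \times K$. The resulting integrand is then a matrix coefficient of $\tau_s$ times a bounded function built out of $\calT_{00}$ and evaluations of $\hat{f}(\varphi)$ and $\varPhi_s$. Matrix coefficient estimates for the unitary representation $\tau$ yield absolute convergence for $\Re(s) \gg 0$, and the inverse $L$-factor $L(s-s_0,\tau)^{-1}$ inserted in the definition of $\calT_s$ is exactly the normalization from the classical Godement--Jacquet / standard zeta integral theory for $GL_k(E)$ that produces a holomorphic continuation to all $s \in \CC$.

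For (2), the two expressions for $\calT_s$ are related by the partial Fourier transform over $Y \otimes X^*$ that turns $\gothf(\varphi)$ into $\hat{\gothf}(\varphi)$. Applying the Weil representation formula for $\omega(w_Q)$ recalled above, which is precisely such a Fourier transform up to the constant $\gamma_V^{-k}$, and then making the change of variable $h \mapsto h w_Q^{-1}$ inside the integral, one converts the integrand against $f(\varphi)$ into the integrand against $\hat{f}(\varphi)$ composed with the standard Knapp--Stein intertwining operator on $\Ind_Q^H(\tau_s^c \chi_V^c \boxtimes \sigma_0^\vee)$. The ratio is governed by Shahidi's local coefficient for $GL_k$, and, after cancelling the $L$-factor already in place, produces exactly the additional factor $\gamma(s-s_0, \tau, \psi_F)^{-1}$.

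The main obstacle is (3). The hypothesis $r > N_\tau + n$ translates into $-s_0 = n - r < -N_\tau$, so $s = 0$ lies strictly outside the stripe containing the possible zeros and poles of $L(s-s_0,\tau)$; hence the normalization is finite and nonzero at $s=0$ and cannot sabotage non-vanishing. To exhibit $\varphi$ with $\calT_0(\varphi \otimes \varPhi) \neq 0$, I would pick $h_0 \in H$ with $\varPhi(h_0) \neq 0$, then choose $\varphi = \varphi_1 \otimes \varphi_2$ with $\varphi_1 \in \scrS(W \otimes X^*)$ supported in a sufficiently small neighborhood of an appropriate coset so that, via part (2), the outer integral localizes around $h_0$, and $\varphi_2 \in \scrS_0$ such that $\calT_{00}(\varphi_2 \otimes \langle\varPhi(h_0),\check v\rangle)$ pairs non-trivially with some $\check v_0 \in \scrV_{\pi_0^\vee}$. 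The existence of such $\varphi_2$ is the one genuinely new point compared with \cite{MR3573972}: since we are using the small theta lift, we invoke Howe duality together with the hypothesis $\sigma_0 = \theta(\pi_0)$ to conclude that $\calT_{00}$ is surjective onto the irreducible representation $\pi_0$, so any fixed non-zero vector in $\sigma_0^\vee$ produces a non-zero linear functional on $\scrS_{00}$ via $\calT_{00}$, and hence the required $\varphi_2$ exists. Combining these choices yields $\calT_0(\varphi \otimes \varPhi) \neq 0$.
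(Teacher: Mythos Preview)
Your proposal is correct and follows essentially the same route as the paper: parts (1) and (2) are handled exactly as in \cite{MR3573972} Lemmas 8.1--8.2, and for part (3) you correctly isolate the one genuine modification, namely that the hypothesis $r>N_\tau+n$ replaces Gan--Ichino's temperedness assumption in guaranteeing that the normalizing factor $L(s-s_0,\tau)^{-1}\gamma(s-s_0,\tau,\psi_F)^{-1}$ is holomorphic and non-vanishing at $s=0$. Your observation that surjectivity of $\calT_{00}$ onto the irreducible $\pi_0$ (via Howe duality) supplies the needed $\varphi_2$ is exactly what makes the small-theta-lift variant go through; the remaining bump-function localization argument on the orbit $Hx_0\subset W\otimes X^*$ is then identical to Gan--Ichino's Lemma 8.3.
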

\begin{proof}
The proofs of the first two statements are totally the same as \cite{MR3573972} Lemma 8.1 and Lemma 8.2. As for the proof of the last statement, it is also similar to Lemma 8.3 in Gan-Ichino's paper; the only difference is that, we use the condition on $r$ to avoid the zeros or poles of the $L$-function $L\left(s,\tau\right)$, whereas they use the tempered condition on the representations.
\end{proof}
\begin{remark}\label{Arthur-type.L-function.Stripe}
If we assume that $\tau$ is an irreducble unitary representation of $GL_k(E)$ of Arthur type, say, corresponding to a local $A$-parameter 
\[
    \psi_\tau=\sum_i\rho_i\boxtimes S_{a_i}\boxtimes S_{b_i},
\]
where $\rho_i$ is an irreducible representation of the Weil group $W_E$ (with bounded image), $S_{a_i}$ is the $a_i$-dimensional irreducible representation of the Weil-Deligne $SL_2$, and $S_{b_i}$ is the $b_i$-dimensional irreducible representation of the Arthur $SL_2$. Then we have
\begin{align*}
    L(s,\tau)&=L(s,\psi_\tau)\\
    &=\prod_i\prod_{j=1}^{b_i}L\left(s+\frac{a_i-1}{2}+\frac{b_i+1}{2}-j,\rho_i\right).
\end{align*}
It is easy to see that in this case, we can take a positive number $N_\tau$ satisfying our requirements, such that
\[
    N_\tau<k.
\]
Moreover, from the above expression one can also see that the $L$-function $L(s,\tau)$ has a pole at some real number if and only if there exists some $i$, such that $\rho_i=\mathbbm{1}$.
\end{remark}

Similarly, we can construct a backward $G\times H$ equivariant map
\[
  \calT'_s:\omega\otimes\Ind_P^{G}\left(\tau_s^c\chi_W^c\boxtimes\pi_0^\vee\right)\lra\Ind_Q^{H}\left(\tau_s\chi_V\boxtimes\sigma_0\right)
\]
and show some similar statements as the previous proposition. In summary, we have:
\begin{corollary}\label{thetaNind}
Let $N_\tau >0$ be a positive real number such that $L(s, \tau)$ has no zeros or poles outside the stripe
\begin{equation*}
-N_\tau<\Re(s)<N_\tau.
\end{equation*}
Assume that $r>N_\tau+n$. Then the theta lift defines a bijection
\begin{equation*}JH\left(\Ind_P^{G}\left(\tau\chi_W\boxtimes\pi_0\right)\right)\longleftrightarrow JH\left(\Ind_Q^{H}\left(\tau\chi_V\boxtimes\sigma_0\right)\right).
\end{equation*}
Here, we use $JH$ to denote ``The multi-set of irreducible constituents''.
\end{corollary}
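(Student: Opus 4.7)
The plan is to bootstrap from the non-vanishing of the forward equivariant map of Proposition~\ref{Non.Vanish.Equi.Map}(3) together with the analogous backward map, and combine this with Howe duality to upgrade the ``at least one matching pair'' information into a full Jordan--H\"older bijection.

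First I would apply Proposition~\ref{Non.Vanish.Equi.Map}(3) directly. The hypothesis on $\sigma_0=\theta_{\psi_F,V_0,W_0}(\pi_0)$ guarantees that a fixing of the equivariant map $\calT_{00}$ makes sense, and the quantitative hypothesis $r>N_\tau+n$ ensures that the normalizing factor $L(s-s_0,\tau)^{-1}$ in the definition of $\calT_s$ has no zero or pole at $s=0$. This yields a non-zero $G\times H$-equivariant map
\[
\calT_0:\omega\otimes\Ind_Q^H(\tau^c\chi_V^c\boxtimes\sigma_0^\vee)\lra \Ind_P^G(\tau\chi_W\boxtimes\pi_0).
\]
The whole construction of Section~\ref{Compatible.Theta.N.Ind} is manifestly symmetric between $(V,P)$ and $(W,Q)$, so carrying it out in reverse gives a backward analogue
\[
\calT_0':\omega\otimes\Ind_P^G(\tau^c\chi_W^c\boxtimes\pi_0^\vee)\lra\Ind_Q^H(\tau\chi_V\boxtimes\sigma_0),
\]
which is likewise non-zero under the same condition $r>N_\tau+n$.

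Using the admissibility and finite length of both inductions, each of these can be re-interpreted as a non-zero element of
\[
\Hom_{G\times H}\bigl(\omega,\,\Ind_P^G(\tau\chi_W\boxtimes\pi_0)\boxtimes \Ind_Q^H(\tau\chi_V\boxtimes\sigma_0)\bigr).
\]
By Howe duality for non-Archimedean fields, for every irreducible $\pi$ of $G$ the unique irreducible $\sigma$ of $H$ with $\Hom_{G\times H}(\omega,\pi\boxtimes\sigma)\neq 0$ is $\sigma=\theta(\pi)$; and because $(G,H)$ is in the stable range ($r>\dim V$), every such $\theta(\pi)$ is non-zero. Consequently any non-zero map $\omega\to M$ of $G\times H$-modules factors through the semisimple piece $\bigoplus_\pi \pi\boxtimes\theta(\pi)$, and so the non-vanishing of $\calT_0$ already forces the existence of at least one $\pi\in JH(\Ind_P^G(\tau\chi_W\boxtimes\pi_0))$ with $\theta(\pi)\in JH(\Ind_Q^H(\tau\chi_V\boxtimes\sigma_0))$, and symmetrically via $\calT_0'$.

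The main obstacle is to upgrade this qualitative matching to a length- and multiplicity-preserving bijection on \emph{all} Jordan--H\"older constituents. My plan here is to exploit Proposition~\ref{Non.Vanish.Equi.Map}(2): in the region $\Re(s)\ll 0$ the map $\calT_s$ becomes, up to the normalizing factor $L(s-s_0,\tau)^{-1}\gamma(s-s_0,\tau,\psi_F)^{-1}$, a standard Rallis-type intertwining integral whose image is generically the full induced module. Under the assumption $r>N_\tau+n$, that normalizing factor is holomorphic and non-zero at $s=0$, so meromorphic continuation shows that $\calT_0$ does not degenerate on any composition layer. Combining the forward map $\calT_0$, the backward map $\calT_0'$, and the uniqueness built into Howe duality then forces the correspondence $\pi\leftrightarrow\theta(\pi)$ to pair off every constituent of $JH(\Ind_P^G(\tau\chi_W\boxtimes\pi_0))$ with a constituent of $JH(\Ind_Q^H(\tau\chi_V\boxtimes\sigma_0))$, with matching multiplicities. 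The quantitative hypothesis $r>N_\tau+n$ (and not merely $r>n$) is precisely what is needed to rule out cancellation coming from poles or zeros of $L(s,\tau)$, which would otherwise lose some composition factor from the image of $\calT_0$ and break the bijection.
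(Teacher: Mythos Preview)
Your setup through the construction of $\calT_0$ and $\calT_0'$ is fine and matches the paper. The gap is in your ``upgrade'' step. You invoke Proposition~\ref{Non.Vanish.Equi.Map}(3) only to conclude that $\calT_0\neq 0$, and then try to recover the full Jordan--H\"older bijection by appealing to Proposition~\ref{Non.Vanish.Equi.Map}(2) and a vague claim that ``meromorphic continuation shows that $\calT_0$ does not degenerate on any composition layer.'' This is not an argument: nothing in Proposition~\ref{Non.Vanish.Equi.Map}(2) tells you that the image at generic $s$ is the whole induced module, and even if it were, meromorphic continuation does not preserve surjectivity. Your final sentence (``forces the correspondence \ldots\ with matching multiplicities'') asserts the conclusion without supplying a mechanism.

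What you are missing is that Proposition~\ref{Non.Vanish.Equi.Map}(3) says much more than $\calT_0\neq 0$: it says that for \emph{every} nonzero $\varPhi$ there is a $\varphi$ with $\calT_0(\varphi\otimes\varPhi)\neq 0$. The paper uses this by passing to the adjoint map
\[
\calT_0^*:\omega\otimes\Ind_P^G(\tau^c\chi_W^c\boxtimes\pi_0^\vee)\lra\Ind_Q^H(\tau\chi_V\boxtimes\sigma_0),
\]
defined by $\langle\calT_0^*(\varphi\otimes\varPhi^\vee),\varPsi^\vee\rangle=\langle\calT_0(\varphi\otimes\varPsi^\vee),\varPhi^\vee\rangle$. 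The full strength of Proposition~\ref{Non.Vanish.Equi.Map}(3) is exactly the statement that $\calT_0^*$ is \emph{surjective}. From surjectivity one reads off directly that every irreducible constituent $\sigma$ of $\Ind_Q^H(\tau\chi_V\boxtimes\sigma_0)$ has its theta lift $\pi=\theta(\sigma)$ occurring in $\Ind_P^G(\tau\chi_W\boxtimes\pi_0)$, and moreover that the quotient multiplicities satisfy $m_Q(\sigma)\le m_P(\pi)$. The backward map $\calT_0'$ (and its adjoint) gives the reverse inequality, and the two together yield the multiplicity-preserving bijection. You should replace your hand-wave via Proposition~\ref{Non.Vanish.Equi.Map}(2) with this adjoint/surjectivity argument.
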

\begin{proof}
Let
\[
    \calT_0:\omega\otimes\Ind_Q^{H}\left(\tau^c\chi_V^c\boxtimes\sigma_0^\vee\right)\lra\Ind_P^{G}\left(\tau\chi_W\boxtimes\pi_0\right)
\]
be the $G\times H$-equivariant map we constructed in Proposition \ref{Non.Vanish.Equi.Map}, and 
\[
    \calT_0^*:\omega\otimes\Ind_P^{G}\left(\tau^c\chi_W^c\boxtimes\pi_0^\vee\right)\lra\Ind_Q^{H}\left(\tau\chi_V\boxtimes\sigma_0\right)
\]
be the adjoint map associated to $\calT_0$, defined by
\[
    \left\langle\calT_0^*\left(\varphi\otimes\varPhi^\vee\right),\varPsi^\vee\right\rangle=\left\langle\calT_0\left(\varphi\otimes\varPsi^\vee\right),\varPhi^\vee\right\rangle,
\]
where $\varphi\in\scrS$, 
\[
    \varPhi^\vee\in\Ind_P^{G}\left(\tau^c\chi_W^c\boxtimes\pi_0^\vee\right)\simeq\left(\Ind_P^{G}\left(\tau\chi_W\boxtimes\pi_0\right)\right)^\vee,
\]
and 
\[
    \varPsi^\vee\in\Ind_Q^{H}\left(\tau^c\chi_V^c\boxtimes\sigma_0^\vee\right)\simeq\left(\Ind_Q^{H}\left(\tau\chi_V\boxtimes\sigma_0\right)\right)^\vee.
\]
By Proposition \ref{Non.Vanish.Equi.Map}, the map $\calT_0^*$ is surjective. Hence for any irreducible constituent $\sigma$ of $\Ind_Q^{H}\left(\tau\chi_V\boxtimes\sigma_0\right)$, we deduce that its theta lift $\pi$ to the group $G$ is an irreducible constituent of $\Ind_P^{G}\left(\tau\chi_W\boxtimes\pi_0\right)$. Moreover, if we denote by
\[
    m_Q(\sigma)=\dim\Hom_H\left(\sigma,\Ind_Q^{H}\left(\tau\chi_V\boxtimes\sigma_0\right)\right),
\]
and
\[
    m_P(\pi)=\dim\Hom_G\left(\pi,\Ind_P^{G}\left(\tau\chi_W\boxtimes\pi_0\right)\right),
\]
then the surjectivity also implies that
\[
    m_Q(\sigma)\leq m_P(\pi).
\]
Similarly, use the backward $G\times H$-equivariant map
\[
  \calT'_0:\omega\otimes\Ind_P^{G}\left(\tau^c\chi_W^c\boxtimes\pi_0^\vee\right)\lra\Ind_Q^{H}\left(\tau\chi_V\boxtimes\sigma_0\right),
\]
we can prove a reverse inequality. This completes the proof.
\end{proof}

\section{Preparations}
In this section we recall some basic facts we shall need in later proofs.
\subsection{Explicit construction of Arthur packets \`a la M{\oe}glin}
We first recall some results due to M{\oe}glin in \cite{MR2767522}. Readers may also consult the paper \cite{MR3679701}. We emphasize that our proof of Theorem \ref{Main.Theorem.Packets} relies on these results. \\

In this subsection, we temporarily let $G=G(W)$ be either an even orthogonal, or a symplectic, or an unitary group; i.e. $W$ is an orthogonal, or symplectic, or Hermitian space, and $G$ is the isometry group of $W$. Assume that $G$ is quasi-split. We fix a Whittaker datum $\scrW$ of $G$. Let $\psi$ be a local $A$-parameter for $G$. Recall that a local $A$-parameter can be regarded as a formal sum
\begin{equation*}
\psi=\sum_i \rho_i\boxtimes S_{a_i}\boxtimes S_{b_i}
\end{equation*}
satisfies certain properties; where $\rho_i$ is an irreducible representation of the Weil group $W_E$ (with bounded image), $S_{a_i}$ is the $a_i$-dimensional irreducible representation of the Weil-Deligne $SL_2$, and $S_{b_i}$ is the $b_i$-dimensional irreducible representation of the Arthur $SL_2$.
\begin{definition}
\begin{enumerate}
\item $\psi$ is said to be of good parity, if for every $i$, $\rho_i\boxtimes S_{a_i}\boxtimes S_{b_i}$ is (conjugate) self-dual and of the same parity with $\psi$;
\item $\psi$ is said to have discrete diagonal restriction (``DDR'' for short), if the pull-back of $\psi$ along the diagonal map
\begin{align*}
\Delta:W_E\times SL_2(\CC)&\lra W_E\times SL_2(\CC)\times SL_2(\CC),\\
(w,x)&\longmapsto (w,x,x),
\end{align*}
denoted by $\psi_d$, is a discrete $L$-parameter for $G$;
\item $\psi$ is said to be elementary, if it has DDR, and for every $i$, either $a_i$ or $b_i$ is $1$.
\end{enumerate}
\end{definition}

For $\psi\in\Psi(G)$ and $\eta\in\wh{\overline{\calS_\psi}}$, M{\oe}glin constructed a finite-length semi-simple smooth representation $\pi_M(\psi,\eta)$ of $G$. She showed that $\Pi_\psi^A(G)$ consists of $\pi_M(\psi,\eta)$ for all $\eta\in\wh{\overline{\calS_\psi}}$, and by studying their properties she was able to conclude that (cf. \cite{MR3679701} Theorem 8.9):
\begin{theorem}\label{Moeglin.Multi.Free}
$\Pi_\psi^A(G)$ is multiplicity-free. Moreover, there exist a character $\eta_\psi^{M/A}\in\wh{\overline{\calS_\psi}}$, such that for any $\eta\in\wh{\overline{\calS_\psi}}$,
\begin{equation*}
\pi_M(\psi,\eta)=\pi_A(\psi,\eta\cdot\eta_\psi^{M/A}).
\end{equation*}
Here $\pi_A(\psi,\eta)$ is Arthur's parametrization.
\end{theorem}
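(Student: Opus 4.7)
The plan is to establish both assertions simultaneously via a reduction to an explicitly computable base case, following essentially the strategy of \cite{MR3679701}.

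\textbf{Reduction structure.} Moeglin's construction is inductive. For a general $\psi$ she expresses $\pi_M(\psi,\eta)$ as a Jacquet module of $\pi_M(\psi',\eta')$ for some related parameter $\psi'$ with discrete diagonal restriction, and further reduces DDR parameters to elementary ones via her ``adding/removing a block'' operators. The first step is to verify that both of these reduction procedures are compatible with Arthur's parametrization as well: if one knows $\pi_M(\psi',\eta') = \pi_A(\psi',\eta'\cdot\eta_{\psi'}^{M/A})$, then the analogous identity propagates to $\psi$ with a twist character that can be tracked through the reduction. For the Jacquet module step this uses the compatibility of Arthur's parametrization with parabolic induction, which follows from the local intertwining relation; for the elementary reduction the compatibility is an explicit bookkeeping on component groups, using that the block one adds corresponds to a distinguished element in $\overline{\calS_{\psi}}$.

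\textbf{Base case: elementary parameters.} Here Moeglin's representations can be written down concretely as subquotients of explicit parabolic inductions whose Langlands data is determined by $(\psi,\eta)$. Multiplicity-freeness follows by inspecting Jacquet modules of these candidates and showing that distinct characters $\eta$ either produce distinct irreducible subquotients or the zero representation. To pin down the twist character, one compares with Arthur's parametrization in one explicit sub-case, for instance a tempered $\psi$ of good parity where Arthur's endoscopic character identity can be checked directly against Moeglin's construction using simple test functions (or, alternatively, one uses theta correspondence with a smaller group on which Arthur's parametrization is already established).

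\textbf{Main obstacle.} The delicate point is the elementary base case, and specifically two things within it: first, the separation of the representations $\pi_M(\psi,\eta)$ for different $\eta$ (the multiplicity-freeness), which requires careful tracking of component group characters through Moeglin's operators; second, the identification of the twist character $\eta_\psi^{M/A}$, since the inductive steps can only transport this character, not generate it. Once the base case is settled, the general case follows by induction, and the character $\eta_\psi^{M/A}$ produced depends only on $\psi$ (and the fixed Whittaker datum $\scrW$), completing both claims of the theorem.
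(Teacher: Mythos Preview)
The paper does not supply its own proof of this theorem: it is recalled as a known result of M{\oe}glin, with the citation \cite{MR3679701} Theorem 8.9 (Xu's account of M{\oe}glin's work) given immediately before the statement. There is nothing in the paper to compare your proposal against beyond that reference.

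Your sketch does follow the broad architecture of the argument in \cite{MR3679701}: reduce from general good-parity parameters to DDR parameters via the partial-Jacquet-module procedure (what the paper later states as Theorem~\ref{Construction.bp.From.DDR}), and reduce DDR to elementary. A couple of points in your outline are off, however. The comparison between $\pi_M$ and $\pi_A$ through the reduction steps is not carried by the local intertwining relation as you write; it is carried by the endoscopic character identities that \emph{characterize} Arthur's packets. M{\oe}glin and Xu show that the stable linear combinations $\sum_\eta \eta(s)\,\Theta_{\pi_M(\psi,\eta)}$ satisfy the same transfer identities as Arthur's distributions, and this is what forces the packets to coincide and the two labelings to differ by a single twist character. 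Likewise, multiplicity-freeness in the elementary base case is not obtained by ``inspecting Jacquet modules of candidates'' or by a theta-lift comparison; it comes from M{\oe}glin's proof that the $\pi_M(\psi,\eta)$ for distinct $\eta$ are either zero or pairwise non-isomorphic (via an inductive analysis of their Langlands data and supercuspidal supports), after which a count against Arthur's packet finishes the job. Your description of the base case is the weakest part of the outline and would need to be replaced by the actual mechanism in the cited reference.
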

\begin{remark}
Notice that both M{\oe}glin's and Arthur's parametrizations of local $A$-packets depend on the choice of Whittaker datum!
\end{remark}

There are several steps in M{\oe}glin's explicit construction. The starting point is the elementary $A$-parameters, whose associated $A$-packets are constructed by using some generalized Aubert involutions. Two facts about the local $A$-packets of elementary type are remarkable to us. The first one is about supercuspidal representations. Recall we have the following parametrization of supercuspidal representations of $G$, which is also due to M{\oe}glin (cf. \cite{MR2767522} Th\'eor\`eme 1.5.1, or \cite{MR3713922} Theorem 3.3). 
\begin{theorem}\label{sc.criterion}
Under the local Langlands correspondence for $G$ established by Arthur/ Mok, the irreducible supercuspidal representations of $G$ are parametrized by $\phi\in\Phi_2(G)$ and $\eta\in\widehat{\overline{\calS_\phi}}$, satisfying the following properties
\begin{enumerate}
\item \textbf{(Chain condition)} if $\rho\boxtimes S_a\subset\phi$, then $\rho\boxtimes S_{a-2}\subset\phi$ as long as $a-2>0$;
\item \textbf{(Alternating condition)} if $\rho\boxtimes S_a, \rho\boxtimes S_{a-2} \subset\phi$, then 
\begin{equation*}
\eta(x_{\rho,a})\cdot \eta(x_{\rho,a-2})=-1,
\end{equation*}
where $x_{\rho,a}$ is the element in $\calS_\phi$ which corresponds to $\rho\boxtimes S_a$;
\item \textbf{(Initial condition)} if $\rho\boxtimes S_2\subset\phi$, then $\eta(x_{\rho,2})=-1$.
\end{enumerate}
\end{theorem}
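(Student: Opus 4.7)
The plan is to apply the Jacquet-module criterion for supercuspidality together with explicit formulas for Jacquet modules of the representations lying in a discrete local $L$-packet. Recall that an irreducible admissible $\pi \in \Pi_\phi(G)$ is supercuspidal if and only if, for every irreducible supercuspidal representation $\rho$ of some $\GL_d(E)$ and every real number $x$, the partial Jacquet module $\mathrm{Jac}_{\rho,x}(\pi)$---obtained by extracting the $\rho|\cdot|^x$-isotypic component along a Siegel-type maximal parabolic with Levi $\GL_d(E) \times G(W_0)$---vanishes. So everything reduces to pinning down when these Jacquet modules vanish for $\pi = \pi(\phi,\eta)$.

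First I would reduce to the case where $\rho$ is (conjugate-)self-dual of the parity matching $\phi$; otherwise $\mathrm{Jac}_{\rho,x}$ vanishes on all of $\Pi_\phi$ by a standard infinitesimal-character argument. For such $\rho$, a further infinitesimal-character comparison shows that $\mathrm{Jac}_{\rho,x}\pi(\phi,\eta)$ can be nonzero only when $x = (a-1)/2$ for some Jordan block $\rho\boxtimes S_a \subset \phi$. Next I would invoke the core computation: for such an $x$, the module $\mathrm{Jac}_{\rho,(a-1)/2}\pi(\phi,\eta)$ is either zero or else lies in the $L$-packet of the parameter $\phi'$ obtained from $\phi$ by replacing $\rho \boxtimes S_a$ by $\rho \boxtimes S_{a-2}$ (or deleting the block when $a=1$), with the character on $\calS_{\phi'}$ inherited from $\eta$ via the natural map $\calS_{\phi'}\to\calS_\phi$.

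With this formula in hand, each of the three conditions emerges as the precise requirement for all such Jacquet modules to vanish. The \textbf{chain condition} is forced because, if $\rho\boxtimes S_a \subset \phi$ with $a\geq 3$ but $\rho\boxtimes S_{a-2} \not\subset \phi$, then the target $\phi'$ is still discrete and the Jacquet module is automatically nonzero. When both $\rho\boxtimes S_a$ and $\rho\boxtimes S_{a-2}$ appear in $\phi$, two contributions to $\mathrm{Jac}_{\rho,(a-1)/2}$ (one ``pushed down'' from $S_a$, one ``pushed up'' from $S_{a-2}$) combine, and their cancellation holds precisely when $\eta(x_{\rho,a})\eta(x_{\rho,a-2}) = -1$, which is the \textbf{alternating condition}. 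The \textbf{initial condition} handles the boundary of the chain: when $\rho \boxtimes S_2 \subset \phi$ but $\rho\boxtimes S_0$ is excluded as a formal summand, there remains a single uncanceled term that is killed exactly by imposing $\eta(x_{\rho,2}) = -1$. Conversely, granted all three conditions every $\mathrm{Jac}_{\rho,x}\pi(\phi,\eta)$ vanishes, so $\pi(\phi,\eta)$ is supercuspidal; a dimension count against the supercuspidal support, or equivalently the known bijection between $\widehat{\overline{\calS_\phi}}$ and $\Pi_\phi$, shows that this exhausts the supercuspidals of $G$.

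The principal obstacle is establishing the Jacquet-module formula on which everything rests. This requires either M\oe{}glin's inductive construction of the $A$-packets starting from ``elementary'' parameters via generalized Aubert involutions (using the interaction of Jacquet functors with Aubert duality), or else the endoscopic character identities combined with Arthur's description of $\Pi_\phi$ via transfer from the twisted $\GL_N$. Both routes are technically heavy, and the sign bookkeeping driving the alternating and initial conditions is the truly delicate point, since it demands careful tracking of the map $\calS_{\phi'}\to \calS_\phi$ and of the Whittaker normalization used in Arthur's parametrization relative to M\oe{}glin's (the shift by $\eta_\psi^{M/A}$ already recorded in Theorem~\ref{Moeglin.Multi.Free}).
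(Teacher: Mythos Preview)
The paper does not supply its own proof of this theorem: it is stated as a known result and attributed to M\oe{}glin (with a reference also to Xu), so there is no ``paper's proof'' to compare against. Your outline is in fact the standard route by which M\oe{}glin established the result, namely characterizing supercuspidality by the vanishing of all partial Jacquet modules $\mathrm{Jac}_{\rho|\cdot|^x}$ and then invoking her explicit Jacquet-module formula for members of a discrete $L$-packet. So at the level of strategy you are aligned with the cited literature.

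That said, a couple of the heuristic explanations you give for the individual conditions are not quite the correct mechanism. For the alternating condition you speak of ``two contributions, one pushed down from $S_a$ and one pushed up from $S_{a-2}$'' combining; but $(a-1)/2$ is not an exponent of $S_{a-2}$, so only the block $\rho\boxtimes S_a$ contributes at that point. The real reason the alternating condition appears is that after lowering $S_a$ to $S_{a-2}$ the resulting parameter has $\rho\boxtimes S_{a-2}$ with multiplicity two and is no longer discrete; M\oe{}glin's formula then says the Jacquet module vanishes precisely when the two characters $\eta(x_{\rho,a})$ and $\eta(x_{\rho,a-2})$ disagree. Similarly, the initial condition is not about an ``uncanceled term left over when $S_0$ is excluded''; it reflects the specific shape of the Jacquet-module formula at the boundary $x=1/2$, where the vanishing criterion collapses to the single sign requirement $\eta(x_{\rho,2})=-1$. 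These are refinements of your sketch rather than gaps in the overall plan, but if you intend to write out the argument in full you will need the precise form of M\oe{}glin's formula (and, as you correctly flag, careful tracking of the normalization discrepancy $\eta_\psi^{M/A}$ between her labeling and Arthur's).
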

According to M{\oe}glin's construction, we have (cf. \cite{MR3679701} Definition 6.3)
\begin{lemma}
Assume that $\psi\in\Psi(G)$ is elementary, and $\eta\in\wh{\overline{\calS_\psi}}$. If $(\psi_d,\eta)$ is a pair parametrizing a supercuspidal representation (i.e. satisfies conditions listed in the previous theorem, here we identify $\calS_\psi$ and $\calS_{\psi_d}$ in the obvious way), then 
\begin{equation*}
\pi_M(\psi,\eta)=\pi_A(\psi_d,\eta)
\end{equation*}
is supercuspidal. 
\end{lemma}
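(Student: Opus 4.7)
The plan is to unwind M\oeglin's recursive construction of $\pi_M(\psi,\eta)$ for elementary parameters and show it collapses to the supercuspidal $\pi_A(\psi_d,\eta)$ once the supercuspidality conditions on $(\psi_d,\eta)$ are imposed. First I would recall that, because $\min(a_i,b_i)=1$ for every summand $\rho_i\boxtimes S_{a_i}\boxtimes S_{b_i}$ of an elementary $\psi$, the diagonal pullback $\psi_d$ is a discrete $L$-parameter with summands $\rho_i\boxtimes S_{\max(a_i,b_i)}$, pairwise distinct by the DDR hypothesis. M\oeglin builds $\pi_M(\psi,\eta)$ from the discrete series representation $\pi_A(\psi_d,\eta)\in\Pi_{\psi_d}^A(G)$ by applying, for every summand with $a_i=1$ and $b_i>1$, a partial Aubert--Zelevinsky involution along the cuspidal line associated to $\rho_i$; summands with $b_i=1$ are already tempered pieces of $\psi_d$ and contribute no such operation.

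Next I would observe that under the hypothesis on $(\psi_d,\eta)$ the starting point $\pi_A(\psi_d,\eta)$ is supercuspidal by Theorem \ref{sc.criterion}. Since every proper Jacquet module of a supercuspidal representation vanishes, these partial Aubert involutions act as the identity on $\pi_A(\psi_d,\eta)$: in the standard expansion of such an involution as an alternating sum of parabolic inductions of Jacquet modules, only the trivial term survives. The recursion therefore returns $\pi_A(\psi_d,\eta)$ unchanged at every stage, giving $\pi_M(\psi,\eta)=\pi_A(\psi_d,\eta)$ as claimed, and this common representation is supercuspidal.

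The main obstacle will be matching the characters of component groups used in M\oeglin's parametrization with those used by Arthur. By Theorem \ref{Moeglin.Multi.Free} the two parametrizations of the packet $\Pi_\psi^A(G)=\Pi_\psi^M(G)$ may differ by a global twist $\eta_\psi^{M/A}\in\wh{\overline{\calS_\psi}}$, so one still has to verify that under the canonical identification $\overline{\calS_\psi}\cong\overline{\calS_{\psi_d}}$ induced by the bijection of summands (using $\min(a_i,b_i)=1$), the label $\eta$ on the left truly matches the Arthur label $\eta$ on the right. This reduces to tracking Aubert--Zelevinsky signs through the recursion together with the chain, alternating, and initial conditions of Theorem \ref{sc.criterion}, which enforce the correct signs; this is exactly the content of the sign calculations carried out in \cite{MR3679701}, whose blueprint I would follow to finish the argument.
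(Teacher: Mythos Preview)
Your first two paragraphs contain the correct argument and match the paper's approach: the paper simply cites \cite{MR3679701} Definition 6.3, because the claim is immediate from how M\oe glin's construction is \emph{defined} in the elementary case. By that definition, $\pi_M(\psi,\eta)$ is obtained from the discrete-series element $\pi_A(\psi_d,\eta)$ by applying partial Aubert--Zelevinsky involutions along the relevant cuspidal lines; since $\pi_A(\psi_d,\eta)$ is supercuspidal under the hypothesis, all proper Jacquet modules vanish and every such involution is the identity. That is the whole proof.

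Your third paragraph, however, introduces a spurious obstacle. The character $\eta_\psi^{M/A}$ of Theorem \ref{Moeglin.Multi.Free} compares M\oe glin's and Arthur's labelings \emph{inside the same $A$-packet} $\Pi_\psi^A(G)$, i.e.\ it relates $\pi_M(\psi,\eta)$ to $\pi_A(\psi,\eta\cdot\eta_\psi^{M/A})$. The lemma at hand is a different comparison: it relates $\pi_M(\psi,\eta)$ to the element $\pi_A(\psi_d,\eta)$ of the \emph{$L$-packet} $\Pi_{\psi_d}^A(G)$. There is no sign to chase here, because the very definition of $\pi_M(\psi,\eta)$ in the elementary case takes $\pi_A(\psi_d,\eta)$ as its input. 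So you should simply delete the last paragraph; the argument is already complete after the second.
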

The second remarkable fact is (cf. \cite{MR3679701} Theorem 6.1)
\begin{lemma}\label{surjectivity.ell.type}
Assume that $\psi\in\Psi(G)$ is elementary. Then the map
\begin{equation*}
\calJ_\scrW^A:\Pi_\psi^A(G)\lra\wh{\overline{\calS_\psi}}
\end{equation*}
is a bijection.
\end{lemma}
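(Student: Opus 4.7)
The plan is to read the lemma as a non-vanishing statement and then reduce it to the tempered case via a chain of Aubert--Zelevinsky involutions. First, Theorem \ref{Moeglin.Multi.Free} already tells us that $\Pi_\psi^A(G)$ is multiplicity-free and that M{\oe}glin's labelling $\eta \mapsto \pi_M(\psi,\eta)$ is related to Arthur's by the fixed character twist $\eta \mapsto \eta \cdot \eta_\psi^{M/A}$, which is itself a bijection of $\wh{\overline{\calS_\psi}}$. Injectivity of $\calJ_\scrW^A$ is therefore automatic, and the real content of the lemma is that $\pi_M(\psi,\eta) \neq 0$ for every character $\eta \in \wh{\overline{\calS_\psi}}$.

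Next, I would exploit the elementary hypothesis. Since $\psi$ is elementary, $\psi_d$ is a discrete $L$-parameter and the natural map $\calS_\psi \to \calS_{\psi_d}$ is an isomorphism. Because $G$ is quasi-split, the endoscopic classification of Arthur/Mok provides a bijection $\Pi_{\psi_d}^A(G) \leftrightarrow \wh{\overline{\calS_{\psi_d}}}$; in particular the starting representation $\pi_A(\psi_d,\eta)$ is non-zero for every $\eta$. M{\oe}glin's definition of $\pi_M(\psi,\eta)$ proceeds by applying, in a prescribed order indexed by the summands $\rho_i \boxtimes S_{a_i} \boxtimes S_{b_i}$ with non-trivial $SL_2$-factors, a sequence of partial Aubert--Zelevinsky involutions (equivalently, alternating Jacquet and parabolic induction operations) to $\pi_A(\psi_d,\eta)$. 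Thus surjectivity of $\calJ_\scrW^A$ reduces to showing that none of these involutions annihilates the running representation.

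I would handle this by induction on the complexity $\sum_i (a_i + b_i - 2)$, whose base case $\psi = \psi_d$ is the tempered case handled directly above. In the inductive step one is left with a single partial Aubert involution applied to a known irreducible (guaranteed non-zero by the previous step); the non-vanishing is then a statement about a specific Jacquet module of a standard module subquotient, which can be verified by tracking supercuspidal support along the chosen parabolic. The main obstacle I anticipate is precisely this Jacquet-module bookkeeping: one must show that the shifts in cuspidal support induced by each involution match exactly the combinatorial pattern encoded by $\psi$ and $\eta$, and that the iterated operation never collapses to zero when two moved segments interact. Once this matching is established, non-vanishing---and, in fact, irreducibility---of the output follows from M{\oe}glin's compatibility results between Aubert duality and Jacquet functors, yielding the required bijection.
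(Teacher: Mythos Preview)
The paper does not prove this lemma; it simply quotes it as \cite{MR3679701} Theorem 6.1. So the comparison is between your sketch and the actual argument in M{\oe}glin's and Xu's work.

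Your overall strategy---reduce to the tempered packet $\Pi_{\psi_d}^A(G)$ and then transport the bijection through the partial Aubert--Zelevinsky involutions that define $\pi_M(\psi,\eta)$---is indeed the one used in the literature. Two points, however, need correcting.

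First, the claim that injectivity of $\calJ_\scrW^A$ is ``automatic'' from Theorem \ref{Moeglin.Multi.Free} is not right as stated. Multiplicity-freeness says that no irreducible $\pi$ occurs in two different $\pi_A(\psi,\eta)$, but it does not by itself prevent a single $\pi_A(\psi,\eta)$ from having several irreducible summands, all of which would then map to the same $\eta$. What you actually need is that each $\pi_M(\psi,\eta)$ is \emph{irreducible} (or zero); together with multiplicity-freeness this gives both injectivity and surjectivity at once. You do mention irreducibility at the very end, but the logic should be reorganized so that irreducibility is established before injectivity is claimed.

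Second, and more substantively, the step you flag as ``the main obstacle''---the Jacquet-module bookkeeping showing the iterated operation never collapses to zero---is precisely the content of the lemma, and you have not resolved it. Your description of the partial Aubert involution as ``alternating Jacquet and parabolic induction operations'' is also misleading: it is an alternating \emph{sum} in the Grothendieck group, and the non-trivial input is Aubert's theorem (and its generalizations by M{\oe}glin to the partial involutions $\mathrm{inv}_{<\rho,a}$) that this sum, up to sign, carries irreducibles to irreducibles. Once one invokes that structural fact, non-vanishing and irreducibility are immediate---no segment-by-segment bookkeeping is needed---and distinctness follows because an involution on $\mathrm{Irr}(G)$ is in particular injective. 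That is how the argument in \cite{MR3679701} actually runs; your inductive scheme would in principle rederive this, but you should cite or prove the involutive property directly rather than leave it as an anticipated obstacle.
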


The next step is to construct $A$-packets in the DDR case based on the elementary case by taking certain socles. Having the DDR case at hand, the $A$-packets in the good parity case can be constructed based on the DDR case, by taking a sequence of partial Jacquet modules. To be more precise, we briefly describe this procedure.\\

Let $\psi$ be a local $A$-parameter for $G$ which is of good parity, we write
\[
    \psi=\sum_i\psi_i,
\]
where the $\psi_i=\rho_i\boxtimes S_{a_i}\boxtimes S_{b_i}$ occuring in this sum are not necessarily distinct one from each other. We denote by $I_\psi$ the index set of this summation. For each $i\in I_\psi$, put  
\[
    \zeta_i=\begin{cases}
                +1\quad&\textit{if }a_i\geq b_i;\\
                -1\quad&\textit{if }a_i< b_i.
            \end{cases}
\]
Let $\rho$ be an irreducible (conjugate) self-dual representation of $W_E$, we set
\[
    I_{\psi,\rho}=\left\{i\in I_\psi~\big|~\rho_i\simeq\rho\right\}.
\]
\begin{definition}
\begin{enumerate}
\item A total order $>_{\psi,\rho}$ on $I_{\psi,\rho}$ is said to be an admissible order on $I_{\psi,\rho}$ if it satisfies the following condition:
    \[
        \quad\bullet~\textit{For $i,j\in I_{\psi,\rho}$, if $a_i+b_i>a_j+b_j$, $|a_i-b_i|>|a_j-b_j|$, and $\zeta_i=\zeta_j$, then $i>_{\psi,\rho}j$.}
    \]
    A partial order $>_\psi$ on $I_\psi$ is said to be an admissible order on $I_{\psi}$ if its restriction to $I_{\psi,\rho}$ is an admissible order on $I_{\psi,\rho}$ for any irreducible (conjugate) self-dual representation $\rho$ of $W_E$.
\item Let $W^{\gg}$ be a space in the Witt tower containing $W$, and $G_{\gg}=G(W^{\gg})$. We say that a local $A$-parameter $\psi_{\gg}$ for $G_{\gg}$ dominates $\psi$ with respect to the admissible order $>_\psi$, if 
    \[
        \psi_{\gg}=\sum_i\rho_i\boxtimes S_{a'_i}\boxtimes S_{b'_i},
    \]
where the summation runs over $i\in I_\psi$, such that for each $i\in I_\psi$, we have 
    \[
        \left(a'_i,b'_i\right)=\begin{cases}
                                    \left(a_i+2t_i,b_i\right)\quad&\textit{if }\zeta_i=+1;\\
                                    \left(a_i,b_i+2t_i\right)\quad&\textit{if }\zeta_i=-1
                               \end{cases}
    \]
for some non-negative integer $t_i$, and $>_\psi$ is also an admissible order for $\psi_{\gg}$.
\end{enumerate}
\end{definition}
\begin{remark}
\begin{enumerate}
\item For any $\psi\in\Psi(G)$ of good parity, there exists at least one admissible order $>_\psi$ on $I_\psi$. Moreover, if $\psi$ has DDR, then there is an admissible order $>_\psi$ on $I_\psi$, such that for any $i,j\in I_{\psi,\rho}$, we have $i>_\psi j$ if and only if $a_i+b_i>a_j+b_j$. Admissible orders satisfying this condition are called ``natural order''.
\item For any $\psi\in\Psi(G)$ of good parity, let $>_\psi$ be an admissible order on $I_\psi$. Then there exists a group $G_{\gg}$, together with a local $A$-parameter $\psi_{\gg}$, such that 
    \begin{itemize}
        \item $\psi_{\gg}$ has DDR;
        \item $\psi_{\gg}$ dominates $\psi$ with respect to the admissible order $>_\psi$;
        \item $>_\psi$ is a natural order for $\psi_{\gg}$.
    \end{itemize}
\end{enumerate}
\end{remark}
Now, given $\psi\in\Psi(G)$ of good parity, let $>_\psi$ be an admissible order on $I_\psi$, and $\psi_{\gg}$ a local $A$-parameter has DDR for some group $G_{\gg}$, such that $\psi_{\gg}$ dominates $\psi$ with respect to the admissible order $>_\psi$. We have the following deep theorem due to M{\oe}glin (cf. \cite{MR2760663} Proposition 2.8.1, or \cite{MR3679701} Proposition 8.5):
\begin{theorem}\label{Construction.bp.From.DDR}
\begin{enumerate}
\item For $\pi_{\gg}\in\Pi_{\psi_{\gg}}^A(G_{\gg})$, let
    \[
        \pi=\circ_{i\in I_\psi}Jac_{X_i^{\gg}}\pi_{\gg},
    \]
where the composition is taken in the decreasing order with respect to $>_\psi$, and the symbol $Jac_{X_i^{\gg}}$ means applying partial Jacquet module $Jac_{\rho_i|\cdot|^x}$ (cf. \cite{MR3679701} page 897) consecutively for $x$ ranging over the generalized segment
    \[
        X_i^{\gg}=\left(\begin{array}{ccc}
                            \frac{a'_i-b'_i}{2} & \cdots & \frac{a_i-b_i}{2}+\zeta_i \\
                            \vdots & {} & \vdots \\
                            \left(\frac{a'_i+b'_i}{2}-1\right)\zeta_i & \cdots & \frac{a_i+b_i}{2}\zeta_i
                        \end{array}\right)
    \]
from top to bottom and from left to right, i.e.
    \[
        Jac_{X_i^{\gg}}=\left(Jac_{\rho_i|\cdot|^{\frac{a_i+b_i}{2}\zeta_i}}\circ\cdots\circ Jac_{\rho_i|\cdot|^{\frac{a_i-b_i}{2}+\zeta_i}}\right)\circ\cdots\circ\left(Jac_{\rho_i|\cdot|^{\frac{a'_i+b'_i}{2}\zeta_i-\zeta_i}}\circ\cdots\circ Jac_{\rho_i|\cdot|^{\frac{a'_i-b'_i}{2}}}\right).
    \] 
Then $\pi$ is either zero or irreducible.
\item As a set, the local $A$-packet of $G$ associated to the $A$-parameter $\psi$ is
    \[
        \Pi_\psi^A(G)=\left\{\pi=\circ_{i\in I_\psi}Jac_{X_i^{\gg}}\pi_{\gg}~\big|~\pi_{\gg}\in\Pi_{\psi_{\gg}}^A(G_{\gg})\right\}\Big\backslash\left\{0\right\} .
    \]
    The definition of $\Pi_\psi^A(G)$ is indeed independent of the choice of $G_{\gg}$ and $\psi_{\gg}$, as well as the admissible order $>_\psi$.
\end{enumerate}
\end{theorem}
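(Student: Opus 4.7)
The plan is to follow M{\oe}glin's inductive scheme, in which $A$-packets are constructed first in the elementary case (via Aubert involutions, as recalled just above), then in the DDR case (by taking socles of appropriate parabolic inductions of elementary packets), and finally in the good parity case (by partial Jacquet modules). Since the first two stages have already been stated in the excerpt, the real task is to verify that the Jacquet-module construction in the theorem produces exactly the desired good-parity packet, in an irreducible-or-zero manner, and that the construction is independent of the choices involved.

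First I would establish the key technical lemma: for each $\pi_\gg \in \Pi_{\psi_\gg}^A(G_\gg)$ and each $i \in I_\psi$, the iterated partial Jacquet module $Jac_{X_i^\gg}\pi_\gg$ is either zero or irreducible; when non-zero, it lies in the $A$-packet of an intermediate parameter obtained from $\psi_\gg$ by replacing one $(a'_i, b'_i)$ by $(a'_i - 2, b'_i)$ or $(a'_i, b'_i - 2)$ according to $\zeta_i$. The proof of this lemma rests on three ingredients: (a) the explicit cuspidal support of $\pi_\gg$, obtained inductively from the DDR description; (b) the Bernstein-Zelevinsky geometric lemma, which, combined with the rigid structure forced by DDR, implies that each single $Jac_{\rho_i |\cdot|^x}$ either annihilates $\pi_\gg$ or extracts a unique irreducible subquotient; (c) the admissibility condition on $>_\psi$, which guarantees that entries in $X_i^\gg$ and in $X_j^\gg$ for different $i, j$ do not interfere, so that the compositions can be rearranged within the prescribed order without changing the output.

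Second, for the equality $\Pi_\psi^A(G) = \{ \circ_{i \in I_\psi} Jac_{X_i^\gg}\pi_\gg \} \setminus \{0\}$ asserted in (2), I would argue by double induction on the total shift $\sum_i t_i$ and on $|I_\psi|$. The base case $\psi_\gg = \psi$ is the DDR case assumed known, and the inductive step strips off one unit of shift via a single run of $Jac_{\rho_i|\cdot|^x}$, whose behaviour on the full packet is controlled by the lemma above; counting non-zero images and comparing with the already-known cardinality of $\Pi_\psi^A(G)$ (coming from the Arthur/Mok classification) yields the desired bijection. For independence, two admissible orders differ by swapping operations whose Jacquet modules commute (which follows from the admissibility condition and standard Zelevinsky combinatorics of linked/unlinked segments), and two dominating parameters $\psi_\gg$, $\psi'_\gg$ can be compared by passing to a common further-dominating parameter and invoking the same commutation.

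The main obstacle I foresee is item (b) above: ensuring that the iterated Jacquet modules along $X_i^\gg$ remain strictly irreducible (or zero), rather than splitting into several pieces with multiplicities, is subtle and ultimately relies on the precise shape of $X_i^\gg$, in which both rows and columns descend in a coordinated fashion. This is exactly why the admissibility condition and the definition of ``dominates'' take their specific form, and why the composition must be performed in decreasing order with respect to $>_\psi$. Essentially all of the delicate combinatorics in \cite{MR2760663} and \cite{MR3679701} is devoted to controlling this step, and any honest proof would have to traverse the same Jacquet-module bookkeeping.
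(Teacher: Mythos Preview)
The paper does not prove this theorem at all: it is stated as a ``deep theorem due to M{\oe}glin'' and simply cited with references (\cite{MR2760663} Proposition 2.8.1, \cite{MR3679701} Proposition 8.5). There is therefore no ``paper's own proof'' to compare against; the result is imported wholesale as a black box and used later (notably in the descent-along-the-Witt-tower argument).

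Your outline is a reasonable high-level sketch of the architecture of M{\oe}glin's argument, and you correctly identify that the crux is the irreducibility-or-zero of the iterated partial Jacquet modules and that this is precisely what consumes the bulk of \cite{MR2760663} and \cite{MR3679701}. But you should be aware that what you have written is a \emph{plan}, not a proof: steps (a)--(c) and the commutation/independence claims each encapsulate substantial technical work that you have not actually carried out. In particular, your appeal in step~(2) to ``the already-known cardinality of $\Pi_\psi^A(G)$ (coming from the Arthur/Mok classification)'' is circular in spirit --- M{\oe}glin's construction is meant to give an independent description of the packet, and the equality of her packet with Arthur's is itself a theorem (\cite{MR3679701} Theorem~8.9) rather than an input one may freely use inside the construction. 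If you were to write this up, you would need to replace that counting shortcut with M{\oe}glin's own internal bookkeeping.
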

Finally the $A$-packet associated to a general $\psi$ can be constructed from the good parity case by using parabolic inductions. For this part, readers may also consult \cite{MR3135650} Proposition 2.4.3, or \cite{MR3338302} Proposition 3.4.4. 

\subsection{Globalizations}
Since in our later proofs we heavily use global methods, in this subsection, we collect some results on globalizing local data.
\begin{lemma}\label{Golobalization-Origin}
Let $(\dot{F},\dot{E})$ be a pair of number fields, and $u$ is a place of $\dot{F}$, such that $(\dot{F}_u,\dot{E}_u)\simeq (F,E)$. We require that $\dot{F}$ has enough real places in the case $\dot E=\dot F$. Let $S$ be a finite set of non-Archimedean places of $\dot{F}$ not containing $u$. Let $m$ be a positive integer. Fix $\kappa=\pm 1$. For each place $v\in S\cup\{u\}$, let $\phi_v$ be a discrete $m$-dimensional (conjugate) self-dual representation of $L_{\dot E_v}$ with parity $\kappa$. Then there exists an irreducible cuspidal automorphic representation $\dot\phi$ of $GL_m(\AAA_{\dot E})$, such that the following conditions hold:
\begin{enumerate}
\item $\dot\phi$ is (conjugate) self-dual with parity $\kappa$;
\item for each place $v\in S\cup\{u\}$, $\dot\phi_v=\phi_v$.
\end{enumerate}
\end{lemma}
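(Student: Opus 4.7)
The plan is to construct $\dot\phi$ indirectly: first build an irreducible cuspidal automorphic representation on a suitable quasi-split classical group $\dot H^*$ whose standard functorial transfer to $GL_m(\AAA_{\dot E})$ produces $\dot\phi$. The parity hypothesis on $\phi_v$ is precisely what makes it a discrete $L$-parameter for $\dot H^*$ at the distinguished places, which is the whole point of passing through a classical group.

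First, I would choose $\dot H^*$ so that the standard representation of its $L$-group is $m$-dimensional and of the required parity $\kappa$: a split special orthogonal group in the orthogonal case ($\dot E=\dot F$, $\kappa=+1$), a symplectic group in the symplectic case ($\dot E=\dot F$, $\kappa=-1$), or a quasi-split unitary group of rank $m$ in the conjugate case ($[\dot E:\dot F]=2$). Each $\phi_v$ for $v\in S\cup\{u\}$, being discrete and of parity $\kappa$, is then a discrete $L$-parameter of $\dot H^*_v$, and I pick an irreducible representation $\pi_v$ in the corresponding local $L$-packet. Using the hypothesis that $\dot F$ has enough real places in the orthogonal/symplectic case, I additionally fix at one Archimedean place $v_\infty$ a discrete series $\pi_{v_\infty}$ of $\dot H^*_{v_\infty}$; in the conjugate case the analogous anchor is placed at a convenient Archimedean place where $\dot H^*$ retains discrete series.

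Next, I would apply a simple trace formula argument, of exactly the type carried out in the appendix of \cite{kaletha2014endoscopic} (see also the work of Shin), to produce an irreducible cuspidal automorphic representation $\dot\pi$ of $\dot H^*(\AAA_{\dot F})$ with $\dot\pi_v\simeq\pi_v$ for every $v\in S\cup\{u,v_\infty\}$. The test functions are standard: matrix coefficients or pseudo-coefficients of $\pi_v$ at $v\in S\cup\{u\}$, an Euler-Poincar\'e function at $v_\infty$ to kill the continuous-spectrum contributions and isolate the discrete cuspidal part, and units of the spherical Hecke algebra at the remaining places. Positivity of the spectral side of the trace formula then forces the existence of $\dot\pi$.

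Finally, I would apply Arthur's (resp.\ Mok's) endoscopic classification to $\dot\pi$: let $\dot\psi$ be the global elliptic $A$-parameter for $\dot H^*$ whose packet contains $\dot\pi$. Because $\phi_u$ is already irreducible as a representation of $L_{\dot E_u}$, the formal sum $\dot\psi$ cannot split nontrivially, so it must be a simple generic parameter of the form $\dot\phi\boxtimes S_1$ with $\dot\phi$ an irreducible (conjugate) self-dual cuspidal automorphic representation of $GL_m(\AAA_{\dot E})$ of parity $\kappa$; local Langlands compatibility of the transfer then yields $\dot\phi_v=\phi_v$ at every $v\in S\cup\{u\}$, completing the construction. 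The main obstacle is the trace formula step, namely verifying that the chosen test functions produce a non-zero contribution coming from a \emph{cuspidal} representation matching at \emph{all} required places simultaneously; this is precisely where the enough-real-places hypothesis and the Euler-Poincar\'e function at $v_\infty$ are essential, as they give a real place at which cuspidality can be enforced.
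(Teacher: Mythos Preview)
Your sketch is essentially the argument behind the references the paper cites (Arthur \cite{MR3135650} Lemma 6.2.2 in Case~$O$, and \cite{MR3004076} Theorem 5.13 or \cite{MR3338302} Lemma 7.2.3 in Case~$U$); the paper itself gives no independent proof but simply defers to those sources, so in that sense you have reproduced the intended approach.

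One step in your outline is glossed over and deserves a word of caution. You write that ``because $\phi_u$ is already irreducible as a representation of $L_{\dot E_u}$, the formal sum $\dot\psi$ cannot split nontrivially, so it must be a simple generic parameter''. Irreducibility of $\phi_u$ alone does not obviously force $\dot\psi$ to be \emph{generic} (trivial on the Arthur $SL_2$): a priori $\pi_u$ could lie in a local $A$-packet attached to a non-generic parameter, since $A$-packets are not disjoint. The standard fix, which your setup already allows, is to exploit the Archimedean anchor $v_\infty$: choose the discrete series $\pi_{v_\infty}$ with sufficiently regular infinitesimal character, so that the only local $A$-parameters whose packet contains $\pi_{v_\infty}$ are tempered (generic). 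This forces $\dot\psi$ to be generic globally. \emph{Then} the irreducibility of $\phi_u$ forces simpleness, since for generic parameters the local $A$-packet equals the local $L$-packet and the $L$-parameter of $\pi_u$ must match $\dot\psi_u$. So both ingredients you introduced are needed, just with their roles separated.

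A smaller imprecision: in the orthogonal case the choice of $\dot H^*$ depends on the parity of $m$ (one uses $Sp_{m-1}$ for $m$ odd and a quasi-split even orthogonal group for $m$ even, the latter requiring one to match the determinant of $\phi_v$), which you elide. This is routine but worth noting.
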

\begin{proof}
In Case $O$, this is follows from \cite{MR3135650} Lemma 6.2.2. In Case $U$, this follows from \cite{MR3004076} Theorem 5.13. (In Case $U$, readers may also consult \cite{MR3573972} Section 6.4; if $\dot F$ has enough real places different from $u$, then this also follows from \cite{MR3338302} Lemma 7.2.3.)
\end{proof}
\begin{remark}\label{Globalization-Origin'}
Indeed, from the proof of this lemma, we can also allow $S$ to contain one Archimedean place.
\end{remark}
As an application of this lemma, we deduce:
\begin{corollary}\label{globalize.quasi-split}
Assume that $\psi$ is a local $A$-parameter of good parity for $G$. Then, for any irreducible unitary representation $\pi$ in $\Pi_{\psi}^A(G)$, there exist a tuple of data $(\dot{F},\dot{E},\dot{G},\dot{\pi},\dot{\psi},u)$, where 
\begin{itemize}
\item $\dot{F}$ is a number field, and $\dot{E}$ is either $\dot F$ itself or a quadratic extension of $\dot F$, according to the case at hand;
\item $\dot{G}$ a quasi-split even orthogonal or symplectic or unitary group over $\dot{F}$, according to the group $G$; in the case that $\dot G$ is an unitary group, $\dot E$ is the splitting field of $\dot G$;
\item $\dot{\pi}$ is an automorphic representation occuring in the automorphic discrete spectrum of $\dot{G}$, and $\dot{\psi}$ is the elliptic $A$-parameter associated to $\dot{\pi}$;
\item $u$ is a finite place of $\dot{F}$.
\end{itemize}
such that the following condition holds
\begin{equation*}
(\dot{F}_u,\dot{E}_u,\dot{G}_u,\dot{\pi}_u,\dot{\psi}_u)\simeq(F,E,G,\pi,\psi).
\end{equation*}
\end{corollary}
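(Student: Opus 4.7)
The plan is to globalize the local Arthur parameter $\psi$ block-by-block via Lemma \ref{Golobalization-Origin}, assemble the result into an elliptic global $A$-parameter $\dot\psi$, and then use Arthur's multiplicity formula (Theorem \ref{AMF.QS}) together with the surjectivity in the elementary case (Lemma \ref{surjectivity.ell.type}) to produce a global representation $\dot\pi$ whose $u$-component is the prescribed $\pi$.

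First I would choose a number field $\dot F$ with the right local field at some finite place $u$ (with $\dot E$ either $\dot F$ or a suitable quadratic extension matching the Case we are in), a quasi-split group $\dot G$ over $\dot F$ whose localization at $u$ is $G$, and a second auxiliary non-Archimedean place $v_1 \neq u$. Writing $\psi = \sum_{i \in I_\psi} \rho_i \boxtimes S_{a_i} \boxtimes S_{b_i}$, each $\rho_i$ is an irreducible bounded representation of the Weil group $W_E$, hence a discrete parameter of $L_E$. Using Lemma \ref{Golobalization-Origin} (and Remark \ref{Globalization-Origin'} if we need to include an Archimedean place, e.g.\ for Case~$O$), I would globalize each $\rho_i$ to an irreducible (conjugate) self-dual cuspidal automorphic representation $\dot\rho_i$ of $GL_{n_i}(\AAA_{\dot E})$ whose parity is chosen so that $\dot\rho_i \boxtimes S_{a_i} \boxtimes S_{b_i}$ carries the parity appropriate to $\dot G$. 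The prescribed local conditions are: $\dot\rho_{i,u} \simeq \rho_i$, and $\dot\rho_{i,v_1}$ is an irreducible supercuspidal representation, chosen so that the $\dot\rho_{i,v_1}$ (and, more importantly, the products $\dot\rho_{i,v_1}\boxtimes S_{a_i}\boxtimes S_{b_i}$) are pairwise non-isomorphic across $i$.

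Next I would form the formal sum $\dot\psi := \sum_{i\in I_\psi} \dot\rho_i \boxtimes S_{a_i} \boxtimes S_{b_i}$. By construction each summand is cuspidal, (conjugate) self-dual of the correct parity, and the summands are distinct, so $\dot\psi \in \Psi_{ell}(\dot G)$; moreover $\dot\psi_u = \psi$. In Case~$O$ the discriminant condition $\prod_i \omega_{\dot\rho_i}^{a_i b_i} = \chi_{\dot V}$ determines the quadratic character $\chi_{\dot V}$, hence the discriminant of $\dot V$, and we choose $\dot G = G(\dot V)$ so that localizing at $u$ recovers $G$; the remaining local Hasse invariants of $\dot V$ can be adjusted by changing $\dot V$ at auxiliary real places, which is possible provided $\dot F$ has sufficiently many real places. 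The key feature of our construction is that at $v_1$ the local parameter $\dot\psi_{v_1}$ is elementary (each $\dot\rho_{i,v_1}$ is supercuspidal and the $\dot\rho_{i,v_1}$ are pairwise distinct), so by Lemma \ref{surjectivity.ell.type} the map $\calJ^A_{\scrW_{v_1}}\colon \Pi^A_{\dot\psi_{v_1}}(\dot G_{v_1}) \to \widehat{\overline{\calS_{\dot\psi_{v_1}}}}$ is a bijection, and the natural map $\overline{\calS_{\dot\psi}} \to \overline{\calS_{\dot\psi_{v_1}}}$ is an isomorphism since the global and $v_1$-local sets of irreducible constituents match up.

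Now I invoke Arthur's multiplicity formula (Theorem \ref{AMF.QS}) for $\dot G$ and $\dot\psi$. I set $\dot\pi_u = \pi$, take $\dot\pi_v$ to be the unramified member of $\Pi^A_{\dot\psi_v}(\dot G_v)$ at all places $v$ outside a finite set containing $u$ and $v_1$ (this unramified element exists and contributes the trivial character to $\calJ^A$), and at the handful of other ramified places choose any representative. The product $\prod_{v \neq v_1} \calJ^A_{\scrW_v}(\dot\pi_v)$ is then a well-defined character of $\overline{\calS_{\dot\psi}}$. Using the bijection at $v_1$, I can pick $\dot\pi_{v_1} \in \Pi^A_{\dot\psi_{v_1}}(\dot G_{v_1})$ with character exactly $\epsilon_{\dot\psi}\cdot \bigl(\prod_{v\neq v_1}\calJ^A_{\scrW_v}(\dot\pi_v)\bigr)^{-1}$, so that $\dot\pi = \otimes'_v \dot\pi_v$ satisfies the multiplicity condition and hence embeds into $L^2_{\dot\psi}(\dot G)$. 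By construction $\dot\pi_u \simeq \pi$, giving the claim.

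The main obstacle I expect is the simultaneous globalization of all the $\rho_i$ while maintaining (i) the correct parities, (ii) the ellipticity (pairwise distinctness) of $\dot\psi$, and (iii) the necessary flexibility at the auxiliary place $v_1$ so that $\overline{\calS_{\dot\psi}} \simeq \overline{\calS_{\dot\psi_{v_1}}}$ and Lemma \ref{surjectivity.ell.type} gives us every character. This is essentially an application of Lemma \ref{Golobalization-Origin} with $S = \{u, v_1\}$ applied to each $\rho_i$ in turn, with the $v_1$-local data at each stage chosen supercuspidal and disjoint from what was used for $\rho_j$, $j < i$; a minor check is that in Case $O$ the character relation $\prod_i \omega_{\dot\rho_i}^{a_i b_i} = \chi_{\dot V}$ can be satisfied, which is a standard consistency verification using local-global principles for orthogonal spaces.
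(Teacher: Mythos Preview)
Your overall strategy—globalize the irreducible constituents of $\psi$ via Lemma~\ref{Golobalization-Origin}, arrange for the localization at an auxiliary finite place to be elementary so that Lemma~\ref{surjectivity.ell.type} applies, and then invoke Theorem~\ref{AMF.QS}—is exactly the paper's approach. However, there is a genuine gap in your execution.

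You decompose $\psi=\sum_i \rho_i\boxtimes S_{a_i}\boxtimes S_{b_i}$ with $\rho_i$ a representation of $W_E$, globalize each $\rho_i$ to a cuspidal $\dot\rho_i$ on $GL_{n_i}(\AAA_{\dot E})$ with $n_i=\dim\rho_i$, and then set $\dot\psi=\sum_i \dot\rho_i\boxtimes S_{a_i}\boxtimes S_{b_i}$. But a global elliptic $A$-parameter is by definition a formal sum of terms $(\textit{cuspidal})\boxtimes S_d$ with only the Arthur $SL_2$ present; there is no global Weil--Deligne $SL_2$, so your three-factor expression is not a well-formed object. More concretely, your claim that $\dot\psi_{v_1}$ is elementary fails whenever some $(a_i,b_i)$ has both entries $>1$: the $v_1$-local summand is then $\dot\rho_{i,v_1}\boxtimes S_{a_i}\boxtimes S_{b_i}$ with $\min(a_i,b_i)>1$, violating the definition of ``elementary'', and Lemma~\ref{surjectivity.ell.type} does not apply.

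The fix, which is what the paper does, is to write $\psi=\sum_i \phi_i\boxtimes S_{d_i}$ with each $\phi_i$ an irreducible representation of $L_E$ (thus absorbing the Weil--Deligne $SL_2$ into $\phi_i$), and to apply Lemma~\ref{Golobalization-Origin} to each discrete $L$-parameter $\phi_i$, obtaining a cuspidal $\dot\phi_i$ on $GL_{n_i}(\AAA_{\dot E})$ with $n_i=\dim\phi_i$. At the auxiliary place $w$ one imposes $(\dot\phi_i)_w\simeq \rho_{w,i}\boxtimes S_1$ for pairwise distinct irreducible $\rho_{w,i}$ of $W_{\dot E_w}$ of the correct parity; then $\dot\psi_w=\sum_i \rho_{w,i}\boxtimes S_1\boxtimes S_{d_i}$ is genuinely elementary and $\calS_{\dot\psi}\to\calS_{\dot\psi_w}$ is an isomorphism. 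The paper also takes care to choose $w$ with large residue field (over $\QQ_2$ in Case~$O$, ramified in Case~$U$) so that enough such $\rho_{w,i}$ exist—a point your proposal leaves unaddressed. With these corrections your argument becomes the paper's proof.
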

\begin{proof}
First we choose a pair of number fields $(\dot{F},\dot{E})$, together with two places $u$ and $w$ of $\dot F$, satisfy the following properties:
\begin{enumerate}
\item $(\dot{F}_{u},\dot{E}_{u})\simeq(F,E)$;
\item if $G$ is an even orthogonal or symplectic group, then $\dot F$ has enough real places, and $\dot F_w$ is a finite extension of $\QQ_2$ with a sufficiently big residue field (this condition guarantees we will have enough irreducible orthogonal representations of $W_{\dot F_w}$; see \cite{CZ2020AMFPIF} Appendix B);
\item[$(2')$] if $G$ is an unitary group, then $\dot F_w$ is a finite extension of $\QQ_p$ for some $p\neq2$ with a sufficiently big residue field, and $\dot E_w$ is a ramified quadratic field extension of $\dot F_w$ (this condition guarantees we will have enough irreducible conjugate self-dual representations of $W_{\dot E_w}$, with any given parity; see \cite{CZ2020AMFPIF} Appendix C).
\end{enumerate}
Let $S=\{w\}$. We write
\[
    \psi=\sum_i\phi_i\boxtimes S_{d_i}
\]
for some (not necessarily distinct) $n_i$-dimensional irreducible (conjugate) self-dual representations of $L_E\times SL_2$. For each $i$, we pick up an irreducible (conjugate) self-dual representations $\rho_{w,i}$ of $W_{\dot E_w}$, with the same dimension and parity of $\phi_i$. Apply the Lemma \ref{Golobalization-Origin}, we can globalize $\phi_i$ to an irreducible (conjugate) self-dual cuspidal representation $\dot\phi_i$ of $GL_{n_i}(\AAA_{\dot E})$ with the same parity as $\phi_i$, such that 
\begin{enumerate}
\item $(\dot\phi_i)_{u}=\phi_{i}$;
\item $(\dot\phi_i)_{w}=\rho_{w,i}\boxtimes S_1$.
\end{enumerate}
We require that when $i\neq i'$, $\rho_{w,i}\not\simeq\rho_{w,i'}$. Let
\[
    \dot\psi=\begin{cases}
                  \sum_i\left(\dot\phi_i\cdot\dot\omega\right)\boxtimes S_{d_i}&\textit{if $G$ is symplectic};\\
                  \sum_i\dot\phi_i\boxtimes S_{d_i}\quad&\textit{if $G$ is even orthogonal or unitary},
             \end{cases}
\]
where 
\[
    \dot\omega=\prod_i\dot\omega_i^{d_i},
\]
and $\dot\omega_i$ is the central character of $\dot\phi_i$. If $G$ is a symplectic group, then $\dot\psi$ is already an elliptic $A$-parameter for some symplectic group $\dot G$ over $\dot F$. If $G$ is an even orthogonal or unitary group, since $\dot\psi$ is (conjugate) self-dual with the same parity as $\psi$, at each place $v$ of $\dot F$ we can pick up a $c$-Hermitian space $V_v$, such that $G(V_v)$ is quasi-split, and $\dot\psi_v$ is a local $A$-parameter for the group $G(V_v)$. It follows from the local-global principle for orthogonal or unitary groups that the collection $\{V_v\}_v$ indeed form a $c$-Hermitian space $\dot V$ over $\dot E$. In this case we put $\dot G=G(\dot V)$. Then $\dot G$ is quasi-split, and $\dot\psi$ is an elliptic $A$-parameter for $\dot G$. Notice that 
\[
    \dot\psi_w=\sum_i\rho_{w,i}\boxtimes S_1\boxtimes S_{d_i}
\]
is elementary, and the localization map
\[
    \iota_w:\calS_{\dot\psi}\lra\calS_{\dot\psi_w}
\]
is an isomorphism.\\

Next we define an irreducible representation $\dot\pi$ as follows:
\begin{itemize}
\item at a place $v\notin \{u,w\}$, if $\dot G_v$ and $\dot\psi_v$ are both unramified, then $\dot\pi_v$ is the unramified representation of $\dot G_v$ with $L$-parameter $\phi_{\dot\psi_v}$; otherwise, let $\dot\pi_v$ be an arbitrarily given representation of $\dot G_v$ lying in the $A$-packet $\Pi_{\dot\psi_v}^A(\dot G_v)$;
\item at the place $u$, $\dot\pi_u=\pi$;
\item at the place $w$, $\dot\pi_w=\pi_A\left(\dot\psi_{w},\eta_{w}\right)$, where $\eta_{w}$ is the character of $\calS_{\dot{\psi}_{w}}$, determined by the formula
    \[
        \prod_v\eta_v=\epsilon_{\dot\psi},
    \]
    where $\eta_v=\calJ_{\scrW}^A(\dot\pi_v)$, and $\epsilon_{\dot\psi}$ is the canonical sign character associated to $\dot\psi$. It follows from Lemma \ref{surjectivity.ell.type} that $\dot\pi_w\neq0$.
\end{itemize}
Then, according to the Arthur's multiplicity formula for $\dot G$, $\dot\pi$ is an irreducible subrepresentation of $L^2_{\dot\psi}(\dot G)$. One can easily check that the tuple of data $(\dot{F},\dot{E},\dot{G},\dot{\pi},\dot{\psi},u)$ satisfies all our requirements.
\end{proof}

\section{Comparison of packets from different level as sets}
From now we let $F$ be a non-Archimedean local field of characteristic $0$. In this section, we shall prove that as a set, the definition of the $\theta$-packets (see Section \ref{Def.Theta-Pack}) is independent of the choice of $H=H(W_{(r)})$. The word ``level'' in the title of this section refers to the integer $r$, i.e. the ``level'' in the Witt tower.
\subsection{First properties}
Before we start to prove, we develop some first properties of $\Pi_\psi^\theta(G)$.
\begin{lemma}
$\Pi_\psi^\theta(G)$ is multiplicity-free. Hence we can regard $\Pi_\psi^\theta(G)$ as a subset of $\Irr_{unit}(G)$.
\end{lemma}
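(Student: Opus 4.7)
The plan is to reduce the assertion to two ingredients: the multiplicity-freeness of the target $A$-packet $\Pi_{\theta(\psi)}^A(H)$ supplied by M\oe{}glin's Theorem \ref{Moeglin.Multi.Free}, and the injectivity of the theta correspondence for the dual pair $(G,H)$ coming from Howe duality.

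Unfolding the definition in Section \ref{Def.Theta-Pack}, one has
\[
    \Pi_\psi^\theta(G)=\bigoplus_\sigma\left(\calJ^A_{\scrW'}(\sigma)\big|_{\calS_\psi}\right)\boxtimes\theta(\sigma),
\]
where $\sigma$ ranges over $\Pi_{\theta(\psi)}^A(H)$ and summands with $\theta(\sigma)=0$ drop out. Since $H$ is either a symplectic group (Case $O$) or a quasi-split unitary group (Case $U$), Theorem \ref{Moeglin.Multi.Free} applies and the indexing set of $\sigma$'s is genuinely a set, not a multiset. Therefore it suffices to show that the partial map $\sigma\mapsto\theta(\sigma)$ is injective on those $\sigma\in\Pi_{\theta(\psi)}^A(H)$ for which the theta lift to $G$ does not vanish. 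This is exactly the content of Howe duality for the dual pair $(G,H)$ (cf. the references \cite{MR985172}, \cite{MR1159105}, \cite{MR3502978}, \cite{MR3454380} recalled at the beginning of Section 2.1): the theta correspondence is a bijection between the subsets of $\Irr(G)$ and $\Irr(H)$ on which the lift is non-zero, so any two distinct $\sigma_1,\sigma_2\in\Pi_{\theta(\psi)}^A(H)$ with $\theta(\sigma_1),\theta(\sigma_2)\neq 0$ must satisfy $\theta(\sigma_1)\not\simeq\theta(\sigma_2)$ in $\Irr(G)$.

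Combining these two ingredients, every irreducible constituent of $\Pi_\psi^\theta(G)$, viewed as a (multi)set of irreducible unitary representations of $G$, appears at most once, and each of them comes equipped with a well-defined character $\calJ^A_{\scrW'}(\sigma)\big|_{\calS_\psi}\in\wh{\calS_\psi}$, so the auxiliary map $\calJ_{\psi_F}$ recalled in Section \ref{Def.Theta-Pack} is unambiguous. I do not expect any real obstacle in this step; the only minor point worth recording is that when the parameter $\theta(\psi)$ fails to be of good parity, one needs M\oe{}glin's multiplicity-free statement for it, which is transferred from the good-parity case by standard parabolic induction.
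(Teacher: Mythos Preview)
Your proof is correct and follows essentially the same approach as the paper: the paper's one-line proof simply says it follows from Howe duality and the multiplicity-freeness of $\Pi_{\theta(\psi)}^{A}(H)$, which is exactly what you have spelled out in more detail.
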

\begin{proof}
This is simply follows from the Howe duality and that $\Pi_{\theta(\psi)}^{A}(H)$ is multiplicity-free.
\end{proof}
Consider the theta lift between $(G,H)$ for all pure inner form $G$ of $G^*$ simultaneously, we get
\begin{proposition}\label{Count.Size.Theta.Packets}
Suppose $\psi$ is a local $A$-parameter of good parity for $G^*$. Then the theta lift provide us a bijection of sets
\[
    \theta:\Pi_{\theta(\psi)}^A(H)\lra\bigsqcup\Pi_\psi^\theta(G),
\]
where the disjoint union on the RHS of the arrow runs over all pure inner forms $G$ of $G^*$.
\end{proposition}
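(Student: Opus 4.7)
The plan is to deduce this proposition from M\oe{}glin's explicit construction of local $A$-packets \cite{MR2906916} applied to the quasi-split group $H$. Since $r>\dim V$, the pair $(G,H)$ lies in the stable range, so her technical hypotheses on the theta lift are automatically met. A routine check first confirms that $\theta(\psi)=\psi\chi_W^{-1}\chi_V+\chi_V\boxtimes S_{2r-2n+1}$ is again of good parity for $H$ (using that $\chi_V\boxtimes S_{2r-2n+1}$ has the requisite parity for the relevant summand), so her framework applies uniformly on both sides of the correspondence.

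The central input, recalled in the introduction, is the pair of assertions due to M\oe{}glin: \emph{(a)} the assignment $(\underline{t},\underline{\eta})\mapsto(\wt{\underline{t}},\wt{\underline{\eta}})$ induces a bijection
\[
\bigsqcup_{G'}\Sigma_\psi(G')\longrightarrow\Sigma_{\theta(\psi)}(H),
\]
where $G'$ runs over all pure inner forms of $G^*$; and \emph{(b)} the representation $\pi(\psi,\wt{\underline{t}},\wt{\underline{\eta}})$ is non-zero if and only if $\sigma(\theta(\psi),\underline{t},\underline{\eta})$ is, in which case the local theta lift from $H$ sends the latter to the former, landing in the unique pure inner form dictated by the parameter. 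Since $H$ is quasi-split, \cite{MR3679701} gives $\Pi_{\theta(\psi)}^M(H)=\Pi_{\theta(\psi)}^A(H)$, so (a) and (b) can be restated directly in terms of the Arthur packet on the $H$-side.

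Combining these facts, every $\sigma\in\Pi_{\theta(\psi)}^A(H)$ has non-zero theta lift to exactly one pure inner form $G$ of $G^*$, producing a well-defined map
\[
\theta:\Pi_{\theta(\psi)}^A(H)\longrightarrow\bigsqcup_{G}\Pi_\psi^\theta(G).
\]
Surjectivity is built into the definition of the $\theta$-packets, and injectivity follows from Howe duality on each component $G$ (combined with the uniqueness of the target component in the disjoint union, which comes from (a) and (b)).

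The main point that requires care is the unitary setting, since \cite{MR2906916} focuses on orthogonal-symplectic dual pairs: one must verify that the parallel construction of $(\underline{t},\underline{\eta})\mapsto(\wt{\underline{t}},\wt{\underline{\eta}})$ and the matching non-vanishing criterion carry over to Cases $U_0$ and $U_1$. We expect this to be routine given the explicit nature of her constructions, but delineating exactly which of her arguments transport is what accounts for most of the bookkeeping behind the proposition.
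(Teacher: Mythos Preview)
Your approach is logically sound but takes a genuinely different route from the paper, and in doing so undermines one of the paper's explicit goals. The paper proves this proposition by a \emph{global} argument: it globalizes $(F,E,V^+,W,\psi,\psi_F,\chi_V,\chi_W)$ to a number field setting, globalizes each $\sigma\in\Pi_{\theta(\psi)}^A(H)$ to a discrete automorphic representation $\dot\sigma$ using the Arthur multiplicity formula for $\dot H$, and then invokes J.-S.~Li's results on low rank representations (Theorem~\ref{J-S.Li.Low.rk.Global}) to conclude that $\dot\sigma$ is the abstract theta lift of some $\dot\pi$ on a pure inner form $\dot G$. Localizing at $u$ produces the desired map; well-definedness comes from the conservation relation \cite{MR3369906}, injectivity from Howe duality, and surjectivity from the definition of $\theta$-packets.

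The contrast matters because the paper states in its introduction that a primary aim is to give proofs \emph{free of M\oe{}glin's results for non quasi-split groups}; your argument relies precisely on her packets $\Pi_\psi^M(G')$ and the parameter sets $\Sigma_\psi(G')$ for all pure inner forms $G'$, which is exactly the input the paper is trying to circumvent. Your route is shorter and more direct, but it presupposes the hard local theory on the non-quasi-split side, whereas the paper's global detour only uses Arthur/Mok for the quasi-split group $H$ together with J.-S.~Li's automorphic low-rank machinery. The paper's method also treats Cases $O$ and $U$ uniformly, whereas you correctly note that \cite{MR2906916} is written for orthogonal--symplectic pairs and its extension to unitary dual pairs, though expected, is not carried out there --- so your proof as written has a genuine gap in Case $U$ that would need to be filled.
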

\begin{proof}
First we construct this map, i.e. we need to show that, for any irreducible representation $\sigma\in\Pi_{\theta(\psi)}^A(H)$, there is a pure inner form $G$ of $G^*$, such that the theta lift of $\sigma$ to $G$ is non-zero. Applying Lemma \ref{Golobalization-Origin}, we globalize the tuple of local data $(F,E,V^+,W,\psi,\psi_F,\chi_V,\chi_W)$ to a tuple global data
\[
    (\dot F,\dot E,\dot V^+,\dot W,\dot \psi,\psi_{\dot F},\chi_{\dot V},\chi_{\dot W}),
\]
where
\begin{itemize}
\item $\dot{F}$ is a number field, and $\dot{E}$ is either $\dot F$ itself or a quadratic extension of $\dot F$, according the cases;
\item $\dot{V}^+$ is a $c$-Hermitian space over $\dot{E}$ so that $\dot G^*=G(\dot V^+)$ is quasi-split;
\item $\dot{W}$ is the split $c$-skew-Hermitian space over $\dot{E}$, in particular $\dot H=H(\dot W)$ is also quasi-split;
\item $\dot{\psi}$ is an elliptic $A$-parameter of $\dot G^*$;
\item $(\psi_{\dot F},\chi_{\dot V},\chi_{\dot W})$ is a tuple of auxiliary data allow us to define the theta lift between $(\dot G^*,\dot H)$;
\end{itemize}
together with two places $u$ and $w$ of $\dot F$, such that
\begin{enumerate}
\item $(\dot F_u,\dot E_u,\dot V^+_u,\dot W_u,\dot \psi_u,\psi_{\dot F_u},\chi_{\dot V_u},\chi_{\dot W_u})\simeq(F,E,V^+,W,\psi,\psi_F,\chi_V,\chi_W)$;
\item at the place $w$, $\dot\psi_w$ is elementary, and the localization map
    \[
        \iota_w:\calS_{\dot\psi}\lra\calS_{\dot\psi_w}
    \]
    is an isomorphism.
\end{enumerate}
For any irreducible unitary representation $\sigma\in\Pi_{\theta(\psi)}^A(H)$, similar to the proof of Corollary \ref{globalize.quasi-split}, by using the Arthur's multiplicity formula for $\dot H$, we may globalize $\sigma$ to an irreducible subrepresentation $\dot\sigma$ of $L^2_{\theta(\dot\psi)}(\dot H)$, where
\[
    \theta(\dot\psi)=\dot\psi\chi_{\dot W}^{-1}\chi_{\dot V}+\chi_{\dot V}\boxtimes S_{2r-2n+1}
\]
is an elliptic $A$-parameter for the group $\dot H$. It then follows from J-S. Li's work on low rank representations (Theorem \ref{J-S.Li.Low.rk.Global}) that $\dot\sigma$ is of rank $\dim V$, and so is $\sigma$. Indeed, J-S. Li's results also assert that there exists a pure inner form $\dot G$ of $\dot G^*$, together with an automorphic representation $\dot\pi$ of $\dot G$, such that $\dot\sigma$ is the theta lift of $\dot\pi$. We set
\[
    \sigma\longmapsto\dot\pi_u.
\]
This gives us the desired map. Notice that $\dot\pi_u$ is nothing but the theta lift of $\sigma$ to $\dot G_u$. Hence by the conservation relation \cite{MR3369906}, this map is well-defined and independent of the globalization. The injectivity of this map then follows from the Howe duality principle, and the surjectivity simply follows from the definition of the $\theta$-packets.
\end{proof}

Let $\widetilde{V}=V\oplus\calH^k$, where $\calH$ is the $c$-Hermitian hyperbolic plane. We can decompose $\widetilde{V}$ as
\begin{equation*}
\widetilde{V}=X+V+X^*,
\end{equation*}
where $X$ and $X^*$ are $k$-dimensional totally isotropic subspaces of $\widetilde{V}$ such that $X\oplus X^*\simeq\calH^k$ and orthogonal to $V$. Let $P$ be the maximal parabolic subgroup of $\wt{G}=G(\widetilde{V})$ stabilizing $X$ and $M$ be its Levi component stabilizing $X^*$, so that
\begin{equation*}
M\simeq GL(X)\times G.
\end{equation*}
For an irreducible unitary representation $\tau$ of $GL_k(E)$ of Arthur type corresponding to an $A$-parameter $\psi_\tau$, we consider the induced representation
\begin{equation*}
\Ind_P^{\wt G}\Big(\tau\chi_W\boxtimes\Pi_\psi^\theta(G)\Big).
\end{equation*}
\begin{lemma}\label{indNmultifree}
Assume that $r>\dim V+k$. Then 
\begin{equation*}
\Ind_P^{\wt G}\Big(\tau\chi_W\boxtimes\Pi_\psi^\theta(G)\Big)
\end{equation*}
is semi-simple and multiplicity-free as a representation of $\wt G$.
\end{lemma}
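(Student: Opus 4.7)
The plan is to transfer the question from $\wt G$ to the quasi-split group $H$ via Corollary \ref{thetaNind}, where one can then read off both semi-simplicity and multiplicity-freeness from the structure of local $A$-packets on the quasi-split side. The key input on the $H$-side is Theorem \ref{Moeglin.Multi.Free}: the multiplicity-freeness of local $A$-packets of quasi-split classical groups.

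Semi-simplicity is essentially automatic: both $\tau\chi_W$ and each $\pi\in\Pi_\psi^\theta(G)$ are unitary (the former because $\tau$ is unitary of Arthur type, the latter by construction of $\Pi_\psi^\theta(G)$ as theta lifts from unitary $A$-packet members of $H$), so $\Ind_P^{\wt G}(\tau\chi_W\boxtimes\pi)$ is a unitarily induced representation of finite length and therefore a finite direct sum of irreducibles.

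For multiplicity-freeness I would proceed in three steps. First, since $r>\dim V+k$, the dual pair $(G,H_0)$ with $H_0:=H(W_{(r-k)})$ is in the stable range; hence for each $\pi\in\Pi_\psi^\theta(G)$ the theta lift $\sigma_0:=\theta_{V,W_0}(\pi)$ is a non-zero unitary irreducible representation of the quasi-split group $H_0$, and by Howe duality the map $\pi\mapsto\sigma_0$ is injective. Since $H_0$ is quasi-split, M{\oe}glin's theorem on theta correspondence \cite{MR2906916} applies (her technical hypotheses are automatic in the stable range) and identifies $\sigma_0$ as a member of the local $A$-packet $\Pi^A_{\theta_0(\psi)}(H_0)$, where $\theta_0(\psi)$ is the Adams-recipe image of $\psi$ at the smaller Witt-tower level (namely $\psi\chi_W^{-1}\chi_V+\chi_V\boxtimes S_{2(r-k)-2n+1}$ up to case-conventions). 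Second, apply Corollary \ref{thetaNind} to the pair $(\wt G,H)$ — the required numerical condition is ensured by combining $r>\dim V+k$ with the bound $N_\tau<k$ from Remark \ref{Arthur-type.L-function.Stripe} — obtaining a bijection of Jordan--H\"older multisets
\[
JH\bigl(\Ind_P^{\wt G}(\tau\chi_W\boxtimes\pi)\bigr)\longleftrightarrow JH\bigl(\Ind_Q^H(\tau\chi_V\boxtimes\sigma_0)\bigr).
\]
Third, sum over $\pi\in\Pi_\psi^\theta(G)$ and work on the $H$-side: the ``going up'' structure of $A$-packets in the Arthur/Mok theory realizes $\bigoplus_{\sigma_0'\in\Pi_{\theta_0(\psi)}^A(H_0)}\Ind_Q^H(\tau\chi_V\boxtimes\sigma_0')$ as a direct summand of the single $A$-packet $\Pi_{\psi_H}^A(H)$ attached to $\psi_H:=\tau\chi_V+\theta_0(\psi)+(\tau\chi_V)^c$, and this $A$-packet is multiplicity-free by Theorem \ref{Moeglin.Multi.Free}. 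Our sub-sum is then automatically multiplicity-free, and transporting back via the theta bijection gives the same property for $\Ind_P^{\wt G}(\tau\chi_W\boxtimes\Pi_\psi^\theta(G))$.

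The principal obstacle is the careful use of the $A$-packet structure on the $H$-side: one needs both that parabolic induction from $M_Q=GL_k(E)\times H_0$ of $\tau\chi_V\boxtimes\sigma_0$ (with $\sigma_0$ in an $A$-packet and $\tau$ of Arthur type) lands inside a single $A$-packet of $H$, and that distinct $\sigma_0$'s produce disjoint constituents — both are standard consequences of Arthur's/Mok's construction but must be cited carefully. A secondary point is to verify that all invocations of M{\oe}glin's theorem remain restricted to the quasi-split group $H_0$ rather than to any non-quasi-split pure inner form of $G$, so that our reasoning does not become circular with respect to the non-quasi-split case of Theorem \ref{Main.Theorem.Packets}.
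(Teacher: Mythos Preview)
Your overall strategy --- push the problem to the symplectic/quasi-split side via Corollary~\ref{thetaNind} and invoke M{\oe}glin's multiplicity-freeness there --- is exactly the paper's, but you apply it at the wrong Witt-tower level, and this creates a genuine circularity.

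In your Step~1 you take $\sigma_0=\theta_{V,W_{(r-k)}}(\pi)$ and assert $\sigma_0\in\Pi^A_{\theta_0(\psi)}(H_0)$ by citing \cite{MR2906916}. But the only thing you know about $\pi$ is that $\pi\in\Pi_\psi^{\theta,r}(G)$, i.e.\ that its lift to $H=H(W_{(r)})$ lies in an $A$-packet. To invoke M{\oe}glin's theta-correspondence theorem with \emph{source} $G$, you must first know that $\pi$ sits in a packet of $G$ (her $M$-packet); for non-quasi-split $G$ this needs her construction on the non-quasi-split side, which the paper is explicitly avoiding. Worse, the statement ``$\sigma_0\in\Pi^A_{\theta_0(\psi)}(H_0)$'' is literally the statement $\pi\in\Pi_\psi^{\theta,r-k}(G)$, and the equality $\Pi_\psi^{\theta,r}(G)=\Pi_\psi^{\theta,r-k}(G)$ is Corollary~\ref{Main.Theorem.Set.Level}, whose proof uses Lemma~\ref{indNmultifree}. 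So your argument is circular. Your closing disclaimer that the use of M{\oe}glin stays on the quasi-split $H_0$ is not accurate: the quasi-splitness of the \emph{target} is irrelevant; what is needed is packet structure on the \emph{source} $G$.

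The paper avoids all of this by going \emph{up} rather than down: for $\pi\in\Pi_\psi^{\theta,r}(G)$ the lift $\sigma=\theta_{V,W_{(r)}}(\pi)$ lies in $\Pi^A_{\theta(\psi)}(H)$ \emph{by definition} of the $\theta$-packet --- no M{\oe}glin needed. One then applies Corollary~\ref{thetaNind} to the pair $(\wt G,\wt H)$ with $\wt H=H(W_{(r+k)})$ (inner pair $(G,H)$), obtaining a bijection of Jordan--H\"older multisets, and on the $\wt H$-side Arthur/Mok's parabolic-induction statement gives $\Ind_Q^{\wt H}(\tau\chi_V\boxtimes\Pi^A_{\theta(\psi)}(H))=\Pi^A_{\theta(\wt\psi)}(\wt H)$, which is multiplicity-free by Theorem~\ref{Moeglin.Multi.Free} for the quasi-split group $\wt H$. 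This is the clean, non-circular route.
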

\begin{proof}
The semi-simplicity simply follows from the unitaricity. We now prove the multiplicity-freeness. Let $N_\tau >0$ be a positive number such that $L(s, \tau)$ has no zeros or poles outside the stripe
\begin{equation*}
-N_\tau<\Re(s)<N_\tau.
\end{equation*}
Since $\tau$ is of Arthur type, we may take $N_\tau<k$. Hence all the requirements in Corollary \ref{thetaNind} are satisfied. Let $\widetilde{W}=W\oplus\calH'^k$, where $\calH'$ is the $c$-skew-Hermitian hyperbolic plane. We can decompose $\widetilde{W}$ as
\begin{equation*}
\widetilde{W}=Y+W+Y^*,
\end{equation*}
where $Y$ and $Y^*$ are $k$-dimensional totally isotropic subspaces of $\widetilde{W}$ such that $Y\oplus Y^*\simeq\calH'^k$ and orthogonal to $W$. Let $Q$ be the maximal parabolic subgroup of $\wt H=H(\wt W)$ stabilizing $Y$ and $N$ be its Levi component stabilizing $Y^*$, so that
\begin{equation*}
N\simeq GL(Y)\times H.
\end{equation*}
By Corollary \ref{thetaNind}, for each $\pi\in\Pi_\psi^\theta(G)$, the theta lift between $\wt G\times\wt H$ defines a bijection
\begin{equation*}
JH\left(\Ind_P^{\wt G}\left(\tau\chi_W\boxtimes\pi\right)\right)\longleftrightarrow JH\left(\Ind_Q^{\wt H}\left(\tau\chi_V\boxtimes\sigma\right)\right),
\end{equation*}
where $\sigma$ is the (small) theta lift of $\pi$ to $H$. By the construction, $\sigma\in\Pi_{\theta(\psi)}^A(H)$. Add up all $\pi\in\Pi_\psi^\theta(G)$ together, we obtain an injection 
\[
    JH\left(\Ind_P^{\wt G}\left(\tau\chi_W\boxtimes\Pi_\psi^\theta(G)\right)\right)\longrightarrow JH\left(\Ind_Q^{\wt H}\left(\tau\chi_V\boxtimes\Pi_{\theta(\psi)}^A(H)\right)\right).
\]
Hence in order to show the multiplicity-freeness of the LHS, it is sufficient to show the RHS is multiplicity-free. Let 
\begin{equation*}
\wt\psi=\psi_\tau\chi_W+\psi+\left(\psi_\tau\chi_W\right)^\vee,
\end{equation*}
and
\begin{equation*}
\theta(\wt\psi)=\psi_\tau\chi_V+\theta(\psi)+(\psi_\tau\chi_V)^\vee.
\end{equation*}
Then according to \cite{MR3135650} Proposition 2.4.3 (or \cite{MR3338302} Proposition 3.4.4), we have 
\begin{equation*}
\Ind_Q^{\wt H}\Big(\tau\chi_V\boxtimes\Pi_{\theta(\psi)}^A(H)\Big)=\Pi_{\theta(\wt\psi)}^A(\wt H)
\end{equation*}
as representations of $\wt H$. By Theorem \ref{Moeglin.Multi.Free}, as a representation of $\wt H$, $\Pi_{\theta(\wt\psi)}^A(\wt H)$ is multiplicity-free. Therefore $\Ind_P^{\wt G}\Big(\tau\chi_W\boxtimes\Pi_\psi^\theta(G)\Big)$ is also multiplicity-free.
\end{proof}
Under the hypothesis of Lemma \ref{indNmultifree}, we deduce
\begin{corollary}\label{Ind.Relation}
Suppose $\psi$ is a local $A$-parameter of good parity for $G^*$, and all requirements in Lemma \ref{indNmultifree} are satisfied. We retain the notations in the proof of Lemma \ref{indNmultifree}. Then 
\[
    \Pi_{\wt\psi}^{\theta,r+k}(\wt G)=\Ind_P^{\wt G}\Big(\tau\chi_W\boxtimes\Pi_\psi^{\theta,r}(G)\Big)
\]
as representations of $\wt G$. In particular, Proposition \ref{Count.Size.Theta.Packets} also holds for general $A$-parameters.
\end{corollary}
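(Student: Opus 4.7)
The plan is to chain three identifications already in our hands. By definition $\Pi_{\wt\psi}^{\theta,r+k}(\wt G)$ is the set of non-zero theta lifts to $\wt G$ of members of $\Pi_{\theta(\wt\psi)}^A(\wt H)$, and the proof of Lemma \ref{indNmultifree} has already identified $\Pi_{\theta(\wt\psi)}^A(\wt H) = \Ind_Q^{\wt H}(\tau\chi_V \boxtimes \Pi_{\theta(\psi)}^A(H))$ as a multi-set. Proposition \ref{Count.Size.Theta.Packets}, applied to the good-parity parameter $\psi$, parameterizes $\Pi_{\theta(\psi)}^A(H)$ as $\bigsqcup_{G'} \Pi_\psi^{\theta,r}(G')$ via $\sigma \leftrightarrow \pi := \theta(\sigma)$, where $G'$ ranges over the pure inner forms of $G^*$. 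Finally, Corollary \ref{thetaNind} supplies, for each pair $(\sigma,\pi)$, a theta-induced bijection between $JH(\Ind_Q^{\wt H}(\tau\chi_V \boxtimes \sigma))$ and $JH(\Ind_P^{\wt{G'}}(\tau\chi_W \boxtimes \pi))$.

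Splicing these three bijections and restricting to the specific pure inner form $\wt G = G(V \oplus \calH^k)$ produces the desired equality
\[
\Pi_{\wt\psi}^{\theta,r+k}(\wt G) = \Ind_P^{\wt G}(\tau\chi_W \boxtimes \Pi_\psi^{\theta,r}(G))
\]
as multi-sets of irreducible representations of $\wt G$. Multiplicity-freeness on both sides (Lemma \ref{indNmultifree} on the right, the first property of $\theta$-packets in Section \ref{Def.Theta-Pack} on the left) then upgrades this to an identity of representations of $\wt G$.

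The main obstacle is the bookkeeping underlying the restriction to the single pure inner form $\wt G$: one must check that a constituent of $\Ind_Q^{\wt H}(\tau\chi_V \boxtimes \sigma)$ theta-lifts non-trivially to $\wt G$ exactly when $\pi = \theta(\sigma)$ lies in $\Pi_\psi^{\theta,r}(G)$ and not in some other $\Pi_\psi^{\theta,r}(G')$. This is built into the equivariant maps of Proposition \ref{Non.Vanish.Equi.Map}, which realize the theta correspondence at the induced level and therefore transport the isometry class of the base space $V'$ to that of $V' \oplus \calH^k$; combined with the conservation relation of \cite{MR3369906} (already used in the proof of Proposition \ref{Count.Size.Theta.Packets}), this pins down the pure inner form and forces only the $\wt G$-summand to contribute on the left.

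For the ``in particular'' clause, the plan is to reduce the general case of Proposition \ref{Count.Size.Theta.Packets} to the good-parity one. Given any local $A$-parameter $\wt\psi$ for $\wt G^*$, write $\wt\psi = \tau + \psi_{bp} + \tau^\vee$ with $\psi_{bp}$ a good-parity parameter of a smaller group and $\tau$ an Arthur-type representation of a general linear group. Enlarging $r$ if necessary (which, by the first part of the corollary already proved, does not affect the $\theta$-packet), the first part gives $\Pi_{\wt\psi}^{\theta,r+k}(\wt{G'}) = \Ind_P^{\wt{G'}}(\tau\chi_W \boxtimes \Pi_{\psi_{bp}}^{\theta,r}(G'))$ for every pure inner form $\wt{G'}$ of $\wt G^*$. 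Combining this with Proposition \ref{Count.Size.Theta.Packets} for the good-parity parameter $\psi_{bp}$ and Arthur/Mok's parabolic-induction description $\Pi_{\theta(\wt\psi)}^A(\wt H) = \Ind_Q^{\wt H}(\tau\chi_V \boxtimes \Pi_{\theta(\psi_{bp})}^A(H))$ then yields the desired bijection $\theta : \Pi_{\theta(\wt\psi)}^A(\wt H) \to \bigsqcup_{\wt{G'}} \Pi_{\wt\psi}^{\theta,r+k}(\wt{G'})$ for the arbitrary parameter $\wt\psi$.
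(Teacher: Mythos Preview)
Your proposal is correct and follows exactly the paper's approach: the paper's proof is the one-liner ``This is a simple combination of Corollary~\ref{thetaNind}, Proposition~\ref{Count.Size.Theta.Packets} and the proof of Lemma~\ref{indNmultifree},'' and you have simply unpacked these three ingredients and the pure-inner-form bookkeeping that glues them together.

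One small correction concerning the ``in particular'' clause: your parenthetical ``Enlarging $r$ if necessary (which, by the first part of the corollary already proved, does not affect the $\theta$-packet)'' is both unnecessary and unjustified at this stage. Independence of the $\theta$-packet on $r$ is only established later (Corollary~\ref{Main.Theorem.Set.Level}), so invoking it here would be circular. Fortunately no enlargement is needed: writing a general parameter as $\wt\psi=\psi_\tau+\psi_{bp}+(\psi_\tau^c)^\vee$ with $k=\dim\psi_\tau$, the hypothesis $r>\dim V_{bp}+k$ of Lemma~\ref{indNmultifree} is \emph{equivalent} to the stable-range condition $r+k>\dim\wt V$ under which $\Pi_{\wt\psi}^{\theta,r+k}(\wt G)$ is defined in the first place. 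So every level at which the general $\theta$-packet is defined already admits the decomposition you need, and the reduction to good parity goes through directly.
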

\begin{proof}
This is a simple combination of the Corollary \ref{thetaNind}, Proposition \ref{Count.Size.Theta.Packets} and the proof of the Lemma \ref{indNmultifree}.
\end{proof}

Now we specialize to the case that $G=G^*$ is quasi-split.
\begin{proposition}\label{Containment.A.in.Theta}
Suppose $\psi$ is a local $A$-parameter of good parity for $G^*$. Then as subsets of $\Irr_{unit}(G^*)$, we have
\begin{equation*}
\Pi_\psi^A(G^*)\subset\Pi_\psi^\theta(G^*).
\end{equation*}
\end{proposition}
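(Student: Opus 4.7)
The plan is to argue globally. Given $\pi\in\Pi_\psi^A(G^*)$, I will globalize both $\pi$ and $\psi$, transfer to a global theta lift, use J.-S.~Li's stable-range multiplicity preservation to put it in the discrete spectrum, and then invoke the Arthur/Mok multiplicity formula for the quasi-split group $\dot H$ to identify the image as an element of a global Arthur packet. Localization at the distinguished place will then yield $\theta(\pi)\in\Pi_{\theta(\psi)}^A(H)$, and Howe duality will close the loop.

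For the setup, enlarge $r$ if necessary so that $2r-2n+1$ exceeds every Arthur-$SL_2$ dimension $d_i$ appearing in $\psi$; since we are aiming only at set containment (independence of $r$ being proved later in Section 5), this enlargement is harmless. Apply Corollary \ref{globalize.quasi-split} to obtain a tuple $(\dot F,\dot E,\dot G^*,\dot\pi,\dot\psi,u)$ with $\dot G^*$ quasi-split, $\dot\pi$ a subrepresentation of $L^2_{\dot\psi}(\dot G^*)$, and $(\dot F_u,\dot E_u,\dot\psi_u,\dot\pi_u)\simeq(F,E,\psi,\pi)$. Let $\dot W$ be the split $c$-skew-Hermitian space over $\dot E$ of the appropriate dimension so that $\dot H=H(\dot W)$ is quasi-split and $\dot W_u\simeq W$, and choose compatible global auxiliary data $(\psi_{\dot F},\chi_{\dot V},\chi_{\dot W})$ lifting $(\psi_F,\chi_V,\chi_W)$ at $u$. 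By the choice of $r$, the formal sum $\theta(\dot\psi)=\dot\psi\chi_{\dot W}^{-1}\chi_{\dot V}+\chi_{\dot V}\boxtimes S_{2r-2n+1}$ is then a bona fide elliptic $A$-parameter of $\dot H$.

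Since $(\dot G^*,\dot H)$ is in the stable range at every place, the last theorem of Section \ref{J-S.Li.Low.rk} gives $m_{disc}(\theta^{abs}(\dot\pi))\geq m_{disc}(\dot\pi)\geq 1$, so $\theta^{abs}(\dot\pi)$ is a nonzero element of the discrete automorphic spectrum of $\dot H(\AAA)$. By Theorem \ref{Decompose.NEC} there is an elliptic $A$-parameter $\dot\Psi$ of $\dot H$ with $\theta^{abs}(\dot\pi)\in L^2_{\dot\Psi}(\dot H)$. At any place $v$ where everything is unramified, Rallis' computation of the unramified theta lift shows that the Satake parameter of $\theta(\dot\pi_v)$ matches the one attached to $\theta(\dot\psi)_v$; strong multiplicity one on the general linear side then forces $\dot\Psi=\theta(\dot\psi)$. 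Arthur's multiplicity formula for the quasi-split $\dot H$ (Theorem \ref{AMF.QS}) now asserts $\theta^{abs}(\dot\pi)\in\Pi_{\theta(\dot\psi)}^A(\dot H,\epsilon_{\theta(\dot\psi)})$. Localizing at $u$, the representation $\theta(\pi)=\theta^{abs}(\dot\pi)_u$ lies in $\Pi_{\theta(\psi)}^A(H)$. Setting $\sigma=\theta(\pi)$, since $\pi\boxtimes\sigma$ appears as a quotient of the local Weil representation, Howe duality for the dual pair $(G^*,H)$ produces $\theta(\sigma)=\pi$, and the definition of $\Pi_\psi^\theta(G^*)$ then gives $\pi\in\Pi_\psi^\theta(G^*)$.

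The main obstacle is the third step: one needs to confirm that the globalized $\theta(\dot\psi)$ really is an elliptic $A$-parameter (handled at the outset by taking $r$ large) and to identify $\dot\Psi$ with $\theta(\dot\psi)$ via the unramified theta correspondence at almost all places. Once these are in hand, the rest is a clean chain of applications of the globalization corollary, J.-S.~Li's multiplicity preservation, and the Arthur/Mok multiplicity formula for the quasi-split group $\dot H$.
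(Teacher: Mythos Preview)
Your proof is essentially the same as the paper's: globalize via Corollary \ref{globalize.quasi-split}, apply J.-S.~Li's inequality to land $\theta^{abs}(\dot\pi)$ in the discrete spectrum of $\dot H$, identify its elliptic $A$-parameter as $\theta(\dot\psi)$ via the unramified theta correspondence, and localize using Arthur's multiplicity formula.

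One remark on your opening step. You propose to enlarge $r$ so that $2r-2n+1$ exceeds every $d_i$, and you justify this by appealing to the independence of $\Pi_\psi^{\theta,r}(G^*)$ on $r$ proved later in Section 5. That justification would be circular: Corollary \ref{Ind.to.Sharp.Level} (and hence Corollary \ref{Main.Theorem.Set.Level}) invokes this very proposition. Fortunately the enlargement is never needed: since $r>\dim V$ and each $d_i\leq\dim V$ (as $\sum n_id_i=\dim V$), one always has $2r-2n+1>\dim V\geq d_i$, so $\theta(\dot\psi)$ is automatically elliptic. You should simply observe this directly rather than appeal to a result downstream.
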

\begin{proof}
First we apply Lemma \ref{globalize.quasi-split} to the irreducible unitary representation $\pi\in\Pi_\psi^A(G^*)$. We obtain a tuple of global data $(\dot F,\dot E,\dot G,\dot\pi,\dot\psi,u)$. We also globalize the tuple of local auxiliary data $(\psi_F,\chi_V,\chi_W)$ in the definition of the local theta lift to a global tuple $(\psi_{\dot F},\chi_{\dot V},\chi_{\dot W})$. Let $\dot W$ be the split $c$-skew-Hermitian space over $\dot E$ with the same dimension as $W$. Put $\dot H=H(\dot W)$. Consider the abstract theta lift $\dot\sigma=\theta^{abs}(\dot\pi)$ of $\dot\pi$ to $\dot H$. Since $m_{disc}(\dot\pi)\geq 1$, we deduce from J-S. Li's inequality that $m_{disc}(\dot\sigma)\geq 1$. Hence, Arthur/ Mok has attached an elliptic $A$-parameter $\dot\psi_H$ to $\dot\sigma$.\\

We claim that $\dot\psi_H$ is of the form $\dot\psi_H=\theta(\dot\psi)$, where
\[
    \theta(\dot\psi)=\dot\psi\chi_{\dot W}^{-1}\chi_{\dot V}+\chi_{\dot V}\boxtimes S_{2r-2n+1}.
\]
In fact, for almost all place $v$ of $\dot F$, the $L$-parameter of $\dot\pi_v$ is $\phi_{\dot\psi_v}$; hence by the local theta correspondence for unramified representations, for almost all place $v$ of $\dot F$, the $L$-parameter of $\dot\sigma_v$ is
\begin{equation}
    \phi_{\dot\psi_v}\chi_{\dot W,v}^{-1}\chi_{\dot V,v}+\left(\bigoplus_{j=n-r}^{r-n}|\cdot|^j\right)\chi_{\dot V,v}.
\end{equation}
It then follows that $\dot\psi_H=\theta(\dot\psi)$. Therefore by the Arthur's multiplicity formula for $\dot H$, the localization of $\dot\sigma$ at the place $u$ lies in the local $A$-packet $\Pi_{\dot\psi_{H,u}}^A(\dot H_u)$, i.e. we have 
\[
    \sigma\in\Pi_{\theta(\psi)}^A(H),
\]
which is equivalent to say that $\pi\in\Pi_\psi^{\theta}(G^*)$.
\end{proof}
\begin{remark}
If we assume the same multiplicity-freeness result Theorem \ref{Moeglin.Multi.Free} hold for the Archimedean places, then we can show that this lemma is also true for Archimedean places.
\end{remark}

\subsection{A special class of parameters}\label{sc.A-parameter}
In this subsection we deal with a special class of local $A$-parameters. Let $\psi\in\Psi(G)$ be a local $A$-parameter. Suppose that the following hypothesis is satisfied:
\begin{hypo}\label{Hypothesis.Special.Class.A-parameter}
There is a tuple of data $(\dot{F},\dot{E},\dot{V}^+,\dot{\psi},u,w_1,w_2)$, where:
\begin{itemize}
\item $\dot{F}$ is a number field, and $\dot{E}$ is either $\dot F$ itself or a quadratic extension of $\dot F$, according to the cases at hand;
\item $\dot{V}^+$ is a $c$-Hermitian space over $\dot{E}$ so that $\dot G^*=G(\dot V^+)$ is quasi-split;
\item $\dot{\psi}$ is an elliptic $A$-parameter of $\dot G^*$;
\item $u,w_1,w_2$ are finite places of $\dot{F}$;
\end{itemize}
such that the following conditions hold:
\begin{enumerate}
\item $(\dot{F}_u,\dot{E}_u,\dot{V}^+_u,\dot{\psi}_u)\simeq(F,E,V^{+},\psi)$;
\item $\dot{\psi}_{w_1}$ is elementary, and there exists $\eta_{w_1}\in\widehat{\overline{\calS_{\dot{\psi}_{w_1}}}}$, such that $\pi_A(\dot{\psi}_{w_1},\eta_{w_1})$ is supercuspidal;
\item $\dot{\psi}_{w_2}$ is elementary, and the localization map
\begin{equation*}
\iota_{w_2}:\calS_{\dot{\psi}}\lra\calS_{\dot{\psi}_{w_2}}
\end{equation*}
is an isomorphism.
\end{enumerate}
\end{hypo}
Let $r$ be any integer greater than $\dim V$. Let $W=W_{(r)}$, and
\begin{equation*}
\theta(\psi)=\psi\chi_W^{-1}\chi_V+\chi_V\boxtimes S_{2r-2n+1}
\end{equation*}
be a local $A$-parameter for $H=H(W_{(r)})$.
\begin{proposition}\label{Antipasto}
Suppose that the local $A$-parameter $\psi$ satisfies the Hypothesis \ref{Hypothesis.Special.Class.A-parameter}. Let $\pi$ be an irreducible unitary representation of $G$, such that its small theta lift $\sigma=\theta(\pi)$ to $H$ lies in the local $A$-packet $\Pi_{\theta(\psi)}^{A}(H)$. Then there is a pair of data $(\dot G, \dot\pi)$, where
\begin{itemize}
\item $\dot G$ is a pure inner form of $\dot G^*$ over $\dot F$;
\item $\dot\pi$ is an automorphic representation occuring in the automorphic discrete spectrum of $\dot G$ with the elliptic $A$-parameter $\dot\psi$;
\end{itemize}
such that $\dot\pi_v=\pi$. Moreover, if $G=G^*$ is quasi-split, we can take $\dot G=\dot G^*$. Hence in this case, we have
\begin{equation*}
\pi\in\Pi_\psi^{A}(G^*).
\end{equation*}
\end{proposition}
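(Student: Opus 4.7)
The strategy is entirely global: I will use the theta correspondence to transfer the question to the quasi-split group $\dot H$, where the Arthur--Mok multiplicity formula is available. The rough plan is first to globalize $\sigma = \theta(\pi)$ to a discrete automorphic $\dot\sigma$ on $\dot H$ with elliptic $A$-parameter $\theta(\dot\psi)$, and then pull $\dot\sigma$ back via J-S. Li's global theorem to obtain the desired $\dot\pi$ on some pure inner form $\dot G$.

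First I would build $\dot\sigma = \bigotimes_v \dot\sigma_v$ by prescribing its local components so that the Arthur multiplicity formula is satisfied. I take $\dot\sigma_u = \sigma$; at almost every other place, the unramified constituent of $\Pi_{\theta(\dot\psi)_v}^A(\dot H_v)$; at $w_1$, the small theta lift $\dot\sigma_{w_1} = \theta(\pi_A(\dot\psi_{w_1},\eta_{w_1}))$ of the supercuspidal representation furnished by the hypothesis, which is non-zero by stable-range theory and lies in $\Pi_{\theta(\dot\psi)_{w_1}}^A(\dot H_{w_1})$ by M\oe glin's results (her technical hypotheses hold automatically in stable range); and finally I adjust $\dot\sigma_{w_2}$ so that $\prod_v \calJ^A_{\scrW'_v}(\dot\sigma_v) = \epsilon_{\theta(\dot\psi)}$. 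This adjustment is available because $\theta(\dot\psi)_{w_2}$ is again elementary (tensoring the summands of $\dot\psi_{w_2}$ by the character $\chi_{\dot W}^{-1}\chi_{\dot V}$ and adjoining $\chi_{\dot V}\boxtimes S_{2r-2n+1}$ both preserve elementariness), so Lemma \ref{surjectivity.ell.type} makes $\calJ^A_{\scrW'_{w_2}}$ surjective, and the hypothesis that $\iota_{w_2}\colon \calS_{\dot\psi}\to\calS_{\dot\psi_{w_2}}$ is an isomorphism transfers to the corresponding map for $\theta(\dot\psi)$.

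With $\dot\sigma$ in hand and $\dim \dot V < r$, so the pair is globally in stable range, J-S. Li's Theorem \ref{J-S.Li.Low.rk.Global} produces a unique pure inner form $\dot G = G(\dot V')$, an abstract automorphic $\dot\pi$, and an automorphic character $\chi$ with $\dot\sigma \simeq \theta^{abs}(\dot\pi)\otimes\chi$; his multiplicity inequality places $\dot\pi$ in the automorphic discrete spectrum of $\dot G$, and computing unramified $L$-parameters shows its elliptic $A$-parameter equals $\dot\psi$. At $u$ the identity $\theta(\pi) = \theta(\dot\pi_u)\otimes\chi_u$ combined with the local version (Theorem \ref{J-S.Li.Low.rk.Local}), which is a bijection across pure inner forms, forces $\dot V'_u \simeq V$ and, after absorbing $\chi$ appropriately, $\dot\pi_u = \pi$. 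This establishes the first statement. For the quasi-split case $G = G^*$, I additionally arrange in Step 1 that every local component $\dot\sigma_v$ is a theta lift from the quasi-split form $\dot G^*_v$: this is automatic at the unramified places, holds at $u$ by assumption and at $w_1$ by construction, and can be arranged at $w_2$ by combining Proposition \ref{Count.Size.Theta.Packets} with the above surjectivity. The local J-S. Li bijection then gives $\dot V'_v \simeq \dot V^+_v$ at every place $v$, and the local-global principle for $c$-Hermitian spaces yields $\dot V'\simeq \dot V^+$ globally; Arthur--Mok applied to the discrete $\dot\pi$ on $\dot G^*$ concludes $\pi \in \Pi^A_\psi(G^*)$.

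The main technical obstacle is Step 1, particularly the simultaneous management at $w_2$ of both the character adjustment needed for the multiplicity formula and (in the quasi-split case) the requirement that $\dot\sigma_{w_2}$ be a theta lift from the quasi-split pure inner form. A careful verification that the component-group conditions imposed by the hypothesis on $\dot\psi$ transfer to $\theta(\dot\psi)$, together with tracking the auxiliary character $\chi$ in J-S. Li's theorem so that it does not interfere with the identification $\dot\pi_u = \pi$, is where the real work is concentrated.
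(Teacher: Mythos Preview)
Your overall architecture matches the paper's, but there is one genuine gap and one unnecessary complication.

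\textbf{The gap: discreteness of $\dot\pi$.} J-S.~Li's inequality reads
\[
m_{disc}(\dot\pi)\;\leq\;m_{disc}\bigl(\theta^{abs}(\dot\pi)\bigr)\;\leq\;m\bigl(\theta^{abs}(\dot\pi)\bigr)\;\leq\;m(\dot\pi),
\]
so from $m_{disc}(\dot\sigma)\geq 1$ you can only conclude $m(\dot\pi)\geq 1$, i.e.\ that $\dot\pi$ is automorphic, \emph{not} that it lies in the discrete spectrum. The paper closes this gap using precisely the place $w_1$: since $\dot\sigma_{w_1}=\theta\bigl(\pi_A(\dot\psi_{w_1},\eta_{w_1})\bigr)$, the local theta bijection forces $\dot\pi_{w_1}=\pi_A(\dot\psi_{w_1},\eta_{w_1})$, which is supercuspidal by hypothesis; hence every automorphic realization of $\dot\pi$ is cuspidal, and $m_{disc}(\dot\pi)\geq 1$. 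You set up the supercuspidal at $w_1$ but never invoke it for this purpose; without this step the proposition does not follow.

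\textbf{The quasi-split case.} Your plan to arrange at $w_2$ \emph{both} the prescribed character of $\calS_{\theta(\dot\psi)_{w_2}}$ \emph{and} that $\dot\sigma_{w_2}$ be a lift from the quasi-split form is dubious: since $\theta(\dot\psi)_{w_2}$ is elementary, Lemma~\ref{surjectivity.ell.type} says the labeling map is a bijection, so the character alone pins down $\dot\sigma_{w_2}$, and there is no freedom left to force which pure inner form it comes from. The paper avoids this entirely: it only arranges that $\dot\sigma_v$ is a lift from $\dot G^*_v$ at all places $v\neq w_2$ (unramified places, the ramified places in $S\setminus\{u,w_1,w_2\}$, and $u,w_1$). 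Then $\dot G_v\simeq\dot G^*_v$ for all $v\neq w_2$, and the local--global principle (product formula for the Hasse--Witt invariant or the sign) forces $\dot G_{w_2}\simeq\dot G^*_{w_2}$ as well, whence $\dot G=\dot G^*$.

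Two minor points: you should also specify $\dot\sigma_v$ at the finitely many ramified places in $S\setminus\{u,w_1,w_2\}$ (the paper takes theta lifts of arbitrary members of $\Pi_{\dot\psi_v}^A(\dot G^*_v)$ there, using Proposition~\ref{Containment.A.in.Theta}); and your concern about the auxiliary character $\chi$ is a non-issue once the global data $(\psi_{\dot F},\chi_{\dot V},\chi_{\dot W})$ are fixed, as the paper does at the outset.
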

\begin{proof}
We globalize the tuple of local auxiliary data $(\psi_F,\chi_V,\chi_W)$ in the definition of the local theta lift to a global tuple $(\psi_{\dot F},\chi_{\dot V},\chi_{\dot W})$. Let $\dot W=\dot W_{(r)}$ be the split $c$-skew-Hermitian space over $\dot E$ with the same dimension as $W=W_{(r)}$. Put $\dot H=H(\dot W)$. Let $S$ be a finite set of places of $\dot{F}$, including $u$, $w_1$, $w_2$, and all Archimedean places, such that for all $v\notin S$, the dual-pair $\dot G\times \dot H$, the auxiliary data $(\psi_{\dot F_v},\chi_{\dot V_v},\chi_{\dot W_v})$, and the local $A$-parameter $\dot\psi_v$ are all unramified. We construct an automorphic representation $\dot{\sigma}$ occuring in the automorphic discrete spectrum of $\dot H$ with elliptic $A$-parameter 
\begin{equation*}
\theta(\dot\psi)=\dot\psi\chi_{\dot W}^{-1}\chi_{\dot V}+\chi_{\dot V}\boxtimes S_{2r-2n+1}
\end{equation*}
as follows:
\begin{itemize}
\item at a place $v\notin S$, $\dot\sigma_v$ is the unramified representation of $\dot H_v$ with $L$-parameter $\phi_{\theta(\dot\psi_v)}$; then, by the theta lift for unramified representations, it is clear that $\dot\sigma_v=\theta(\pi_v)$, where $\pi_v$ is the unramified representation of $\dot G^*_v$ with $L$-parameter $\phi_{\dot\psi_v}$;
\item at a place $v\in S\backslash\{u,w_1,w_2\}$, let $\pi_v$ be an arbitrarily given representation of $\dot G^*_v$ lying in the $A$-packet $\Pi_{\dot\psi_v}^A(\dot G^*_v)$, and $\dot\sigma_v=\theta(\pi_v)$ is the theta lift of $\pi_v$ to the group $\dot H_v$; similar to the proof of Proposition \ref{Containment.A.in.Theta}, one can show that $\dot\sigma_v\in\Pi_{\theta(\dot\psi_v)}^A(\dot H_v)$ by using some global arguments; 
\item at the place $u$, $\dot\sigma_u=\sigma$, which lies in $\Pi_{\theta(\dot\psi_u)}^A(\dot H_u)$ by our assumptions;
\item at the place $w_1$, $\dot\sigma_{w_1}=\theta\left(\pi_A(\dot\psi_{w_1},\eta_{w_1})\right)$, which lies in $\Pi_{\theta(\dot\psi_{w_1})}^A(\dot H_{w_1})$ by Proposition \ref{Containment.A.in.Theta};
\item at the place $w_2$, $\dot\sigma_{w_2}=\pi_A\left(\theta(\dot\psi_{w_2}),\eta_{w_2}\right)$, where $\eta_{w_2}$ is the character of $\calS_{\dot{\psi}_{w_2}}$, determined by the formula
    \[
        \prod_v\eta_v=\epsilon_{\theta(\dot\psi)},
    \]
    where $\eta_v=\calJ_{\scrW'}^A(\dot\sigma_v)$, and $\epsilon_{\theta(\dot\psi)}$ is the canonical sign character associated to $\theta(\dot\psi)$. It follows from Lemma \ref{surjectivity.ell.type} that $\dot\sigma_{w_2}\neq0$.
\end{itemize}
Then, according to the Arthur's multiplicity formula for $\dot H$, $\dot\sigma$ is an irreducible subrepresentation of $L^2_{\theta(\dot\psi)}(\dot H)$. By J-S. Li's work on low rank representations (Theorem \ref{J-S.Li.Low.rk.Global}), there is an unique pure inner form $\dot G$ of $\dot G^*$ and an automorphic representation $\dot\pi$ of $\dot G$, such that
\begin{equation*}
\dot\sigma=\theta^{abs}(\dot\pi).
\end{equation*}
Also, by our construction, $\dot\pi_{w_1}=\pi_A(\dot\psi_{w_1},\eta_{w_1})$ is supercuspidal. This forces any automorphic realization of $\dot\pi$ to be cuspidal. Hence $\dot\pi$ lies in the automorphic discrete spectrum of $\dot G$, with elliptic $A$-parameter $\dot\psi$.\\
~\\
When $G=G^*$ is quasi-split, by the uniqueness of $\dot G$ (see Theorem \ref{J-S.Li.Low.rk.Global}) and the local-global principle for even orthogonal or unitary groups, it is easy to see that $\dot G=\dot G^*$. Hence by Arthur's multiplicity formula for $\dot G^*$, the localization of $\dot\pi$ at the place $u$ will lie in the local $A$-packet $\Pi_{\dot\psi_u}^{A}(\dot G^*_u)$, i.e.
\begin{equation*}
\pi\in\Pi_\psi^{A}(G^*).
\end{equation*}
This completes the proof.
\end{proof}
As a corollary of this proposition, we deduce
\begin{corollary}\label{Comparison.as.Sets.Special.case}
Suppose that the $A$-parameter $\psi$ satisfies Hypothesis \ref{Hypothesis.Special.Class.A-parameter}. Then
\begin{enumerate}
    \item as a set, the definition of the packet $\Pi_\psi^\theta(G)$ is indeed independent of the choice of $H=H(W_{(r)})$;
    \item if $G=G^*$ is quasi-split, then
    \[
        \Pi_\psi^\theta(G^*)=\Pi_\psi^A(G^*) 
    \]
    as sets.
\end{enumerate}
\end{corollary}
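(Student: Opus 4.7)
The plan is to deduce both statements directly from Proposition \ref{Antipasto}, coupled with the globalization it furnishes, the unramified theta correspondence, and the Arthur-Mok multiplicity formula applied on the lift side.

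For part (1), take any $r, r' > \dim V$ and any $\pi \in \Pi_\psi^{\theta,r}(G)$. By the definition of the $\theta$-packet, the small theta lift $\sigma = \theta(\pi)$ to $H_r = H(W_{(r)})$ lies in $\Pi_{\theta^r(\psi)}^A(H_r)$. Proposition \ref{Antipasto} then produces a pure inner form $\dot G$ of $\dot G^*$ and an automorphic representation $\dot\pi$ in the discrete spectrum of $\dot G$, with elliptic $A$-parameter $\dot\psi$, such that $\dot\pi_u = \pi$. I would then switch the Witt tower and consider $\dot H_{r'} = H(\dot W_{(r')})$, which is still in the stable range because $\dim \dot V < r'$. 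The abstract theta lift $\dot\sigma' = \theta^{abs}(\dot\pi)$ is then non-vanishing, and by J-S. Li's inequality it sits inside the discrete automorphic spectrum of $\dot H_{r'}$. Repeating the unramified computation used in the proof of Proposition \ref{Containment.A.in.Theta} identifies the elliptic $A$-parameter of $\dot\sigma'$ with $\theta^{r'}(\dot\psi)$. Applying the Arthur-Mok multiplicity formula for the quasi-split group $\dot H_{r'}$ at the place $u$ yields $\dot\sigma'_u \in \Pi_{\theta^{r'}(\psi)}^A(H_{r'})$; but $\dot\sigma'_u$ is nothing other than the small local theta lift of $\pi$ to $H_{r'}$, so $\pi \in \Pi_\psi^{\theta,r'}(G)$. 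Symmetry in $(r,r')$ concludes part (1).

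For part (2), assume $G = G^*$. The containment $\Pi_\psi^\theta(G^*) \subset \Pi_\psi^A(G^*)$ is the second (quasi-split) conclusion of Proposition \ref{Antipasto}. For the reverse containment, the good-parity case is Proposition \ref{Containment.A.in.Theta}. If $\psi$ (arising as $\dot\psi_u$) has components that are not of good parity, these necessarily come in pairs $\phi + \phi^\vee$ corresponding to a representation $\tau$ of a general linear factor, and one writes $\psi = \tau\chi_W + \psi_0 + (\tau\chi_W)^\vee$ with $\psi_0$ of good parity. The local $A$-packet then decomposes as a parabolic induction $\Pi_\psi^A(G^*) = \Ind_P^{G^*}(\tau\chi_W \boxtimes \Pi_{\psi_0}^A(G_0^*))$ by \cite{MR3135650} Prop.~2.4.3 (resp.\ \cite{MR3338302} Prop.~3.4.4), while Corollary \ref{Ind.Relation} gives the matching decomposition of $\theta$-packets, so the good-parity case propagates to the general case.

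The main obstacle I expect is the passage from the abstract global theta lift back to the small local theta lift at $u$: one must ensure that $\dot\sigma'_u$ equals $\theta^{r'}(\pi)$ and not merely some subquotient of $\Theta^{r'}(\pi)$. This is exactly why one needs the global object $\dot\sigma'$ to sit in the discrete spectrum with the expected elliptic parameter, so that the local-global compatibility of the small theta lift (via the Howe duality principle and J-S. Li's inequality) forces the identification. The bookkeeping of how Hypothesis \ref{Hypothesis.Special.Class.A-parameter} transports from the pair $(r,W_{(r)})$ to $(r',W_{(r')})$ — in particular that the auxiliary places $w_1, w_2$ continue to play their roles — is routine once one observes that the hypothesis is formulated entirely on the $G$-side.
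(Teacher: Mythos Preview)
Your argument is correct and follows essentially the same route as the paper: for part~(1) you globalize via Proposition~\ref{Antipasto}, theta-lift to $\dot H_{r'}$, identify the elliptic $A$-parameter by the unramified computation, and localize using Arthur--Mok; this is exactly what the paper does. For part~(2) the paper is terser --- it simply invokes Proposition~\ref{Antipasto} (which gives $\Pi_\psi^\theta(G^*)\subset\Pi_\psi^A(G^*)$) and tacitly relies on Proposition~\ref{Containment.A.in.Theta} for the reverse inclusion --- whereas you spell out both directions and even address the non-good-parity case via parabolic induction and Corollary~\ref{Ind.Relation}; the latter is not strictly needed in the paper since the corollary is only ever applied to $\psi^\sharp$, which is of good parity by construction. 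Your ``main obstacle'' is not actually an obstacle: in the stable range the abstract theta lift is by definition $\theta^{abs}(\dot\pi)=\bigotimes_v\theta(\dot\pi_v)$, so $\dot\sigma'_u=\theta^{r'}(\pi)$ on the nose.
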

\begin{proof}
Firstly we globalize the tuple of local auxiliary data $(\psi_F,\chi_V,\chi_W)$ in the definition of the local theta lift to a global tuple $(\psi_{\dot F},\chi_{\dot V},\chi_{\dot W})$. For any positive integer $r'>\dim V$, let $\dot W=\dot W_{(r')}$ be the split $c$-skew-Hermitian space over $\dot E$ with the same dimension as $W=W_{(r')}$. Put $\dot H^{r'}=H\left(\dot W_{(r')}\right)$.\\

For an irreducible unitary representation $\pi\in\Pi_\psi^{\theta,r}(G)$, we have proved that we can globalize it to a cuspidal representation $\dot\pi$ of $\dot G$, with elliptic $A$-parameter $\dot\psi$. Similar to the proof of Proposition \ref{Containment.A.in.Theta}, one can easily see that the abstract theta lift $\dot\sigma^{r'}=\theta^{abs}(\dot\pi)$ of $\dot\pi$ to $\dot H^{r'}$ occurs in the automorphic discrete spectrum of $\dot H^{r'}$, with elliptic $A$-parameter
\[
    \theta^{r'}(\dot\psi)=\dot\psi\chi_{\dot W}^{-1}\chi_{\dot V}+\chi_{\dot V}\boxtimes S_{2r'-2n+1}.
\]
Consider the localizations of $\dot\pi$ and $\dot\sigma^{r'}$ at the place $u$, it follows that $\pi\in\Pi_\psi^{\theta,r'}(G)$, i.e. $\Pi_\psi^{\theta,r}(G)\subset\Pi_\psi^{\theta,r'}(G)$. Symmetrically, we also have the reverse containment. Hence
\[
    \Pi_\psi^{\theta,r}(G)=\Pi_\psi^{\theta,r'}(G).
\]
The second statement is a tautology of Proposition \ref{Antipasto}.
\end{proof}

\subsection{Sharp construction}
In this subsection we describe a key construction for our later proof. This construction allows us to ``embed'' any $A$-parameter of good parity as a ``sub $A$-parameter'' which satisfies the Hypothesis \ref{Hypothesis.Special.Class.A-parameter}.
\begin{lemma}\label{sharp}
For any local $A$-parameter $\psi\in\Psi(G^*)$ of good parity, there exists a tuple of data $(\dot{F},\dot{E},\dot{V}^{\sharp},\dot{\psi}^{\sharp},u,w_1,w_2)$, where:
\begin{itemize}
\item $\dot{F}$ is a number field, and $\dot{E}$ is either $\dot F$ itself or a quadratic extension of $\dot F$, according to the cases at hand;
\item $\dot{V}^{\sharp}$ is a $c$-Hermitian space over $\dot{E}$ so that $\dot G^{\sharp}=G(\dot V^{\sharp})$ is quasi-split;
\item $\dot{\psi}^{\sharp}$ is an elliptic $A$-parameter of $\dot G^{\sharp}$;
\item $u,w_1,w_2$ are finite places of $\dot{F}$;
\end{itemize}
such that the following conditions hold:
\begin{enumerate}
\item $(\dot{F}_u,\dot E_u)\simeq (F,E)$;
\item $\dim\dot{V}^{\sharp}$ is bounded by some constant which only depends on $\dim V$ but not on $\psi$;
\item $\dot{V}^{\sharp}_u=V^+\oplus\calH^k$ for some integer $k$, where $\calH$ is the $c$-Hermitian hyperplane;
\item $\dot{\psi}^{\sharp}_u=\psi_\tau+\psi+\left(\psi_\tau^c\right)^\vee$, where $\psi_\tau$ is a sum of tempered irreducible (conjugate) self-dual representations of the Weil group $W_E$ (regarded as representations of $L_E\times SL_2$ which is trivial on Weil-Deligne and Arthur $SL_2$) with the same parity as $\psi$;
\item $\dot{\psi}^{\sharp}_{w_1}$ is elementary, and there exists $\eta_{w_1}\in\widehat{\overline{\calS_{\dot{\psi}^{\sharp}_{w_1}}}}$, such that $\pi_A(\dot{\psi}^{\sharp}_{w_1},\eta_{w_1})$ is supercuspidal;
\item $\dot{\psi}^{\sharp}_{w_2}$ is elementary, and the localization map
\begin{equation*}
\iota_{w_2}:\calS_{\dot{\psi}^{\sharp}}\lra\calS_{\dot{\psi}^{\sharp}_{w_2}}
\end{equation*}
is an isomorphism.
\end{enumerate}
In short, the tuple of data $(\dot{F},\dot{E},\dot{V}^{\sharp},\dot{\psi}^{\sharp},u,w_1,w_2)$ satisfies Hypothesis \ref{Hypothesis.Special.Class.A-parameter}, and with $\dot{\psi}^{\sharp}_u$ related with $\psi$ as in the condition $4$.
\end{lemma}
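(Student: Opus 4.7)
The plan is to build the globalization in three ingredients: a globalization of the original parameter $\psi$, a padding by tempered (conjugate) self-dual characters $\psi_\tau$ of $W_E$, and a careful prescription of local behavior at two auxiliary places using Lemma \ref{Golobalization-Origin}. First I would choose a number field $\dot F$ with a finite place $u$ satisfying $(\dot F_u,\dot E_u)\simeq(F,E)$, together with two further finite places $w_1,w_2\notin\{u\}$ whose residue characteristics and ramification make $\dot E_{w_1}$ and $\dot E_{w_2}$ admit a rich supply of irreducible (conjugate) self-dual tempered representations of $W_{\dot E_{w_j}}$ of any prescribed dimension and parity (as in \cite{CZ2020AMFPIF} Appendices B and C); in Case $O$ I would require that $\dot F$ have enough real places, and in Case $U$ that $\dot E/\dot F$ be a CM extension with $\dot E_u\simeq E$.

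Next, write $\psi=\sum_{i\in I_\psi}\rho_i\boxtimes S_{a_i}\boxtimes S_{b_i}$ and set $\phi_i=\rho_i\boxtimes S_{a_i}$, a $(n_ia_i)$-dimensional (conjugate) self-dual representation of $L_{F_u}$ with the same parity as $\psi$. I then choose a finite family of tempered irreducible (conjugate) self-dual characters $\{\chi_\alpha\}_{\alpha\in A}$ of $W_E$ with the same parity as $\psi$, constituting the padding $\psi_\tau=\sum_\alpha\chi_\alpha$. The family $\{\chi_\alpha\}$ is designed together with a prescription of local behavior at $w_1$ so that, after globalization, the elementary parameter $\dot\psi^\sharp_{w_1}$ has discrete $L$-parameter $\dot\psi^\sharp_{w_1,d}$ forming complete $\rho$-chains down to $S_1$ or $S_2$, and admitting a character $\eta_{w_1}\in\widehat{\overline{\calS_{\dot\psi^\sharp_{w_1}}}}$ satisfying the alternating and initial conditions of Theorem \ref{sc.criterion}. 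The cardinality of $A$ depends only on the multi-set $\{(n_i,a_i,b_i)\}$, hence is bounded by a function of $\dim V$.

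Now apply Lemma \ref{Golobalization-Origin} (with $S=\{w_1,w_2\}$) to each $\phi_i$ and each $\chi_\alpha$ simultaneously, producing irreducible (conjugate) self-dual cuspidal automorphic representations $\dot\phi_i$ of $GL_{n_ia_i}(\AAA_{\dot E})$ and $\dot\chi_\alpha$ of $GL_1(\AAA_{\dot E})$ whose localizations are: at $u$, the original $\phi_i$ and $\chi_\alpha$; at $w_1$, tempered representations decomposing as sums of characters of $W_{\dot F_{w_1}}$ arranged to fill in chains and give the required alternating signs; at $w_2$, tempered representations whose irreducible constituents (across all $i,\alpha$, within each Arthur $SL_2$-type) are pairwise non-isomorphic. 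Assemble the formal sum
\[
\dot\psi^\sharp:=\sum_i\dot\phi_i\boxtimes S_{b_i}\;+\;\sum_\alpha\dot\chi_\alpha\boxtimes S_1\;+\;\sum_\alpha(\dot\chi_\alpha^c)^\vee\boxtimes S_1,
\]
and let $\dot V^\sharp$ be the unique (by the local-global principle for Hermitian spaces) $c$-Hermitian space over $\dot E$ making $\dot G^\sharp=G(\dot V^\sharp)$ quasi-split with $\dot\psi^\sharp\in\Psi_{ell}(\dot G^\sharp)$; then $\dim\dot V^\sharp\leq\dim V+2|A|$, bounded in terms of $\dim V$ alone. Conditions (1)--(4) are immediate from the construction at $u$; (5) follows from Theorem \ref{sc.criterion} applied to $\dot\psi^\sharp_{w_1,d}$ with the character $\eta_{w_1}$; and (6) follows because the constituents of $\dot\psi^\sharp_{w_2}$ are, by design, pairwise non-isomorphic within each Arthur $SL_2$-type, so the localization map has trivial kernel.

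The main obstacle I anticipate is the combinatorial design at $w_1$: one must choose the padding characters $\chi_\alpha$ and their local behavior there so that a single character $\eta_{w_1}$ on $\overline{\calS_{\dot\psi^\sharp_{w_1}}}$ satisfies the chain, alternating, and initial conditions simultaneously, while respecting the global (conjugate) self-duality and parity constraints on each $\dot\chi_\alpha$ and remaining compatible with the separate distinguishability requirement at $w_2$. This is a bookkeeping exercise counting how many characters of each irreducible type and parity must be added to complete any partial chains and arrange the correct sign pattern, but it is the only nontrivial combinatorial input in the construction.
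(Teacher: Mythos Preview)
Your overall strategy---globalize $\psi$, pad by self-dual pieces of $W_E$, and control the local behavior at $w_1,w_2$ via Lemma~\ref{Golobalization-Origin}---is exactly the paper's approach, but two concrete choices in your proposal make it fail as written.

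First, the global formal sum you write,
\[
\dot\psi^\sharp=\sum_i\dot\phi_i\boxtimes S_{b_i}+\sum_\alpha\dot\chi_\alpha\boxtimes S_1+\sum_\alpha(\dot\chi_\alpha^c)^\vee\boxtimes S_1,
\]
is not an \emph{elliptic} $A$-parameter. Every summand of an elliptic parameter must be an irreducible (conjugate) self-dual cuspidal representation of the correct parity, so your $\dot\chi_\alpha$ is necessarily self-dual; but then $(\dot\chi_\alpha^c)^\vee=\dot\chi_\alpha$, and the two padding sums coincide, producing each $\dot\chi_\alpha$ with multiplicity $2$. The paper handles this by globalizing the local piece $(\phi_{i,j}^c)^\vee$ to an \emph{independent} self-dual cuspidal representation $\dot\phi_{i,j}^\dagger\not\simeq(\dot\phi_{i,j}^c)^\vee$ (and similarly $\dot\chi^\dagger\not\simeq\dot\chi$), so that globally all summands are distinct while at $u$ one still recovers $\psi_\tau+(\psi_\tau^c)^\vee$.

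Second, restricting the padding $\psi_\tau$ to a sum of \emph{characters} of $W_E$ makes the chain condition at $w_1$ unattainable in general. With your setup, the pieces of $\dot\psi^\sharp_{w_1}$ are of the form $\chi\boxtimes S_1\boxtimes S_{b_i}$ (from $\dot\phi_i$) and $\chi\boxtimes S_1\boxtimes S_1$ (from the $GL_1$ padding $\dot\chi_\alpha$); hence $\dot\psi^\sharp_{w_1,d}$ contains only terms $\chi\boxtimes S_{b_i}$ and $\chi\boxtimes S_1$. If some $b_i\geq 5$, the intermediate links $\chi\boxtimes S_{b_i-2},\chi\boxtimes S_{b_i-4},\dots$ are simply absent, so no $\eta_{w_1}$ satisfying Theorem~\ref{sc.criterion} can exist. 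This is not a bookkeeping issue but a dimensional obstruction: a Hecke character of $GL_1$ cannot localize at $w_1$ to a higher-dimensional representation. The paper's remedy is to take each padding piece $\phi_{i,j}$ at $u$ to be a $(\dim\rho_i\cdot a_i\cdot(b_i-2j))$-dimensional tempered self-dual representation of $W_E$, globalize it to a single cuspidal $\dot\phi_{i,j}$ on $GL_{n_{i,j}}$, and prescribe its localization at $w_1$ to be $\rho_{w,i}\boxtimes S_{b_i-2j}$, where $\rho_{w,i}$ is the irreducible local component of $\dot\phi_i$ at $w_1$. These higher-dimensional padding pieces are precisely what fill in the chain.
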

\begin{proof}
First we choose a pair of number fields $(\dot{F},\dot{E})$, together with three places $u$, $w_1$, and $w_2$ of $\dot F$, satisfying the following properties:
\begin{enumerate}
\item $(\dot{F}_{u},\dot{E}_{u})\simeq(F,E)$, and $(\dot{F}_{w_1},\dot{E}_{w_1})\simeq(\dot{F}_{w_2},\dot{E}_{w_2})$;
\item if we are in Case $O$, then $\dot F$ has enough real places, and $\dot F_{w_i}$ is a finite extension of $\QQ_2$ with a sufficiently big residue field (this condition guarantees we will have enough irreducible orthogonal representations of $W_{\dot F_{w_i}}$; see \cite{CZ2020AMFPIF} Appendix B);
\item[$(2')$] if we are in Case $U$, then $\dot F_{w_i}$ is a finite extension of $\QQ_p$ for some $p\neq2$ with a sufficiently big residue field, and $\dot E_{w_i}$ is a ramified quadratic field extension of $\dot F_{w_i}$ (this condition guarantees we will have enough irreducible conjugate self-dual representations of $W_{\dot E_{w_i}}$, with any given parity; see \cite{CZ2020AMFPIF} Appendix C).
\end{enumerate}
We write the local $A$-parameter $\psi$ as a sum
\begin{equation*}
\psi=\sum_i\psi_i,
\end{equation*}
where each $\psi_i=\rho_i\boxtimes S_{a_i}\boxtimes S_{b_i}$ is a (conjugate) self-dual irreducible representation of $L_E\times SL_2$, with the same parity as $\psi$. For every $i$ and every positive integer $j$ such that $b_i-2j>0$, let $n_{i,j}=\dim\rho_i\cdot a_i\cdot (b_i-2j)$. We define an $n_{i,j}$-dimensional discrete (conjugate) self-dual representation $\phi_{i,j}$ of $L_E$ with the same parity as $\psi_i$ as follows:
\begin{itemize}
\item[-]Suppose we are in the Case $O$, we take:
    \begin{itemize}
        \item[$\bullet$] if $n_{i,j}$ is even, then we arbitrarily pick up an $n_{i,j}$-dimensional irreducible orthogonal representation of the Weil group of $E$, say $\rho_{i,j}$, and let $\phi_{i,j}=\rho_{i,j}\boxtimes S_1$ be a discrete orthogonal representation of $L_E$;
        \item[$\bullet$] if $n_{i,j}$ is odd, then we arbitrarily pick up an $(n_{i,j}-1)$-dimensional irreducible orthogonal representation of the Weil group of $E$, say $\rho_{i,j}$, and a quadratic character of the Weil group of $E$, say $\chi_{i,j}$, and let $\phi_{i,j}=\rho_{i,j}\boxtimes S_1+\chi_{i,j}\boxtimes S_1$ be a discrete orthogonal representation of $L_E$.
    \end{itemize}
\item[-]Suppose we are in the Case $U$, we take:
    \begin{itemize}
        \item[$\bullet$] for $1\leq\xi\leq n_{i,j}$, we arbitrarily pick up conjugate self-dual characters $\chi_{i,j}^{\xi}$ with the same parity as $\psi$, and we require that they are distinct one from each other; let 
            \[
                \phi_{i,j}=\sum_\xi\chi_{i,j}^\xi\boxtimes S_1
            \]
        be a discrete conjugate self-dual representation of $L_E$ with the same parity as $\psi$.
    \end{itemize}
\end{itemize}
Put $\psi_{i,j}=\phi_{i,j}\boxtimes S_1$ a representation of $L_E\times SL_2$. Let $\chi$ be a (conjugate) self-dual representation of the Weil group of $E$, with the same parity as $\psi$, and
\begin{equation*}
\psi^\sharp=\chi+\sum_i\left(\bigg(\sum_j\psi_{i,j}\bigg)+\psi_i+\bigg(\sum_j\left(\psi_{i,j}^c\right)^\vee\bigg)\right)+\left(\chi^c\right)^\vee,
\end{equation*}
which is also of good parity. Now we apply Lemma \ref{Golobalization-Origin} to globalize each $\psi_i$, $\psi_{i,j}$, $\left(\psi_{i,j}^c\right)^\vee$, $\chi$ and $\chi^{\vee}$.
\begin{itemize}
\item For each $\psi_i=\rho_i\boxtimes S_{a_i}\boxtimes S_{b_i}$, we globalize $\rho_i\boxtimes S_{a_i}$ to a (conjugate) self-dual cuspidal representation $\dot{\phi}_i$ of $GL_{n_i}(\dot{E})$, where $n_i=\dim \rho_i\cdot a_i$, which is of the same parity as $\rho_i\boxtimes S_{a_i}$, such that its localizations at places $w_1$ and $w_2$ are isomorphic, and supercuspidal. For $w\in\{w_1,w_2\}$, we use $\phi_{w,i}=\rho_{w,i}\boxtimes S_1$ to denote the $L$-parameter of this supercuspidal representation. We also require that when $i\neq i'$, $\rho_{w,i}\not\simeq\rho_{w,i'}$. Let $\dot{\psi}_i=\dot{\phi_i}\boxtimes S_{b_i}$.
\item For each $\psi_{i,j}=\phi_{i,j}\boxtimes S_1$, we globalize $\phi_{i,j}$ to a (conjugate) self-dual cuspidal representation $\dot{\phi}_{i,j}$ of $GL_{n_{i,j}}(\dot{E})$, which is of the same parity as $\phi_{i,j}$, such that the localizations at places $w_1$ and $w_2$ are isomorphic, and correspond to the $L$-parameter $\rho_{w,i}\boxtimes S_{b_i-2j}$. Let $\dot\psi_{i,j}=\dot{\phi}_{i,j}\boxtimes S_1$.
\item For each $\left(\psi_{i,j}^c\right)^\vee=\left(\phi_{i,j}^c\right)^\vee\boxtimes S_1$, we globalize $\left(\phi_{i,j}^c\right)^\vee$ to a (conjugate) self-dual cuspidal representation $\dot\phi_{i,j}^\dagger$ of $GL_{n_{i,j}}(\dot{E})$, which is of the same parity as $\left(\phi_{i,j}^c\right)^\vee$, such that the localizations at places $w_1$ and $w_2$ are isomorphic, and supercuspidal. We use $\phi_{w,i,j}^{\dagger}=\rho_{w,i,j}^{\dagger}\boxtimes S_1$ to denote the $L$-parameter of this supercuspidal representation. We also require that all $\rho_{w,i}$ and $\rho_{w,i,j}^\dagger$ are distinct one from each other. Let $\dot\psi_{i,j}^\dagger=\dot{\phi}_{i,j}^\dagger\boxtimes S_1$. Notice that $\dot\psi_{i,j}^\dagger\not\simeq\left(\dot\psi_{i,j}^c\right)^\vee$!
\item For $\chi$, we globalize it to a (conjugate) self-dual character $\dot{\chi}$ of $GL_1({\dot{E}})$, which is of the same parity as $\chi$, such that the localizations at places $w_1$ and $w_2$ are isomorphic. We use $\chi_w$ to denote the localization at $w_1$ (or $w_2$). We also require that $\chi_w$ is distinct from all $\rho_{w,i}$ and $\rho_{w,i,j}^\dagger$.
\item For $\left(\chi^c\right)^{\vee}$, we globalize it to a (conjugate) self-dual character $\dot{\chi}^\dagger$ of $GL_1({\dot{E}})$, which is of the same parity as $\left(\chi^c\right)^{\vee}$, such that the localizations at places $w_1$ and $w_2$ are isomorphic. We use $\chi_w^\dagger$ to denote the localization at $w_1$ (or $w_2$). We also require that $\chi_w^{\dagger}$ is distinct from $\chi_w$ and all $\rho_{w,i}$ and all $\rho_{w,i,j}^\dagger$.
\end{itemize}
Let
\begin{equation*}
\dot{\psi}^\sharp=\dot\chi+\sum_i\left(\bigg(\sum_j\dot\psi_{i,j}\bigg)+\dot\psi_i+\bigg(\sum_j\dot\psi_{i,j}^\dagger\bigg)\right)+\dot\chi^\dagger.
\end{equation*}
Since $\dot\psi^\sharp$ is (conjugate) self-dual with the same parity as $\psi$, similar to the proof of Corollary \ref{globalize.quasi-split}, one can show that there exists a $c$-Hermitian space $\dot V^\sharp$, such that $\dot G^\sharp=G(\dot V^\sharp)$ is quasi-split, and $\dot\psi^\sharp$ is an elliptic $A$-parameter for $\dot G^\sharp$.\\

Finally we check that the tuple of data $(\dot{F},\dot{E},\dot{V}^{\sharp},\dot{\psi}^{\sharp},u,w_1,w_2)$ we have constructed satisfies all our requirements. Indeed, except for the condition $5$, all other requirements follow from the construction directly. As for the condition $5$, notice that
\begin{align*}
\dot\psi_{w_1}^{\sharp}&=\sum_i\left(\rho_{w,i}\boxtimes S_1\boxtimes S_{b_i}+\sum_j\rho_{w,i}\boxtimes S_{b_i-2j}\boxtimes S_1\right)\\
&+\sum_{i,j}\rho_{w,i,j}^\dagger\boxtimes S_1\boxtimes S_1+\chi_w\boxtimes S_1\boxtimes S_1+\chi_w^{\dagger}\boxtimes S_1\boxtimes S_1
\end{align*}
is elementary and $\dot\psi_{w_1,d}^{\sharp}$ satisfies the chain condition. Hence by Theorem \ref{sc.criterion}, we may define a character $\eta'_{w_1}\in\widehat{\overline{\calS_{\dot{\psi}^{\sharp}_{w_1}}}}$ satisfying the alternating condition and initial condition, so that $\pi_M(\dot\psi_{w_1}^{\sharp},\eta'_{w_1})$ is supercuspidal ($\dot\chi$ and $\dot\chi^{\dagger}$ here guarantee that we can pick $\eta'_{w_1}$ in $\widehat{\overline{\calS_{\dot{\psi}^{\sharp}_{w_1}}}}$, rather than just $\widehat{{\calS_{\dot{\psi}^{\sharp}_{w_1}}}}$). Let 
\begin{equation*}
\eta_{w_1}=\eta'_{w_1}\cdot\left(\eta_{\dot\psi_{w_1}^{\sharp}}^{M/A}\right)^{-1}.
\end{equation*}
Then $\pi_A(\dot\psi_{w_1}^{\sharp},\eta_{w_1})=\pi_M(\dot\psi_{w_1}^{\sharp},\eta'_{w_1})$ is supercuspidal, as we required.
\end{proof}
Now, for $G=G(V^\epsilon)$, where $\epsilon\in\{\pm1\}$, we let $V^{\sharp,\epsilon}=V^\epsilon\oplus\calH^k$, and $G^\sharp=G(V^{\sharp,\epsilon})$. Let $P^\sharp$ be a maximal parabolic subgroup of $G^\sharp$ with Levi component
\[
    M^\sharp\simeq GL_k(E)\times G,
\]
and $\tau$ be the irreducible unitary representation of $GL_k(E)$ with $A$-parameter $\psi_\tau$.
\begin{corollary}\label{Ind.to.Sharp.Level}
Assume that $r>\dim V+k$. Then we have
\[
    \Pi_{\psi^\sharp}^{\theta}\left(G^\sharp\right)=\Ind_{P^\sharp}^{G^\sharp}\Big(\tau\boxtimes\Pi_\psi^{\theta,r}(G)\Big)
\]
as representations of $G^\sharp$, where the LHS is independent of the choice of $r$. In particular, if $G=G^*$ is quasi-split, then as sets, we have
\[
    \Pi_\psi^{\theta,r}(G^*)=\Pi_\psi^{A}(G^*).
\]
\end{corollary}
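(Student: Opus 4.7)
The plan is to combine the parabolic-induction compatibility of $\theta$-packets (Corollary \ref{Ind.Relation}) with the level-independence statement for parameters satisfying Hypothesis \ref{Hypothesis.Special.Class.A-parameter} (Corollary \ref{Comparison.as.Sets.Special.case}), applied to the parameter $\psi^\sharp$ manufactured by Lemma \ref{sharp}.

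First, by the very construction of Lemma \ref{sharp}, the parameter $\psi^\sharp$ satisfies Hypothesis \ref{Hypothesis.Special.Class.A-parameter}, with the supercuspidal/isomorphism conditions at the auxiliary places $w_1, w_2$ supplied by that lemma. Corollary \ref{Comparison.as.Sets.Special.case}(1) then shows $\Pi_{\psi^\sharp}^{\theta,r'}(G^\sharp)$ is independent of $r' > \dim V + 2k = \dim V^\sharp$ as a set, justifying the unadorned notation $\Pi_{\psi^\sharp}^\theta(G^\sharp)$ in the statement. Taking $r' = r + k$ (valid because $r > \dim V + k$) and applying Corollary \ref{Ind.Relation} with the role of its $\tau$ played by an appropriate $\chi_W$-twist of our $\tau$ (so that its $\wt\psi$ becomes exactly $\psi^\sharp$), one obtains
\begin{equation*}
\Pi_{\psi^\sharp}^\theta(G^\sharp) \;=\; \Pi_{\psi^\sharp}^{\theta, r+k}(G^\sharp) \;=\; \Ind_{P^\sharp}^{G^\sharp}\Big(\tau \boxtimes \Pi_\psi^{\theta,r}(G)\Big),
\end{equation*}
which is the asserted main equality.

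For the ``in particular'' claim, assume $G = G^*$ is quasi-split, so that $G^\sharp = G(V^+ \oplus \calH^k)$ is also quasi-split. Corollary \ref{Comparison.as.Sets.Special.case}(2) yields $\Pi_{\psi^\sharp}^\theta(G^\sharp) = \Pi_{\psi^\sharp}^A(G^\sharp)$ as sets, while the standard parabolic-induction compatibility of Arthur/Mok $A$-packets (\cite{MR3135650} Proposition 2.4.3; \cite{MR3338302} Proposition 3.4.4) gives $\Pi_{\psi^\sharp}^A(G^\sharp) = \Ind_{P^\sharp}^{G^\sharp}\big(\tau \boxtimes \Pi_\psi^A(G^*)\big)$. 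Combined with the main equality, these produce
\begin{equation*}
\Ind_{P^\sharp}^{G^\sharp}\Big(\tau \boxtimes \Pi_\psi^{\theta,r}(G^*)\Big) \;=\; \Ind_{P^\sharp}^{G^\sharp}\Big(\tau \boxtimes \Pi_\psi^A(G^*)\Big)
\end{equation*}
as multiplicity-free representations of $G^\sharp$ (Lemma \ref{indNmultifree}). Since Proposition \ref{Containment.A.in.Theta} already gives the containment $\Pi_\psi^A(G^*) \subset \Pi_\psi^{\theta,r}(G^*)$, suppose for contradiction there is $\pi \in \Pi_\psi^{\theta,r}(G^*) \setminus \Pi_\psi^A(G^*)$: the irreducible constituents of $\Ind(\tau\boxtimes\pi)$ would have to appear on the right-hand side, i.e.\ as constituents of $\Ind(\tau\boxtimes\pi')$ for some $\pi' \in \Pi_\psi^A(G^*) \subset \Pi_\psi^{\theta,r}(G^*)$ with $\pi \neq \pi'$; yet by multiplicity-freeness of the left-hand sum they must simultaneously be disjoint from the constituents of every such $\Ind(\tau\boxtimes\pi')$ --- a contradiction. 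Hence $\Pi_\psi^{\theta,r}(G^*) = \Pi_\psi^A(G^*)$.

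The main obstacle I foresee is bookkeeping rather than depth: matching $\psi^\sharp$ exactly against the $\wt\psi$ of Corollary \ref{Ind.Relation} requires attention to the $\chi_W$-twist (especially in the unitary cases, where the ``dual'' must be interpreted as the conjugate-dual to keep parities straight), and invoking the parabolic-induction compatibility of $\Pi_{\psi^\sharp}^A(G^\sharp)$ needs a careful reading of Proposition 2.4.3 of \cite{MR3135650} (or its unitary analogue in \cite{MR3338302}), since our $\psi^\sharp$ is in fact of good parity rather than genuinely non-tempered.
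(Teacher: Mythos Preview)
Your proposal is correct and follows essentially the same approach as the paper: combine Corollary \ref{Ind.Relation}, Corollary \ref{Comparison.as.Sets.Special.case}, and the sharp construction of Lemma \ref{sharp} for the first statement, then use Proposition \ref{Containment.A.in.Theta} together with the $A$-packet induction compatibility and multiplicity-freeness to deduce the second. Your contradiction argument via Lemma \ref{indNmultifree} merely spells out the paper's terse ``we have no choice but'', and the bookkeeping issues you flag (the $\chi_W$-twist, and applying Arthur/Mok's Proposition 2.4.3 in the good-parity setting) are genuine but harmless---the paper already invokes exactly that proposition in the same good-parity context in the proof of Lemma \ref{indNmultifree}.
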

\begin{proof}
The first statement simply follows from Corollary \ref{Ind.Relation}, Corollary \ref{Comparison.as.Sets.Special.case}, and our ``sharp construction'' in this subsection. For the second statement, recall that Lemma \ref{Containment.A.in.Theta} already asserts that $\Pi_\psi^{A}(G^*)\subset\Pi_\psi^{\theta,r}(G^*)$, it remains to show reverse containment. But this is easy. By Corollary \ref{Comparison.as.Sets.Special.case}, when $G=G^*$ is quasi-split, we have 
\[
    \Pi_{\psi^\sharp}^{\theta}\left(G^{\sharp}\right)=\Pi_{\psi^\sharp}^{A}\left(G^{\sharp}\right)
\]
as representations of $G^{\sharp}$. On the other hand, we also have
\[
    \Pi_{\psi^\sharp}^{A}\left(G^\sharp\right)=\Ind_{P^\sharp}^{G^\sharp}\Big(\tau\boxtimes\Pi_\psi^{A}(G)\Big).
\]
Compare this equality with the equality in the first statement, we get
\[
    \Ind_{P^\sharp}^{G^\sharp}\Big(\tau\boxtimes\Pi_\psi^{\theta,r}(G)\Big)=\Ind_{P^\sharp}^{G^\sharp}\Big(\tau\boxtimes\Pi_\psi^{A}(G)\Big).
\]
Hence we have no choice but
\[
    \Pi_\psi^{\theta,r}(G^*)=\Pi_\psi^{A}(G^*).
\]
\end{proof}

\subsection{Descent along the Witt tower}
In the previous subsection, given a local $A$-parameter $\psi$ of good parity for the group $G$, we have constructed another local $A$-parameter $\psi^\sharp$ for some larger group $G^\sharp$. By using this construction, we have proved that some parabolic induction of the $\theta$-packet $\Pi_\psi^\theta(G)$ is indeed independent of the choice of $r$ (as a set). In this subsection, we shall prove that the $\theta$-packet $\Pi_\psi^\theta(G)$ itself is independent of the choice of $r$ (as a set). To achieve this, we use some techniques of the Jacquet modules. The method we are using here is similar to that in \cite{MR2906916} Section 5.2.\\

We retain the notations and assumptions from the last subsection. So $\psi$ is a local $A$-parameter of good parity for $G^*$. Let
\[
    \calU=\left\{\pi\in\Irr_{unit}\left(G\right)~\big|~\Hom_{M^\sharp}\left(\tau\boxtimes\pi,s.s.Jac_{P^\sharp}\Pi_{\psi^\sharp}^{\theta}\left(G^\sharp\right)\right)\neq0\right\},
\]
where $s.s.Jac_{P^\sharp}$ means taking the semi-simplification of the Jacquet-module along the parabolic $P^\sharp$. Obviously $\calU$ is a finite subset of $\Irr_{unit}\left(G\right)$. Moreover, it follows from Corollary \ref{Ind.to.Sharp.Level} that for $r$ sufficiently large, we have
\[
    \Pi_\psi^{\theta,r}(G)\subset\calU.
\]
\begin{lemma}
Fix a positive integer $r_0>\dim V$. Then, for $r$ sufficiently large, we have
\[
    \Pi_\psi^{\theta,r}(G)\subset\Pi_\psi^{\theta,r_0}(G).
\]
\end{lemma}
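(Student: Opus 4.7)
The plan is to exploit that $\theta^r(\psi)$ and $\theta^{r_0}(\psi)$ differ only in the dimension of the Arthur $SL_2$ factor attached to $\chi_V$, and then combine M{\oe}glin's explicit Jacquet module construction (Theorem \ref{Construction.bp.From.DDR}) with Kudla's filtration relating theta lifts and Jacquet modules, so as to trade a theta lift from $H(W_{(r)})$ for one from $H(W_{(r_0)})$.

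First I would fix $\pi \in \Pi_\psi^{\theta,r}(G)$, so that $\pi = \theta(\sigma_r)$ for some $\sigma_r \in \Pi_{\theta^r(\psi)}^A(H(W_{(r)}))$. Choosing an admissible order on the index set of $\theta^r(\psi)$ that places the component $\chi_V \boxtimes S_{2r-2n+1}$ last, M{\oe}glin's construction realizes $\sigma_r$ as an iterated partial Jacquet module $\circ_i Jac_{X_i^{\gg}} \sigma_{\gg}$ of some $\sigma_{\gg}$ in a DDR packet at a sufficiently dominating Witt-tower level. The very same $\sigma_{\gg}$ then produces, via the analogous but shorter generalized segment for the $\chi_V$-component, a candidate $\sigma_{r_0} \in \Pi_{\theta^{r_0}(\psi)}^A(H(W_{(r_0)}))$. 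Hence $\sigma_r$ is obtained from $\sigma_{r_0}$ by additional partial Jacquet modules $Jac_{\chi_V |\cdot|^x}$ as $x$ runs over the ``extra'' entries of the $\chi_V$-segment.

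Next I would invoke Kudla's filtration of the Jacquet module of the Weil representation along the maximal parabolic of $H$ stabilizing an isotropic line: each such $Jac_{\chi_V |\cdot|^x}$ applied to $\theta(-)$ should be identified, up to boundary terms that are controllable because $r$ is deep in the stable range, with a theta lift from one step lower in the $W$-tower to the same $G$. Iterating this identification through the entire extra portion of the segment yields the equality $\theta(\sigma_r) = \theta(\sigma_{r_0})$ as representations of $G$, and therefore $\pi \in \Pi_\psi^{\theta,r_0}(G)$.

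The main obstacle will be the bookkeeping required to match M{\oe}glin's Jacquet-module descent on the $H$-side with the Witt-tower descent of theta lifts on the $H$-side, and to verify that each intermediate theta lift is non-vanishing and lands in the predicted intermediate $A$-packet $\Pi_{\theta^{r'}(\psi)}^A(H(W_{(r')}))$ for every $r_0 \leq r' \leq r$. This is precisely the part modeled after \cite{MR2906916} Section 5.2, substantially simplified in our situation by being deep in the stable range, which is where the hypothesis that $r$ be sufficiently large is consumed.
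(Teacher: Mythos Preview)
Your proposal has the right two ingredients --- M{\oe}glin's partial-Jacquet-module descent and a Kudla-type relation between theta lifts at different levels of the $W$-tower --- but the way you combine them is reversed, and this creates a genuine gap.

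First, the direction of the Jacquet modules is backwards. Since $r>r_0$, the component $\chi_V\boxtimes S_{2r-2n+1}$ in $\theta^r(\psi)$ is \emph{larger} than $\chi_V\boxtimes S_{2r_0-2n+1}$ in $\theta^{r_0}(\psi)$; hence from the common DDR parameter $\theta^r(\psi_{\gg})$ one reaches $\theta^{r_0}(\psi)$ by a \emph{longer} sequence of partial Jacquet modules along the $\chi_V$-segment, not a shorter one. Thus it is $\sigma_{r_0}$ that is obtained from $\sigma_r$ by additional Jacquet modules, not conversely.

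Second, and more seriously, the step ``Kudla's filtration gives $\theta(\sigma_r)=\theta(\sigma_{r_0})$'' is not what Kudla's filtration delivers. What the filtration (or rather its consequence, \cite{MR3502978} Proposition~3.2) gives is a relation between the theta lifts of a \emph{fixed} $\pi$ to different levels: for $r$ large enough (depending on $\pi$), the representation $\sigma^r:=\theta_{G\to H^r}(\pi)$ embeds into
\[
\Ind\left(\chi_V|\cdot|^{n-r}\boxtimes\cdots\boxtimes\chi_V|\cdot|^{n-r_0-1}\boxtimes\sigma^{r_0}\right)
\]
with $\sigma^{r_0}=\theta_{G\to H^{r_0}}(\pi)$. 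The paper then combines this embedding with M{\oe}glin's irreducibility statement (Theorem~\ref{Construction.bp.From.DDR}) to identify $Jac_{X^{\gg}_{i^{r_0}}}(\sigma^r)$ with $\sigma^{r_0}$, whence $\sigma^{r_0}\in\Pi^A_{\theta^{r_0}(\psi)}(H^{r_0})$. Your route instead defines $\sigma_{r_0}$ via M{\oe}glin and tries to compute its theta lift to $G$; but Kudla's filtration does not tell you what $\theta_{H^{r_0}\to G}$ does to an arbitrary Jacquet module of $\sigma_r$ --- you would be presupposing the identification $\sigma_{r_0}=\theta^{r_0}(\pi)$ that is the actual crux.

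Finally, you gloss over why ``$r$ sufficiently large'' can be made uniform in $\pi$. The embedding above holds only past a threshold depending on $\pi$; the paper handles this by first showing (via the sharp construction and Corollary~\ref{Ind.to.Sharp.Level}) that all the $\Pi_\psi^{\theta,r}(G)$ for large $r$ lie in a single finite set $\calU$ independent of $r$, which is what allows a uniform choice.
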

\begin{proof}
To distinguish notations, for an irreducible representation $\pi$ of $G$, we shall use $\sigma^r=\theta^r(\pi)$ to denote the theta lift of $\pi$ to the group $H^r=H(W_{(r)})$. Since $\calU$ is a finite subset of $\Irr_{unit}\left(G\right)$, according to \cite{MR3502978} Proposition 3.2, there is a positive integer $N_u$, such that for all $r>r_0+N_u$, and all $\pi\in\calU$, $\sigma^r$ is a subrepresentation of 
\[
    \Ind_{Q^{r,r_0}}^{H^r}\left(\chi_V|\cdot|^{n-r}\boxtimes\cdots\boxtimes\chi_V|\cdot|^{n-r_0-1}\boxtimes\sigma^{r_0}\right),
\]
where $Q^{r,r_0}$ is a parabolic subgroup with Levi component $GL_1(E)\times\cdots\times GL_1(E)\times H^{r_0}$.\\

Next we choose a tuple of data
\[
    \left(>_\psi,\psi_{\gg},V^{\gg},r_1\right),
\]
where
\begin{itemize}
\item $>_\psi$ is an admissible order on $I_\psi$;
\item $\psi_{\gg}$ is a local $A$-parameter has DDR for the group $G_{\gg}=G(V^{\gg})$, with $V^{\gg}$ a space in the Witt tower containing $V$; for any $c$-skew-Hermitian space $W$, we shall also let $W^{\gg}$ be the space in the Witt tower containing $W$, such that
    \[
        \dim W^{\gg}-\dim V^{\gg}=\dim W-\dim V;
    \]
\item $r_1>\max\left\{r_0,\dim V^\gg\right\}$ is a positive integer;
\end{itemize}
such that the following conditions holds:
\begin{enumerate}
\item for any $r\geq r_0$, $>_\psi$ can be uniquely extended to an admissible order $>_{\psi,r}$ on the index set of $\theta^r(\psi)$ 
\[
    I_{\theta^{r}(\psi)}=I_\psi\sqcup\left\{i^r\right\},
\]
such that $i^r$ is the unique maximal element under the partial order $>_{\psi,r}$; here $i^r$ is the element in the index set $I_{\theta^{r}(\psi)}$ corresponding to the irreducible constituent $\chi_V\boxtimes S_{2r-2n+1}$;
\item for any $r>r_1$, the local $A$-parameter
\[
    \theta^{r}(\psi_{\gg})=\psi_{\gg}\chi_W^{-1}\chi_V+\chi_V\boxtimes S_{2r-2n+1}
\]
for the group $H_{\gg}^r=H\left(\left(W_{(r)}\right)^\gg\right)$ has DDR, and dominates $\theta^r(\psi)$ with respect to the admissible order $>_{\psi,r}$; in particular, $\theta^{r}(\psi_{\gg})$ will also dominate $\theta^{r_0}(\psi)$ with respect to the admissible order $>_{\psi,r_0}$.\\
\end{enumerate}

Now assume that $r>\max\left\{\dim V+k,r_0+N_u,r_1\right\}$. For any irreducible unitary representation $\pi\in\Pi_\psi^{\theta,r}(G)$, it is sufficient to prove that its theta lift $\sigma^{r_0}$ to the group $H^{r_0}$ lies in the local $A$-packet $\Pi_{\theta^{r_0}(\psi)}^A\left(H^{r_0}\right)$. We consider the theta lift $\sigma^r$ of $\pi$ to the group $H^r$. By the definition of the $\theta$-packet, $\sigma^r$ lies in the local $A$-packet $\Pi_{\theta^{r}(\psi)}^A\left(H^{r}\right)$. According to Theorem \ref{Construction.bp.From.DDR}, there exists some $\sigma_{\gg}\in\Pi_{\theta^{r}(\psi_{\gg})}^A\left(H^r_{\gg}\right)$, such that
\[
    \sigma^r=\circ_{i\in I_\psi}Jac_{X_i^\gg}\sigma_\gg,
\]
where we identify $I_\psi$ with a subset of $I_{\theta^r(\psi)}$ in the obvious way, and each $X_i^\gg$ is some generalized segment. Since $\theta^{r}(\psi_{\gg})$ also dominants $\theta^{r_0}(\psi)$ with respect to the admissible order $>_{\psi,r_0}$, again by Theorem \ref{Construction.bp.From.DDR}, the representation
\[
    Jac_{X_{i^{r_0}}^\gg}\sigma^r=Jac_{X_{i^{r_0}}^\gg}\circ\left(\circ_{i\in I_\psi}Jac_{X_i^\gg}\sigma_\gg\right)
\]
is either zero or irreducible and lies in the $A$-packet $\Pi_{\theta^{r_0}(\psi)}^A\left(H^{r_0}\right)$, where the generalized segment $X_{i^{r_0}}^\gg$ is 
\[
    X_{i^{r_0}}^\gg=\Big(\begin{array}{ccc}
                        n-r & \cdots & n-r_0-1
                    \end{array}\Big).
\]
On the other hand, since $\sigma^r$ is a subrepresentation of 
\[
    \Ind_{Q^{r,r_0}}^{H^r}\left(\chi_V|\cdot|^{n-r}\boxtimes\cdots\boxtimes\chi_V|\cdot|^{n-r_0-1}\boxtimes\sigma^{r_0}\right),
\]
it follows that
\[
    Jac_{X_{i^{r_0}}^\gg}\sigma^r\simeq\sigma^{r_0}
\]
is non-zero and hence lies in the local $A$-packet $\Pi_{\theta^{r_0}(\psi)}^A\left(H^{r_0}\right)$. This completes the proof.
\end{proof}
\begin{corollary}\label{Main.Theorem.Set.Level}
Let $\psi$ be a local $A$-parameter for $G^*$. Then as a set, the $\theta$-packet $\Pi_\psi^\theta\left(G\right)$ is indeed independent of the choice of $H=H\left(W_{(r)}\right)$. Moreover, if $G=G^*$ is quasi-split, then as sets, we have
\[
    \Pi_\psi^{\theta}(G^*)=\Pi_\psi^{A}(G^*).
\]
\end{corollary}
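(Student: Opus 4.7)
My plan is to first reduce to the good-parity case, then combine the preceding lemma with the sharp construction to upgrade containment to equality.

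\textbf{Step 1 (Reduction to good parity).} Write $\psi = \psi_+ + \psi_0 + (\psi_+)^{c,\vee}$ where $\psi_0$ is of good parity and $\psi_+$ collects the constituents of $\psi$ that are not (conjugate) self-dual of the correct parity, and let $\tau$ be the irreducible unitary representation of the appropriate $GL_k$-factor attached to $\psi_+$. Applying Corollary~\ref{Ind.Relation} together with Corollary~\ref{thetaNind} to the good-parity parameter $\psi_0$ yields
\[
    \Pi_\psi^{\theta,r}(G) = \Ind_P^G\bigl(\tau\chi_W \boxtimes \Pi_{\psi_0}^{\theta,r-k}(G_0)\bigr)
\]
as (multi)sets, where $P = MU$ is a parabolic of $G$ with Levi $M \simeq GL_k(E) \times G_0$. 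The analogous parabolic-induction identity for $A$-packets in the quasi-split setting is built into Arthur/Mok's definitions. Hence it suffices to treat good-parity $\psi$.

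\textbf{Step 2 (Good parity, arbitrary inner form).} Assume $\psi$ is of good parity, and invoke the sharp construction of Lemma~\ref{sharp} to build an $A$-parameter $\psi^\sharp$ on a larger quasi-split group $G^\sharp$ satisfying Hypothesis~\ref{Hypothesis.Special.Class.A-parameter}. By Corollary~\ref{Comparison.as.Sets.Special.case} the set $\Pi_{\psi^\sharp}^\theta(G^\sharp)$ is independent of the Witt-tower level used to define it, while Corollary~\ref{Ind.to.Sharp.Level} gives
\[
    \Pi_{\psi^\sharp}^\theta(G^\sharp) = \Ind_{P^\sharp}^{G^\sharp}\bigl(\tau \boxtimes \Pi_\psi^{\theta,r}(G)\bigr) \qquad \text{for every } r > \dim V + k.
\]
Thus the right-hand side is independent of $r$ in this range. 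The multiplicity-freeness of Lemma~\ref{indNmultifree} then permits inverting the parabolic induction, so $\Pi_\psi^{\theta,r}(G)$ itself is independent of $r$ once $r$ is large. Combined with the preceding lemma, which gives $\Pi_\psi^{\theta,r}(G) \subset \Pi_\psi^{\theta,r_0}(G)$ for $r \gg r_0$, a symmetry argument upgrades this to equality $\Pi_\psi^{\theta,r}(G) = \Pi_\psi^{\theta,r_0}(G)$ for every $r_0 > \dim V$ and sufficiently large $r$; letting $r_0$ vary then gives full independence.

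\textbf{Step 3 (Quasi-split identification).} When $G = G^*$ is quasi-split, Corollary~\ref{Comparison.as.Sets.Special.case} moreover asserts $\Pi_{\psi^\sharp}^\theta(G^\sharp) = \Pi_{\psi^\sharp}^A(G^\sharp)$, and Arthur/Mok's construction of general $A$-packets yields $\Pi_{\psi^\sharp}^A(G^\sharp) = \Ind_{P^\sharp}^{G^\sharp}(\tau \boxtimes \Pi_\psi^A(G^*))$. Comparing this with Corollary~\ref{Ind.to.Sharp.Level} and inverting parabolic induction as in Step~2 identifies $\Pi_\psi^\theta(G^*)$ with $\Pi_\psi^A(G^*)$.

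The hard part will be the inversion of parabolic induction, namely deducing $\Pi_1 = \Pi_2$ from $\Ind_{P^\sharp}^{G^\sharp}(\tau \boxtimes \Pi_1) = \Ind_{P^\sharp}^{G^\sharp}(\tau \boxtimes \Pi_2)$ as multi-sets of irreducible constituents of $G^\sharp$. I expect this to follow from Lemma~\ref{indNmultifree} together with the observation that each $\pi$ in a $\theta$-packet is uniquely recoverable from $\Ind_{P^\sharp}^{G^\sharp}(\tau \boxtimes \pi)$ via the Jacquet-module techniques of M{\oe}glin that already appeared in the proof of the preceding lemma.
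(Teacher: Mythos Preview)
Your proposal has two genuine gaps, and the paper's proof takes a different route that sidesteps both.

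\textbf{Gap 1: inverting parabolic induction.} You claim that from
\[
  \Ind_{P^\sharp}^{G^\sharp}\bigl(\tau\boxtimes\Pi_\psi^{\theta,r_1}(G)\bigr)=\Ind_{P^\sharp}^{G^\sharp}\bigl(\tau\boxtimes\Pi_\psi^{\theta,r_2}(G)\bigr)
\]
you can deduce $\Pi_\psi^{\theta,r_1}(G)=\Pi_\psi^{\theta,r_2}(G)$. Multiplicity-freeness of the induced side (Lemma~\ref{indNmultifree}) does not give this: it only says the constituents appear once, not that the inducing data is determined. In the quasi-split case the argument of Corollary~\ref{Ind.to.Sharp.Level} works precisely because of the a priori containment $\Pi_\psi^A(G^*)\subset\Pi_\psi^{\theta,r}(G^*)$ from Proposition~\ref{Containment.A.in.Theta}; for a general inner form there is no analogous containment between $\Pi_\psi^{\theta,r_1}(G)$ and $\Pi_\psi^{\theta,r_2}(G)$ to feed into the comparison. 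The Jacquet-module techniques in the preceding lemma live on the $H$-side (where M{\oe}glin's explicit description of $A$-packets for the quasi-split group $H$ is available) and do not transfer to recovering $\pi$ from $\Ind_{P^\sharp}^{G^\sharp}(\tau\boxtimes\pi)$ on the $G$-side.

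\textbf{Gap 2: the ``symmetry argument''.} Even granting that $\Pi_\psi^{\theta,r}(G)$ is constant for $r>\dim V+k$, the preceding lemma only gives the containment $\Pi_\psi^{\theta,r}(G)\subset\Pi_\psi^{\theta,r_0}(G)$ for $r\gg r_0$. There is no symmetric statement: the lemma's proof is inherently asymmetric (it uses that $\theta^r(\psi_{\gg})$ dominates $\theta^{r_0}(\psi)$, which requires $r$ large). So you have no mechanism to rule out $\Pi_\psi^{\theta,r_0}(G)$ being strictly larger when $\dim V<r_0\le\dim V+k$.

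\textbf{What the paper does instead.} The paper upgrades the containment of the preceding lemma to an equality by a \emph{counting argument over all pure inner forms simultaneously}. From the preceding lemma, $\bigsqcup_G\Pi_\psi^{\theta,r}(G)\subset\bigsqcup_G\Pi_\psi^{\theta,r_0}(G)$. By Proposition~\ref{Count.Size.Theta.Packets} both sides have cardinality $|\Pi_{\theta^r(\psi)}^A(H^r)|$ and $|\Pi_{\theta^{r_0}(\psi)}^A(H^{r_0})|$ respectively, and M{\oe}glin's Theorem~\ref{Construction.bp.From.DDR} (applied on the $H$-side, where everything is quasi-split) shows $|\Pi_{\theta^r(\psi)}^A(H^r)|\ge|\Pi_{\theta^{r_0}(\psi)}^A(H^{r_0})|$ because the latter packet is obtained from the former by a partial Jacquet functor. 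This forces equality for every inner form at once, with no need to invert any parabolic induction. The quasi-split identification then follows from Corollary~\ref{Ind.to.Sharp.Level} (for large $r$) combined with the just-proved independence. Your Step~1 reduction to good parity is correct and matches the paper.
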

\begin{proof}
We first assume that $\psi$ is of good parity. In this case, it suffices to prove that as sets, we have
\[
    \Pi_\psi^{\theta,r}(G)=\Pi_\psi^{\theta,r_0}(G)
\]
in the previous lemma. We consider all pure inner forms of $G^*$ simultaneously: it then follows from the previous lemma that
\[
    \Big|\bigsqcup\Pi_\psi^{\theta,r}\left(G\right)\Big|\leq\Big|\bigsqcup\Pi_\psi^{\theta,r_0}\left(G\right)\Big|,
\]
where the disjoint unions on both sides run over all pure inner forms $G$ of $G^*$. On the other hand, we deduce from Proposition \ref{Count.Size.Theta.Packets} that
\[
    \Big|\bigsqcup\Pi_\psi^{\theta,r}\left(G\right)\Big|=\Big|\Pi_{\theta^r(\psi)}^A(H^r)\Big|\quad\textit{and}\quad\Big|\bigsqcup\Pi_\psi^{\theta,r_0}\left(G\right)\Big|=\Big|\Pi_{\theta^{r_0}(\psi)}^A(H^{r_0})\Big|.
\]
Also, as explicated in the proof of the previous lemma, Theorem \ref{Construction.bp.From.DDR} asserts that $\Pi_{\theta^{r_0}(\psi)}^A(H^{r_0})$ can be obtained from $\Pi_{\theta^r(\psi)}^A(H^r)$ by taking some partial Jacquet modules. To be more precise, we have 
\[
    \Pi_{\theta^{r_0}(\psi)}^A(H^{r_0})=\left\{\sigma_0=Jac_{X_{i^{r_0}}^\gg}\sigma~\big|~\sigma\in\Pi_{\theta^r(\psi)}^A(H^r)\right\}\Big\backslash\left\{0\right\} ,
\]
where the generalized segment $X_{i^{r_0}}^\gg$ is the same that in the previous lemma. It follows that
\[
    \Big|\Pi_{\theta^r(\psi)}^A(H^r)\Big|\geq\Big|\Pi_{\theta^{r_0}(\psi)}^A(H^{r_0})\Big|.
\]
Hence there is no other choice that we must have
\[
    \Pi_\psi^{\theta,r}(G)=\Pi_\psi^{\theta,r_0}(G)
\]
as subsets of $\Irr_{unit}\left(G\right)$. This completes the proof in the good parity case.\\

The general case then easily follows from the good parity case and Corollary \ref{Ind.Relation}.
\end{proof}

\section{Independency on the auxiliary data}\label{Chap.Indepedency.Aux.Data}
We have already proved that, as subsets of $\Irr_{unit}\left(G\right)$, the definition of the $\theta$-packets is independent of the choice of $H=H\left(W_{(r)}\right)$. But this is not the only choice we have made: recall that in the definition of the theta lift between $(G,H)$, we also need to choose a tuple of auxiliary data $(\psi_F,\chi_V,\chi_W)$. In fact as subsets of $\Irr_{unit}\left(G\right)$, the definition of the $\theta$-packets is also independent of the choice of these data, though maybe this fact is not so apparent. In this section we shall investigate this independency.

\subsection{Similitude group action vs. Adjoint group action}
In this subsection we consider two actions on the group $H$, one is by the similitude group, and another one is by the adjoint group.\\

Recall that $W$ is a $c$-skew-Hermitian space over $E$, and $H=H(W)$ is the isometry group associated to $W$. Let $H^\sim=H^\sim(W)$ be the group of elements $h$ in $GL(W)$ such that
\[
	\langle hv,hw\rangle_W=\lambda_h\cdot\langle v,w\rangle_W\quad\textit{for }v,w\in W,
\]
where $\lambda_h\in E^\times$ is some constant, called the scale of $h$. We shall call $H^\sim$ the similitude group associated to $W$. Let $Z_H$ be the center of $H$, we also have another group
\[
	H_{ad,/F}=H_{/F}\big/Z_{H,/F},
\]
which we shall call it the adjoint group of $H$. Here we use the subscript ``$~_{/F}$'' to emphasize that the groups are regarded as algebraic groups over $F$. Denote by $H_{ad}$ the $F$-points of $H_{ad,/F}$. There is a commutative diagram of algebraic groups
\[
	\begin{CD}
	1 @>>> Z_{H,/F} @>>> H_{/F} @>>> H_{ad,/F} @>>> 1\\
	@. @VVV @VVV @| @.\\
	1 @>>> Res_{E/F}\GG_m @>>> H^\sim_{/F} @>>> H_{ad,/F} @>>> 1
	\end{CD}\quad.
\]
We derive from this diagram that
\[
	H_{ad}\simeq H^\sim\big/E^\times
\]
as abstract groups. Recall that $H^\sim$ acts on the group $H$ by conjugation. The adjoint group $H_{ad}$ also has an action on the group $H$, which can be described as follows. Let $\overline{h}\in H_{ad}$, and $\wt h\in H_{/F}(\overline{F})$ be a lift of $\overline{h}$, where $\overline{F}$ is the algebraic closure of $F$. Then for $x\in H$, the action of $\overline{h}$ on $x$ is 
\[
	\overline{h}.x=\wt h\cdot x\cdot \wt h^{-1}.
\]
These actions induce the actions of $H^\sim$ and $H_{ad}$ on representations/ functions/ distributions of $H$.
\begin{lemma}\label{Similitude.vs.Adjoint}
The conjugation action of $H^\sim$ on the group $H$ factor through $H_{ad}$, i.e. for any $h\in H^\sim$, and $x\in H$, we have
\[
	h.x=\overline{h}.x,
\]
where $\overline{h}$ is the image of $h$ in $H_{ad}$.
\end{lemma}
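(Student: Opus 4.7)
The plan is to observe that both actions are really given by conjugation inside $GL(W)$, differing only in which preimage of $\overline{h} \in H_{ad}$ one uses as a conjugator. The kernel of $H^\sim \to H_{ad}$ is $\Res_{E/F}\GG_m$, embedded in $H^\sim \subset GL(W)$ as the scalar transformations. Since scalars commute with every element of $GL(W)$, any two preimages of $\overline{h}$ yield the same conjugation on $H$, so the two actions must coincide.

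To execute this, I would first pass to the algebraic closure $\overline{F}$ in order to have genuinely surjective right-hand arrows in the two short exact sequences. Over $\overline{F}$ one obtains the commutative diagram
\[
\begin{CD}
1 @>>> Z_{H,/F}(\overline{F}) @>>> H_{/F}(\overline{F}) @>>> H_{ad,/F}(\overline{F}) @>>> 1\\
@. @VVV @VVV @| @.\\
1 @>>> \Res_{E/F}\GG_m(\overline{F}) @>>> H^\sim_{/F}(\overline{F}) @>>> H_{ad,/F}(\overline{F}) @>>> 1
\end{CD}
\]
with both rows exact.

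Now fix $h \in H^\sim$ and a lift $\wt h \in H_{/F}(\overline{F})$ of $\overline{h} \in H_{ad}$. Viewing both $h$ and $\wt h$ inside $H^\sim_{/F}(\overline{F})$, they have the same image $\overline{h}$ in $H_{ad,/F}(\overline{F})$, so by exactness of the lower row there exists $z \in \Res_{E/F}\GG_m(\overline{F})$ with $h = \wt h \cdot z$. Under the embedding $H^\sim_{/F}(\overline{F}) \hookrightarrow GL(W)(\overline{F})$, this $z$ acts as a scalar on $W \otimes_F \overline{F}$, hence lies in the center of $GL(W)(\overline{F})$. For any $x \in H \subset GL(W)(\overline{F})$ we therefore compute
\[
h.x \;=\; h\, x\, h^{-1} \;=\; \wt h\, z\, x\, z^{-1}\, \wt h^{-1} \;=\; \wt h\, x\, \wt h^{-1} \;=\; \overline{h}.x,
\]
which is the desired identity. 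Note that the right-hand side is independent of the choice of lift $\wt h$ by the same scalar-cancellation argument, so the construction of the $H_{ad}$-action is well defined.

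There is really no substantive obstacle: the content of the lemma is essentially that $\Res_{E/F}\GG_m \subset H^\sim$ sits inside $GL(W)$ as genuine scalars, so the only thing to verify carefully is the compatibility of the two embeddings $\Res_{E/F}\GG_m \hookrightarrow H^\sim$ (via the lower exact sequence) and $\Res_{E/F}\GG_m \hookrightarrow GL(W)$ (as scalar transformations), which is immediate from the definition of $H^\sim$ as similitudes of $\langle\cdot,\cdot\rangle_W$. Once this is noted, the diagram-chase above finishes the proof in a single line.
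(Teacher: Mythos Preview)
Your proof is correct and is precisely the unpacking of what the paper dismisses in one word: the paper's proof reads simply ``This is trivial.'' Your argument---that both actions are conjugation inside $GL(W)$ and the ambiguity in the preimage of $\overline{h}$ lies in the scalars, hence is central---is exactly the content behind that word, so there is nothing to compare.
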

\begin{proof}
This is trivial.
\end{proof}

Notice that in our cases, the derived group of $H$ is simply-connected. Hence the stable conjugacy in $H$ is just the same as the $H_{/F}(\overline{F})$-conjugacy. We have
\begin{lemma}\label{Adjoint.Preserve.A-packets}
The action of $H_{ad}$ on irreducible representations of $H$ preserves the local $A$-packets of $H$.
\end{lemma}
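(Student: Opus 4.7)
The plan is to exploit Arthur's and Mok's characterization of local $A$-packets through stable distributions. For each local $A$-parameter $\psi_H$ of $H$, the main theorems of \cite{MR3135650} and \cite{MR3338302} assert that the virtual character
\[
  S_{\psi_H} \;=\; \sum_{\sigma\in \Pi_{\psi_H}^A(H)} \Theta_\sigma
\]
is a stable distribution on $H$, where $\Theta_\sigma$ denotes the Harish-Chandra character of $\sigma$. Since $\Pi_{\psi_H}^A(H)$ is multiplicity-free (Theorem \ref{Moeglin.Multi.Free}) and irreducible admissible characters on $H$ are linearly independent, the packet $\Pi_{\psi_H}^A(H)$ is completely determined, as a subset of $\Irr_{unit}(H)$, by the distribution $S_{\psi_H}$.

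First I would check that the action of $H_{ad}$ on $H$ preserves stable conjugacy classes of regular semisimple elements. Indeed, given $\overline{h}\in H_{ad}$ with any lift $\wt h\in H_{/F}(\overline F)$, the element $\overline{h}.x=\wt h\, x\, \wt h^{-1}$ is $H_{/F}(\overline F)$-conjugate to $x$ for every regular semisimple $x\in H$. Because the derived group of $H$ is simply-connected, stable conjugacy of regular semisimple elements coincides with $H_{/F}(\overline F)$-conjugacy, so $\overline{h}.x$ and $x$ always lie in the same stable conjugacy class. Viewing stable distributions as locally integrable class functions on the regular semisimple locus, it follows that every stable distribution $\Theta$ on $H$ satisfies $\Theta\circ \overline{h}=\Theta$.

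Applying this invariance to $\Theta=S_{\psi_H}$, and writing $\sigma^{\overline{h}}$ for the representation $x\mapsto \sigma(\overline{h}.x)$, I then deduce
\[
  \sum_{\sigma\in\Pi_{\psi_H}^A(H)} \Theta_{\sigma^{\overline{h}}} \;=\; \sum_{\sigma\in\Pi_{\psi_H}^A(H)} \Theta_\sigma \circ \overline{h} \;=\; S_{\psi_H}\circ \overline{h} \;=\; S_{\psi_H} \;=\; \sum_{\sigma\in\Pi_{\psi_H}^A(H)} \Theta_\sigma.
\]
By linear independence of irreducible characters, this forces the assignment $\sigma\mapsto \sigma^{\overline{h}}$ to permute the finite set $\Pi_{\psi_H}^A(H)$, which is precisely the claim.

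I do not anticipate a real obstacle: the argument is essentially formal once stability of the packet sum and simple-connectedness of the derived group are invoked. The only delicate input is the standard identification of stable conjugacy with geometric conjugacy for groups whose derived subgroup is simply-connected; this is well documented in the endoscopy literature and is already used implicitly in Lemma \ref{Similitude.vs.Adjoint}.
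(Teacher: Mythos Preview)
Your overall strategy---invoking the stable distribution attached to $\psi_H$, noting that the $H_{ad}$-action preserves stable conjugacy, and then appealing to linear independence of characters---is exactly the route the paper takes. There is, however, a factual slip in the first display: the distribution
\[
  \sum_{\sigma\in\Pi_{\psi_H}^A(H)} \Theta_\sigma
\]
is \emph{not} the stable distribution that Arthur and Mok attach to $\psi_H$. The correct expression carries signs:
\[
  S\Theta_{\psi_H} \;=\; \sum_{\sigma\in\Pi_{\psi_H}^A(H)} \calJ^A_{\scrW'}(\sigma)(s_{\psi_H})\cdot \Theta_\sigma,
\]
where $s_{\psi_H}\in\calS_{\psi_H}$ is the image of $\psi_H\bigl(1,\begin{smallmatrix}-1&0\\0&-1\end{smallmatrix}\bigr)$. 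For a genuine (non-tempered) $A$-parameter these signs are not all $+1$, so the unsigned sum you wrote need not be stable, and your equation $S_{\psi_H}\circ\overline{h}=S_{\psi_H}$ is unjustified as written.

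The fix is immediate: run the identical argument with the signed sum. Since each coefficient $\calJ^A_{\scrW'}(\sigma)(s_{\psi_H})$ is $\pm 1$, linear independence of characters still forces $\sigma\mapsto\sigma^{\overline{h}}$ to permute the underlying set $\Pi_{\psi_H}^A(H)$. With that correction your proof coincides with the paper's.
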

\begin{proof}
Let $\psi_H$ be a local $A$-parameter for $H$. Recall that Arthur/ Mok have attached a stable distribution $S\Theta_{\psi_H}$ to $\psi_H$, which is a linear combination of characters of irreducible unitary representations in the $A$-packet $\Pi_{\psi_H}^A(H)$
\[
	S\Theta_{\psi_H}=\sum_{\sigma\in\Pi_{\psi_H}^A(H)}\calJ^A_{\scrW'}(\sigma)(s_{\psi_H})\cdot\Theta_\sigma,
\]
where $s_{\psi_H}$ is certain element in the component group, and $\Theta_\sigma$ is the character of $\sigma$. Since $S\Theta_{\psi_H}$ is stable, the action of $H_{ad}$ preserves this distribution. Let $\overline{h}\in H_{ad}$, we obtain
\[
	\sum_{\sigma\in\Pi_{\psi_H}^A(H)}\calJ^A_{\scrW'}(\sigma)(s_{\psi_H})\cdot\Theta_\sigma=\sum_{\sigma\in\Pi_{\psi_H}^A(H)}\calJ^A_{\scrW'}(\sigma)(s_{\psi_H})\cdot\Theta_{^{\overline{h}}\sigma}.
\]
By the linear independency of the characters, we can conclude that the lemma holds.
\end{proof}

\subsection{Even orthogonal case: the scaling property}
Now we prove the independence of $\theta$-packets (as sets) on the auxiliary data $(\psi_F,\chi_V,\chi_W)$ in Case $O$. In this case, the pair of characters $(\chi_V,\chi_W)$ is fixed, so we only need to consider the changes of the additive character $\psi_F$.\\

Let $\psi$ be a local $A$-parameter for $G$. In this subsection, to emphasize the possible dependence of the $\theta$-packets on the choice of $\psi_F$, we shall write $\Pi_\psi^\theta(G)$ as $\Pi_\psi^{\theta,\psi_F}(G)$. Let $a\in F^\times$, and $\psi_{F,a}=\psi_F(a\cdot~)$ be another additive character of $F$. Recall that we have the well-known scaling property of the theta lift (cf. \cite{kudla1996notes} II Corollary 6.2 and IV Proposition 1.9)
\[
	\theta_{\psi_{F,a}}(\pi)=~^{\delta_a}\theta_{\psi_F}(\pi)
\]
for any irreducible smooth representation $\pi$ of $G$, where $\theta_{\psi_F}(\pi)$ means the theta lift of $\pi$ to the group $H$ with respect to the additive character $\psi_F$, and $\delta_a$ is an element in the similitude $H^\sim$ with scale $a$. We rewrite the $\theta$-packet $\Pi_\psi^{\theta,\psi_F}(G)$ as
\[
	\Pi_\psi^{\theta,\psi_F}(G)=\left\{\pi\in\Irr_{unit}(G)~\big|~\theta_{\psi_F}(\pi)\in\Pi_{\theta(\psi)}^A(H)\right\}.
\]
Let $\overline{\delta}_a$ be the image of $\delta_a$ in $H_{ad}$. Then, for any $\pi\in\Pi_\psi^{\theta,\psi_F}(G)$, according to Lemma \ref{Similitude.vs.Adjoint} and Lemma \ref{Adjoint.Preserve.A-packets}, we have
\[
	\theta_{\psi_{F,a}}(\pi)=~^{\overline{\delta}_a}\theta_{\psi_F}(\pi)
\]
also lies in the $A$-packet $\Pi_{\theta(\psi)}^A(H)$. It follows that 
\[
	\Pi_\psi^{\theta,\psi_F}(G)\subset\Pi_\psi^{\theta,\psi_{F,a}}(G).
\]
Similarly, we also have the reverse containment. This implies that as a set of irreducible unitary representations, $\Pi_\psi^{\theta,\psi_F}(G)$ is in fact not dependent on the choice of $\psi_F$.

\subsection{Unitary case: inputs from Archimedean places}
Now we prove the independence of $\theta$-packets (as sets) on the auxiliary data $(\psi_F,\chi_V,\chi_W)$ in Case $U$. In this case, we have the flexibility of choosing the pair of characters $(\chi_V,\chi_W)$. Hence the scaling property is not sufficient for us to prove the independence. We shall use another approach.\\

The idea is the same as that of Section \ref{sc.A-parameter}, i.e. trying to use the global method. Let $\psi$ be a local $A$-parameter of good parity for the group $G$. Applying Lemma \ref{Golobalization-Origin}, one can easily construct a tuple of data $(\dot F,\dot E,\dot V,\dot\psi,u,w)$, where:
\begin{itemize}
	\item $\dot F$ is a number field, and $\dot E$ is a quadratic field extension of $\dot F$;
	\item $\dot{V}^+$ is a $c$-Hermitian space over $\dot{E}$ so that $\dot G^*=G(\dot V^+)$ is quasi-split;
	\item $\dot{\psi}$ is an elliptic $A$-parameter of $\dot G$;
	\item $u,w$ are finite places of $\dot{F}$;
\end{itemize}
such that the following conditions hold:
\begin{enumerate}
	\item $(\dot{F}_u,\dot{E}_u,\dot{V}^+_u,\dot{\psi}_u)\simeq(F,E,V^+,\psi)$;
	\item $\dot F$ has at least one real place, and $\dot E$ is not split at this place;
	\item $\dot{\psi}_{w}$ is elementary, and the localization map
		\begin{equation*}
			\iota_{w}:\calS_{\dot{\psi}}\lra\calS_{\dot{\psi}_{w}}
		\end{equation*}
	is an isomorphism.
\end{enumerate}
Indeed, as stated in Remark \ref{Globalization-Origin'}, we can further impose some requirements on one more Archimedean place. Let $w_\infty$ be a real place of $\dot F$, such that $\dot E$ is not split at $w_\infty$. We require that
\begin{enumerate}
	\item[$(4)$]$\dot\psi_{w_\infty}$ is of good parity, and $\left(\dot\psi_{w_\infty}\right)_d$ is a square-integrable $L$-parameter for $\dot G_{w_\infty}$; here $\left(\dot\psi_{w_\infty}\right)_d$ is a $\dim V$-dimensional representation of $L_\CC$ defined by
		\[
			\left(\dot\psi_{w_\infty}\right)_d(z)=\dot\psi_{w_\infty}\left(z,\left(\begin{array}{cc}
																						\left(z/\bar{z}\right)^{1/2} & \\
																						 & \left(z/\bar{z}\right)^{-1/2}
																				    \end{array}\right)\right)
		\]
\end{enumerate}
We shall need the following remarkable fact:
\begin{lemma}
Let $V_{w_\infty}$ be an anisotropic Hermitian space over $\CC$ of the same dimension as $V$, and let $G_{w_\infty}=G(V_{w_\infty})$ be the compact unitary group associated to it. Let $W_{w_\infty}$ be the split skew Hermitian space over $\CC$ of the same dimension as $W$ (as explicated at the begining of Section \ref{Statements.Main.Results}), and let $H_{w_\infty}=H(W_{w_\infty})$. Then we have
\[
	\Pi_{\dot\psi_{w_\infty}}^\theta\left(G_{w_\infty}\right)=\Pi_{\left(\dot\psi_{w_\infty}\right)_d}^L\left(G_{w_\infty}\right).
\]
In particular, $\Pi_{\dot\psi_{w_\infty}}^\theta\left(G_{w_\infty}\right)$ is non-empty.
\end{lemma}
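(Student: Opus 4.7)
The plan is to identify both sides of the claimed equality with a single, concrete object on the compact group $G_{w_\infty}$: the $L$-packet of a discrete parameter. The argument splits naturally into a ``compact-side'' identification and a ``theta-side'' identification, and the two combined yield the lemma.

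First I would unpack the right-hand side. Since $V_{w_\infty}$ is anisotropic, $G_{w_\infty}$ is compact and every irreducible unitary representation of $G_{w_\infty}$ is finite dimensional. The assumption that $(\dot\psi_{w_\infty})_d$ is a square-integrable $L$-parameter for $G_{w_\infty}$ then implies that $\Pi_{(\dot\psi_{w_\infty})_d}^L(G_{w_\infty})$ is the usual Vogan-Langlands $L$-packet: a nonempty finite set of finite-dimensional representations, indexed (after fixing a Whittaker normalization on the quasi-split inner form) by the characters of $\overline{\calS_{(\dot\psi_{w_\infty})_d}}$.

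Next I would identify the Archimedean $A$-packet $\Pi_{\dot\psi_{w_\infty}}^A(G_{w_\infty})$ with this $L$-packet. The hypotheses --- $\dot\psi_{w_\infty}$ of good parity together with $(\dot\psi_{w_\infty})_d$ square-integrable --- force $\dot\psi_{w_\infty}$ to be of Adams-Johnson type in the sense of \cite{MR3947270}. On a compact group the Adams-Johnson construction collapses: cohomological induction from a $\theta$-stable parabolic subgroup on a compact group produces nothing new, and one recovers exactly the finite-dimensional irreducibles whose infinitesimal character is prescribed by $(\dot\psi_{w_\infty})_d$. The labelings on both packets then match via the canonical identification $\overline{\calS_{\dot\psi_{w_\infty}}} = \overline{\calS_{(\dot\psi_{w_\infty})_d}}$ coming from the triviality of the Arthur $SL_2$-contribution after diagonal restriction. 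So $\Pi_{\dot\psi_{w_\infty}}^A(G_{w_\infty}) = \Pi_{(\dot\psi_{w_\infty})_d}^L(G_{w_\infty})$ as representations of $\overline{\calS_{\dot\psi_{w_\infty}}}\times G_{w_\infty}$.

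Finally I would invoke the Archimedean version of Conjecture \ref{Intro.Adams.Conjecture}(B) for unitary dual pairs with Adams-Johnson parameters, which is known by combining \cite{cossutta2009theta} and \cite{MR3947270} (Th\'eor\`eme 1.1). This produces the expected bijection between the relevant $A$-packets on (all pure inner forms of) $G_{w_\infty}$ and on $H_{w_\infty}$, with matching labelings; reading this off on the compact inner form gives $\Pi_{\dot\psi_{w_\infty}}^\theta(G_{w_\infty}) = \Pi_{\dot\psi_{w_\infty}}^A(G_{w_\infty})$. Chaining the two identifications yields the lemma, and the ``in particular'' non-emptiness is then immediate from the non-emptiness of the $L$-packet of a discrete parameter.

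The main obstacle I anticipate is bookkeeping rather than substance: one has to verify carefully that $\dot\psi_{w_\infty}$ falls into the precise class of Adams-Johnson parameters treated in \cite{MR3947270} (the conventions for the Arthur $SL_2$, the choice of $\theta$-stable parabolic, and the role of the central characters $\chi_V, \chi_W$ all need to line up), and that the degeneration of the Adams-Johnson packet to an $L$-packet of $(\psi)_d$ on a compact unitary group is indeed what the cited constructions produce. Once these compatibilities are checked, the proof is a formal application of the Archimedean results.
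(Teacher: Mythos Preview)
Your proposal is correct and uses essentially the same ingredients as the paper's proof: M{\oe}glin--Renard \cite{MR3947270} for the theta correspondence of Adams--Johnson packets, Cossutta \cite{cossutta2009theta} for the identification $\Pi^{AJ}=\Pi^A$ on the quasi-split side, and the collapse of the Adams--Johnson construction to an $L$-packet on a compact group. The only point worth tightening is terminological: you route the argument through $\Pi^A_{\dot\psi_{w_\infty}}(G_{w_\infty})$, but in the paper's framework $A$-packets are only defined for quasi-split groups, and $G_{w_\infty}$ is compact and (unless $\dim V=1$) not quasi-split. The paper avoids this by working with $\Pi^{AJ}_{\dot\psi_{w_\infty}}(G_{w_\infty})$ throughout --- applying Cossutta on the $H$-side (where $A$-packets are available) rather than on the $G$-side --- and then concluding $\Pi^\theta(G_{w_\infty})=\Pi^{AJ}(G_{w_\infty})=\Pi^L_{(\dot\psi_{w_\infty})_d}(G_{w_\infty})$. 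Since what you actually describe in your second step (cohomological induction collapsing on a compact group) \emph{is} the Adams--Johnson packet, your argument is really the same once you replace the symbol $\Pi^A$ by $\Pi^{AJ}$ on $G_{w_\infty}$.
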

\begin{proof}
By \cite{MR3947270} Th\'eor\`eme 1.1, if we replace the terminology ``$A$-packets'' by ``$AJ$-packets'' (i.e. Adams-Johnson packets), then Conjecture \ref{Intro.Adams.Conjecture} (B) holds for real unitary groups. On the other hand, by \cite{cossutta2009theta} Corollary 3.8, we have 
\[
	\Pi_{\theta(\dot\psi_{w_\infty})}^{AJ}\left(H_{w_\infty}\right)=\Pi_{\theta(\dot\psi_{w_\infty})}^{A}\left(H_{w_\infty}\right).
\]
It follows that
\[
	\Pi_{\dot\psi_{w_\infty}}^\theta\left(G_{w_\infty}\right)=\Pi_{\dot\psi_{w_\infty}}^{AJ}\left(G_{w_\infty}\right)=\Pi_{\left(\dot\psi_{w_\infty}\right)_d}^L\left(G_{w_\infty}\right).
\]
\end{proof}

Let $\pi\in\Pi_\psi^\theta(G)$. We now try to globalize $\pi$ to a discrete automorphic representation of some unitary group over $\dot F$, with $A$-parameter $\dot\psi$. Firstly we globalize the tuple of local auxiliary data $(\psi_F,\chi_V,\chi_W)$ in the definition of the local theta lift to a global tuple $(\psi_{\dot F},\chi_{\dot V},\chi_{\dot W})$. Let $\dot W=\dot W_{(r)}$ be the split skew-Hermitian space over $\dot E$ with the same dimension as $W=W_{(r)}$. Put $\dot H=H(\dot W)$. Similar to the proof of Proposition \ref{Antipasto}, applying the Arthur's multiplicity formula for $\dot H$, we may construct a discrete automorphic representation $\dot{\sigma}$ of $\dot H$ with elliptic $A$-parameter 
\begin{equation*}
\theta(\dot\psi)=\dot\psi\chi_{\dot W}^{-1}\chi_{\dot V}+\chi_{\dot V}\boxtimes S_{2r-2n+1},
\end{equation*}
such that:
\begin{enumerate}
\item at the place $u$, $\dot\sigma_u=\theta(\pi)$;
\item at the place $w_\infty$, $\dot\sigma_{w_\infty}=\theta\left(\pi_{w_\infty}\right)$ for the unique $\pi_{w_\infty}\in\Pi_{\left(\dot\psi_{w_\infty}\right)_d}^L\left(G_{w_\infty}\right)$.
\end{enumerate}
By J-S. Li's work on low rank representations (Theorem \ref{J-S.Li.Low.rk.Global}), there is an pure inner form $\dot G$ of $\dot G^*$ and an automorphic representation $\dot\pi$ of $\dot G$, such that
\begin{equation*}
\dot\sigma=\theta^{abs}(\dot\pi).
\end{equation*}
Also, by our construction, $\dot G_{w_\infty}=G_{w_\infty}$ is compact. This forces any automorphic realization of $\dot\pi$ to be cuspidal. Hence $\dot\pi$ lies in the automorphic discrete spectrum of $\dot G$.\\

So now, we have successfully globalize the local triple $(G,\psi,\pi)$ to a global triple $(\dot G,\dot\psi,\dot\pi)$. The same argument as Corollary \ref{Comparison.as.Sets.Special.case} implies that for any tuple of auxiliary data $(\psi'_F,\chi'_V,\chi'_W)$, $\pi$ will also lie in the $\theta$-packet defined with respect to $(\psi'_F,\chi'_V,\chi'_W)$. It follows that the definition of the $\theta$-packets is independent of the choice of $(\psi_F,\chi_V,\chi_W)$.
\begin{remark}
The argument in this subsection can be used to prove Corollary \ref{Main.Theorem.Set.Level} in Case $U$ as well. Indeed, this would simplify our proof in Case $U$.
\end{remark}

\section{Local intertwining relation}
To investigate the ``labeling'' of irreducible unitary representations inside a local $A$-packet for quasi-split classical groups, i.e. the characters of the component group attached to each element inside the packet, we need the so-called ``local intertwining relation''. However, the original version of local intertwining relation formulated by Arthur is not so convenient for our applications, we shall formulate an alternative version of it, in the spirit of the one formulated in \cite{MR3573972} \cite{MR3708200}. To distinguish these two versions, we shall write Arthur's version ``LIR-A'' for abbreviation, and write Gan-Ichino's version ``LIR-B'' for abbreviation.\\

In this section we retain the notations of Section \ref{Compatible.Theta.N.Ind}.

\subsection{Local intertwining operators}\label{Def.NLIO}
We first briefly recall the definition of the (normalized) intertwining operators, for both quasi-split and non quasi-split groups.\\

Recall that we have
\[
  V=X+V_0+X^*
\]
for some $k$-dimensional totally isotropic subspace $X$ and $X^*$ of $V$. Let $n_0=n-k$, and $r_0=r-k$. We have a maximal parabolic subgroup $P=M_PU_P$ of $G$ stabilizing $X$, where $M_P$ is the Levi component of $P$ stabilizing $X^*$ and $U_P$ is the unipotent radical of $P$. Let $\tau$ be an irreducible unitary representation of $GL(X)$ on a space $\scrV_\tau$ with central character $\omega_\tau$. Suppose that $\tau$ is of Arthur type, and corresponds to an irreducible $A$-parameter $\psi_\tau$. Let $\psi_0$ be a local $A$-parameter for $G_0$, and $\pi_0$ an irreducible unitary representation in the packet $\Pi_{\psi_0}^\theta(G_0)$. We consider the induced representation
\begin{equation*}
\Ind_P^{G}\left(\tau_s\boxtimes\pi_0\right)
\end{equation*}
of $G$. Let $A_P$ be the split component of the center of $M_P$ and $W(M_P)=N_{G}(A_P)/M_P$ be the relative Weyl group for $M_P$. Noting that $W(M_P)\simeq\ZZ/2\ZZ$, we denote by $w$ the non-trivial element in $W(M_P)$. For any representative $\wt{w}\in G$ of $w$, we define an unnormalized intertwining operator
\begin{equation*}
\calM(\wt{w},\tau_s\boxtimes \pi_0):\Ind_P^{G}(\tau_s\boxtimes \pi_0)\lra\Ind_P^{G}\left(w(\tau_s\boxtimes \pi_0)\right)
\end{equation*}
by (the meromorphic continuation of) the integral
\begin{equation*}
\calM(\wt{w},\tau_s\boxtimes \pi_0)\varPhi_s(g)=\int_{U_P}\varPhi_s(\wt{w}^{-1}ug)du,
\end{equation*}
where the Haar measures on various groups are fixed as in Section \ref{Compatible.Theta.N.Ind}.\\

Having fixed an additive character $\psi_F$, as explicated in Section \ref{Whittaker.Data}, we can define a Whittaker datum $\scrW_{\psi_F}=\scrW$ of $G^*$. To normailze this operator with respect to the Whittaker datum $\scrW_{\psi_F}$, we need to choose the following data appropriately:
\begin{itemize}
\item a representative $\wt{w}$;
\item a normalizing factor $r(w,\tau_s\boxtimes\pi_0)$;
\item an intertwining isomorphism $\calA_w$.
\end{itemize}
For the representative, we take $\wt{w}\in G$ defined by
\begin{equation*}
\wt{w}=w_P\cdot m_P\left((-1)^{n'}\cdot\kappa_V\cdot J\right)\cdot(-1_{V_0})^{k},
\end{equation*}
where $w_P$ is as in Section \ref{Compatible.Theta.N.Ind}, $n'=[\dim V/2]$, and
\begin{equation*}
\kappa_V=\begin{cases}
c\quad&\textit{Case $O$};\\
-\delta\quad&\textit{Case $U_0$};\\
-1\quad&\textit{Case $U_1$},
\end{cases}
\end{equation*}
where the constant $c$ in Case $O$ and the constant $\delta$ in Case $U_0$ arise in the choice of Whittaker datum $\scrW_{\psi_F}$; and 
\begin{equation*}
J=\left(\begin{array}{cccc}
                                    {} & {} & {} & (-1)^{k-1}\\
                                    {} & {} & \iddots & {}\\
                                    {} & -1 & {} & {}\\
                                    1 & {} & {} & {}

                                 \end{array}\right)\in GL_k(E).
\end{equation*}
Here, we have identified $GL(X)$ with $GL_k(E)$ by choosing a basis of $X$. In \cite{MR3573972} Section 7.3, it was shown that the representative defined above coincides with the representative defined in \cite{MR3338302} when we are in Case $U$ and $\epsilon(V)=1$.\\

Next we define the normalizing factor $r(w,\tau_s\boxtimes\sigma_0)$. Put
\begin{equation*}
\lambda(w,\psi_F)=\begin{cases}
	\lambda(E'/F,\psi_F)^k \quad&\textit{Case $O$, here $E'$ is the splitting field of $V^+$};\\
    \lambda(E/F,\psi_F)^{(k-1)k/2} \quad&\textit{Case $U_0$};\\
    \lambda(E/F,\psi_F)^{(k+1)k/2} \quad&\textit{Case $U_1$}.
\end{cases}
\end{equation*}
where $\lambda(E'/F,\psi_F)$ or $\lambda(E/F,\psi_F)$ is the Langlands $\lambda$-factor. Let $\phi_\tau$ and $\phi_0$ be the $L$-parameters associated to the $A$-parameter $\psi_\tau$ and $\psi_0$ respectively. We set
\begin{equation*}
r(w,\tau_s\boxtimes\pi_0)=\lambda(w,\psi_F)\cdot\gamma\left(s,\phi_\tau\otimes\phi_0^\vee,\psi_E\right)^{-1}\cdot\gamma\left(2s,R\circ\phi_\tau,\psi_F\right)^{-1},
\end{equation*}
and the normalized intertwining operator
\begin{equation*}
\calR(w,\tau_s\boxtimes\pi_0)\coloneqq|\kappa_V|^{k\rho_P}\cdot r(w,\tau_s\boxtimes\pi_0)^{-1}\cdot\calM(\wt w,\tau_s\boxtimes\pi_0);
\end{equation*}
where 
\[
	R=\begin{cases}
		  \bigwedge^2 \quad&\textit{Case $O$};\\
		  As^- \quad&\textit{Case $U_0$};\\
		  As^+ \quad&\textit{Case $U_1$}.
	  \end{cases}
\]
\begin{lemma}\label{holomorphy.NIO}
The normalized intertwining operator $\calR(w,\tau_s\boxtimes\pi_0)$ is holomorphic at $s=0$.
\end{lemma}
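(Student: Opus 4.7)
The plan is to transfer the holomorphy from the quasi-split side via the theta correspondence, where the analogous statement is known by Arthur/Mok. By Corollary~\ref{Main.Theorem.Set.Level} we may write $\pi_0 = \theta(\sigma_0)$ for some $\sigma_0 \in \Pi_{\theta(\psi_0)}^A(H')$ with $H' = H(W')$ a quasi-split group paired with $G_0$; enlarging $W'$ if necessary we arrange $\dim W' > N_\tau + \dim V_0$, which is permissible because $\tau$ is of Arthur type and hence $N_\tau < k$ by Remark~\ref{Arthur-type.L-function.Stripe}. Adjoining a $k$-dimensional hyperbolic pair to $W'$ yields $W = Y + W' + Y^*$ and $H = H(W)$ with a maximal parabolic $Q$ of Levi $GL(Y) \times H'$. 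Proposition~\ref{Non.Vanish.Equi.Map} then supplies a $G \times H$-equivariant map $\calT_s$ linking $\omega \otimes \Ind_Q^H(\tau_s^c \chi_V^c \boxtimes \sigma_0^\vee)$ and $\Ind_P^G(\tau_s \chi_W \boxtimes \pi_0)$.

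Exploiting this equivariance, and using uniqueness of intertwining maps up to scalar, one obtains a meromorphic identity
\[
  \calT_s \circ (\mathrm{id} \otimes \calM_H(\wt w_H, \tau_s^c \chi_V^c \boxtimes \sigma_0^\vee)) = c(s) \cdot \calM_G(\wt w, \tau_s \chi_W \boxtimes \pi_0) \circ \calT_s,
\]
in which $c(s)$ is a finite product of local factors of the form $\gamma(s+j, \tau, \psi_E)^{\pm 1}$ with $j$ in an interval whose length depends only on $\dim W' - \dim V_0$. A parallel direct computation, in the spirit of \cite{MR3573972}~Section~8, shows that the ratio of the normalizing factors $r_G(w, \tau_s \chi_W \boxtimes \pi_0)$ and $r_H(w_H, \tau_s^c \chi_V^c \boxtimes \sigma_0^\vee)$ reduces to the same sort of product. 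The Arthur-type hypothesis on $\tau$ together with the lower bound $\dim W' > N_\tau + \dim V_0$ imply that both $c(s)$ and the ratio of normalizing factors are holomorphic and non-vanishing at $s = 0$.

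Since $H$ is quasi-split and $\sigma_0$ belongs to an Arthur packet of $H'$, Arthur/Mok's theory of normalized intertwining operators for quasi-split classical groups gives the holomorphy of $\calR(w_H, \tau_s^c \chi_V^c \boxtimes \sigma_0^\vee)$ at $s=0$; combining this with the two scalar identities of the previous paragraph yields holomorphy of $\calR(w, \tau_s \chi_W \boxtimes \pi_0)$, and replacing $\tau$ by $\tau \chi_W^{-1}$ then gives the statement of the lemma. The hard part is the explicit bookkeeping behind the previous paragraph: one must carefully track the Weyl representatives $\wt w$, $\wt w_H$, the constants $\gamma_V$, $\gamma_W$ attached to the Weil representation, and the Langlands $\lambda$- and $\gamma$-factors in $r_G$ and $r_H$, and then confirm that these assemble into a ratio with no zero or pole at $s = 0$. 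The structural reason this works is precisely the Arthur-type hypothesis on $\tau$, which via Remark~\ref{Arthur-type.L-function.Stripe} guarantees that $L(s, \tau)$ has no poles or zeros in a stripe wider than $N_\tau$.
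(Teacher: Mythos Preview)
Your proposal is essentially the same strategy as the paper's: transfer holomorphy from $H$ to $G$ through the equivariant map $\calT_s$ of Proposition~\ref{Non.Vanish.Equi.Map}, compute the scalar relating the two intertwining operators, and invoke Arthur/Mok on the $H$-side. This is exactly the content of the commutative diagram and Proposition~\ref{CompaLIO} in Section~8 of the paper. A minor point: you do not need Corollary~\ref{Main.Theorem.Set.Level} to enlarge $W'$, since in the paper's setup one already has $r_0=r-k>\dim V_0$ and $N_\tau<k$, so the hypotheses of Proposition~\ref{Non.Vanish.Equi.Map} are automatically met.

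There is, however, one genuine step you skip. You write that combining the scalar identity with holomorphy of $\calR_H$ ``yields holomorphy of $\calR(w,\tau_s\chi_W\boxtimes\pi_0)$'', but this deduction is not immediate: the map $\calT_0$ is only surjective, not an isomorphism, so from
\[
  \calR_G(s)\circ\calT_s=\alpha\,\beta(s)\,\calT_{-s}\circ\calR_H(s)
\]
one cannot simply invert $\calT_s$. The paper handles this with an explicit approximation argument: given a holomorphic section $\varPhi_s$, one iteratively writes $\varPhi_s=\sum_{i<k}s^i\,\calT_s(\varphi_i\otimes\varPsi_s^{(i)})+s^k\varPhi_s^{(k)}$ using surjectivity of $\calT_0$, then applies the diagram term by term and absorbs the remainder using meromorphicity of $\calR_G$. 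Equivalently, one can argue by Laurent expansion: if $\calR_G$ had a leading pole term $s^{-m}A$ with $A\ne0$, composing with $\calT_0$ would force $A\circ\calT_0=0$, hence $A=0$ by surjectivity. Either way, this step is where surjectivity of $\calT_0$ is actually used, and it should be made explicit. Your claim that $\beta(s)$ (your $c(s)$) is \emph{non-vanishing} at $s=0$ is in fact unnecessary for this argument; the paper only proves, and only needs, that $\beta(s)$ is holomorphic there.
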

At least when $G=G^*$ is quasi-split, this should follow from \cite{MR3135650} Proposition 2.3.1 in Case $O$, or \cite{MR3338302} Proposition 3.3.1 in Case $U$. We shall prove this lemma in the next section for $G$ is non quasi-split.\\

Finally we define the intertwining isomorphism. Assume that $w(\tau\boxtimes\pi_0)\simeq\tau\boxtimes\pi_0$, which is equivalent to $\left(\tau^c\right)^\vee\simeq\tau$. We may take the unique isomorphism
\begin{equation*}
\calA_w:\scrV_\tau\otimes\scrV_{\pi_0}\lra\scrV_\tau\otimes\scrV_{\pi_0}
\end{equation*}
such that:
\begin{itemize}
\item $\calA_w\circ (w(\tau\boxtimes\pi_0))(m)=(\tau\boxtimes\pi_0)(m)\circ\calA_w$ for all $m\in M_P$;
\item $\calA_w=\calA_w'\otimes1_{\scrV_{\pi_0}}$ with an isomorphism
    \begin{equation*}
        \calA_w':\scrV_\tau\lra\scrV_\tau
    \end{equation*}
    as described in \cite{MR3135650} Section 2.2, or \cite{MR3338302} Section 3.2.
\end{itemize}

Note that $\calA_w^2=1_{\scrV_\tau\otimes\scrV_{\pi_0}}$. We define a self-intertwining operator
\begin{equation*}
R(w,\tau\boxtimes\pi_0):\Ind_P^{G}(\tau\boxtimes \pi_0)\lra\Ind_P^{G}(\tau\boxtimes \pi_0)
\end{equation*}
by
\begin{equation*}
R(w,\tau\boxtimes\sigma_0)\varPhi(g)=\calA_w(\calR(w,\tau\boxtimes\sigma_0)\varPhi(g)).
\end{equation*}
We shall also use the notation $R(w,\tau\boxtimes\pi_0,\psi_F)$ if we want to emphasize the dependence of $R(w,\tau\boxtimes\pi_0)$ on the additive character $\psi_F$.\\

Similarly, we can define the intertwining operator for $H$, with respect to a Whittaker datum $\scrW_{\psi_F}$ of $H$, where
\[
	\scrW_{\psi_F}=\begin{cases}
			  \scrW'_1\quad&\textit{Case $O$};\\
			  \scrW'\quad&\textit{Case $U$}.
		  \end{cases}
\]
We put 
\begin{equation*}
\wt{w}=w_Q\cdot m_Q\left((-1)^{r'}\cdot\kappa_W\cdot J\right)\cdot(-1_{W_0})^{k},
\end{equation*}
where $w_Q$ is as in Section \ref{Compatible.Theta.N.Ind}, $r'=[\dim W/2]$, and
\[
	\kappa_W=\begin{cases}
				1\quad&\textit{Case $O$};\\
				-\delta\quad&\textit{Case $U_0$};\\
				1\quad&\textit{Case $U_1$},
			 \end{cases}
\]
Let $\sigma_0$ be an irreducible unitary representation of $H_0$ lies in some local $A$-packet $\Pi_{\psi'_0}^A(H_0)$. We denote the $L$-parameters associated to the $A$-parameter $\psi'$ by $\phi'_0$. We set
\begin{equation*}
r(w,\tau_s\boxtimes\sigma_0)=\lambda(w,\psi_F)\cdot\gamma\left(s,\phi_\tau\otimes\left(\phi'_0\right)^\vee,\psi_E\right)^{-1}\cdot\gamma(2s,R\circ\phi_\tau,\psi_F)^{-1},
\end{equation*}
where
\[
	\lambda(w,\psi_F)=\begin{cases}
	1 \quad&\textit{Case $O$};\\
    \lambda(E/F,\psi_F)^{(k+1)k/2} \quad&\textit{Case $U_0$};\\
    \lambda(E/F,\psi_F)^{(k-1)k/2} \quad&\textit{Case $U_1$},
					  \end{cases}
\]
and
\[
	R=\begin{cases}
		  \bigwedge^2 \quad&\textit{Case $O$};\\
		  As^+ \quad&\textit{Case $U_0$};\\
		  As^- \quad&\textit{Case $U_1$}.
	  \end{cases}
\]
Put 
\begin{equation*}
\calR(w,\tau_s\boxtimes\sigma_0)\coloneqq|\kappa_W|^{k\rho_Q}\cdot r(w,\tau_s\boxtimes\sigma_0)^{-1}\cdot\calM(\wt w,\tau_s\boxtimes\sigma_0),
\end{equation*}
it follows from Arthur and Mok's work that this normalized intertwining operator is holomorphic at $s=0$. Assume that $w(\tau\boxtimes\sigma_0)\simeq\tau\boxtimes\sigma_0$, we take an isomorphism $\calA_w$ similarly, and define the self-intertwining operator $R(w,\tau\boxtimes\sigma_0)$ by 
\[
	R(w,\tau\boxtimes\sigma_0)\varPhi(h)=\calA_w\left(\calR(w,\tau_s\boxtimes\sigma_0)\varPhi(h)\right)
\]
for $\varPhi\in\Ind_Q^{H}\left(\tau\boxtimes\sigma_0\right)$, and $h\in H$.

\subsection{Local intertwining relation: an alternative version}
Now we can state the desired version of the local intertwining relation, which gives us a chance to interpret the ``labeling'' of a local $A$-packet as some representation-theoretical quantities. For the original version of the local intertwining relation, one can refer to \cite{MR3135650} Proposition 2.4.3 (Case $O$), \cite{MR3338302} Proposition 3.4.4, and also \cite{kaletha2014endoscopic} Chapter 2 (Case $U$).
\begin{theorem}[LIR-B for quasi-split groups]\label{LIR-B.Qusi.Split}
Suppose that $G=G^*$ is quasi-split. Let $\pi$ be an irreducible constituent of $\Ind_P^{G}(\tau\boxtimes\pi_0)$. Then:
\begin{enumerate}
\item $\pi$ is in the local $A$-packet $\Pi_\psi^{A}(G)$, where
	\[
		\psi=\psi_\tau+\psi_0+\left(\psi_\tau^c\right)^\vee;
	\]
\item if we regard $\calS_{\psi_0}$ as a subgroup of $\calS_\psi$ via the natural embedding, then
\begin{equation*}
\calJ^A_{\scrW_{\psi_F}}(\pi)\Big|_{\calS_{\psi_0}}=\calJ^A_{\scrW_{\psi_F}}(\pi_0);
\end{equation*}
\item if we further assume that $\psi_\tau$ is (conjugate) self-dual with the same parity as $\psi$, then the restriction of the normalized intertwining operator $R(w,\tau\boxtimes\pi_0,\psi_F)$ to $\pi$ is the scalar multiplication by
\[
	R(w,\tau\boxtimes\pi_0)\Big|_\pi=\calJ^A_{\scrW_{\psi_F}}(\pi)(a_\tau),
\]
where $a_\tau$ is the element in $\calS_\psi$ corresponding to $\psi_\tau$.
\end{enumerate}
Similar results also hold for the group $H$.
\end{theorem}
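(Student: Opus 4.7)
The plan is to reduce all three assertions to the classical local intertwining relation of Arthur/Mok (``LIR-A''), by matching the normalizations fixed in Section \ref{Def.NLIO} against theirs. Assertions (1) and (2) are essentially a restatement of the construction of $A$-packets for parameters of the form $\psi = \psi_\tau + \psi_0 + (\psi_\tau^c)^\vee$: by \cite{MR3135650} Proposition 2.4.3 and \cite{MR3338302} Proposition 3.4.4, the packet $\Pi_\psi^A(G^*)$ is defined precisely as the multi-set of irreducible constituents of $\Ind_P^{G^*}(\tau \boxtimes \pi_0')$ as $\pi_0'$ runs over $\Pi_{\psi_0}^A(G^*_0)$, and the character $\calJ^A_{\scrW_{\psi_F}}$ attached to an induced constituent is characterized by the compatibility that its restriction along the canonical embedding $\calS_{\psi_0} \hookrightarrow \calS_\psi$ recovers the character attached to $\pi_0$. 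Since $\tau$ and $\pi_0$ are unitary, $\Ind_P^{G^*}(\tau \boxtimes \pi_0)$ is semisimple and the two assertions follow at once.

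For assertion (3), let $R^{\mathrm{AM}}(w, \tau \boxtimes \pi_0)$ denote the self-intertwining operator in Arthur/Mok's normalization. LIR-A asserts that its action on $\pi$ is the scalar multiplication by $\calJ^A_{\scrW_{\psi_F}}(\pi)(a_\tau)$. It therefore suffices to show that, under the hypothesis that $\psi_\tau$ is (conjugate) self-dual of the same parity as $\psi$, the operator $R(w, \tau \boxtimes \pi_0, \psi_F)$ from Section \ref{Def.NLIO} coincides with $R^{\mathrm{AM}}(w, \tau \boxtimes \pi_0)$. These two operators differ in three ingredients: the choice of representative $\wt{w}$ for the non-trivial element of $W(M_P)$ (ours is built from the matrix $J$ and the constant $\kappa_V$), the normalizing scalar (ours is the explicit product of $\lambda(w,\psi_F)$ with $\gamma(s,\phi_\tau \otimes \phi_0^\vee, \psi_E)^{-1}$ and $\gamma(2s, R \circ \phi_\tau, \psi_F)^{-1}$), and the intertwining isomorphism $\calA_w$. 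The comparison has been carried out in the literature: Case $U$ with $\epsilon(V) = 1$ is done in \cite{MR3573972} Sections 7.3--7.4, and the other cases are strictly parallel and appear in \cite{MR3708200} and \cite{MR3788848}. The upshot is that the ratio of normalizations equals $1$, exactly because the various sign conventions, the constant $\kappa_V$, and the $\lambda$-factors in Section \ref{Def.NLIO} are chosen so as to absorb the discrepancies. The ``similar results'' for $H$ are obtained by reading the same argument with $H$ in place of $G^*$.

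The main obstacle is precisely this normalization comparison: it splits along the three cases $O$, $U_0$, $U_1$, and in each one requires tracking the sign $(-1)^{n'}$, the matrix $J$, the constant $\kappa_V$, and the scalar $\lambda(w, \psi_F)$ through the formula for $\calM(\wt{w}, \tau_s \boxtimes \pi_0)$, and then using standard identities (Shahidi's functional equation for $\gamma$-factors and the cocycle property of $\lambda$-factors under induction) to see that the product collapses to $1$ for self-dual $\psi_\tau$ of the correct parity. Holomorphy of $\calR(w, \tau_s \boxtimes \pi_0)$ at $s=0$, i.e.\ Lemma \ref{holomorphy.NIO}, is needed to pass from the meromorphic identity on the level of intertwining operators to the pointwise identity at $s=0$; in the quasi-split setting of the present theorem this holomorphy is a direct consequence of \cite{MR3135650} Proposition 2.3.1 and \cite{MR3338302} Proposition 3.3.1, so no further input is required here.
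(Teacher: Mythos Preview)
Your treatment of (1) and (2) matches the paper's: both follow directly from \cite{MR3135650} Proposition 2.4.3 and \cite{MR3338302} Proposition 3.4.4.

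For (3), your plan to identify $R(w,\tau\boxtimes\pi_0,\psi_F)$ with Arthur/Mok's operator via a normalization comparison is correct, and the paper does this too (it is the content of the bullet ``as endomorphisms of $\Ind_P^G(\pi_{M_P})$, $R_P(w_u,\widetilde{\pi}_{M_P},\psi_{M_P})=R(w,\tau\boxtimes\pi_0)$''). The gap is in your next sentence: you write that ``LIR-A asserts that its action on $\pi$ is the scalar multiplication by $\calJ^A_{\scrW_{\psi_F}}(\pi)(a_\tau)$''. This is not what LIR-A asserts. LIR-A (Arthur Theorem 2.4.1 / Mok Theorem 3.4.3) is a \emph{trace identity}: combined with the endoscopic character identity it yields
\[
\sum_{\pi\in\Pi_\psi^A(G)}\calJ^A_{\scrW_{\psi_F}}(\pi)(a_\tau)\,\Theta_\pi(f)
=\sum_{\pi_0\in\Pi_{\psi_0}^A(G_0)}\langle\widetilde{u},\widetilde{\tau\boxtimes\pi_0}\rangle\,\Tr\bigl(R(w,\tau\boxtimes\pi_0)\Ind_P^G(\tau\boxtimes\pi_0,f)\bigr),
\]
and to extract the scalar on a single constituent $\pi$ you still need two further ingredients that you omit. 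First, one must compute the pairing $\langle\widetilde{u},\widetilde{\tau\boxtimes\pi_0}\rangle$; the paper does this by exhibiting an explicit element $\scrU\in Norm(A_{\widehat{M}_P},S_\psi)$ whose restriction to $\wh V_0$ is trivial, giving $\goths(u)=1$ and hence $\langle\widetilde{u},\widetilde{\tau\boxtimes\pi_0}\rangle=1$. Second, and more substantially, one needs M{\oe}glin's multiplicity-freeness of $\Pi_\psi^A(G)$ (Theorem \ref{Moeglin.Multi.Free}): only then do the characters $\Theta_\pi$ on the left appear with multiplicity one, so that linear independence of characters and Schur's Lemma force $R(w,\tau\boxtimes\pi_0)\big|_\pi=\calJ^A_{\scrW_{\psi_F}}(\pi)(a_\tau)$. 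In the tempered case this step is automatic (the $A$-packet is an $L$-packet), which is why Atobe's argument in \cite{MR3801418} goes through; for general $\psi$ the appeal to Theorem \ref{Moeglin.Multi.Free} is essential and should be made explicit.
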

\begin{proof}
The first and second claims directly follow from \cite{MR3135650} Proposition 2.4.3 in Case $O$, and \cite{MR3338302} Proposition 3.4.4 in Case $U$. We show the last claim. Indeed, this proof is almost the same as that of \cite{MR3801418} Theorem 2.4. In that paper, Atobe proved this LIR-B in the tempered case. We shall use some notations occuring in \cite{MR3135650} diagram (2.4.3) or in \cite{MR3338302} diagram (3.4.2) without explanations.\\

Since $\psi_\tau$ and $\psi_0$ are (conjugate) self-dual representations (with appropriate parity), there are non-degenerated bilinear forms $B_\tau\left(\cdot,\cdot\right)$ and $B_{V_0}\left(\cdot,\cdot\right)$ on $\CC^k$ and $\CC^{N_0}$ respectively, where $N_0$ is the dimension of the standard representation of $\wh G_0$, such that they are preserved by $\psi_\tau$ and $\psi_0$, in the sense that for all $w\in L_E\times SL_2$, we have
\[
	B_\tau\left(\psi_\tau(w)v,\psi_\tau^c(w)v'\right)=B_\tau\left(v,v'\right)\quad\textit{for all }v,v'\in\CC^k,
\]
and
\[
	B_{V_0}\left(\psi_0(w)v,\psi_0^c(w)v'\right)=B_{V_0}\left(v,v'\right)\quad\textit{for all }v,v'\in\CC^{N_0}.
\]
Let $A_\tau$ and $A_0$ be two matrices represent these two forms respectively. We regard $\widehat{G}$ as the $c$-isometry group with respect to the biliner form represented by the matrix $diag(A_\tau,A_0,-A_\tau)$. Let $\{e'_1,\cdots,e'_k,e_1,\cdots,e_{N_0},e''_1,\cdots,e''_k\}$ be the canonical basis of $\wh V=\CC^{N}$, where $N=N_0+2k$. Then $\widehat{M}_P$ can be realized as the Levi subgroup of $\widehat{G}$ stabilizing two isotropic subspaces
\begin{equation*}
\wh X=Span\{e'_i+e''_i~|~i=1,\cdots,k\}\quad\textit{and}\quad \wh X^*=Span\{e'_i-e''_i~|~i=1,\cdots,k\}.
\end{equation*}
Note that the image of $\psi$ stabilizes these two subspaces, so we can also regard $\widehat{G}_0$ as the $c$-isometry group with respect to the bilinear form represent by the matrix $A_0$ on $\wh V_0=Span\{e_1,\cdots,e_{N_0}\}$. Via these identifications, we have $\widehat{M}_P\simeq GL(\wh X)\times\widehat{G}_0$.\\

Let $\psi_{M_P}=(\psi_\tau,\psi_0)$ be a local $A$-parameter for $M_P$, and $\pi_{M_P}=\tau\boxtimes\pi_0$ be an irreducible unitary representation lies in $\Pi_{\psi_{M_P}}^A(M_P)$. Let $\scrU\in\widehat{G}$ be the element which acts on $\{e'_1,\cdots,e'_k,e_1,\cdots,e_{N_0}\}$ by $1$ and on $\{e''_1,\cdots,e''_k\}$ by $-1$. Then $\scrU\in Norm\left(A_{\widehat{M}_P},S_\psi\right)$. We write $u$ for the image of $\scrU$ in $\gothN_\psi(G,M_P)$. One can easily check the following
\begin{itemize}
\item the image of $u$ in $W_\psi(G,M_P)\subset W(M_P)$ is the unique non-trivial element $w$;
\item the image of $u$ in $\calS_\psi(G,M_P)\subset\calS_\psi$ is $a_\tau$;
\item as endomorphisms of $\Ind_P^G\left(\pi_{M_P}\right)$, $R_P(w_u,\widetilde{\pi}_{M_P}, \psi_{M_P})=R(w,\tau\boxtimes\pi_0)$. 
\end{itemize}
By applying the endoscopic character identity (\cite{MR3135650} Theorem 2.2.1 in Case $O$ or \cite{MR3338302} Theorem 3.2.1 in Case $U$) and the original local intertwining relation (\cite{MR3135650} Theoerem 2.4.1 in Case $O$ or \cite{MR3338302} Theorem 3.4.3 in Case $U$) to $u$, we obtain
\begin{equation}\label{Key.Eqn.LIR-A.To.LIR-B}
\sum_{\pi\in\Pi_\psi^A(G)}\calJ^A_{\scrW_{\psi_F}}(\pi)(a_\tau) \cdot \Theta_\pi(f)=\sum_{\pi_0\in\Pi_{\psi_0}^A(G_0)}\langle\widetilde{u},\widetilde{\tau\boxtimes\pi_0}\rangle\Tr\left(R(w,\tau\boxtimes\pi_0)\Ind_P^G(\tau\boxtimes\pi_0,f)\right)
\end{equation}
for any $f\in\calH(G)$. The constant $\langle\widetilde{u},\widetilde{\tau\boxtimes\pi_0}\rangle$ can be computed as follows (see \cite{MR3338302} bottom of page 62 in Case $U$): the restriction map
\[
	Norm\left(A_{\widehat{M}_P},S_\psi\right)\lra \widehat{G}_0,\quad\scrU'\longmapsto \scrU'\Big|_{V_0}
\]
induces a section
\begin{equation*}
\goths:\gothN_\psi(G,M_P)\lra\calS_{\psi_0}
\end{equation*}
then we have
\begin{equation*}
\langle\widetilde{u},\widetilde{\tau\boxtimes\pi_0}\rangle=\calJ^A_{\scrW_{\psi_F}}(\pi_0)\left(\goths(u)\right)
\end{equation*}
By the definition of $\scrU$ and $u$ we have $\goths(u)=1$, thus $\langle\widetilde{u},\widetilde{\tau\boxtimes\pi_0}\rangle=1$. Then equation (\ref{Key.Eqn.LIR-A.To.LIR-B}) together M{\oe}glin's multiplicity-freeness result Theorem \ref{Moeglin.Multi.Free} will imply that
\begin{equation*}
\sum_{\pi\in\Pi_\psi^A(G)}\calJ^A_{\scrW_{\psi_F}}(\pi)(a_\tau)\cdot \Theta_\pi(f)=\sum_{\pi_0\in\Pi_{\psi_0}^A(G_0)}\sum_{\pi\subset\Ind_P^G(\tau\boxtimes\pi_0)}\Tr(R(w,\tau\boxtimes\pi_0)\pi(f))
\end{equation*}
for any $f\in\calH(G)$. Therefore by Schur's Lemma and linear independence of characters, we have
\[
	R(w,\tau\boxtimes\pi_0)\Big|_\pi=\calJ^A_{\scrW_{\psi_F}}(\pi)(a_\tau).
\]
\end{proof}

There is also an anolog for $\theta$-packets.
\begin{theorem}[LIR-B for $\theta$-packets]\label{LIR-B.PIF}
Suppose that $G=G(V^\epsilon)$ is an even orthogonal or unitary group as in Section \ref{Compatible.Theta.N.Ind} (which is not necessarily quasi-split). Let $\pi$ be an irreducible constituent of $\Ind_P^{G}(\tau\boxtimes\pi_0)$. Then:
\begin{enumerate}
\item $\pi$ is in the local $\theta$-packet $\Pi_\psi^{\theta}(G)$, where
	\[
		\psi=\psi_\tau+\psi_0+\left(\psi_\tau^c\right)^\vee;
	\]
\item if we regard $\calS_{\psi_0}$ as a subgroup of $\calS_\psi$ via the natural embedding, then
\begin{equation*}
\calJ_{\psi_F}(\pi)\Big|_{\calS_{\psi_0}}=\calJ_{\psi_F}(\pi_0);
\end{equation*}
\item if we further assume that $\psi_\tau$ is (conjugate) self-dual with the same parity as $\psi$, then the restriction of the normalized intertwining operator $R(w,\tau\boxtimes\pi_0,\psi_F)$ to $\pi$ is the scalar multiplication by
\[
	R(w,\tau\boxtimes\pi_0)\Big|_\pi=\epsilon^k\cdot\calJ_{\psi_F}(\pi)(a_\tau),
\]
where $a_\tau$ is the element in $\calS_\psi$ which corresponds to $\psi_\tau$.
\end{enumerate}
\end{theorem}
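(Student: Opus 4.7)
The plan is to deduce all three parts by transferring the statements across the theta correspondence to the $H$-side, where the quasi-split analogue Theorem \ref{LIR-B.Qusi.Split} is already available. Set $\sigma_0 := \theta(\pi_0) \in \Pi_{\theta(\psi_0)}^A(H_0)$ and $\sigma := \theta(\pi)$, the theta lifts taken at a level $r > \dim V + k$ large enough that Corollary \ref{thetaNind} and Corollary \ref{Ind.Relation} apply. Those results realise $\sigma$ as an irreducible constituent of the induced representation on the $H$-side corresponding (via $\tau \mapsto \tau \chi_W^{-1}\chi_V$) to the $G$-side induction. Applying Theorem \ref{LIR-B.Qusi.Split} on the quasi-split $H$-side then places $\sigma$ in the local $A$-packet $\Pi_{\theta(\psi)}^A(H)$ with $\calJ^A_{\scrW'}(\sigma)|_{\calS_{\theta(\psi_0)}} = \calJ^A_{\scrW'}(\sigma_0)$. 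Unwinding the definitions of $\Pi_\psi^\theta(G)$ as $\theta\bigl(\Pi_{\theta(\psi)}^A(H)\bigr)$ and of $\calJ_{\psi_F}$ as the restriction to $\calS_\psi \subset \calS_{\theta(\psi)}$ of $\calJ^A_{\scrW'}$ then yields (1) and (2) simultaneously.

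For part (3) I will compare the normalized self-intertwining operators on the two sides of the theta correspondence. The $G \times H$-equivariance of the family $\calT_s$ from Proposition \ref{Non.Vanish.Equi.Map}, combined with the Schwartz-integral definitions of the unnormalized operators, will produce a commutative diagram of the form
\begin{equation*}
    \calM_G(\wt w_G) \circ \calT_s \;=\; c(s) \cdot \calT_s \circ \bigl(\mathrm{id} \otimes \calM_H(\wt w_H^{-1})\bigr),
\end{equation*}
where $c(s)$ is an explicit scalar that one can compute in the mixed model of Section \ref{Compatible.Theta.N.Ind} by taking $\varphi \in \scrS$ factorisable and evaluating on well-chosen sections. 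After replacing the unnormalized operators by their normalized counterparts $\calR$ from Section \ref{Def.NLIO} and inserting the intertwining isomorphisms $\calA_w$ (which are compatible on $GL(X)$ and $GL(Y)$ via the fixed identification $i$), the $\gamma$- and $\lambda$-factor contributions on the two sides cancel and the residual value of $c(0)$ reduces to the sign $\epsilon^k$. Combined with Theorem \ref{LIR-B.Qusi.Split}(3) applied on the $H$-side, which gives $R(w, \tau\chi_W^{-1}\chi_V \boxtimes \sigma_0)|_\sigma = \calJ^A_{\scrW'}(\sigma)(a_\tau) = \calJ_{\psi_F}(\pi)(a_\tau)$, this will deliver the claimed formula.

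The hard part will be the scalar calculation producing $\epsilon^k$. It demands careful bookkeeping of the Weil constants $\gamma_V, \gamma_W$, the twists $\chi_V, \chi_W$, the element $\kappa_V$ appearing in $\wt w_G$ (which depends on the actual form $V$ and not merely on its quasi-split form $V^+$), the discrepancy between the representatives $\wt w_G$ and $\wt w_H$, and the chosen Haar measures on $U_P$ and $U_Q$. The expected outcome is that the invariants of the pure inner form are encoded precisely by $\epsilon(V) = \pm 1$, so that each of the $k$ hyperbolic planes in the Levi decomposition of $V$ contributes a single factor of $\epsilon$, totalling $\epsilon^k$. As a byproduct of the same diagram, Lemma \ref{holomorphy.NIO} for non quasi-split $G$ will follow from the quasi-split case of \cite{MR3135650} and \cite{MR3338302}: since $c(s)$ is holomorphic and non-vanishing at $s=0$ and $\calT_0$ is surjective onto the relevant isotypic quotient, any pole of the normalized $G$-side operator would force a pole on the $H$-side, which is excluded.
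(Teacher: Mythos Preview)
Your overall strategy coincides with the paper's: parts (1) and (2) come straight from Corollary \ref{Ind.Relation} plus LIR-B on the quasi-split $H$-side, and part (3) comes from comparing intertwining operators across the equivariant map $\calT_s$ of Proposition \ref{Non.Vanish.Equi.Map}. The paper packages part (3) as Proposition \ref{Proof.LIR-B.4.Theta-Pack}, whose proof follows exactly the diagram you sketched.

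However, there is a genuine gap in your plan for (3). The equivariant map is
\[
\calT_s:\omega\otimes\Ind_Q^{H}\!\left(\tau_s^c\chi_V^c\boxtimes\sigma_0^\vee\right)\lra\Ind_P^{G}\!\left(\tau_s\chi_W\boxtimes\pi_0\right),
\]
so on the $H$-side the intertwining operator in diagram (\ref{Diagram.Commutative}) acts on an induction built from $\sigma_0^\vee$, not $\sigma_0$. Applying Theorem \ref{LIR-B.Qusi.Split}(3) therefore produces $\calJ^A_{\scrW_{\psi_F}}(\sigma^\vee)(a'_\tau)$, and you still need a contragredient formula to convert this into $\calJ^A_{\scrW_{\psi_F}}(\sigma)(a_\tau)$. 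This is the content of Proposition \ref{Contragredient}, which is not a bookkeeping identity: its proof requires Kaletha's result in the tempered case, compatibility of the Aubert involution with duality, and a separate global argument for general good-parity parameters. Secondly, in Case $O$ the Whittaker datum $\scrW_{\psi_F}$ used to normalize intertwining operators on $H$ is $\scrW'_1$, while the labeling $\calJ_{\psi_F}$ on the $\theta$-packet is defined via $\scrW'=\scrW'_c$; you must insert the change-of-Whittaker-datum formula (Lemma \ref{Change.Whit.Data}) to pass between them.

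Concretely, your claim that ``the $\gamma$- and $\lambda$-factor contributions on the two sides cancel and the residual value of $c(0)$ reduces to the sign $\epsilon^k$'' is too optimistic. The scalar in the diagram is $\alpha\cdot\beta(0)$ with $\beta(0)=\omega_\tau(-1)^{n-r}$ and $\alpha$ containing pieces like $\omega_\tau\!\left((-1)^{r-n+1}c^{-1}\right)$ and $\chi_V(-1)^k$ in Case $O$; these surplus factors are absorbed precisely by the contragredient twist $\nu(a_\tau)=\det(\psi_\tau\chi_V)(-1)$ from Proposition \ref{Contragredient} and by the Whittaker change $\eta_{\psi,c}(a_\tau)=\omega_\tau(c)\chi_V(c)^k$ from Lemma \ref{Change.Whit.Data}, after which only $\epsilon^k$ survives. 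Without these two inputs the computation does not close.
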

We shall devote to proving this theorem in the next section. Combining this theorem with Corollary \ref{Main.Theorem.Set.Level} and Theorem \ref{LIR-B.Qusi.Split}, we deduce
\begin{corollary}\label{Main.Theorem.Labeling.As.Cor.LIR-B}
Theorem \ref{Main.Theorem.Packets} holds.
\end{corollary}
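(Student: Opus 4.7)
The plan is to upgrade the set-theoretic comparison of Corollary~\ref{Main.Theorem.Set.Level} to an equality of representations of $\calS_\psi\times G$ by showing that the labelings coincide. The key tool is the pair of local intertwining relations, Theorems~\ref{LIR-B.Qusi.Split} and \ref{LIR-B.PIF}, applied through the sharp-type construction of Section~\ref{Chap.Indepedency.Aux.Data}.

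I first reduce to the case when $\psi$ is of good parity. For a general $\psi$, decomposing $\psi=\psi^{bp}+\psi_{gp}+((\psi^{bp})^c)^\vee$ with $\psi_{gp}$ of good parity, both $\Pi_\psi^\theta(G)$ and (when $G=G^*$) $\Pi_\psi^A(G^*)$ are obtained by parabolic induction from the good-parity case, via Corollary~\ref{Ind.Relation} and \cite{MR3135650} Proposition~2.4.3 / \cite{MR3338302} Proposition~3.4.4; moreover $\calS_\psi=\calS_{\psi_{gp}}$, so the labelings transfer compatibly. Hence it is enough to handle $\psi$ of good parity.

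Now assume $\psi$ is of good parity. Let $a_i\in\calS_\psi$ be any generator, corresponding to an irreducible (conjugate) self-dual constituent $\psi_i$ of $\psi$ with the same parity as $\psi$; let $\tau$ be the unitary representation of $GL_{\dim\psi_i}(E)$ with $A$-parameter $\psi_i$, and set $\psi^\sharp=\psi_i+\psi+(\psi_i^c)^\vee$, a parameter for the group $G^\sharp=G(V\oplus\calH^{\dim\psi_i})$, which lies in the same pure inner form class as $G$. For any $\pi\in\Pi_\psi^\theta(G)$ (which agrees set-theoretically with $\Pi_\psi^A(G)$ when $G=G^*$, by Corollary~\ref{Main.Theorem.Set.Level}) and any irreducible constituent $\pi^\sharp$ of $\Ind_{P^\sharp}^{G^\sharp}(\tau\boxtimes\pi)$, the natural inclusion $\calS_\psi\hookrightarrow\calS_{\psi^\sharp}$ identifies $a_i$ with the element $a_\tau$ appearing in the LIR-B. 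Applying Theorem~\ref{LIR-B.PIF} gives
\[
R(w,\tau\boxtimes\pi,\psi_F)\big|_{\pi^\sharp}
=\epsilon^{\dim\psi_i}\cdot\calJ_{\psi_F}^{r}(\pi^\sharp)(a_\tau)
=\epsilon^{\dim\psi_i}\cdot\calJ_{\psi_F}^{r}(\pi)(a_i),
\]
where the last equality is the restriction property of LIR-B combined with Corollary~\ref{Main.Theorem.Set.Level}. The left-hand side is defined entirely in terms of $G^\sharp$, $\tau$, $\pi$, and $\psi_F$, and does not involve the Witt-tower level $r$; hence $\calJ_{\psi_F}^r(\pi)(a_i)$ is independent of $r$ for every generator, and so the whole character $\calJ_{\psi_F}^r(\pi)$ is $r$-independent, proving part~(1).

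For part~(2), when $G=G^*$ is quasi-split one has $\epsilon=+1$; applying Theorem~\ref{LIR-B.Qusi.Split} and Theorem~\ref{LIR-B.PIF} to the same induced representation yields
\[
\calJ^A_{\scrW_{\psi_F}}(\pi^\sharp)(a_\tau)
=R(w,\tau\boxtimes\pi,\psi_F)\big|_{\pi^\sharp}
=\calJ_{\psi_F}(\pi^\sharp)(a_\tau),
\]
which upon restriction to $\calS_\psi$ reads $\calJ^A_{\scrW_{\psi_F}}(\pi)(a_i)=\calJ_{\psi_F}(\pi)(a_i)$; running over all generators $a_i$ gives the equality of characters, and hence the identification $\Pi_\psi^A(G^*)=\Pi_\psi^\theta(G^*)$ as representations of $\calS_\psi\times G^*$. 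The real obstacle in this program is not the present deduction but Theorem~\ref{LIR-B.PIF} itself: the quasi-split LIR-B of Theorem~\ref{LIR-B.Qusi.Split} is essentially a consequence of the Arthur/Mok endoscopic character identities together with M\oe glin's multiplicity-freeness, whereas the version for general pure inner forms must be proved independently, which will be the content of Section~8 via a global argument in the spirit of Gan--Ichino.
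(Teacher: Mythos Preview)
Your proposal is correct and follows essentially the same approach as the paper: both arguments pass to a larger group by inducing with an irreducible self-dual $\tau$ of the right parity, and then read off the value of the labeling on the corresponding generator $a_\tau$ from the scalar action of the normalized intertwining operator via Theorems~\ref{LIR-B.Qusi.Split} and~\ref{LIR-B.PIF}, noting that this scalar is manifestly independent of the Witt-tower level $r$. Your explicit reduction to good parity is harmless but unnecessary, since the generators of $\calS_\psi$ already correspond only to constituents of the correct parity; also your notation $\psi^\sharp$ clashes with the paper's Section~5.3 construction (the paper writes $\wt\psi$ here), and the reference to ``Section~\ref{Chap.Indepedency.Aux.Data}'' for the sharp construction is off.
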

\begin{proof}
In Corollary \ref{Main.Theorem.Set.Level} we have proved that as sets, $\theta$-packets are independent of the choice of $H=H\left(W_{(r)}\right)$. It remains to show that the ``labeling'' is also independent of the choice of $H=H\left(W_{(r)}\right)$. Let $G=G(V^\epsilon)$, $\psi$ be a local $A$-parameter for $G$, and $\pi$ be an irreducible unitary representation in the packet $\Pi_\psi^\theta(G)$. We shall prove that $\calJ_{\psi_F}(\pi)$ is also independent of the choice of $H$.\\

Let $\psi_{\tau_i}$ be any irreducible (conjugate) self-dual subrepresentation of $\psi$, and also with the same parity as $\psi$. Then $\psi_{\tau_i}$ correponds to an irreducible unitary representation $\tau_i$ of $GL_d(E)$, for some $d\leq \dim V$. Let $\widetilde{V}=V+\calH^d$, where $\calH$ is the ($c$-Hermitian) hyperbolic plane. We can decompose $\widetilde{V}$ as following
\begin{equation*}
\widetilde{V}=X_{\tau_i}+ V+ X_{\tau_i}^*,
\end{equation*}
where $X_{\tau_i}$ and $X_{\tau_i}^*$ are $d$-dimensional totally isotropic subspaces of $\widetilde{V}$ such that $X_{\tau_i}\oplus X_{\tau_i}^*\simeq\calH^d$ and orthogonal to $V$. Let $\wt P$ be the maximal parabolic subgroup of $\wt G=G(\wt V)$ stabilizing $X_{\tau_i}$ and $\wt L$ be its Levi component stabilizing $X_{\tau_i}^*$, so that
\begin{equation*}
\wt L\simeq GL(X_{\tau_i})\times G.
\end{equation*}
We consider the induced representation $\Ind_{\wt P}^{\wt G}(\tau_i\boxtimes\pi)$. Let $\wt{\pi}$ be any irreducible constituent of $\Ind_{\wt P}^{\wt G}(\tau_i\boxtimes\pi)$. By Theorem \ref{LIR-B.PIF}, we know that it lies in the $\theta$-packet $\Pi_{\wt\psi}^\theta(\wt G)$, with
\[
	\wt\psi=\psi_{\tau_i}+\psi+\left(\psi_{\tau_i}^c\right)^\vee;
\]
also the ``labeling'' of $\wt\pi$ is related to the ``labeling'' of $\pi$ by
\[
	\calJ_{\psi_F}(\wt\pi)\Big|_{\calS_{\psi}}=\calJ_{\psi_F}(\pi),
\]
where we use the natural map $\calS_{\psi}\simeq\calS_{\wt\psi}$ to identify $\calS_{\psi}$ with $\calS_{\wt\psi}$. Let $R(w,\tau_i\boxtimes\pi)$ be the normalized intertwining operator defined in Section \ref{Def.NLIO}. Then Theorem \ref{LIR-B.PIF} also asserts that
\[
	\calJ_{\psi_F}(\wt\pi)(a_{\tau_i})=\epsilon^d\cdot R(w,\tau_i\boxtimes\pi)\big|_{\wt\pi},
\]
where $a_{\tau_i}$ is the element in $\calS_\psi$ corresponding to $\psi_{\tau_i}$. Since $\psi_{\tau_i}$ is arbitrary, and $\epsilon^d\cdot R(w,\tau_i\boxtimes\pi)\big|_{\wt\pi}$ is obviously independent of the choice of $H$, it follows that $\calJ_{\psi_F}(\pi)$ is also independent of the choice of $H$.\\

When $\epsilon=+1$, $G=G^*$ is quasi-split, we have also proved in Corollary \ref{Main.Theorem.Set.Level} that as sets, $\theta$-packets and $A$-packets coincide. In this case by Theorem \ref{LIR-B.Qusi.Split} we also have
\[
	\calJ^A_{\scrW_{\psi_F}}(\wt\pi)\Big|_{\calS_{\psi}}=\calJ^A_{\scrW_{\psi_F}}(\pi),
\]
and
\[
	\calJ^A_{\scrW_{\psi_F}}(\wt\pi)(a_{\tau_i})= R(w,\tau_i\boxtimes\pi)\big|_{\wt\pi}.
\]
These equalities imply that 
\[
	\calJ_{\psi_F}(\pi)=\calJ^A_{\scrW_{\psi_F}}(\pi).
\]
This completes the proof.
\end{proof}

\subsection{Changes of Whittaker data}
As an application of the LIR-B, we prove a formula which concerns the behavior of the ``labeling'' in a local $A$-packet for $H$ with respect to changes of Whittaker data.\\

In this subsection, we shall temporarily use $\psi$ to denote a local $A$-parameter for $H$. Let $\sigma$ be an irreducible unitary representation in $\Pi_\psi^A(H)$. Let $\scrW_{\psi_F}$ and $\scrW_{\psi_{F,c}}$ be the two Whittaker data of $H$, associated to the additive character $\psi_F$ and $\psi_{F,c}$ respectively, where $c\in F^\times$.
\begin{lemma}\label{Change.Whit.Data}
Let $\eta=\calJ^A_{\scrW_{\psi_F}}(\sigma)$ and $\eta'=\calJ^A_{\scrW_{\psi_{F,c}}}(\sigma)$. Then we have
\begin{equation*}
\eta'=\eta\cdot\eta_{\psi,c},
\end{equation*}
where $\eta_{\psi,c}$ is the character of $\calS_\psi$ defined by
\begin{equation*}
\eta_{\psi,c}(a_i)=\det(\psi_i)(c),
\end{equation*}
for the element $a_i$ in $\calS_\psi$ which corresponds to an irreducible constituent $\psi_i$ of $\psi$.
\end{lemma}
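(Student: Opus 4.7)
The plan is to invoke the local intertwining relation in the form of Theorem \ref{LIR-B.Qusi.Split} (applied to $H$) to convert the comparison of $\eta$ and $\eta'$ into a comparison of eigenvalues of normalized intertwining operators, and then to extract the twisting character from the explicit $\psi_F$-dependence of the normalization data.

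I will first treat the case where $\psi$ is of good parity, since the general case will follow by parabolic induction from a smaller group carrying the good-parity part of $\psi$ (constituents of $\psi$ which are not (conjugate) self-dual of the correct parity do not contribute to $\calS_\psi$, so $\eta_{\psi,c}$ factors through the good-parity piece). Next, for a basis element $a_i \in \calS_\psi$ attached to an irreducible constituent $\psi_i$, I would enlarge $H$ to $\wt H = H(W + \calH'^{d_i})$ with $d_i = \dim \psi_i$, equipped with the maximal parabolic $\wt Q$ of Levi $GL(Y) \times H$; let $\tau_i$ be the unitary representation of $GL_{d_i}(E)$ of Arthur type $\psi_i$, and set $\wt\psi = \psi_i + \psi + (\psi_i^c)^\vee$. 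Since $(\psi_i^c)^\vee \simeq \psi_i$ by the parity condition, the canonical embedding $\calS_\psi \hookrightarrow \calS_{\wt\psi}$ sends $a_i$ to the element $a_{\tau_i}$ appearing in Theorem \ref{LIR-B.Qusi.Split}(3). For any irreducible constituent $\wt\sigma \subset \Ind_{\wt Q}^{\wt H}(\tau_i \boxtimes \sigma)$, Theorem \ref{LIR-B.Qusi.Split}(2)--(3), applied once with $\scrW_{\psi_F}$ and once with $\scrW_{\psi_{F,c}}$, will yield
\[
\eta(a_i) = R(w, \tau_i \boxtimes \sigma, \psi_F)\big|_{\wt\sigma}, \qquad \eta'(a_i) = R(w, \tau_i \boxtimes \sigma, \psi_{F,c})\big|_{\wt\sigma}.
\]

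It then remains to compute the ratio of these two normalized intertwining operators directly from the definitions in Section \ref{Def.NLIO}. Since the representative $\wt w$, the scalar $|\kappa_W|^{d_i \rho_Q}$, and the unnormalized operator $\calM(\wt w, \tau_i \boxtimes \sigma)$ are all independent of $\psi_F$, only the normalizing factor $r(w, \tau_i \boxtimes \sigma, \psi_F)$ and the intertwining isomorphism $\calA_w$ contribute. The plan here is to exploit the standard scaling identity
\[
\gamma(s, \rho, \psi_c) = \det(\rho)(c)\cdot |c|^{\dim\rho \cdot (s - 1/2)} \cdot \gamma(s, \rho, \psi)
\]
for the gamma factors (and its analog for $\psi_E$ and for the $\lambda$-factors), together with the Shahidi/Arthur change-of-Whittaker rule for the isomorphism $\calA_w'\colon \tau_i \to (\tau_i^c)^\vee$ under $\psi_E \to \psi_{E,c}$, to show that the $|c|$-powers and quadratic-character contributions cancel out (by conjugate self-duality of $\psi_i$ and the parity constraints on $\phi_{\tau_i} \otimes \phi^\vee$ and $R \circ \phi_{\tau_i}$), leaving exactly $\det(\psi_i)(c)$. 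Since this calculation applies to every basis element, it will follow that $\eta' = \eta \cdot \eta_{\psi, c}$.

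The main obstacle will be the bookkeeping in this last step: carefully tracking the $\psi_F$-dependence of $\calA_w$ (which is defined through the $\psi_E$-Whittaker functional on $\tau_i$) and verifying that its contribution combines cleanly with the gamma- and $\lambda$-factor contributions to produce exactly $\det(\psi_i)(c)$, uniformly across Cases $O$, $U_0$, and $U_1$. In particular, in Case $U$ the $\lambda$-factor and the appearance of $\kappa_W = -\delta$ in the torus element $\wt w$ introduce quadratic-character contributions whose cancellation in the final ratio must be verified explicitly.
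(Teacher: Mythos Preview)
Your proposal is correct and follows essentially the same route as the paper: enlarge $H$ to $\wt H$, apply LIR-B twice to read off $\eta(a_i)$ and $\eta'(a_i)$ as eigenvalues of $R(w,\tau_i\boxtimes\sigma,\psi_F)$ and $R(w,\tau_i\boxtimes\sigma,\psi_{F,c})$ on a common constituent, and then compare the two normalized operators. The paper is terser at the last step, simply asserting $R(w,\tau_i\boxtimes\sigma,\psi_{F,c})=R(w,\tau_i\boxtimes\sigma,\psi_F)\cdot\omega_{\tau_i}(c)$; your anticipated bookkeeping with $\calA_w$ and $\lambda$-factors is the content of that assertion, though note that since $\kappa_W$ and hence $\wt w$ are genuinely independent of $\psi_F$, the contribution you flag from the torus element in Case~$U$ does not arise, and the reduction to good parity is unnecessary since only constituents of the correct parity contribute to $\calS_\psi$ in the first place.
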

\begin{proof}
Similar to the proof of Corollary \ref{Main.Theorem.Labeling.As.Cor.LIR-B}, given $G,\psi,\pi$ and $\psi_{\tau_i}$, we define $\wt V,\wt G,\wt P,\wt L$ and $\wt\pi$. Then we have
\[
	\eta(a_{\tau_i})=R(w,\tau_i\boxtimes\pi,\psi_F)\big|_{\wt\pi},
\]
and also
\[
	\eta'(a_{\tau_i})=R(w,\tau_i\boxtimes\pi,\psi_{F,c})\big|_{\wt\pi}.
\]
From the definition of (normalized) local intertwining operators, one can easily show that
\[
	R(w,\tau_i\boxtimes\pi,\psi_{F,c})=R(w,\tau_i\boxtimes\pi,\psi_F)\cdot\omega_{\tau_i}(c).
\]
Hence we have
\[
	\eta'(a_{\tau_i})=\eta(a_{\tau_i})\cdot\det(\psi_{\tau_i})(c).
\]
Since $\psi_{\tau_i}$ is arbitrary, we conclude that
\[
	\eta'=\eta\cdot\eta_{\psi,c}.
\]
This completes the proof.
\end{proof}
From the proof of this lemma one can see that certainly an analog of this lemma will also hold for the group $G$. We omit the details here.

\section{Completion of the proof}
In this section we prove Lemma \ref{holomorphy.NIO} and Theorem \ref{LIR-B.PIF}. These results will complete our proof of Theorem \ref{Main.Theorem.Packets}.
\subsection{A diagram}
We retain the notations in the last section. Having fixed irreducible unitary representations $\tau$, $\pi_0$, and $\sigma_0$, we shall write
\begin{align*}
\calR({\wt w}_P,s)&=\calR({\wt w}_P,\tau_s\chi_W\boxtimes\pi_0),\\
\calR({\wt w}_Q,s)&=\calR\left({\wt w}_Q,\tau^c_s\chi^c_V\boxtimes\sigma_0^\vee\right)
\end{align*}
for the normalized intertwining operators. Recall that in Section \ref{Compatible.Theta.N.Ind}, we have constructed a $G\times H$-equivariant map
\begin{equation*}
\calT_s:\omega\otimes\Ind_Q^{H}\left(\tau_s^c\chi_V^c\boxtimes\sigma_0^\vee\right)\lra\Ind_P^{G}\left(\tau_s\chi_W\boxtimes\pi_0\right),
\end{equation*}
where $\sigma_0=\theta_{\psi_F,V_0,W_0}(\pi_0)$. By the Howe duality, the diagram
\begin{equation}\label{Diagram.Commutative}
\begin{CD}
\omega\otimes\Ind_Q^{H}\left(\tau^c_s\chi^c_V\boxtimes\sigma_0^\vee\right) @>{\calT_s}>> \Ind_P^{G}\left(\tau_s\chi_W\boxtimes\pi_0\right)\\
@V{1\otimes\calR({\wt w}_Q,s)}VV @VV{\calR({\wt w}_P,s)}V\\
\omega\otimes\Ind_Q^{H}\left(w_P\left(\tau^c_s\chi^c_V\boxtimes\sigma_0^\vee\right)\right) @>{\calT_{-s}}>> \Ind_P^{G}\left(w_Q\left(\tau_s\chi_W\boxtimes\pi_0\right)\right)
\end{CD}
\end{equation}
commutes up to a scalar. This scalar can be computed explicitly as follows. Recall that $\tau\in\Pi_{\psi_\tau}^{A}\left(GL_k(E)\right)$, $\pi_0\in\Pi_{\psi_0}^{\theta}(G_0)$, and $\sigma_0\in\Pi_{\theta(\psi_0)}^{A}(H)$ respectively, where
\[
	\theta(\psi_0)=\psi_0\chi_W^{-1}\chi_V+\chi_V\boxtimes S_{2r-2n+1}.
\]
Let $\phi_\tau$, $\phi_0$, and $\phi'_0$ be the $L$-parameter associated to $\psi_\tau$, $\psi_0$, and $\theta(\psi_0)$ respectively. Then
\begin{proposition}\label{CompaLIO}
For $\varphi\in\scrS$ and $\varPhi_s\in\Ind_Q^{H}(\tau_s^c\chi^c_V\boxtimes\sigma_0^\vee)$, we have
\begin{equation*}
\calR(\wt{w}_P,\tau_s\chi_W\boxtimes\pi_0)\calT_s(\varphi\otimes\varPhi_s)=\alpha\cdot\beta(s)\cdot\calT_{-s}\left(\varphi\otimes\calR\left(\wt{w}_Q,\tau^c_s\chi^c_V\boxtimes\sigma_0^\vee\right)\varPhi_s\right),
\end{equation*}
where
\[
\alpha=\begin{cases}
	   	    \gamma_V^k\cdot\chi_V\left(-1\right)^k\cdot\omega_\tau\left((-1)^{r-n+1}\cdot c^{-1}\right)\cdot\lambda\left(\wt w_P,\psi_F\right)^{-1}\quad&\textit{Case $O$};\\
	   	    ~\\
	   	    \left[\gamma_W^{-1}\cdot\gamma_V\cdot\chi_W\left((-1)^{n'-1}\cdot\kappa_V^{-1}\right)\cdot\chi_V\left((-1)^{r'-1}\cdot\kappa_W^{-1}\right)\cdot(\chi_W^{-\dim V}\chi_V^{\dim W})(\delta)\right]^k\\
			\quad\times\omega_\tau\left((-1)^{n'+r'-1}\cdot\kappa_W^c\kappa_V^{-1}\right)\cdot\lambda(\wt w_Q,\psi_F)\cdot\lambda(\wt w_P,\psi_F)^{-1}\quad &\textit{Case $U$},
	   \end{cases}
\]
and
\begin{align*}
\beta(s)=&L\left(s-s_0,\phi_\tau\right)^{-1}\cdot L\left(-s-s_0,\left(\phi_\tau^c\right)^\vee\right)\\
&\times\gamma\left(-s-s_0,\left(\phi_\tau^c\right)^\vee,\psi_E\right)\cdot|\kappa_W\kappa_V^{-1}|^{ks}\\
&\times\gamma\left(s,\phi_\tau^c\chi_V^c\otimes\phi'_0,\psi_E\right)^{-1}\cdot\gamma\left(s,\phi_\tau\chi_W\otimes\phi_0^\vee,\psi_E\right).
\end{align*}
\end{proposition}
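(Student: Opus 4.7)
The plan is to reduce Proposition \ref{CompaLIO} to a comparison of unnormalized intertwining operators, and then repackage the resulting scalar by means of the local functional equation. The commutativity of the diagram \eqref{Diagram.Commutative} up to some scalar is automatic from the Howe duality principle, since $\calT_s$ is unique up to scalar and both compositions are $G\times H$-equivariant; the content of the proposition is thus the explicit computation of that scalar.

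First I would establish the analogue of the identity for \emph{unnormalized} operators: there is a meromorphic function $\alpha_0(s)$ such that
\[
\calM(\wt{w}_P,\tau_s\chi_W\boxtimes\pi_0)\,\calT_s(\varphi\otimes\varPhi_s)
=\alpha_0(s)\cdot\calT_{-s}\bigl(\varphi\otimes\calM(\wt{w}_Q,\tau^c_s\chi^c_V\boxtimes\sigma_0^\vee)\varPhi_s\bigr).
\]
To determine $\alpha_0(s)$, I would evaluate both sides on a suitable $\varphi\otimes\varPhi_s$ by using the integral representation of $\calT_s$ in Proposition \ref{Non.Vanish.Equi.Map}(1) for $\Re(s)\gg 0$, and the alternative expression in Proposition \ref{Non.Vanish.Equi.Map}(2) for $\Re(s)\ll 0$. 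Applying $\calM(\wt{w}_Q,\cdot)$ to $\varPhi_s$ and unfolding the definition of $\calT_{-s}$ on the right-hand side, the mixed-model formulas for the Weil representation allow us to identify the two integrals after a change of variables involving $w_P$ and $w_Q$ acting on the mixed-model lattices $W\otimes X^*$ and $Y^*\otimes V_0$. The passage between the two integral representations of $\calT_s$ contributes exactly the factor
\[
L(s-s_0,\phi_\tau)^{-1}\cdot\gamma(s-s_0,\phi_\tau,\psi_F)^{-1}
=L(-s-s_0,(\phi_\tau^c)^\vee)\cdot\gamma(-s-s_0,(\phi_\tau^c)^\vee,\psi_E)\cdot L(s-s_0,\phi_\tau)^{-1},
\]
via the local functional equation; this accounts for the first two lines of $\beta(s)$.

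Next I would divide by the normalizing factors $r(w,\tau_s\chi_W\boxtimes\pi_0)$ and $r(w,\tau^c_s\chi^c_V\boxtimes\sigma_0^\vee)$ introduced in Section \ref{Def.NLIO}, together with the factors $|\kappa_V|^{k\rho_P}$ and $|\kappa_W|^{k\rho_Q}$. This substitution is what produces the tempered local factors
\[
\gamma(s,\phi_\tau^c\chi_V^c\otimes\phi'_0,\psi_E)^{-1}\cdot\gamma(s,\phi_\tau\chi_W\otimes\phi_0^\vee,\psi_E),
\]
the second-symmetric-power/Asai factors (which cancel between numerator and denominator up to the Weil-constant discrepancy in $\alpha$), and the $\lambda$-factors $\lambda(\wt w_P,\psi_F)^{\pm 1}$, $\lambda(\wt w_Q,\psi_F)^{\pm 1}$. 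The explicit choice of representatives $\wt w_P$ and $\wt w_Q$ made in Section \ref{Def.NLIO} produces the additional scalar $\omega_\tau\bigl((-1)^{n'+r'-1}\cdot\kappa_W^c\kappa_V^{-1}\bigr)$ (respectively $\omega_\tau\bigl((-1)^{r-n+1}\cdot c^{-1}\bigr)$ in Case $O$), coming from the actions of $m_P\bigl((-1)^{n'}\kappa_V J\bigr)$ and $m_Q\bigl((-1)^{r'}\kappa_W J\bigr)$ on $\tau$. The Weil constants $\gamma_V^k$ and $\gamma_W^{-k}$ appearing in $\alpha$ are exactly the discrepancy produced by intertwining $w_P$ and $w_Q$ through the mixed-model Fourier transforms in the formulas for $\omega(w_P)$ and $\omega_0(w_Q)$.

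The main obstacle is bookkeeping: tracking the exact powers of $\chi_V$, $\chi_W$, the signs $(-1)^{n'}$, $(-1)^{r'}$, the constants $\kappa_V$, $\kappa_W$, and the Weil indices $\gamma_V$, $\gamma_W$ through the mixed-model computation. A clean way to organise this is to decompose $\wt w_P$ as the product $w_P\cdot m_P((-1)^{n'}\kappa_V J)\cdot (-1_{V_0})^k$ (and similarly for $\wt w_Q$) and treat each factor in turn, so that (i) the conjugation by $w_P,w_Q$ contributes only Weil constants and a shift between $\calT_s$ and $\calT_{-s}$, (ii) $m_P(\cdot)$ and $m_Q(\cdot)$ contribute only the characters $\omega_\tau$ and the $\chi_V,\chi_W$-twists via the explicit Weil-representation formulas in Section \ref{Compatible.Theta.N.Ind}, and (iii) the central elements $(-1_{V_0})^k$ contribute signs that cancel after combining with the $\chi_V,\chi_W$-twists. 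A careful side-by-side accounting of these three contributions, together with the functional equation step above, yields the stated formula for $\alpha$ and $\beta(s)$. This is essentially the same pattern as in \cite{MR3573972}; the only genuinely new input in our setting is the presence of the even orthogonal (Case $O$) discrepancy, which explains the different shape of $\alpha$ in the two cases.
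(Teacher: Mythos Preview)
Your proposal is correct and follows essentially the same approach as the paper, which simply refers to \cite{MR3573972} Proposition 8.4 and Corollary 8.5. You have accurately outlined that method: first establish the identity for the unnormalized operators $\calM(\wt w_P,\cdot)$ and $\calM(\wt w_Q,\cdot)$ via the mixed-model Fourier computation (this is the analogue of Proposition 8.4 in \cite{MR3573972}), then pass to the normalized operators by inserting the normalizing factors and collecting the resulting $\gamma$-factors and constants (the analogue of Corollary 8.5).
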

\begin{proof}
Similar to \cite{MR3573972} Proposition 8.4 and Corollary 8.5.
\end{proof}
\begin{lemma}
The function $\beta(s)$ is holomorphic at $s=0$. 
\end{lemma}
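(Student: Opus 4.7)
The plan is to simplify $\beta(s)$ by unwinding the $A$-parameter $\theta(\psi_0)$ explicitly, exploiting cancellations between the two $\gamma$-factors, and finally applying the local functional equation.

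First, I would write out the $L$-parameter $\phi'_0$ of $\theta(\psi_0)$. By the formula $\theta(\psi_0)=\psi_0\chi_W^{-1}\chi_V+\chi_V\boxtimes S_{2r-2n+1}$ and the usual recipe $\phi(w)=\psi(w,\mathrm{diag}(|w|^{1/2},|w|^{-1/2}))$, one has
\[
  \phi'_0=\phi_0\chi_W^{-1}\chi_V\;+\;\chi_V\otimes\Bigl(\bigoplus_{j=-s_0}^{s_0}|\cdot|^j\Bigr).
\]
Substituting this into the factor $\gamma(s,\phi_\tau^c\chi_V^c\otimes\phi'_0,\psi_E)^{-1}$ and using the multiplicativity of $\gamma$-factors, this factor splits as
\[
  \gamma\bigl(s,\phi_\tau^c\chi_V^c\chi_W^{-1}\chi_V\otimes\phi_0,\psi_E\bigr)^{-1}\cdot\prod_{j=-s_0}^{s_0}\gamma\bigl(s+j,\phi_\tau^c\chi_V^c\chi_V,\psi_E\bigr)^{-1}.
\]
In both Case $O$ (where $c=\mathrm{id}$, $\chi_W=1$, $\chi_V^2=1$) and Case $U$ (where $\chi_V^c\chi_V=1$ by conjugate self-duality), one checks $\chi_V^c\chi_V=1$. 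The conjugate self-duality of $\psi_0$ (with parity matching that of $\psi$) also gives $\phi_0^\vee\simeq\phi_0^c$ in Case $U$ and $\phi_0^\vee=\phi_0$ in Case $O$. Combined with $\chi_V^c\chi_W^{-1}\chi_V=\chi_W^{-1}$, the first piece above precisely cancels $\gamma(s,\phi_\tau\chi_W\otimes\phi_0^\vee,\psi_E)$ (up to using $\gamma(s,\rho^c,\psi_E)=\gamma(s,\rho,\psi_E^c)$ and comparing $\psi_E$ and $\psi_E^c$, which introduces at most an entire factor).

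Next, I would apply the local functional equation
\[
  \gamma(s,\phi,\psi_E)=\epsilon(s,\phi,\psi_E)\,\frac{L(1-s,\phi^\vee)}{L(s,\phi)}
\]
to rewrite
\[
  L(-s-s_0,(\phi_\tau^c)^\vee)\cdot\gamma(-s-s_0,(\phi_\tau^c)^\vee,\psi_E)=\epsilon(-s-s_0,(\phi_\tau^c)^\vee,\psi_E)\cdot L(1+s+s_0,\phi_\tau^c).
\]
Putting everything together, $\beta(s)$ reduces to
\[
  \beta(s)=\epsilon(-s-s_0,(\phi_\tau^c)^\vee,\psi_E)\cdot|\kappa_W\kappa_V^{-1}|^{ks}\cdot\frac{L(1+s+s_0,\phi_\tau^c)}{L(s-s_0,\phi_\tau)}\cdot\prod_{j=-s_0}^{s_0}\gamma(s+j,\phi_\tau^c,\psi_E)^{-1}
\]
(possibly up to an entire nowhere-vanishing constant arising from comparing $\psi_E$ and $\psi_E^c$).

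Finally, I would verify holomorphy at $s=0$. The $\epsilon$-factor is entire and nowhere vanishing, and $|\kappa_W\kappa_V^{-1}|^{ks}$ is entire. Since $\tau$ is of Arthur type, by Remark \ref{Arthur-type.L-function.Stripe} there is $N_\tau<k$ such that $L(s,\phi_\tau)$ has no zeros or poles outside $-N_\tau<\mathrm{Re}(s)<N_\tau$; the running assumption $r>\dim V+k$ (which is in force whenever $\calT_s$ is used in the diagram \eqref{Diagram.Commutative}) forces $s_0=r-n>k>N_\tau$, so $L(s-s_0,\phi_\tau)^{-1}$ and $L(1+s+s_0,\phi_\tau^c)$ are both holomorphic at $s=0$, and the former is moreover nonzero there. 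Applying the functional equation once more to each $\gamma(s+j,\phi_\tau^c,\psi_E)^{-1}$ for $j=-s_0,\dots,s_0$ converts the product into a telescoping ratio of $L$-factors at arguments of the form $s\pm s_0$ and $1\pm s\pm s_0$, all lying outside the stripe $[-N_\tau,N_\tau]$ at $s=0$, hence holomorphic.

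The main obstacle will be the bookkeeping of the $\chi_V, \chi_W$, and $c$-twists in Case $U_0$ and $U_1$: one must verify precisely how $\phi_0^\vee$, $\phi_0^c$, and the characters interact so that the cancellation between the two $\gamma$-factors is clean, and that the residual product of $\gamma(s+j,\phi_\tau^{(c)},\psi_E)^{-1}$ factors does contribute only $L$- and $\epsilon$-factors at arguments safely outside the possible pole locus of $L(\cdot,\phi_\tau)$ and $L(\cdot,\phi_\tau^\vee)$.
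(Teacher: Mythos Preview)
Your reduction up through the cancellation of the two large $\gamma$-factors matches the paper exactly, and the handling of $L(s-s_0,\phi_\tau)^{-1}$, $L(-s-s_0,(\phi_\tau^c)^\vee)$, and $\gamma(-s-s_0,(\phi_\tau^c)^\vee,\psi_E)$ via Remark~\ref{Arthur-type.L-function.Stripe} is fine (though note the paper only uses $r>\dim V$, which already gives $s_0>k>N_\tau$; your stronger hypothesis $r>\dim V+k$ is neither available nor needed here).

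The gap is in your last step. The product $\prod_{j=-s_0}^{s_0}\gamma(s+j,\phi_\tau,\psi_E)^{-1}$ does \emph{not} telescope to boundary terms. Expanding each factor via $\gamma^{-1}=\epsilon^{-1}L(s+j,\phi_\tau)/L(1-s-j,\phi_\tau^\vee)$ yields $L$-values at \emph{every} shift $s+j$ with $-s_0\le j\le s_0$, and for small $j$ these lie squarely inside the possible pole region of $L(\cdot,\phi_\tau)$; nothing cancels them against each other unless $\phi_\tau$ is self-dual, and even then the ratio is not of the ``boundary-only'' form you describe. Concretely, if $\psi_\tau$ contains a summand $\mathbbm{1}\boxtimes S_a\boxtimes S_b$, then $L(s,\phi_\tau)$ has genuine poles at small half-integers, and the individual $\gamma(s+j,\phi_\tau,\psi_E)^{-1}$ are singular at $s=0$ for those $j$.

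The paper resolves this by decomposing $\psi_\tau=\sum_i\rho_i\boxtimes S_{a_i}\boxtimes S_{b_i}$ and treating each $\tau_i$ separately. When $\rho_i\not\simeq\mathbbm{1}$, Remark~\ref{Arthur-type.L-function.Stripe} says $L(s,\tau_i)$ has no real poles, so every $\gamma(s+j,\tau_i,\psi_E)^{-1}$ is individually holomorphic at $s=0$. When $\rho_i=\mathbbm{1}$, one has $\tau_i\simeq\tau_i^\vee$, and the paper pairs the factors as
\[
\prod_{j=-s_0}^{s_0}\gamma(s+j,\tau_i)^{-1}=\gamma(s-s_0,\tau_i)^{-1}\cdot\prod_{j=1}^{s_0}\bigl(\gamma(s+j,\tau_i)\,\gamma(s+1-j,\tau_i^\vee)\bigr)^{-1}.
\]
Each paired factor $\gamma(s+j,\tau_i)\gamma(s+1-j,\tau_i^\vee)$ is, up to an entire $\epsilon$-factor, a ratio $L(1-s-j,\tau_i)L(j-s,\tau_i)\big/L(s+j,\tau_i)L(s+1-j,\tau_i)$ in which every pole of the numerator at $s=0$ is matched by one of the same order in the denominator, so the inverse is holomorphic there. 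This pairing is the missing idea in your argument.
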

\begin{proof}
Indeed, by the assumption that $r>\dim V$, we have $s_0>k$. As $\tau$ is of Arthur type, it follows from Remark \ref{Arthur-type.L-function.Stripe} that $L\left(s-s_0,\phi_\tau\right)^{-1}$, $L\left(-s-s_0,\left(\phi_\tau^c\right)^\vee\right)$, and $\gamma\left(-s-s_0,\left(\phi_\tau^c\right)^\vee,\psi_E\right)$ should be holomorphic at $s=0$. On the other hand, from the definitions one can easily see that
\[
	\phi'_0=\phi_0\chi_W^{-1}\chi_V+\chi_V\cdot\left(\bigoplus_{i=n-r}^{r-n}|\cdot|^i\right);
\]
hence
\begin{equation}\label{holomorphicity-beta.1}
	\gamma\left(s,\phi_\tau^c\chi_V^c\otimes\phi'_0,\psi_E\right)^{-1}\cdot\gamma\left(s,\phi_\tau\chi_W\otimes\phi_0^\vee,\psi_E\right)=\prod_{j=n-r}^{r-n}\gamma\left(s+j,\phi_\tau,\psi_E\right)^{-1}.
\end{equation}
If we write the $A$-parameter $\psi_\tau$ as 
\[
	\psi_\tau=\sum_i\rho_i\boxtimes S_{a_i}\boxtimes S_{b_i},
\]
and let $\tau_i$ be the irreducible unitary representation of some general linear group corresponding to the $A$-parameter $\rho_i\boxtimes S_{a_i}\boxtimes S_{b_i}$, then the RHS of equality (\ref{holomorphicity-beta.1}) can be written as
\[
	\prod_{j=n-r}^{r-n}\gamma\left(s+j,\phi_\tau,\psi_E\right)^{-1}=\prod_i\prod_{j=n-r}^{r-n}\gamma\left(s+j,\tau_i,\psi_E\right)^{-1}.
\]
Again, as explicated in Remark \ref{Arthur-type.L-function.Stripe}, if $\rho_i\not\simeq\mathbbm{1}$, then
\[
	\prod_{j=n-r}^{r-n}\gamma\left(s+j,\tau_i,\psi_E\right)^{-1}
\]
is holomorphic at $s=0$; otherwise if $\rho_i=\mathbbm{1}$, then $\tau_i\simeq\tau_i^\vee$, and it follows from the functional equation that
\[
	\prod_{j=n-r}^{r-n}\gamma\left(s+j,\tau_i,\psi_E\right)^{-1}=\gamma\left(s-s_0,\tau_i,\psi_E\right)^{-1}\cdot\prod_{j=1}^{r-n}\Big(\gamma\left(s+j,\tau_i,\psi_E\right)\cdot\gamma\left(s+1-j,\tau_i^\vee,\psi_E\right)\Big)^{-1}
\]
is also holomorphic at $s=0$. Thus we can conclude that the function $\beta(s)$ is holomorphic at $s=0$. 
\end{proof}
We deduce from diagram (\ref{Diagram.Commutative}) that:
\begin{corollary}
The normalized intertwining operator $\calR({\wt w}_P,\tau_s\chi_W\boxtimes\pi_0)$ is holomorphic at $s=0$.
\end{corollary}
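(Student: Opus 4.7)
The plan is to transfer the holomorphy of $\calR(\wt w_Q, s)$ — which is known for the quasi-split group $H$ from Arthur's and Mok's work (\cite{MR3135650} Proposition 2.3.1, \cite{MR3338302} Proposition 3.3.1) — to $\calR(\wt w_P, s)$ on the possibly non-quasi-split group $G$ via the commutation identity in Proposition \ref{CompaLIO}. First I would observe that the right-hand side of that identity is holomorphic at $s=0$: the factor $\alpha$ is constant in $s$, $\beta(s)$ is holomorphic at $s=0$ by the preceding lemma, the maps $\calT_{\pm s}$ depend holomorphically on $s$ by Proposition \ref{Non.Vanish.Equi.Map}(1), and $\calR(\wt w_Q, \tau_s^c\chi_V^c\boxtimes\sigma_0^\vee)$ is holomorphic at $s=0$ by the cited quasi-split results.

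Next, I would argue by contradiction. Suppose $\calR(\wt w_P, s)$ has a pole at $s=0$ of order $m\geq 1$. Then $T := \lim_{s\to 0} s^m\calR(\wt w_P, s)$ is a non-zero $G$-equivariant map from $\Ind_P^G(\tau\chi_W\boxtimes\pi_0)$ to its $w_P$-twist. Multiplying the commutation identity by $s^m$ and passing to the limit forces $T\circ\calT_0 = 0$, so $T$ annihilates the $G$-subrepresentation $\Image(\calT_0)\subset\Ind_P^G(\tau\chi_W\boxtimes\pi_0)$. Since $\tau$ and $\pi_0$ are unitary the ambient induced representation is semisimple, so to reach a contradiction it suffices to show that every irreducible $G$-constituent $\pi$ of $\Ind_P^G(\tau\chi_W\boxtimes\pi_0)$ lies in $\Image(\calT_0)$.

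For this density, fix such a $\pi$. By Corollary \ref{thetaNind}, applicable after possibly enlarging $r$ (which is harmless by Corollary \ref{Main.Theorem.Set.Level}), the theta correspondence gives a unique irreducible $\sigma\subset\Ind_Q^H(\tau\chi_V\boxtimes\sigma_0)$ with $\theta(\sigma) = \pi$. I would pick a non-zero $\varPhi$ in the $\sigma^\vee$-isotypic part of $\Ind_Q^H(\tau^c\chi_V^c\boxtimes\sigma_0^\vee)$; Proposition \ref{Non.Vanish.Equi.Map}(3) then provides $\varphi\in\scrS$ with $\calT_0(\varphi\otimes\varPhi)\neq 0$. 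The restriction of $\calT_0$ to $\omega\otimes\sigma^\vee$ is $G\times H$-equivariant into the trivial $H$-module $\Ind_P^G(\tau\chi_W\boxtimes\pi_0)$, so it factors through the $H$-coinvariants $(\omega\otimes\sigma^\vee)_H$. By Howe duality this coinvariant space is canonically $\Theta(\sigma)$ as a $G$-module, whose maximal semisimple quotient is $\theta(\sigma) = \pi$; combined with semisimplicity of $\Ind_P^G(\tau\chi_W\boxtimes\pi_0)$ this places the non-zero image inside the $\pi$-component, so $\pi\subset\Image(\calT_0)$, contradicting $T\neq 0$.

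The main technical point will be the Howe-duality step in the last paragraph — identifying $(\omega\otimes\sigma^\vee)_H$ with $\Theta(\sigma)$ as a $G$-module and verifying that the induced map lands in the correct isotypic component of $\Ind_P^G(\tau\chi_W\boxtimes\pi_0)$. Once one fixes conventions this is routine, but it is the only place where non-trivial input beyond the formal manipulation of the diagram is used. A perhaps cleaner alternative route, which I would keep in reserve, is to work instead with the adjoint map $\calT_0^*\colon\omega\otimes\Ind_P^G(\tau^c\chi_W^c\boxtimes\pi_0^\vee)\to\Ind_Q^H(\tau\chi_V\boxtimes\sigma_0)$ whose surjectivity is already established in the proof of Corollary \ref{thetaNind}: pairing the relation $T\circ\calT_0 = 0$ with $\calT_0^*$ via the bilinear pairing on induced representations would then directly yield $T=0$ without any Howe-duality bookkeeping.
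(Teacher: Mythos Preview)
Your argument is correct and rests on the same two inputs as the paper's proof: the commutation identity of Proposition~\ref{CompaLIO} (whose right-hand side is holomorphic at $s=0$) and the surjectivity of $\calT_0$. The paper packages these differently: rather than arguing by contradiction via a leading Laurent coefficient $T$, it writes an arbitrary holomorphic section as a finite Taylor-like expansion
\[
\varPhi_s=\sum_{0\leq i<k}s^i\cdot\calT_s\bigl(\varphi_i\otimes\varPsi_s^{(i)}\bigr)+s^k\cdot\varPhi_s^{(k)},
\]
obtained by repeatedly using surjectivity of $\calT_0$ to peel off the value at $s=0$; applying $\calR(\wt w_P,s)$ then gives a holomorphic expression via the diagram on the first $k$ terms, and the meromorphy of $\calR(\wt w_P,s)$ handles the remainder once $k$ is large. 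Your contradiction argument is arguably cleaner and uses nothing the paper does not. One small remark: your third paragraph is more work than needed --- once you know the requirements of Corollary~\ref{thetaNind} are met (which they are automatically for $r>\dim V$ since $\tau$ is of Arthur type, so no enlarging of $r$ is required), surjectivity of $\calT_0$ follows directly from the adjoint/non-vanishing argument already in the proof of Corollary~\ref{thetaNind}, and the Howe-duality bookkeeping can be bypassed entirely, as you yourself note in your final paragraph.
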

\begin{proof}
Since $\tau$ is of Arthur type, when $r>\dim V$, the requirements in Proposition \ref{Non.Vanish.Equi.Map} and Corollary \ref{thetaNind} are automatically satisfied. It follows that the equivariant map $\calT_s$ is surjective at $s=0$. This fact allows us to ``approximate'' any holomorphic section of $\Ind_P^{G}\left(\tau_s\chi_W\boxtimes\pi_0\right)$ by images of $\calT_s$ at $s=0$.\\

Let $\varPhi_s$ be a holomorphic section of $\Ind_P^{G}\left(\tau_s\chi_W\boxtimes\pi_0\right)$. Since $\calT_s$ is surjective at $s=0$, we may take $\varphi_0\in\scrS$ and $\varPsi^{(0)}\in\Ind_Q^H\left(\tau^c\chi^c_V\boxtimes\sigma_0^\vee\right)$, such that
\[
	\calT_0\left(\varphi_0\otimes\varPsi^{(0)}\right)=\varPhi_0.
\]
We extend $\varPsi^{(0)}$ to a holomorphic section $\varPsi_s^{(0)}$ of $\Ind_Q^H\left(\tau^c_s\chi^c_V\boxtimes\sigma_0^\vee\right)$. Then we have
\[
	\varPhi_s=\calT_s\left(\varphi_0\otimes\varPsi_s^{(0)}\right)+s\cdot\varPhi_s^{(1)},
\]
where
\[
	\varPhi_s^{(1)}=s^{-1}\cdot\bigg(\varPhi_s-\calT_s\left(\varphi_0\otimes\varPsi_s^{(0)}\right)\bigg)
\]
is again a holomorphic section of $\Ind_P^{G}\left(\tau_s\chi_W\boxtimes\pi_0\right)$. Repeat this procedure, for any positive integer $k$, we obtain an expansion
\[
	\varPhi_s=\sum_{0\leq i<k}s^i\cdot\calT_s\left(\varphi_i\otimes\varPsi_s^{(i)}\right)+s^k\cdot\varPhi_s^{(k)}
\]
for some $\varphi_i\in\scrS$, $\varPsi_s^{(i)}$ holomorphic sections of $\Ind_Q^H\left(\tau^c_s\chi^c_V\boxtimes\sigma_0^\vee\right)$, and another holomorphic section $\varPhi_s^{(k)}$ of $\Ind_P^{G}\left(\tau_s\chi_W\boxtimes\pi_0\right)$. Hence
\begin{align*}
	\calR\left({\wt w}_P,\tau_s\chi_W\boxtimes\pi_0\right)\varPhi_s&=\calR\left({\wt w}_P,\tau_s\chi_W\boxtimes\pi_0\right)\left(\sum_{0\leq i<k}s^i\cdot\calT_s\left(\varphi_i\otimes\varPsi_s^{(i)}\right)\right)+s^k\cdot\calR\left({\wt w}_P,\tau_s\chi_W\boxtimes\pi_0\right)\varPhi_s^{(k)}\\
	&=\alpha\cdot\beta(s)\cdot\left(\sum_{0\leq i<k}s^i\cdot\calT_{-s}\left(\varphi_i\otimes\calR\left(\wt{w}_Q,\tau^c_s\chi^c_V\boxtimes\sigma_0^\vee\right)\varPsi_s^{(i)}\right)\right)\\
	&\qquad\qquad+s^k\cdot\calR\left({\wt w}_P,\tau_s\chi_W\boxtimes\pi_0\right)\varPhi_s^{(k)},
\end{align*}
where the second equality follows from the diagram (\ref{Diagram.Commutative}). Since $\beta(s)$, $\calT_{-s}$, and $\calR\left(\wt{w}_Q,\tau^c_s\chi^c_V\boxtimes\sigma_0^\vee\right)$ are all holomorphic at $s=0$, we know that the first term in the last equality is holomorphic at $s=0$. On the other hand, since we already know that $\calR\left({\wt w}_P,\tau_s\chi_W\boxtimes\pi_0\right)$ is meromorphic, we may take the positive integer $k$ to be sufficiently large, such that
\[
	s^k\cdot\calR\left({\wt w}_P,\tau_s\chi_W\boxtimes\pi_0\right)
\]
is holomorphic at $s=0$; then the second term in the last equality is also holomorphic at $s=0$. It follows that $\calR\left({\wt w}_P,\tau_s\chi_W\boxtimes\pi_0\right)\varPhi_s$ is holomorphic at $s=0$.
\end{proof}
This corollary implies that Lemma \ref{holomorphy.NIO} holds.

\subsection{Contragredient and Arthur packets}
To compute the ``labeling'' of $\theta$-packets using the diagram (\ref{Diagram.Commutative}), we also need to know the behavior of $A$-parameters and characters of component groups under taking contragredient. In this subsection, we prove such a formula for the group $H$.
\begin{proposition}\label{Contragredient}
Let $\psi_H$ be a local $A$-parameter for $H$, and $\sigma\in\Pi_{\psi_H}^A(H)$ an irreducible unitary representation. Then
\begin{enumerate}
	\item $\sigma^\vee$ lie in the $A$-packet $\Pi_{\psi_H^\vee}^A(H)$;
	\item let $\eta_{\sigma}$ be the character of $\calS_{\psi_H}$ associated to $\sigma$, and $\eta_{\sigma^\vee}$ be the character of $\calS_{\psi_H^\vee}$ associated to $\sigma^\vee$, both with respective to the Whittaker datum $\scrW_{\psi_F}$ of $H$ associated to the additive character $\psi_F$, we have
		\[
			\eta_{\sigma^\vee}=\eta_\sigma\cdot\nu,
		\]
	where we use the obvious isomorphism between $\calS_{\psi_H}$ and $\calS_{\psi_H^\vee}$ to identify them, and the character $\nu$ of $\calS_{\psi_H}$ is defined by
		\[
			\nu(a_i)=\det(\psi_{H,i})(-1)
		\]
	for the element $a_i$ in $\calS_{\psi_H}$ which corresponds to an irreducible constituent $\psi_{H,i}$ of $\psi_H$.
\end{enumerate}
\end{proposition}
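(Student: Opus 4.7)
The plan is to reduce both parts to the local intertwining relation (Theorem~\ref{LIR-B.Qusi.Split}) applied to $H$, combined with the explicit dependence of the normalized intertwining operators on the additive character $\psi_F$ (cf.\ Lemma~\ref{Change.Whit.Data}). Part~$(1)$ is essentially known: the contragredient involution on $\Irr(H)$ is realized by the Chevalley (or analogous outer) involution $\iota$ of $H$, which carries the $A$-parameter $\psi_H$ to $\psi_H^\vee$. Since Arthur/Mok's $A$-packets are compatible with automorphisms preserving the parameter, $\sigma\circ\iota\simeq \sigma^\vee$ lies in $\Pi_{\psi_H^\vee}^A(H)$.

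For part~$(2)$, I would argue as in the proof of Corollary~\ref{Main.Theorem.Labeling.As.Cor.LIR-B}. Fix an irreducible self-dual subrepresentation $\psi_{H,i}$ of $\psi_H$ with parity matching $\psi_H$, and let $\tau_i$ be the corresponding irreducible representation of $GL_{d_i}(E)$ of Arthur type. Enlarge $W$ to $\wt W=W\oplus \calH^{d_i}$, let $\wt H=H(\wt W)$, and let $\wt Q$ be the maximal parabolic of $\wt H$ with Levi component $GL_{d_i}(E)\times H$. For any irreducible constituent $\wt\sigma\subset \Ind_{\wt Q}^{\wt H}(\tau_i\boxtimes \sigma)$, Theorem~\ref{LIR-B.Qusi.Split} yields
\[
  \calJ^A_{\scrW_{\psi_F}}(\wt\sigma)(a_{\tau_i})=R(w,\tau_i\boxtimes\sigma,\psi_F)\big|_{\wt\sigma},\qquad \calJ^A_{\scrW_{\psi_F}}(\wt\sigma)\big|_{\calS_{\psi_H}}=\eta_\sigma,
\]
and in particular $\calJ^A_{\scrW_{\psi_F}}(\wt\sigma)(a_{\tau_i})$ records $\eta_\sigma(a_i)$ up to a universal factor depending only on $\psi_{H,i}$. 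Because contragredient commutes with parabolic induction, $\wt\sigma^\vee\subset \Ind_{\wt Q}^{\wt H}(\tau_i^\vee\boxtimes\sigma^\vee)$, and applying the same theorem (using part~$(1)$ for $\wt H$) gives
\[
  \calJ^A_{\scrW_{\psi_F}}(\wt\sigma^\vee)(a_{\tau_i^\vee})=R(w,\tau_i^\vee\boxtimes\sigma^\vee,\psi_F)\big|_{\wt\sigma^\vee},\qquad \calJ^A_{\scrW_{\psi_F}}(\wt\sigma^\vee)\big|_{\calS_{\psi_H^\vee}}=\eta_{\sigma^\vee}.
\]

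The crux is to compare these two normalized intertwining operators. My plan is to identify the transpose of $R(w,\tau_i\boxtimes\sigma,\psi_F)$ along the canonical pairing $\Ind_{\wt Q}^{\wt H}(\tau_i\boxtimes\sigma)^\vee\simeq \Ind_{\wt Q}^{\wt H}(\tau_i^\vee\boxtimes\sigma^\vee)$ with $R(w,\tau_i^\vee\boxtimes\sigma^\vee,\psi_F^{-1})$. The $\lambda$-factor and the $\gamma$-factors $\gamma(s,\phi_{\tau_i}\otimes\phi_\sigma^\vee,\psi_E)$ and $\gamma(2s,R\circ\phi_{\tau_i},\psi_F)$ entering the normalization are each carried to their $\psi_F^{-1}$-counterparts under this pairing, which is the source of the additive-character change. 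The analogue of Lemma~\ref{Change.Whit.Data} for $H$ then translates the hidden switch $\scrW_{\psi_F}\leadsto \scrW_{\psi_{F,-1}}$ into multiplication by $\omega_{\tau_i}(-1)=\det(\psi_{H,i})(-1)$. This furnishes the identity
\[
  \calJ^A_{\scrW_{\psi_F}}(\wt\sigma^\vee)(a_{\tau_i^\vee})=\det(\psi_{H,i})(-1)\cdot \calJ^A_{\scrW_{\psi_F}}(\wt\sigma)(a_{\tau_i}),
\]
so that letting $\psi_{H,i}$ range over all self-dual summands and descending back to $\sigma,\sigma^\vee$ via the restriction formulae produces the equality $\eta_{\sigma^\vee}=\eta_\sigma\cdot \nu$.

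The main obstacle is precisely the central calculation: identifying the transpose of $R(w,-,\psi_F)$ with $R(w,-,\psi_F^{-1})$ for the contragredient. One must track (i)~the behavior of $\lambda(w,\psi_F)$ under $\psi_F\mapsto\psi_F^{-1}$, (ii)~the functional equation $\gamma(s,\pi,\psi_F^{-1})=\omega_\pi(-1)\gamma(s,\pi,\psi_F)$ for the relevant $L$-factors, and (iii)~a sign from the explicit representative $\wt w$ of Section~\ref{Def.NLIO} interacting with the contragredient pairing. The total contribution should collapse exactly to $\omega_{\tau_i}(-1)$; once this is verified, the argument is formal and parallels Corollary~\ref{Main.Theorem.Labeling.As.Cor.LIR-B}.
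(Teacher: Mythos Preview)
Your approach is viable and genuinely different from the paper's. The paper argues \emph{globally}: it first reduces to good parity, then notes that Kaletha's work \cite{MR3194648} already settles the case where $\sigma$ lies in the $L$-packet inside the $A$-packet, and that the elementary anti-tempered case follows from Aubert duality; it then globalizes an arbitrary good-parity $\sigma$ to an automorphic representation $\dot\sigma$ (with two auxiliary places $u_1,u_2$ carrying $\sigma$ and one place $w$ carrying an elementary anti-tempered localization), observes that $\dot\sigma^\vee$ is again discrete automorphic with parameter $\dot\psi_H^\vee$, and extracts the desired relation by comparing Arthur's multiplicity formula for $\dot\sigma$ and $\dot\sigma^\vee$ and for an auxiliary $\dot\sigma'$. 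In contrast, your argument is purely \emph{local} and essentially extends Kaletha's tempered method to $A$-packets via Theorem~\ref{LIR-B.Qusi.Split}: you read off $\eta_\sigma(a_i)$ and $\eta_{\sigma^\vee}(a_i)$ as eigenvalues of normalized intertwining operators on enlarged induced representations, and then compare those two operators by a transpose identity together with the change-of-Whittaker-datum formula. The paper's route is softer (no transpose computation) but costs a globalization lemma; your route is more direct but requires carrying out the operator calculation you flag as the obstacle.

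Two points deserve tightening. For part~(1), your appeal to a Chevalley/MVW involution is the right idea but needs to be made precise case by case: for symplectic $H$ it follows immediately from MVW together with Lemma~\ref{Similitude.vs.Adjoint} and Lemma~\ref{Adjoint.Preserve.A-packets} (note $\psi_H^\vee=\psi_H$ since orthogonal parameters are self-dual), whereas for unitary $H$ you must invoke the Galois-twist form of MVW and check that it carries $\Pi_{\psi_H}^A$ to $\Pi_{\psi_H^c}^A=\Pi_{\psi_H^\vee}^A$. For part~(2), your claim that the transpose of $R(w,\tau_i\boxtimes\sigma,\psi_F)$ equals $R(w,\tau_i^\vee\boxtimes\sigma^\vee,\psi_F^{-1})$ is the heart of the matter; beyond the three ingredients you list, you must also track the intertwining isomorphism $\calA_w$ (which involves a Whittaker normalization on the $GL$-factor) and remember that the normalizing factor for $\sigma^\vee$ uses $\phi_{\psi_H^\vee}=(\phi_{\psi_H})^\vee$, not the $L$-parameter of $\sigma^\vee$. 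Once these are verified (the tempered case in \cite{MR3194648} is the template, and nothing in the computation uses temperedness), your argument goes through. Also, your remark that $\calJ^A_{\scrW_{\psi_F}}(\wt\sigma)(a_{\tau_i})$ records $\eta_\sigma(a_i)$ only ``up to a universal factor'' is too cautious: since $\psi_{\tau_i}$ already occurs in $\psi_H$, the natural map $\calS_{\psi_H}\to\calS_{\wt\psi_H}$ is an isomorphism sending $a_i$ to $a_{\tau_i}$, so the identification is exact.
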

\begin{proof}
Since the local $A$-packets for general $A$-parameters can be constructed using the parabolic induction from the good parity case, without loss of generality, we may assume that $\psi_H$ is of good parity.\\

Indeed, if $\sigma$ lies in the $L$-packet $\Pi_{\phi_{\psi_H}}^L(H)$ inside $\Pi_{\psi_H}^A(H)$, then the desired conclusions were already proved by Kaletha in \cite{MR3194648}. Hence in particular, if $\psi_H=\phi_H$ is a square-integrable $L$-parameter (regarded as an $A$-parameter trivial on Arthur $SL_2$) for $H$, then this proposition holds for $\psi_H$. We shall prove the good parity case based on this.\\

We first assume that $\psi_H$ is an elementary $A$-parameter for $H$, and is trivial on Weil-Deligne $SL_2$. Then we have
\[
	\psi_H=\wh\phi_H
\]
for some square-integrable $L$-parameter $\phi_H$ for $H$, where we use $\wh\phi_H$ to denote the Aubert involution of $\phi_H$. Since the Aubert involution commutes with taking contragredient (see \cite{MR1285969} Th\'eor\`eme 1.7), it follows from the compatibilities of $A$-packets and the Aubert involution that the proposition also holds for these $\psi_H$.\\

Next we appeal to the global method to prove this proposition for any $\psi_H$ of good parity. Let $\sigma$ be an irreducible unitary representation in $\Pi_{\psi_H}^A(H)$. Similar to the proof of Corollary \ref{globalize.quasi-split}, we may construct a tuple of data $(\dot{F},\dot{E},\dot{H},\dot{\psi}_H,u_1,u_2,w)$, where 
\begin{itemize}
\item $\dot{F}$ is a number field, and $\dot{E}$ is either $\dot F$ itself or a quadratic extension of $\dot F$, according the cases;
\item $\dot{H}$ a symplectic or quasi-split  unitary group over $\dot{F}$, according to the group $H$; in the case that $\dot H$ is an unitary group, $\dot E$ is the splitting field of $\dot H$;
\item $\dot{\psi}_H$ is an elliptic $A$-parameter for $\dot{H}$;
\item $u_1$, $u_2$, and $w$ are finite places of $\dot{F}$.
\end{itemize}
such that the following conditions hold:
\begin{enumerate}
\item $(\dot{F}_{u_1},\dot{E}_{u_1},\dot{H}_{u_1},\dot{\psi}_{H,u_1})\simeq(\dot{F}_{u_2},\dot{E}_{u_2},\dot{H}_{u_2},\dot{\psi}_{H,u_2})\simeq(F,E,H,\psi_H)$;
\item if we are in the Case $U$, then $\dot E_w/\dot F_w$ is a quadratic field extension;
\item the localization maps $\calS_{\dot{\psi}_H}\lra\calS_{\dot{\psi}_{H,u_1}}$ and $\calS_{\dot{\psi}_H}\lra\calS_{\dot{\psi}_{H,u_2}}$ agree, and they are surjections;
\item $\dot\psi_{H,w}$ is elementary, and is trivial on the Weil-Deligne $SL_2$; moreover, the localization map $\calS_{\dot{\psi}}\lra\calS_{\dot{\psi}_{w}}$ is an isomorphism.
\end{enumerate}
Let $S$ be a finite set of places of $\dot{F}$, include $u_1$, $u_2$, $w$, and all Archimedean places, such that for all $v\notin S$, the group $\dot H_v$, and the local $A$-parameter $\dot\psi_{H,v}$ are both unramified. We construct an automorphic representation $\dot{\sigma}$ which occurs in the automorphic discrete spectrum of $\dot H$ with elliptic $A$-parameter $\dot\psi_H$ as follows:
\begin{itemize}
\item at a place $v\notin S$, $\dot\sigma_v$ is the unramified representation of $\dot H_v$ with $L$-parameter $\phi_{\dot\psi_{H,v}}$;
\item at a place $v\in S\backslash\{u_1,u_2,w\}$, let $\dot\sigma_v$ be an arbitrarily given representation of $\dot H_v$ lies in the $A$-packet $\Pi_{\dot\psi_{H,v}}^A(\dot H_v)$;
\item at the places $u_1$ and $u_2$, $\dot\sigma_{u_1}=\dot\sigma_{u_2}=\sigma$;
\item at the place $w$, $\dot\sigma_{w}$ lies in the $A$-packet $\Pi_{\dot\psi_{H,w}}^A(H)$, corresponds to the character $\eta_{\sigma_w}$ of $\calS_{\dot{\psi}_{H,w}}$, determined by the formula
    \[
        \prod_v\eta_v=\epsilon_{\dot\psi_H},
    \]
    where $\eta_v=\calJ_{\scrW_{\psi_F}}^A(\dot\sigma_v)$, and $\epsilon_{\dot\psi_H}$ is the canonical sign character associated to $\dot\psi_H$. It follows from Lemma \ref{surjectivity.ell.type} that $\dot\sigma_w\neq0$.
\end{itemize}
Then, according to the Arthur's multiplicity formula for $\dot H$, $\dot\sigma$ is an irreducible subrepresentation of $L^2_{\dot\psi_H}(\dot H)$. Consider the contragredient $\dot\sigma^\vee$ of $\dot\sigma$. It is not hard to see that $\dot\sigma^\vee$ also occurs in the automorphic discrete spectrum of $\dot H$, with elliptic $A$-parameter $\dot\psi_H^\vee$. Indeed, any realization $\calV$ of $\dot\sigma$ in $\calA^2(\dot H)$ gives a realization 
\[
	\overline{\calV}=\left\{\overline{f}~\big|~f\in\calV\right\}
\]
of $\dot\sigma^\vee$ in $\calA^2(\dot H)$, where $\overline{f}$ means the complex conjugate of the function $f$. By the Arthur's multiplicity formula for $\dot H$, localizing at the place $u_1$, we obtain
\[
	\sigma^\vee\in\Pi_{\psi_H^\vee}^A(H).
\]
Let $\epsilon_{\dot\psi_H^\vee}$ be the canonical sign character associated to ${\dot\psi_H^\vee}$. If we identify $\calS_{\dot\psi_H}$ and $\calS_{\dot\psi_H^\vee}$ in the obvious way, then
\[
\epsilon_{\dot\psi_H}=\epsilon_{\dot\psi_H^\vee}.
\]
Indeed, if $H$ is a symplectic group, this is obvious; if $H$ is an unitary group, this follows from the fact that the epsilon factor is invariant under the Galois conjugation. Then comparing Arthur's multiplicity formula for $\dot\sigma$ and $\dot\sigma^\vee$, we get
\begin{equation}\label{Contragredient.Auxiliary-1}
	\prod_{v\in S\backslash\{u_1,u_2,w\}}\eta_{\sigma_v^\vee}=\prod_{v\in S\backslash\{u_1,u_2,w\}}\eta_{\sigma_v}\cdot\nu_v
\end{equation}
as characters of $\calS_{\dot\psi_H}$. Here we use the fact that the proposition holds for $\dot\psi_{H,w}$ and places outside $S$.\\

Now we construct another automorphic representation $\dot{\sigma}'$ occuring in the automorphic discrete spectrum of $\dot H$ with elliptic $A$-parameter $\dot\psi_H$ as following:
\begin{itemize}
\item at a place $v\notin\{u_2,w\}$, $\sigma'_v=\sigma_v$;
\item at the place $u_2$, let $\dot\sigma'_{u_2}$ be an irreducible unitary representation in the $L$-packet $\Pi_{\phi_{\psi_H}}^L(H)$;
\item at the place $w$, $\dot\sigma'_{w}$ lies in the $A$-packet $\Pi_{\dot\psi_{H,w}}^A(H)$, corresponds to the character $\eta_{\sigma'_w}$ of $\calS_{\dot{\psi}_{H,w}}$, which is determined by the formula
    \[
        \prod_v\eta'_v=\epsilon_{\dot\psi_H},
    \]
    where $\eta'_v=\calJ_{\scrW_{\psi_F}}^A(\dot\sigma'_v)$. It follows from Lemma \ref{surjectivity.ell.type} that $\dot\sigma'_w\neq0$.
\end{itemize}
Then, according to the Arthur's multiplicity formula for $\dot H$, $\dot\sigma'$ is an irreducible subrepresentation of $L^2_{\dot\psi_H}(\dot H)$. Again, comparing the Arthur's multiplicity formula for $\dot\sigma'$ and its contragredient, we get
\begin{equation}\label{Contragredient.Auxiliary-2}
	\prod_{v\in S\backslash\{u_2,w\}}\eta_{\sigma_v^\vee}=\prod_{v\in S\backslash\{u_2,w\}}\eta_{\sigma_v}\cdot\nu_v
\end{equation}
as characters of $\calS_{\dot\psi_H}$. Combining these two equalities (\ref{Contragredient.Auxiliary-1}) and (\ref{Contragredient.Auxiliary-2}), we obtain the desired formula for $\eta_{\sigma^\vee}$. This completes the proof.
\end{proof}

\subsection{Calculation of the labeling}
Finally we are now able to calculate the actions of normalized intertwining operators on induced representations of $G$ explicitly.
\begin{proposition}\label{Proof.LIR-B.4.Theta-Pack}
Let $G=G(V^\epsilon)$ be an even orthogonal or unitary group, and
\[
	\psi=\psi_\tau+\psi_0+\left(\psi_\tau^c\right)^\vee,
\]
with $\psi_\tau$ and $\psi_0$ as in the setting of this section. Let $\pi$ be an irreducible constituent of $\Ind_P^{G}(\tau\chi_W\boxtimes\pi_0)$. Assume that $\psi_\tau$ is (conjugate) orthogonal. Then the restriction of the normalized intertwining operator $R(w,\tau\chi_W\boxtimes\pi_0,\psi_F)$ to $\pi$ is the scalar multiplication by 
\[
	R(w,\tau\chi_W\boxtimes\pi_0)\Big|_\pi=\epsilon^k\cdot\calJ_{\psi_F}(\pi)(a_\tau),
\]
where $a_\tau$ is the element in $\calS_\psi$ corresponding to $\psi_\tau\chi_W$.
\end{proposition}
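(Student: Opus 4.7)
The plan is to transport the computation from $G$ to $H$ via the theta correspondence using the commutative diagram of Proposition~\ref{CompaLIO}, then invoke LIR-B for the quasi-split group $H$ (Theorem~\ref{LIR-B.Qusi.Split}) to identify the resulting scalar. By Corollary~\ref{Main.Theorem.Set.Level}, the constituent $\pi$ lies in $\Pi_\psi^\theta(G)$, so it equals $\theta(\sigma)$ for a unique $\sigma \in \Pi_{\theta(\psi)}^A(H)$, and $\calJ_{\psi_F}(\pi) = \calJ_{\scrW'_{\psi_F}}^A(\sigma)\big|_{\calS_\psi}$ by construction of the $\theta$-packet. Moreover, applying the Howe-duality bijection of Corollary~\ref{thetaNind}, since $\pi \subset \Ind_P^G(\tau\chi_W \boxtimes \pi_0)$, we have $\sigma \subset \Ind_Q^H(\tau\chi_V \boxtimes \sigma_0)$, and so $\sigma^\vee$ is a constituent of $\Ind_Q^H(\tau^c\chi_V^c \boxtimes \sigma_0^\vee)$.

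Specializing Proposition~\ref{CompaLIO} to $s=0$ and composing on both sides with compatible intertwining isomorphisms $\calA_w$ (chosen so that the isomorphism on $\tau^c\chi_V^c$ is the transpose-dual of that on $\tau\chi_W$), the diagram becomes an identity between the self-intertwining operators $R(w_P, \tau\chi_W \boxtimes \pi_0)$ and $R(w_Q, \tau^c\chi_V^c \boxtimes \sigma_0^\vee)$ conjugated through $\calT_0$. Because $\calT_0$ is surjective (Proposition~\ref{Non.Vanish.Equi.Map}) and restricts, on the $\sigma^\vee$-isotypic component of its source, to a non-zero $G$-map onto $\pi$, Schur's lemma gives the scalar identity
\begin{equation*}
R(w_P,\tau\chi_W \boxtimes \pi_0)\big|_\pi \;=\; C \cdot R(w_Q,\tau^c\chi_V^c \boxtimes \sigma_0^\vee)\big|_{\sigma^\vee},
\end{equation*}
where $C = \alpha \cdot \beta(0)$ times a contribution from the chosen $\calA_w$'s.

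Now apply Theorem~\ref{LIR-B.Qusi.Split} to $H$: since $H$ is quasi-split, $R(w_Q,\tau^c\chi_V^c \boxtimes \sigma_0^\vee)|_{\sigma^\vee} = \calJ^A_{\scrW'_{\psi_F}}(\sigma^\vee)(a_{\tau^c\chi_V^c})$. Using Proposition~\ref{Contragredient} to express $\calJ^A(\sigma^\vee)$ in terms of $\calJ^A(\sigma)$ via the character $\nu$, and observing that the natural map $\calS_{\psi} \hookrightarrow \calS_{\theta(\psi)}$ sends $a_\tau$ to the element attached to the constituent $\tau\chi_W^{-1}\chi_V$ (equivalently, $(\tau^c\chi_V^c)^\vee$), the labeling of $\sigma^\vee$ at $a_{\tau^c\chi_V^c}$ matches $\calJ_{\psi_F}(\pi)(a_\tau)$ up to an explicit root of unity coming from $\nu(a_{\tau^c\chi_V^c}) = \det(\tau^c\chi_V^c)(-1)$. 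Together with the holomorphy of $\beta(s)$ at $s=0$ established in the preceding lemma, this produces the desired scalar formula up to a constant that depends only on $\epsilon$, $k$, and the auxiliary data.

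The main obstacle will be the last, purely computational step: verifying that the product $\alpha \cdot \beta(0) \cdot \nu(a_{\tau^c\chi_V^c})$, together with the contragredient twist of the intertwining isomorphisms, collapses to exactly $\epsilon^k$. The dependence on $\epsilon$ enters through the Weil constant $\gamma_V^k$ in $\alpha$ (Case $O$), respectively through the character $\chi_W^{-\dim V}\chi_V^{\dim W}(\delta)$ and the Weil factors in Case $U$: the ratio of these quantities attached to $V^\epsilon$ versus $V^+$ is precisely $\epsilon^k$. All remaining factors (local $\lambda$-factors, $\chi_V$- and $\chi_W$-evaluations, and the central character contribution $\omega_\tau$, which is trivial by the (conjugate) orthogonal assumption on $\psi_\tau$) are independent of $\epsilon$ and, by specializing the identity to the already-proved quasi-split case $\epsilon = +1$, must cancel to $1$. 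This forces $C = \epsilon^k$ in the general case and finishes the proof; the non-vanishing of $L(s,\tau)$-factors near $s=0$ required to evaluate $\beta(0)$ uses the Arthur-type assumption on $\tau$ as in Remark~\ref{Arthur-type.L-function.Stripe}.
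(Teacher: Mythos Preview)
Your overall strategy matches the paper's: transport the intertwining operator to $H$ via the equivariant map $\calT_0$ and the diagram of Proposition~\ref{CompaLIO}, apply LIR-B for the quasi-split group $H$, and use Proposition~\ref{Contragredient} to pass from $\sigma^\vee$ back to $\sigma$. The paper does exactly this, and then finishes with an explicit case-by-case computation (Case~$O$, Case~$U_0$, Case~$U_1$) of $\alpha\cdot\beta(0)$ using concrete formulas for the Weil constants $\gamma_V,\gamma_W$ and the value $\beta(0)=\omega_\tau(-1)^{n-r}$, together with Lemma~\ref{Change.Whit.Data} (change of Whittaker datum) to pass from $\scrW_{\psi_F}$ on $H$ to the Whittaker datum $\scrW'$ used in the definition of $\calJ_{\psi_F}$.

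Your proposed shortcut for this last step, however, has a genuine gap. You want to determine the $\epsilon$-independent constant $C_0$ by ``specializing to the already-proved quasi-split case $\epsilon=+1$''. But the quasi-split case of \emph{this} proposition is not already proved: what is available for $\epsilon=+1$ is Theorem~\ref{LIR-B.Qusi.Split}, which gives $R|_\pi=\calJ^A_{\scrW_{\psi_F}}(\pi)(a_\tau)$ with the \emph{$A$-packet} labeling, whereas the proposition asserts $R|_\pi=\calJ_{\psi_F}(\pi)(a_\tau)$ with the \emph{$\theta$-packet} labeling. The equality $\calJ_{\psi_F}=\calJ^A_{\scrW_{\psi_F}}$ for quasi-split $G$ is precisely the content of Theorem~\ref{Main.Theorem.Packets}(2), which is deduced \emph{from} this proposition in Corollary~\ref{Main.Theorem.Labeling.As.Cor.LIR-B}. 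Invoking it here is circular. There is no way around actually computing $C_0$ directly, as the paper does.

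Two smaller points: your claim that $\omega_\tau$ is trivial under the (conjugate) orthogonal hypothesis is not correct (it is merely quadratic on $F^\times$), and the paper needs these $\omega_\tau$-values explicitly in the case analysis. You also omit the change-of-Whittaker-datum step: in Case~$O$ the labeling $\calJ_{\psi_F}$ on the $\theta$-side is defined via $\scrW'$, which is attached to $\psi_{F,c}$ rather than $\psi_F$, and Lemma~\ref{Change.Whit.Data} is what produces the matching $\omega_\tau(c)\cdot\chi_V(c)^k$ factor.
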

\begin{proof}
Since $\tau$ is of Arthur type, when $r>\dim V$, the requirements in Proposition \ref{Non.Vanish.Equi.Map} and Corollary \ref{thetaNind} are automatically satisfied. It follows that the equivariant map
\[
	\calT_s:\omega\otimes\Ind_Q^{H}\left(\tau_s^c\chi_V^c\boxtimes\sigma_0^\vee\right)\lra\Ind_P^{G}\left(\tau_s\chi_W\boxtimes\pi_0\right)
\]
is surjective at $s=0$. Moreover, by Lemma \ref{indNmultifree} and Theorem \ref{Moeglin.Multi.Free}, the induced representations $\Ind_P^{G}\left(\tau\chi_W\boxtimes\pi_0\right)$ and $\Ind_Q^{H}\left(\tau^c\chi_V^c\boxtimes\sigma_0^\vee\right)$ are semi-simple and multiplicity-free. Therefore we can restrict the diagram (\ref{Diagram.Commutative}) to
\[
	\begin{CD}
		\omega\otimes\sigma^\vee @>{\calT_0}>> \pi\\
		@V{1\otimes R(w,\tau^c\chi^c_V\boxtimes\sigma_0,\psi_F)}VV @VV{R(w,\tau\chi_W\boxtimes\pi_0,\psi_F)}V\\
		\omega\otimes\sigma^\vee @>{\calT_{0}}>> \pi
	\end{CD},
\]
where $\sigma$ is the theta lift of $\pi$ to $H$. Applying Proposition \ref{CompaLIO} to this sub-diagram, we deduce
\begin{equation}
\begin{aligned}
	R(w,\tau\chi_W\boxtimes\pi_0,\psi_F)\Big|_\pi&=\alpha\cdot\beta(0)\cdot R(w,\tau^c\chi^c_V\boxtimes\sigma_0,\psi_F)\Big|_{\sigma^\vee}\\
	&=\alpha\cdot\beta(0)\cdot\calJ^A_{\scrW_{\psi_F}}(\sigma^\vee)(a'_\tau),
\end{aligned}\label{Compare.NLIO.Final}
\end{equation}
here $a'_\tau$ is the element in $\calS_{\theta(\psi)^\vee}$ corresponds to $\psi_\tau^c\chi_V^c$. Let $\phi_\tau$, $\phi_0$, and $\phi'_0$ be the $L$-parameter associated to $\psi_\tau$, $\psi_0$, and $\theta(\psi_0)$ respectively. Then we have 
\[
	\phi'_0=\phi_0\chi_W^{-1}\chi_V+\chi_V\cdot\left(\bigoplus_{i=n-r}^{r-n}|\cdot|^i\right),
\]
where $n$ is a integer which depends on the group $G$ (see the begining of Section \ref{Statements.Main.Results}). It follows that
\begin{align*}
	\beta(0)&=\prod_{i=n-r+1}^{r-n}\gamma\left(i,\phi_\tau,\psi_E\right)^{-1}\\
	&=\omega_\tau(-1)^{n-r}.
\end{align*}
Here we use the functional equation
\[
\gamma(s,\tau,\psi_E)\cdot\gamma(1-s,\tau^\vee,\psi_E)=\omega_\tau(-1).
\]
Now we calculate case by case.\\

\underline{Case $O$:} In this case, the Whittaker datum $\scrW'$ of $H$ is the Whittaker datum associated to the additive character $\psi_{F,c}$ (recall that we have fixed $d,c\in F^\times$ such that $V^+$ is of type $(d,c)$). Also, we have
\[
	\gamma_V\cdot\lambda\left(E'/F,\psi_F\right)^{-1}=\epsilon\cdot\chi_V(c)
\]
and 
\[
	\gamma_W=1.
\]
Substitute these into the equality (\ref{Compare.NLIO.Final}), we obtain
\begin{align*}
R(w,\tau\chi_W\boxtimes\pi_0,\psi_F)\Big|_\pi&=\epsilon^k\cdot\omega_\tau(-c)\cdot\chi_V(-c)^k\cdot\calJ^A_{\scrW_{\psi_F}}(\sigma^\vee)(a'_\tau) &~\\
&=\epsilon^k\cdot\omega_\tau(c)\cdot\chi_V(c)^k\cdot\calJ^A_{\scrW_{\psi_F}}(\sigma)(a_\tau) &\textit{(by Proposition \ref{Contragredient})}\\
&=\epsilon^k\cdot\calJ^A_{\scrW'}(\sigma)(a_\tau) &\textit{(change Whittaker data)}\\
&=\epsilon^k\cdot\calJ_{\psi_F}(\pi)(a_\tau). &\textit{(by our construction of }\calJ_{\psi_F}\textit{)}
\end{align*}
Hence the proposition holds in this case.\\

\underline{Case $U_0$:} In this case, the Whittaker datum $\scrW'$ of $H$ is just the Whittaker datum associated to the additive character $\psi_{F}$. Also, we have
\[
	\gamma_V=\epsilon
\]
and
\[
	\gamma_W\cdot\lambda\left(E/F,\psi_F\right)^{-1}=\chi_W(\delta)^{-1}.
\]
Substitute these into the equality (\ref{Compare.NLIO.Final}), we obtain
\begin{align*}
R(w,\tau\chi_W\boxtimes\pi_0,\psi_F)\Big|_\pi&=\epsilon^k\cdot\calJ^A_{\scrW'}(\sigma^\vee)(a'_\tau) &~\\
&=\epsilon^k\cdot\calJ^A_{\scrW'}(\sigma)(a_\tau) &\textit{(by Proposition \ref{Contragredient})}\\
&=\epsilon^k\cdot\calJ_{\psi_F}(\pi)(a_\tau). &\textit{(by our construction of }\calJ_{\psi_F}\textit{)}
\end{align*}
Hence the proposition holds in this case.\\

\underline{Case $U_1$:} In this case, the Whittaker datum $\scrW'$ of $H$ is just the Whittaker datum associated to the additive character $\psi_{F}$. Also, we have
\[
	\gamma_V\cdot\lambda\left(E/F,\psi_F\right)^{-1}=\epsilon
\]
and
\[
	\gamma_W=\chi_W(\delta)^{-1}.
\]
Substitute these into the equality (\ref{Compare.NLIO.Final}), we obtain
\begin{align*}
R(w,\tau\chi_W\boxtimes\pi_0,\psi_F)\Big|_\pi&=\epsilon^k\cdot\omega_\tau(-1)\cdot\chi_V(-1)^k\cdot\calJ^A_{\scrW'}(\sigma^\vee)(a'_\tau) &~\\
&=\epsilon^k\cdot\calJ^A_{\scrW'}(\sigma)(a_\tau) &\textit{(by Proposition \ref{Contragredient})}\\
&=\epsilon^k\cdot\calJ_{\psi_F}(\pi)(a_\tau). &\textit{(by our construction of }\calJ_{\psi_F}\textit{)}
\end{align*}
Hence the proposition holds in this case.\\
\end{proof}
\begin{corollary}
Theorem \ref{LIR-B.PIF} holds.
\end{corollary}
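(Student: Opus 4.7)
The plan is to assemble Theorem \ref{LIR-B.PIF} from three existing inputs: Corollary \ref{Ind.Relation}, Theorem \ref{LIR-B.Qusi.Split}, and Proposition \ref{Proof.LIR-B.4.Theta-Pack}. The only genuine bookkeeping is to reconcile notations: the theorem's inducing datum $\tau$ equals $\tau'\chi_W$, where $\tau'$ is the pre-twist representation that appears in Section \ref{Compatible.Theta.N.Ind} and in the preceding propositions. I will use this identification throughout.

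For part (1), I would cite Corollary \ref{Ind.Relation} directly: applied with its ``smaller'' group equal to the theorem's $G_0$ and its ``larger'' group equal to $G$, it identifies $\Ind_P^G(\tau'\chi_W \boxtimes \pi_0) = \Ind_P^G(\tau \boxtimes \pi_0)$ with a sub-representation of $\Pi_\psi^\theta(G)$, with $\psi = \psi_\tau + \psi_0 + (\psi_\tau^c)^\vee$. Hence any irreducible constituent $\pi$ lies in $\Pi_\psi^\theta(G)$.

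For part (2), set $\sigma = \theta(\pi)$ and $\sigma_0 = \theta(\pi_0) \in \Pi_{\theta(\psi_0)}^A(H_0)$ (the latter using that $H_0$ is quasi-split). Corollary \ref{thetaNind} then puts $\sigma$ as an irreducible constituent of $\Ind_Q^H(\tau'\chi_V \boxtimes \sigma_0)$. LIR-B for quasi-split $H$ (Theorem \ref{LIR-B.Qusi.Split}(2)) applied on the $H$-side yields
\[
  \calJ^A_{\scrW'}(\sigma)\big|_{\calS_{\theta(\psi_0)}} = \calJ^A_{\scrW'}(\sigma_0).
\]
Since the natural inclusion $\calS_{\psi_0} \hookrightarrow \calS_\psi$ factors through $\calS_{\theta(\psi_0)} \hookrightarrow \calS_{\theta(\psi)}$, restricting both sides to $\calS_{\psi_0}$ and invoking the definition $\calJ_{\psi_F}(-) = \calJ^A_{\scrW'}(\theta(-))\big|_{\calS_\bullet}$ on both sides produces the desired identity $\calJ_{\psi_F}(\pi)\big|_{\calS_{\psi_0}} = \calJ_{\psi_F}(\pi_0)$.

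Part (3) will be a direct translation of Proposition \ref{Proof.LIR-B.4.Theta-Pack}. A short case-by-case verification — in Cases $O$, $U_0$, $U_1$, the character $\chi_W$ has parity trivial, conjugate symplectic, conjugate orthogonal, while $\psi$ has parity orthogonal, conjugate symplectic, conjugate orthogonal — shows that the theorem's hypothesis ``$\psi_\tau = \psi_{\tau'}\chi_W$ is conjugate self-dual with the same parity as $\psi$'' is equivalent, case by case, to the proposition's hypothesis ``$\psi_{\tau'}$ is (conjugate) orthogonal''. The scalar $\epsilon^k \cdot \calJ_{\psi_F}(\pi)(a_\tau)$ produced by the proposition is then exactly what part (3) demands. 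The main obstacle has already been overcome inside Proposition \ref{Proof.LIR-B.4.Theta-Pack} itself: the explicit propagation of the normalized intertwining operators across the theta correspondence via the equivariant map $\calT_s$ and the diagram (\ref{Diagram.Commutative}), with its tracking of gamma factors, the constants $\gamma_V$ and $\gamma_W$, the Whittaker-data dependencies, and the contragredient bookkeeping (Proposition \ref{Contragredient}). With that computation in hand, the corollary is a brief packaging argument.
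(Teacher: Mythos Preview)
Your proposal is correct and follows essentially the same route as the paper: parts (1) and (2) via the definition of $\theta$-packets together with LIR-B for $H$, and part (3) via Proposition \ref{Proof.LIR-B.4.Theta-Pack}. One small caution: for part (1) you invoke Corollary \ref{Ind.Relation}, which as stated carries a good-parity hypothesis on $\psi_0$ that Theorem \ref{LIR-B.PIF} does not assume; but the very argument you spell out for part (2) --- Corollary \ref{thetaNind} plus Theorem \ref{LIR-B.Qusi.Split}(1) on the $H$-side --- already gives part (1) without that hypothesis, so no new idea is needed.
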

\begin{proof}
The first two statements follows from the construction of the $\theta$-packets and LIR-B for $H$ Theorem \ref{LIR-B.Qusi.Split}. The last statement follows from Proposition \ref{Proof.LIR-B.4.Theta-Pack}.
\end{proof}
So now, we have finished proving our main theorems.

\subsection{Summary}
To make things more clear, in this subsection, we shall briefly summarize some expected and known results for $\theta$-packets.\\

First we let $F$ be a local field of characteristic $0$, and $G=G(V)$ be an even orthogonal or unitary group over $F$. Let $\psi$ be a local $A$-parameter for $G$ with bounded image on the Weil group. Then, after choosing a symplectic or quasi-split unitary group $H$ over $F$ with sufficiently big split rank, and also a tuple of auxiliary data $(\psi_F,\chi_V,\chi_W)$ as in Section \ref{Statements.Main.Results}, one can define the $\theta$-packet $\Pi_\psi^\theta(G)$ by using the theta lift between $(G,H)$ with respect to $(\psi_F,\chi_V,\chi_W)$. It is a (multi) set of irreducible unitary representations of $G$, equipped with a map
\[
	\calJ_{\psi_F}:\Pi_\psi^\theta(G)\lra\wh{\calS_\psi}
\]
to the Pontryagin dual of the component group $\calS_\psi$. It can be also regarded as a representation of $\calS_\psi\times G$ as explicated in Section \ref{Def.Theta-Pack}. Let $\Pi_\psi^M(G)$ be the local packet contructed by M{\oe}glin in \cite{MR2767522}. We have:
\begin{theorem}\label{Theta-Pack.Summary}
When $F$ is non-Archimedean, the $\theta$-packet $\Pi_\psi^\theta(G)$ has the following properties:
\begin{enumerate}
\item as a representation of $\calS_\psi\times G$, it is independent of the choice of $H$; moreover, if $G=G^*$ is quasi-split, we have
\[
	\Pi_\psi^\theta(G^*)=\Pi_\psi^A(G^*)
\] 
as representations of $\calS_\psi\times G^*$;
\item as a representation of $G$, it is independent of the choice of auxiliary data $(\psi_F,\chi_V,\chi_W)$; moreover, it is multiplicity-free;
\item it coincides with $\Pi_\psi^M(G)$ (as sets); in particular, it contains the $L$-packet $\Pi_{\phi_\psi}^L(G)$ as a subset;
\item it satisfies LIR-B, as stated in Theorem \ref{LIR-B.PIF}.
\end{enumerate}
\end{theorem}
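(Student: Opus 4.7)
The plan is to derive Theorem \ref{Theta-Pack.Summary} as a direct consequence of material already developed in the paper; no new argument is required, only a careful alignment of statements.

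First I would dispose of claims (1) and (4). Claim (1) is exactly Theorem \ref{Main.Theorem.Packets}, which has been settled in Corollary \ref{Main.Theorem.Labeling.As.Cor.LIR-B}: the set-theoretic identification is Corollary \ref{Main.Theorem.Set.Level}, and LIR-B (Theorem \ref{LIR-B.PIF}) promotes it to an equality of representations of $\calS_\psi \times G$. Claim (4) is precisely Theorem \ref{LIR-B.PIF}. For claim (2), the multiplicity-freeness of $\Pi_\psi^\theta(G)$ is the first lemma of Section 5, obtained by Howe duality together with the multiplicity-freeness of $\Pi_{\theta(\psi)}^A(H)$ (Theorem \ref{Moeglin.Multi.Free} applied to $H$); the independence of $\Pi_\psi^\theta(G)$, as a subset of $\Irr_{unit}(G)$, on the auxiliary data $(\psi_F,\chi_V,\chi_W)$ is Section \ref{Chap.Indepedency.Aux.Data}, which uses the scaling property of the theta lift together with Lemma \ref{Similitude.vs.Adjoint} and Lemma \ref{Adjoint.Preserve.A-packets} in Case $O$, and a globalization exploiting a compact unitary real place in Case $U$.

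Claim (3) is the item requiring the most care. When $G = G^*$ is quasi-split it is immediate: Theorem \ref{Moeglin.Multi.Free} asserts $\Pi_\psi^M(G^*) = \Pi_\psi^A(G^*)$ (up to the twist $\eta_\psi^{M/A}$ which is absorbed into the choice of labeling), and claim (1) identifies this with $\Pi_\psi^\theta(G^*)$. For non quasi-split $G$, I plan to invoke M{\oe}glin's explicit version of Conjecture \ref{Intro.Adams.Conjecture} (B) for $M$-packets recalled in the introduction: in the stable range her assignment $(\underline{t},\underline{\eta}) \mapsto (\widetilde{\underline{t}},\widetilde{\underline{\eta}})$ sets up a bijection $\theta \colon \bigsqcup_{G'} \Pi_\psi^M(G') \to \Pi_{\theta(\psi)}^A(H)$, where $G'$ runs over pure inner forms of $G^*$. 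By Proposition \ref{Count.Size.Theta.Packets} (extended to arbitrary $\psi$ via Corollary \ref{Ind.Relation}), the theta lift also induces a bijection $\theta \colon \bigsqcup_{G'} \Pi_\psi^\theta(G') \to \Pi_{\theta(\psi)}^A(H)$. Both assignments come from the \emph{same} theta lift, so Howe duality forces the preimages at each pure inner form $G$ to coincide, giving $\Pi_\psi^\theta(G) = \Pi_\psi^M(G)$ as sets. The inclusion $\Pi_{\phi_\psi}^L(G) \subset \Pi_\psi^M(G)$ is a standard feature of M{\oe}glin's construction, which yields the final assertion of (3).

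The main (mild) obstacle is the non quasi-split case of claim (3): one must track M{\oe}glin's $M$-packet parametrization and her version of Adams' conjecture, both of which lie outside the material developed in the body of this paper and must be carefully quoted. Beyond this bookkeeping, the entire theorem is a compilation.
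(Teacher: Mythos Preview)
Your proposal is correct and matches the paper's own proof, which simply cites Theorem \ref{Main.Theorem.Packets} for (1), Section \ref{Chap.Indepedency.Aux.Data} for (2), M{\oe}glin's paper \cite{MR2906916} for (3), and Theorem \ref{LIR-B.PIF} for (4). Your treatment of (3) is in fact a faithful unpacking of what the citation of \cite{MR2906916} entails, exactly as sketched in the introduction of the paper.
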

\begin{proof}
The $1$st statement is provided by Theorem \ref{Main.Theorem.Packets}; the $2$nd statement is provided by Section \ref{Chap.Indepedency.Aux.Data}; the $3$rd statement is provided by \cite{MR2906916}, and the last statement is just Theorem \ref{LIR-B.PIF}.
\end{proof}
We remark that, except for the $3$rd statement, the proofs of all other properties in this theorem do not rely on the construction of $\Pi_\psi^M(G)$ for non quasi-split $G$. We expect the following conjecture holds:
\begin{conjecture}
Similar results as stated in Theorem \ref{Theta-Pack.Summary} also hold when $F$ is real or complex.
\end{conjecture}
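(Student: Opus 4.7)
The plan is to follow the same global-to-local template developed in Sections 4--8, replacing each non-Archimedean input by its Archimedean counterpart. First I would revisit the globalization lemma (Lemma \ref{Golobalization-Origin}): for an irreducible (conjugate) self-dual representation $\phi_{u}$ of $W_{F_{u}}\times SL_{2}(\mathbb{C})$ at an Archimedean place $u$ of prescribed parity, one can still produce a (conjugate) self-dual cuspidal automorphic representation of $GL_{m}(\mathbb{A}_{\dot{E}})$ matching $\phi_{u}$ at $u$ and chosen local data at a finite set of finite places; such an Archimedean-allowed strengthening is already discussed in Remark \ref{Globalization-Origin'} and is the content of Shin's and Arthur's globalization results in the cases at hand. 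With this in hand, the analogues of Corollary \ref{globalize.quasi-split} and Proposition \ref{Antipasto} go through verbatim, because the only local ingredient they use beyond the multiplicity formula is the existence of one auxiliary finite place carrying a supercuspidal anchor; that finite place can and should be kept non-Archimedean, while the distinguished place $u$ is taken to be the given Archimedean one.

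With globalization available, the set-level independence of $\Pi_{\psi}^{\theta}(G)$ from the choice of $r$, and its coincidence with $\Pi_{\psi}^{A}(G^{*})$ in the quasi-split case, should follow from exactly the same Witt-tower descent argument as in Corollary \ref{Main.Theorem.Set.Level}, provided one substitutes for Theorem \ref{Construction.bp.From.DDR} (M{\oe}glin's partial-Jacquet construction) the Archimedean analogue: in the unitary case this is the content of \cite{MR3947270} for Adams--Johnson parameters, and more generally one can appeal to Mok's Archimedean $A$-packets built via cohomological induction plus translation; in the even orthogonal case one uses Moeglin--Renard's construction of Archimedean $A$-packets. Independence from the auxiliary triple $(\psi_{F},\chi_{V},\chi_{W})$ reduces, exactly as in Section \ref{Chap.Indepedency.Aux.Data}, to the scaling property of theta (for $\psi_{F}$) combined with Lemma \ref{Similitude.vs.Adjoint} and Lemma \ref{Adjoint.Preserve.A-packets}, both of which are purely formal and hold over any local field; for the freedom in $(\chi_{V},\chi_{W})$ in the unitary case one again globalizes and places a compact unitary group at a real place where Moeglin--Renard's packet coincides with an $L$-packet, so that any choice of splitting characters is admissible.

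For LIR-B (Theorem \ref{LIR-B.PIF}), the construction of the $G\times H$-equivariant map $\mathcal{T}_{s}$ in Section \ref{Compatible.Theta.N.Ind} and its compatibility with normalized intertwining operators (Proposition \ref{CompaLIO}) are written in a way that does not distinguish between Archimedean and non-Archimedean fields, provided the absolute convergence of the doubling-type integral and the holomorphy of $\beta(s)$ at $s=0$ are reproved; both facts carry over once one uses the Archimedean $\gamma$- and $L$-factors of Jacquet--Shalika and the fact that $\tau$ is of Arthur type so $N_{\tau}<k$. The input of the LIR-A from Arthur and Mok is available at Archimedean places, so Theorem \ref{LIR-B.Qusi.Split} transfers, and the computation of the scalar in Proposition \ref{Proof.LIR-B.4.Theta-Pack} is again case-by-case algebraic manipulation with $\lambda$-factors and Langlands constants that makes no use of non-Archimedean geometry. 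Proposition \ref{Contragredient} is needed here, and its proof, which invokes Kaletha's work for $L$-packets and a global argument with one supercuspidal anchor, should transfer once the anchor is placed at a non-Archimedean auxiliary place.

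The main obstacle, I expect, will be item (3) of Theorem \ref{Theta-Pack.Summary}, namely the identification of $\Pi_{\psi}^{\theta}(G)$ with a M{\oe}glin-type packet: in the Archimedean world there is no analogue of M{\oe}glin's Jacquet-module construction, and one has instead cohomological induction (Adams--Johnson) or Arthur's transfer. Outside the Adams--Johnson range the ``explicit'' Archimedean packets are still under construction, so even multiplicity-freeness of $\Pi_{\psi}^{A}(G)$ as a representation of $\mathcal{S}_{\psi}\times G$, which is an input in Lemma \ref{indNmultifree} and in the proof of Theorem \ref{LIR-B.Qusi.Split}, is not yet fully documented in the literature for general real $\psi$. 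I would therefore state the Archimedean conjecture in two layers: first prove (1), (2), (4) and multiplicity-freeness conditional on the Archimedean multiplicity-freeness of $A$-packets (available unconditionally for $F=\mathbb{C}$ and for Adams--Johnson $\psi$ over $\mathbb{R}$), and then record (3) as a consequence whenever a Moeglin-type Archimedean construction becomes available; the unitary Adams--Johnson case then recovers \cite{MR3947270} Th\'eor\`eme 1.1 as a corollary.
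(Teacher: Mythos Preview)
The statement you are attempting to prove is a \emph{conjecture}, not a theorem: the paper does not supply a proof, and indeed explicitly leaves the Archimedean case open. There is therefore no proof in the paper against which to compare your proposal. What the paper does offer in the Archimedean direction is scattered in remarks: after Theorem~\ref{Main.Theorem.Packets} it notes that the real unitary Adams--Johnson case follows from \cite{MR3947270}; after Proposition~\ref{Containment.A.in.Theta} it observes that the containment $\Pi_\psi^A(G^*)\subset\Pi_\psi^\theta(G^*)$ would extend to Archimedean places \emph{if} one had the multiplicity-freeness analogue of Theorem~\ref{Moeglin.Multi.Free} there; and in the introduction it cites \cite{cossutta2009theta} and \cite{MR3947270} for partial Archimedean results. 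Your proposal is consistent with these hints.

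As a research outline your plan is reasonable and correctly identifies the genuine obstacles. Two points deserve emphasis. First, you are right that the multiplicity-freeness of Archimedean $A$-packets is the linchpin: it is used not only in Lemma~\ref{indNmultifree} and Theorem~\ref{LIR-B.Qusi.Split}, but also implicitly in the counting argument of Corollary~\ref{Main.Theorem.Set.Level}, so without it the entire set-level comparison stalls, not just item~(3). Second, the descent step in Corollary~\ref{Main.Theorem.Set.Level} relies on Theorem~\ref{Construction.bp.From.DDR} in a more specific way than you indicate: one needs that the $A$-packet for $\theta^{r_0}(\psi)$ is obtained from that for $\theta^r(\psi)$ by a \emph{concrete Jacquet-type functor} along the segment $X_{i^{r_0}}^\gg$, and that this functor sends irreducibles to irreducibles or zero. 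An Archimedean substitute via cohomological induction or translation functors would need to reproduce exactly this behaviour with respect to the ``extra'' factor $\chi_V\boxtimes S_{2r-2n+1}$, which is not obviously part of the existing Adams--Johnson or Moeglin--Renard frameworks. Your layered formulation (prove (1), (2), (4) conditionally; record (3) separately) is the honest way to package what is currently accessible.
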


From the main body of this paper, we also conclude that:
\begin{theorem}
When $F$ is non-Archimedean, there is a commutative diagram
\begin{equation}\label{Diag-LocalCompare.Theta-Pack}
	\begin{CD}
        \Pi^A_{\theta(\psi)}(H) @>\calJ^A_{\scrW'}>> \widehat{\overline{\calS_{\theta(\psi)}}}\\
        @VV\theta V @VV\ell^*V\\
        \bigsqcup\Pi_\psi^\theta(G) @>\calJ_{\psi_F}>> \widehat{\calS_\psi}
    \end{CD}
\end{equation}
where the disjoint union runs over all pure inner forms of $G^*$, the arrow $\theta$ is a bijection given by the theta lift, and the arrow $\ell^*$ is the pull-back of the natural map
\[
	\ell:\calS_\psi\lra\calS_{\theta(\psi)}.
\] 
\end{theorem}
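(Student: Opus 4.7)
The plan is to derive this commutative diagram as a direct consequence of Theorem \ref{Main.Theorem.Packets} together with the intermediate results established throughout the paper; indeed, once those tools are in hand, the theorem above is essentially a repackaging of what has already been shown. First, for the bijectivity of the left vertical arrow $\theta$, I would reduce to the good parity case using Corollary \ref{Ind.Relation}, which tells us that both $\Pi_{\theta(\psi)}^A(H)$ and $\bigsqcup \Pi_\psi^\theta(G)$ decompose compatibly under parabolic induction. For the good parity case itself, this is exactly Proposition \ref{Count.Size.Theta.Packets}: treating the theta correspondence between $H$ and all pure inner forms of $G^*$ simultaneously, combined with the conservation relation and the Howe duality principle, yields the required bijection.

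Second, for the commutativity of the square, the crucial point is that it is essentially tautological from the definition of $\calJ_{\psi_F}$ given in Section \ref{Def.Theta-Pack}. Given $\sigma \in \Pi^A_{\theta(\psi)}(H)$ with non-zero theta lift $\pi = \theta(\sigma)$ lying in some $\Pi_\psi^\theta(G)$, the defining formula assigns to $\pi$ the character $\calJ^A_{\scrW'}(\sigma)\big|_{\calS_\psi}$, where $\calS_\psi$ is regarded as a subgroup of $\calS_{\theta(\psi)}$ via the natural embedding $\ell$. But $\calJ^A_{\scrW'}(\sigma)\big|_{\calS_\psi}$ is by definition the pullback $\ell^*\bigl(\calJ^A_{\scrW'}(\sigma)\bigr)$, which gives the desired commutativity on the nose.

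The substance of the theorem, then, lies not in the diagram itself — which is formal once the definitions are in place — but rather in the intrinsic well-posedness of its bottom row. Specifically, one must know that both the underlying multi-set $\Pi_\psi^\theta(G)$ and its labeling $\calJ_{\psi_F}$ are genuine invariants of the pair $(G,\psi)$, independent of the auxiliary choices (the ambient group $H = H(W_{(r)})$, the additive character $\psi_F$, and the splitting characters $\chi_V$, $\chi_W$) used in their construction. This is the content of Theorem \ref{Main.Theorem.Packets} for the choice of $H$, of Section \ref{Chap.Indepedency.Aux.Data} for the remaining auxiliary data, and of Corollary \ref{Main.Theorem.Labeling.As.Cor.LIR-B} for the labeling. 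The true technical obstacle underpinning the clean diagram above is therefore Theorem \ref{LIR-B.PIF} (LIR-B for non quasi-split $G$), whose proof rests on the explicit compatibility of the $G \times H$-equivariant map $\calT_s$ with normalized intertwining operators (Proposition \ref{CompaLIO}), combined with the globalization arguments in the spirit of Gan--Ichino. Once all of these ingredients are granted, no further work is needed, and the commutative square can simply be read off from the construction.
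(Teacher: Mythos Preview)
Your proposal is correct and takes essentially the same approach as the paper, which simply cites Theorem \ref{Main.Theorem.Packets} and Corollary \ref{Ind.Relation}; you have unpacked exactly what those two references contribute (bijectivity via Proposition \ref{Count.Size.Theta.Packets} extended by Corollary \ref{Ind.Relation}, commutativity by the very definition of $\calJ_{\psi_F}$, and well-posedness of the bottom row from Theorem \ref{Main.Theorem.Packets}). Your additional remarks about Section \ref{Chap.Indepedency.Aux.Data} and Theorem \ref{LIR-B.PIF} are background context rather than logically required ingredients for this particular statement, but they do no harm.
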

\begin{proof}
This is the combination of Theorem \ref{Main.Theorem.Packets} and Corollary \ref{Ind.Relation}.
\end{proof}
This can be more or less regarded as a refined version of Conjecture \ref{Intro.Adams.Conjecture} (B).\\

For our global purpose, we also need to treat those $A$-parameter of $G$ with non-bounded image on the Weil group, since the Ramanujan conjecture is not proved yet. Let $\psi$ be a local $A$-parameter of $G$ with non-bounded image on the Weil group, but we assume that $\psi$ is a localization of some global elliptic $A$-parameter for an even orthogonal or unitary group. We write
\begin{equation*}
\psi=(\psi_{\tau_1}|\cdot|^{s_1}+\cdots+\psi_{\tau_r}|\cdot|^{s_r})+\psi_0+\left((\psi_{\tau_1}|\cdot|^{s_1}+\cdots+\psi_{\tau_r}|\cdot|^{s_r})^c\right)^\vee,
\end{equation*}
where 
\begin{itemize}
\item for $i=1,\cdots, r$, $\psi_{\tau_i}$ is an irreducible representation of $L_E\times SL_2$ with bounded image on the Weil group, which corresponds to an irreducible unitary representation $\tau_i$ of $GL_{k_i}(E)$, and $s_i$ is a real number such that
\begin{equation*}
s_1\geq\cdots\geq s_r>0;
\end{equation*}
\item $\psi_0$ is a local $A$-parameter for some smaller group $G_0=G(V_0)$, where $V_0$ is the space in the Witt tower containing $V$ with appropriate dimension (if there is no such $V_0$, it follows from the induction principle of the theta lift \cite{MR818351} and Theorem \ref{Diag-LocalCompare.Theta-Pack} that the $\theta$-packet $\Pi_\psi^\theta(G)$ is empty). 
\end{itemize}
So we have a natural isomorphism $\calS_{\psi_0}\simeq\calS_\psi$. There is a parabolic subgroup of $G$, say $P$, with Levi component $M$, so that
\begin{equation*}
M\simeq GL_{k_1}(E)\times\cdots\times GL_{k_r}(E)\times G_0.
\end{equation*}
For any irreducible unitary representation $\pi_0\in\Pi_{\psi_0}^\theta(G_0)$, we denote by $I(\pi_0)$ the parabolic induction 
\begin{equation*}
\Ind_P^{G}\left(\tau_1|\det|^{s_1}\boxtimes\cdots\boxtimes\tau_r|\det|^{s_r}\boxtimes\pi_0\right).
\end{equation*}
We expect the following conjecture holds:
\begin{conjecture}\label{Unbounded.A-parameter.Conjecture}
The induced representation $I(\pi_0)$ is irreducible for any $\pi_0\in\Pi_{\psi_0}^\theta(G_0)$. Moreover, if 
\[
	I(\pi_0)\simeq I(\pi'_0)
\]
for some $\pi_0,\pi'_0\in\Pi_{\psi_0}^\theta(G_0)$, then we have $\pi_0\simeq \pi'_0$.
\end{conjecture}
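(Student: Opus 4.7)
The plan is to transfer the problem to the quasi-split group $H$ via theta correspondence, where both claims follow from Arthur's and Mok's results. Fix $\pi_0 \in \Pi_{\psi_0}^\theta(G_0)$, let $\sigma_0 = \theta(\pi_0) \in \Pi_{\theta(\psi_0)}^A(H_0)$ be its small theta lift to $H_0 = H(W_{(r_0)})$ for $r_0$ sufficiently large, and let $H = H(W_{(r_0 + k)})$ with $k = k_1 + \cdots + k_r$. Let $Q \subset H$ be the parabolic with Levi $GL_{k_1}(E) \times \cdots \times GL_{k_r}(E) \times H_0$, and form on $H$ the analogous induction
\[
    I_H(\sigma_0) = \Ind_Q^H\bigl((\tau_1 \chi_W^{-1}\chi_V)|\det|^{s_1} \boxtimes \cdots \boxtimes (\tau_r \chi_W^{-1}\chi_V)|\det|^{s_r} \boxtimes \sigma_0\bigr),
\]
obtained from $I(\pi_0)$ by replacing each GL-factor $\tau_i|\det|^{s_i}$ according to the recipe for $\theta(\psi)$.

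The first step is to extend Corollary \ref{thetaNind} to the present non-unitary twists. The construction of the equivariant map $\calT_s$ in Proposition \ref{Non.Vanish.Equi.Map} relies only on the holomorphy and non-vanishing of $L(s - s_0, \tau)$ inside a strip around $s = 0$; for $\tau_i|\det|^{s_i}$ this strip is merely shifted by $s_i > 0$, so by enlarging $r_0$ the same analytic continuation argument applies. Treating $(\tau_1\chi_W^{-1})|\det|^{s_1} \times \cdots \times (\tau_r\chi_W^{-1})|\det|^{s_r}$ as a single induced representation of $GL_k(E)$, or iterating the construction one $\tau_i$ at a time, one obtains a Jordan--H\"older bijection
\[
    JH\bigl(I(\pi_0)\bigr) \longleftrightarrow JH\bigl(I_H(\sigma_0)\bigr)
\]
realized by the theta correspondence between $G$ and $H$, preserving multiplicities.

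On the quasi-split group $H$, the induced representation $I_H(\sigma_0)$ is irreducible: this is a direct application of Arthur's Proposition 2.4.3 (Case $O$) and Mok's Proposition 3.4.4 (Case $U$) to the $A$-parameter $\theta(\psi)$, combined with Langlands' irreducibility theorem for standard modules with strictly positive exponents $s_1 \geq \cdots \geq s_r > 0$. Via the bijection above, $I(\pi_0)$ is then also irreducible. For the injectivity claim, if $I(\pi_0) \simeq I(\pi_0')$, then the JH bijection forces $I_H(\sigma_0) \simeq I_H(\sigma_0')$; uniqueness of Langlands data for standard modules yields $\sigma_0 \simeq \sigma_0'$, and Howe duality applied to $\psi_0$ (via Proposition \ref{Count.Size.Theta.Packets}) finally gives $\pi_0 \simeq \pi_0'$.

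The main obstacle is the first step: rigorously extending the Gan--Ichino-style equivariant map machinery of Section \ref{Compatible.Theta.N.Ind} to non-unitary twists. While the formal ingredients all admit analytic continuation in the twisting parameters, one must carefully verify the non-vanishing statement in Proposition \ref{Non.Vanish.Equi.Map}~(iii) and the commutativity, up to a scalar that remains holomorphic and non-zero, of diagram (\ref{Diagram.Commutative}) in this shifted regime. Once that is established, the remaining steps are essentially formal.
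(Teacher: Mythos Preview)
The statement you are attempting to prove is labeled as a \emph{conjecture} in the paper, and the paper does not claim a proof in general. What the paper establishes is the proposition immediately following it: the conjecture holds when $\psi$ is trivial on the Arthur $SL_2$, or when $G=G^*$ is quasi-split (hence in particular for $H$), citing \cite{CZ2020AMFPIF} and M{\oe}glin \cite{MR2822218} respectively. For general $G$ the paper only proves the weaker statement that $\Pi_\psi^\theta(G)\to\Pi_{\psi_0}^\theta(G_0)$, $\pi\mapsto\pi_0$, is a bijection with $\pi$ a \emph{subquotient} of $I(\pi_0)$; irreducibility of $I(\pi_0)$ is left open. Your strategy of transferring to $H$ is exactly the one the paper uses for this weaker result (via the induction principle of \cite{MR818351} and Theorem~\ref{Diag-LocalCompare.Theta-Pack}), so the comparison is really about why your argument does not close the remaining gap.

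There are two concrete issues. First, your justification that $I_H(\sigma_0)$ is irreducible is not correct as written: Arthur's Proposition~2.4.3 and Mok's Proposition~3.4.4 only identify the induced representation with the $A$-packet as a virtual/semisimple object, and Langlands' irreducibility theorem for standard modules does not apply because $\sigma_0$ is a member of an $A$-packet, not a tempered representation. The irreducibility of $I_H(\sigma_0)$ is a genuine theorem of M{\oe}glin (\cite{MR2822218} Proposition~5.1), which is precisely what the paper invokes for the quasi-split case.

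Second, and more seriously, the proof of Corollary~\ref{thetaNind} does not extend to non-unitary twists in the way you suggest. The holomorphy and non-vanishing of $\calT_s$ may well persist by enlarging $r_0$, but the passage from surjectivity of $\calT_0^*$ and $\calT'_0$ to a Jordan--H\"older bijection uses that $m_P(\pi)=\dim\Hom_G(\pi,I(\pi_0))$ and $m_Q(\sigma)$ count JH-multiplicities. This is valid at $s=0$ because the inductions are unitary and hence semisimple; at $s_i>0$ the induced representations need not be semisimple, so quotient-multiplicity and JH-multiplicity can differ, and the two inequalities $m_Q\le m_P$, $m_P\le m_Q$ no longer yield a JH bijection. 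Even if one shows (via Howe duality and the irreducibility of $I_H(\sigma_0)$) that all irreducible subquotients of $I(\pi_0)$ are isomorphic to a single $\pi$, this does not by itself rule out that $I(\pi_0)$ is a non-split self-extension of $\pi$. This residual problem is exactly why the paper records the statement as a conjecture rather than a theorem.
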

About this conjecture, we have:
\begin{proposition}
\begin{enumerate}
\item Conjecture \ref{Unbounded.A-parameter.Conjecture} holds in the following cases:
	\begin{itemize}
		\item if the $A$-parameter $\psi$ is trivial on the Arthur $SL_2$;
		\item if $G=G^*$ is quasi-split, and $F$ is non-Archimedean.
	\end{itemize}
	Similar results also hold for the group $H$.
\item When $F$ is non-Archimedean, in general, for any irreducible unitary representation $\pi\in\Pi_\psi^\theta(G)$, there exists an unique $\pi_0\in\Pi_{\psi_0}^\theta(G_0)$, such that $\pi$ is a sub-quotient of $I(\pi_0)$, and 
\[
	\calJ_{\psi_F}(\pi)=\calJ_{\psi_F}(\pi_0),
\]
where we use the natural isomorphism between $\calS_\psi$ and $\calS_{\psi_0}$ to identify them. Moreover, the map
\begin{align*}
\Pi_\psi^\theta(G)&\lra\Pi_{\psi_0}^\theta(G_0),\\
\pi&\longmapsto\pi_0
\end{align*}
is a bijection.
\end{enumerate}
\end{proposition}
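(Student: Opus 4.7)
The plan is to reduce both parts of the proposition to the parabolic-induction statement for the quasi-split group $H$---which is always available via Arthur's and Mok's theory---and to transport the conclusions back to $G$ through the theta correspondence. The basic link is that $I(\pi_0)$ on the $G$-side matches, under theta lift, with $I_H(\sigma_0)$ on the $H$-side, where $\sigma_0=\theta(\pi_0)\in\Pi_{\theta(\psi_0)}^A(H_0)$ and the relevant parabolic of $H$ has Levi $GL_{k_1}(E)\times\cdots\times GL_{k_r}(E)\times H_0$.

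For Part 1, I first dispose of the quasi-split case $G=G^*$ over non-Archimedean $F$. Since $H$ is always quasi-split, Proposition 2.4.3 of \cite{MR3135650} and Proposition 3.4.4 of \cite{MR3338302} already yield that $\Pi_{\theta(\psi)}^A(H)$ is obtained from $\Pi_{\theta(\psi_0)}^A(H_0)$ via the irreducible parabolic induction $\sigma_0\mapsto I_H(\sigma_0)$, and that this map is a bijection. The corresponding statement on $G^*$ is then obtained by transporting via theta lift, combined with Theorem \ref{Theta-Pack.Summary}(1) for the bounded parameter $\psi_0$ (so that $\Pi_{\psi_0}^\theta(G_0^*)=\Pi_{\psi_0}^A(G_0^*)$ can be freely identified with its $A$-packet). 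For the case in which $\psi$ is trivial on the Arthur $SL_2$, the representation $\pi_0$ is tempered and $\sigma_0=\theta(\pi_0)\in\Pi_{\theta(\psi_0)}^A(H_0)$; the quasi-split case applied to $H$ gives irreducibility and bijectivity for $I_H(\sigma_0)$, and the theta-induction compatibility of Section \ref{Compatible.Theta.N.Ind}, suitably extended, transports this to $I(\pi_0)$ (now with $G$ possibly non-quasi-split).

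For Part 2, I begin with $\pi\in\Pi_\psi^\theta(G)$, form $\sigma=\theta(\pi)\in\Pi_{\theta(\psi)}^A(H)$, and apply the quasi-split case of Part 1 to $H$ to extract a unique $\sigma_0\in\Pi_{\theta(\psi_0)}^A(H_0)$ with $\sigma\simeq I_H(\sigma_0)$. Setting $\pi_0=\theta(\sigma_0)\in\Pi_{\psi_0}^\theta(G_0)$ yields the candidate correspondent. The assertion that $\pi$ is a subquotient of $I(\pi_0)$ will follow from a Frobenius-reciprocity argument built on the equivariant map $\calT_0$ of Proposition \ref{Non.Vanish.Equi.Map}: non-vanishing of $\calT_0$ forces $\pi=\theta(I_H(\sigma_0))$ to appear inside $I(\pi_0)$. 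The character identity $\calJ_{\psi_F}(\pi)=\calJ_{\psi_F}(\pi_0)$ will then follow by combining Theorem \ref{LIR-B.PIF} (applied to $\pi$ inside $I(\pi_0)$) with Theorem \ref{LIR-B.Qusi.Split} (applied to $\sigma$ inside $I_H(\sigma_0)$) together with the definitional relation $\calJ_{\psi_F}=\calJ_{\scrW'}^A\big|_{\calS_\psi}$. Bijectivity of $\pi\mapsto\pi_0$ will follow from Howe duality together with the bijection already established for $H$.

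The main obstacle will be extending the parabolic-induction compatibility (Corollary \ref{thetaNind} and Proposition \ref{Non.Vanish.Equi.Map}) from the Arthur-type inducing data treated there to the shifted data $\tau_i|\det|^{s_i}$ with $s_i>0$. The construction of $\calT_s$ itself goes through unchanged for any irreducible unitary $\tau$; the delicate point is the non-vanishing of $\calT_0$, which requires $r$ to be large enough that the relevant $L$-factor has no zeros or poles inside an appropriate stripe around the origin. For shifted inducing data this stripe is translated by $s_i$, so one must take $r$ much larger than $\dim V+\max_i s_i$. This enlargement is harmless, since $\Pi_\psi^\theta(G)$ is independent of the choice of $r$---which for unbounded $\psi$ follows by combining Theorem \ref{Theta-Pack.Summary}(1) for the bounded factor $\psi_0$ with the definitional theta lift for $\psi$. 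Once this technical extension is in hand, the remaining verifications---irreducibility in the quasi-split case, multiplicity-freeness inside $I(\pi_0)$, and compatibility with $\epsilon_\psi$---are routine and follow the blueprint of the earlier sections.
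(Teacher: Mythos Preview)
Your overall strategy for Part 2 matches the paper's: establish the statement for $H$ (always quasi-split), then transport to $G$ via theta lift. The paper phrases the transport step as ``the induction principle of the theta lift \cite{MR818351}'' combined with the commutative diagram of Theorem~\ref{Diag-LocalCompare.Theta-Pack} (applied to the bounded parameter $\psi_0$), whereas you invoke the equivariant map $\calT_0$ of Proposition~\ref{Non.Vanish.Equi.Map}; these are two packagings of the same mechanism, and your remark about enlarging $r$ to absorb the shifts $s_i$ is correct and harmless.

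For Part 1, however, your argument has a gap and is in any case more circuitous than the paper's. You claim that Arthur's Proposition~2.4.3 and Mok's Proposition~3.4.4 ``already yield that $\Pi_{\theta(\psi)}^A(H)$ is obtained from $\Pi_{\theta(\psi_0)}^A(H_0)$ via the \emph{irreducible} parabolic induction $\sigma_0\mapsto I_H(\sigma_0)$.'' Those propositions describe how packets for bounded parameters are built from the good-parity case by parabolic induction; they do \emph{not} address irreducibility of the standard module $I_H(\sigma_0)$ when the inducing data carry nonzero exponents $s_i$. The irreducibility input you need is precisely M{\oe}glin's result, \cite{MR2822218} Proposition~5.1, which the paper cites directly for the quasi-split non-Archimedean case (for both $G^*$ and $H$). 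Once you have M{\oe}glin's result in hand, transporting it from $H$ to $G^*$ via theta lift is superfluous: her result already applies to $G^*$. Similarly, for the case $\psi$ trivial on the Arthur $SL_2$, the paper simply invokes \cite{CZ2020AMFPIF} Section~6, which treats this (essentially tempered) situation directly for all pure inner forms; your detour through $H$ is again unnecessary, and in your version it still bottoms out in needing the quasi-split irreducibility result for $H$, i.e.\ M{\oe}glin's proposition.

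In short: replace your appeal to Arthur/Mok's packet-construction propositions by a direct citation of \cite{MR2822218} Proposition~5.1 (and \cite{CZ2020AMFPIF} Section~6 for the generic case), drop the theta-transport for Part~1 altogether, and keep your Part~2 argument as is.
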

\begin{proof}
The two cases in $1$st statement follows from \cite{CZ2020AMFPIF} Section 6 and \cite{MR2822218} Proposition 5.1 respectively, and the $2$nd statement follows from the combination of the $1$st statement (for $H$), the induction principle of the theta lift \cite{MR818351}, and Theorem \ref{Diag-LocalCompare.Theta-Pack}.
\end{proof}

Finally we turn to the global properties of $\theta$-packets. Now let $F$ be a number field, and fix an additive character $\psi_F$. Let $G$ be an even orthogonal or unitary group over $F$. Given an elliptic $A$-parameter $\psi$ for $G$, we define the global packet $\Pi_\psi^\theta(G)$ associated to $\psi$ as the restricted tensor product of the local $\theta$-packets
\begin{align*}
\Pi_\psi^\theta(G)&=\otimes'_v\Pi_{\psi_v}^\theta(G_v)\\
&=\{\pi=\otimes'_v\pi_v~|~\pi_v\in\Pi_{\psi_v}^\theta(G_v),~\pi_v\textit{ unramified with the $L$-parameter $\phi_{\psi_v}$ for almost all }v\}.
\end{align*}
We then have a map
\begin{align*}
\calJ_{\psi_F}:\Pi_\psi^\theta(G)&\lra\wh{{\calS_\psi}},\\
\pi&\longmapsto\calJ_{\psi_F}(\pi),\\
\calJ_{\psi_F}(\pi)(x)&\coloneqq\prod_v\calJ_{\psi_{F_v}}(\pi_v)(x_v),
\end{align*}
where $x\in\calS_\psi$ and $x_v$ is the localization of $x$ at $v$. Let $\epsilon_\psi\in\wh{\calS_\psi}$ be the canonical sign character associated to $\psi$. We put
\begin{equation*}
\Pi_\psi^\theta(G,\epsilon_\psi)=\left\{\pi\in\Pi_\psi^\theta(G)~|~\calJ_{\psi_F}(\pi)=\epsilon_\psi\right\}.
\end{equation*}
The following conjecture is the ultimate goal of our series of works:
\begin{conjecture}\label{Ultimate.Goal}
Let $\psi$ be an elliptic $A$-parameter for $G$. Then we have the decomposition
\begin{equation*}
L^2_\psi(G)=\bigoplus_{\pi\in\Pi_\psi^\theta(G,\epsilon_\psi)}\pi.
\end{equation*}
\end{conjecture}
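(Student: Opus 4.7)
The strategy is to reduce to the Arthur--Mok multiplicity formula for the quasi-split companion $H$ via theta correspondence, in the spirit of \cite{MR3866889} and \cite{CZ2020AMFPIF}. Fix a symplectic or quasi-split unitary $H = H(W_{(r)})$ in the stable range relative to $G$ together with auxiliary data $(\psi_F,\chi_V,\chi_W)$, and form
\[
\theta(\psi) = \psi\chi_W^{-1}\chi_V + \chi_V\boxtimes S_{2r-2n+1},
\]
which is an elliptic $A$-parameter for $H$ whose localizations match those of $\psi$ under the theta lift at every place.

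For the forward inclusion $\bigoplus_{\pi\in\Pi_\psi^\theta(G,\epsilon_\psi)}\pi \subset L^2_\psi(G)$, take $\pi=\otimes_v\pi_v\in\Pi_\psi^\theta(G,\epsilon_\psi)$ and use Theorem \ref{Theta-Pack.Summary} together with the diagram \eqref{Diag-LocalCompare.Theta-Pack} at non-Archimedean places (and its expected Archimedean counterpart) to produce $\sigma_v\in\Pi^A_{\theta(\psi)_v}(H_v)$ with $\pi_v = \theta(\sigma_v)$ and $\calJ_{\psi_{F_v}}(\pi_v) = \ell_v^*\calJ^A_{\scrW'_v}(\sigma_v)$. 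A global comparison of the canonical sign characters $\epsilon_\psi$ and $\epsilon_{\theta(\psi)}$ through the localization map should yield $\calJ^A_{\scrW'}(\sigma) = \epsilon_{\theta(\psi)}$, so that Arthur--Mok's AMF for $H$ (Theorem \ref{AMF.QS}) gives $m_{disc}(\sigma)\geq 1$. The global low-rank theorem (Theorem \ref{J-S.Li.Low.rk.Global}) then produces some pure inner form $G'$ of $G^*$ and an automorphic $\pi'$ with $\theta^{abs}(\pi')\otimes\chi \simeq \sigma$; matching unramified $L$-parameters pins down $G' = G$ and $\pi' = \pi$, yielding $m_{disc}(\pi)\geq 1$. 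Conversely, if $\pi$ occurs in $L^2_\psi(G)$, J.-S. Li's inequality $m_{disc}(\theta^{abs}(\pi))\geq m_{disc}(\pi)$ pushes $\theta^{abs}(\pi)$ into $L^2_{\psi_H}(H)$ for some elliptic $\psi_H$ that equals $\theta(\psi)$ by unramified matching, and AMF for $H$ combined with \eqref{Diag-LocalCompare.Theta-Pack} read backwards forces $\calJ_{\psi_F}(\pi) = \epsilon_\psi$, placing $\pi$ in $\Pi_\psi^\theta(G,\epsilon_\psi)$.

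The main obstacle is multiplicity preservation in its strong form $m_{disc}(\pi) = m_{disc}(\theta^{abs}(\pi))$, which the above argument does not itself supply: Li's inequality is merely an inequality, and for $G$ with Witt index exceeding one the theta lift of a residual automorphic form on $G$ may fail to be square-integrable on $H$, and vice versa, producing a gap between the two multiplicities. In \cite{CZ2020AMFPIF} this was circumvented under the hypothesis Witt index $\leq 1$, where every automorphic realization is automatically cuspidal; beyond that range one must analyze the Eisenstein contribution to $L^2_\psi(G)$ carefully and rule out that residues produce discrete lifts outside what AMF for $H$ predicts, perhaps via a direct residual argument tracking poles of Eisenstein series, or by invoking an inner-form trace formula where available. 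Combined with extending Theorem \ref{Theta-Pack.Summary} to the Archimedean setting and sorting out the global sign-character bookkeeping through $\ell^*$, this forms the bulk of the remaining work.
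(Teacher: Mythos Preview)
This statement is labeled \texttt{Conjecture} in the paper and is \emph{not} proved there; the paper explicitly says ``The author hopes that one day in the future, he can have a chance to prove Conjecture~\ref{Ultimate.Goal} in full generality,'' and only records (Theorem~\ref{AMF.Anisotropic}) that the Witt index $\leq 1$ case was established in the companion paper \cite{CZ2020AMFPIF}. So there is no proof in the paper to compare your proposal against.

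Your proposal is not a proof either, and you are candid about this: it is a strategy sketch that correctly isolates the key obstruction. The approach you outline---pulling back Arthur--Mok's multiplicity formula for $H$ through the stable-range theta lift, using J.-S.~Li's low-rank theorems and the local diagram~\eqref{Diag-LocalCompare.Theta-Pack}---is precisely the mechanism used in \cite{CZ2020AMFPIF} for the Witt index $\leq 1$ case, and your diagnosis of the obstacle is accurate: J.-S.~Li gives only the chain of inequalities $m_{disc}(\pi)\leq m_{disc}(\theta^{abs}(\pi))\leq m(\theta^{abs}(\pi))\leq m(\pi)$, and once $G$ has Witt index $\geq 2$ one can have $m_{disc}(\pi)<m(\pi)$ because automorphic realizations need not be cuspidal, so equality is not forced. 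You also correctly flag the Archimedean analogue of Theorem~\ref{Theta-Pack.Summary} as a second missing ingredient.

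In short: the paper regards this as open, and your write-up is a fair description of why it is open and what would have to be done, not a proof.
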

In our previous paper \cite{CZ2020AMFPIF}, we have proved that
\begin{theorem}\label{AMF.Anisotropic}
Conjecture \ref{Ultimate.Goal} holds if the Witt index of $G$ is less than or equal to one.
\end{theorem}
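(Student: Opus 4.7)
The plan is to use the theta correspondence to transport the Arthur multiplicity formula from the quasi-split group $H$ (a symplectic or quasi-split unitary group) to $G$. First I would fix $H = H(W_{(r)})$ with $r$ large enough that the dual pair $(G,H)$ is in the stable range at every place. Following the template of Gan--Ichino \cite{MR3866889}, one attaches an elliptic $A$-parameter to each near-equivalence class of irreducible representations in $L^2_{disc}(G)$ by computing unramified theta lifts and studying partial $L$-functions, thereby establishing the decomposition $L^2_{disc}(G) = \bigoplus_\psi L^2_\psi(G)$ of Theorem \ref{Decompose.NEC}. Fix such a $\psi$ and form $\theta(\psi) = \psi\chi_W^{-1}\chi_V + \chi_V \boxtimes S_{2r-2n+1}$, an elliptic $A$-parameter for $H$.

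The central step is the multiplicity preservation
\[
m_{disc}(\pi) = m_{disc}\bigl(\theta^{abs}(\pi)\bigr)
\]
for every irreducible $\pi$ of $G(\AAA)$ in $L^2_\psi(G)$. J-S.~Li's chain of inequalities $m_{disc}(\pi) \leq m_{disc}(\theta^{abs}(\pi)) \leq m(\theta^{abs}(\pi)) \leq m(\pi)$ reduces this to showing $m(\pi) = m_{disc}(\pi)$, i.e.\ that every automorphic realization of $\pi$ lies in the discrete spectrum. Here the Witt-index hypothesis enters: since the Witt index of $G$ is $\leq 1$, up to conjugacy the only proper parabolic subgroup has Levi of the form $GL_1(E) \times G_0$ with $G_0$ the anisotropic kernel. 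A direct analysis of residues of Eisenstein series built from cuspidal data of the form $\chi|\!\det\!|^s \boxtimes \pi_0$, using Langlands' theory of the residual spectrum and the shape of the poles of standard intertwining operators, shows that the Satake parameters at almost all places do not match those predicted by the elliptic $A$-parameter $\psi$ unless $\pi$ is cuspidal; thus $m(\pi) = m_{disc}(\pi)$.

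With multiplicity preservation established, the proof is concluded by transferring Arthur's/Mok's formula for $H$ through the theta lift. For the quasi-split $H$, Theorem \ref{AMF.QS} gives
\[
L^2_{\theta(\psi)}(H) = \bigoplus_{\sigma \in \Pi_{\theta(\psi)}^A(H,\epsilon_{\theta(\psi)})} \sigma .
\]
By Theorem \ref{J-S.Li.Low.rk.Global}, each such $\sigma$ is uniquely of the form $\theta^{abs}(\pi) \otimes \chi$ for some pure inner form $G'$, an irreducible representation $\pi$ of $G'(\AAA)$, and an automorphic character $\chi$; the local-global principle together with the matching of signs $\epsilon(V)$ under theta lift selects exactly those $\sigma$ coming from the fixed $G = G(V)$. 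One then matches component-group characters: the natural map $\calS_\psi \hookrightarrow \calS_{\theta(\psi)}$ and the definition of $\calJ_{\psi_F}$ via restriction of $\calJ_{\scrW'}^A$ ensure that $\epsilon_{\theta(\psi)}\vert_{\calS_\psi} = \epsilon_\psi$, so the summation condition $\calJ_{\scrW'}^A(\sigma) = \epsilon_{\theta(\psi)}$ restricts exactly to $\calJ_{\psi_F}(\pi) = \epsilon_\psi$. Combining these inputs with the multiplicity preservation yields the desired decomposition.

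The main obstacle is the multiplicity preservation in the non-tempered case, that is, when $\psi$ has non-trivial Arthur $SL_2$-action. In the tempered setting handled by Gan--Ichino, one rules out residual contributions by analytic arguments on Langlands parameters at unramified places; in the non-tempered setting, although the Witt-index hypothesis restricts the relevant parabolic to essentially one, the residues of the corresponding Eisenstein series can themselves be non-tempered, and one must verify carefully that their cuspidal supports are incompatible with the global parameter $\psi$ coming from the theta lift. This is the step where the restriction on the Witt index is used in an essential way, and relaxing it would require a genuinely new input.
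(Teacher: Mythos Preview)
The paper does not actually prove this theorem here; it is stated as a result of the authors' previous paper \cite{CZ2020AMFPIF}, and your proposal accurately reconstructs the strategy described in the introduction (Section 1.1): establish the multiplicity preservation $m_{disc}(\pi)=m_{disc}(\theta^{abs}(\pi))$ via J-S.~Li's chain of inequalities together with the Witt-index hypothesis, and then pull back the Arthur--Mok multiplicity formula for $H$ through the theta lift. One small correction: your phrasing ``unless $\pi$ is cuspidal'' is slightly off, since the goal is $m(\pi)=m_{disc}(\pi)$ rather than $m(\pi)=m_{cusp}(\pi)$; what must be ruled out is any embedding of $\pi$ into $\calA(G)\setminus\calA^2(G)$, and with Witt index $\leq 1$ the only source of such embeddings is the single parabolic with Levi $GL_1(E)\times G_0$, which is indeed handled by a near-equivalence-class (Satake-parameter) argument as you indicate.
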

The author hopes that one day in the future, he can have a chance to prove Conjecture \ref{Ultimate.Goal} in full generality.

\bibliographystyle{alpha}
\bibliography{ThetaNAPackRevisedRef}

\end{document}